\documentclass[11pt]{amsart}
\usepackage[top=1.3in, left=1.3in, right=1.3in, bottom=1.3in]{geometry}

\usepackage{amsmath,amsfonts,amsthm,graphicx}
\usepackage{amsmath}
\usepackage{hyperref}
\usepackage{graphicx,psfrag}  
\usepackage[psamsfonts]{amssymb}
\usepackage{amscd}
\usepackage[all,cmtip]{xy}
\textwidth =418pt
\hyphenation{diff-eo-morph-isms}
\hyphenation{cent-er}
\hyphenation{bunch-ed}
\hyphenation{pa-ra-me-tr-ized}

\title[Rates of mixing]{Rates of mixing for the Weil-Petersson geodesic flow II: exponential mixing in  exceptional moduli spaces}

\author{K. Burns, H. Masur, C. Matheus and  A. Wilkinson}
\thanks{K.B. was supported by NSF grant DMS-1001959, H.M. was supported by NSF grant DMS 1205016, C.M. was supported by ANR grant ``GeoDyM'' (ANR-11-BS01-0004), and  A.W. was supported by NSF grant DMS-1316534.}
\address{Keith Burns: Department of Mathematics, Northwestern University, 2033
Sheridan Road, Evanston, IL 60208-2730 USA.}
\email{burns@math.northwestern.edu.}
\urladdr{http://www.math.northwestern.edu/~burns/}
\address{Howard Masur: Department of Mathematics, University of Chicago, 5734 S.
University, Chicago, IL 60637, USA.}
\email{masur@math.uchicago.edu.}
\urladdr{http://www.math.uchicago.edu/~masur/}
\address{Carlos Matheus: Universit\'e Paris 13, Sorbonne Paris Cit\'e, LAGA, CNRS (UMR 7539), F-93439, Villetaneuse, France.}
\email{matheus@impa.br.}
\urladdr{http://www.impa.br/$\sim$cmateus}
\address{Amie Wilkinson: Department of Mathematics, University of Chicago, 5734 S.
University, Chicago, IL 60637, USA.}
\email{wilkinso@math.uchicago.edu.}
\urladdr{http://math.uchicago.edu/~wilkinso/}

\date{\today}
\theoremstyle{plain}

\newtheorem{mainthm}{Theorem}
\newtheorem{theorem}{Theorem}[section]
\newtheorem{proposition}[theorem]{Proposition}
\newtheorem{lemma}[theorem]{Lemma}
\newtheorem{corollary}[theorem]{Corollary}

\def\title{\em}

\def\bar{\overline}

\def\cW{\mathcal{W}}

\def\cB{\mathcal{B}}

\def\cL{\mathcal{L}}
\def\cA{\mathcal{A}}
\def\cB{\mathcal{B}}
\def\cI{\mathcal{I}}
\def\cJ{\mathcal{J}}

\def\cG{\mathcal{G}}

\def\cC{\mathcal{C}}

\def\cT{\mathcal{T}}
\def\B{\mathcal{B}}
\def\cR{\mathcal{R}}
\def\cD{\mathcal{D}}

\def\cH{\mathcal{H}}

\def\cN{\mathcal{N}}
\def\cV{\mathcal{V}}
\def\cM{\mathcal{M}}

\def\transverse{\,\raise2pt\hbox to1em{\hfil$\top$\hfil}\hskip -1em \hbox
to1em{\hfil$\cap$\hfil}\,} 
\newcommand\vol{\operatorname{vol}}

\newcommand\RR{{\mathbb R}}

\newcommand\DD{{\mathbb D}}

\newcommand\ZZ{{\mathbb Z}}
 
\newlength{\figboxwidth} \setlength{\figboxwidth}{5.8in}

\begin{document}

\begin{abstract}{We establish exponential mixing for the geodesic flow $\varphi_t\colon T^1S\to T^1S$ of an incomplete, negatively curved surface $S$
with cusp-like singularities of a prescribed order.  As a consequence, we obtain that the Weil-Petersson flows for the moduli spaces $\cM_{1,1}$ and $\cM_{0,4}$ are exponentially mixing, in sharp contrast to the flows for $\cM_{g,n}$ with $3g-3+n>1$, which fail to be rapidly mixing.  In the proof, we present a new method of analyzing invariant foliations for hyperbolic flows with singularities, based on changing the Riemannian metric on the phase space $T^1S$ and rescaling the flow $\varphi_t$. }
\end{abstract}

\maketitle

\tableofcontents

\section*{Introduction}

Let $S$ be an oriented surface with finitely many punctures.  Suppose that $S$ is endowed with a negatively curved Riemannian metric and that in a neighborhood of each puncture the metric is ``asymptotically modeled" on a surface of revolution obtained by rotating the curve $y=  x^r$, for some $ r>2$, about the $x$-axis in $\RR^3$  (where $r$ may depend on the puncture).  The results in this paper allow us to conclude that the geodesic flow on $T^1S$ mixes exponentially fast.

Before stating the hypotheses precisely, we recall some facts about the metric on a surface $R$ of revolution for the function $y= x^r$.   This surface is negatively curved, incomplete and the curvature can be expressed as a function of the distance to the cusp point $p_0$ where $x=y=0$.  Denote by $\rho(\cdot,\cdot)$ the induced Riemannian path metric and $\delta \colon R\to \RR_{\geq 0}$ the Riemannian distance to the cusp:
$$
\delta (p) = \rho(p,p_0).
$$
Then for $r>1$, the Gaussian curvature on $R$ has the following asymptotic expansion in $\delta$, as $\delta\to 0$:
 \[K(p) = -\frac{r(r-1)}{\delta(p)^2} + O(\delta(p)^{-1}).\]
Our main theorem applies to any incomplete, negatively curved surface with
singularities of this form.  More precisely, we have:

\begin{mainthm}\label{t=main} Let $X$ be a closed surface, and let $\{p_1,\ldots, p_k\}\subset X$.  Suppose that the punctured surface $S = X\setminus \{p_1,\ldots, p_k\}$ carries a $C^5$, negatively curved Riemannian metric that extends to a complete distance metric $\rho$ on $X$.   
Assume that the lift of this metric to the universal cover $\widetilde S$ is geodesically convex.
Denote by $\delta_i\colon S\to \RR_+$ the distance
$\delta_i(p) = \rho(p, p_i)$, for $i=1,\ldots, k$.   

Assume that  there exist $r_1,\ldots,r_k > 2$ such that the Gaussian curvature $K$ satisfies
\[ K(p) = \sum_{i=1}^k -\frac{r_i(r_i-1)}{\delta_i(p)^2}  + O(\delta_i(p)^{-1}) \] 
and 
\[ \|\nabla^j  K(p)\| =   \sum_{i=1}^k O({\delta_i(p)^{-2-j}}),
\]
for $j=1,2,3$ and all $p\in S$. 

Then the geodesic flow $\varphi_t\colon T^1 S\to T^1S$ is exponentially mixing:  there exist constants  $c, C > 0$ such that
for every pair of  $C^1$ functions $u_1, u_2 \in  L^\infty(T^1S, \vol)$, we have
\[\left| \int_{T^1S} u_1 \, u_2\circ\varphi_t  \,d\vol - \int  u_1\, d\vol  \int  u_2\, d\vol   \right| \leq Ce^{-ct}\|u_1\|_{C^1} \|u_2\|_{C^1},
\]
for all $t>0$, where $\vol$ denotes the Riemannian volume on $T^1S$  (which is finite)  normalized so that 
$\vol(T^1S)=1$.
\end{mainthm}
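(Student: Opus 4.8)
The plan is to reduce the singular geodesic flow $\varphi_t$ to a genuinely uniformly hyperbolic flow, to which a Dolgopyat-type argument applies; the reduction is carried out by the change of Riemannian metric on $T^1S$ together with a rescaling of the flow announced in the abstract. \textbf{Step 1 (Regularizing the flow).} Work on the open, dense, full-measure set $\mathcal{G}\subset T^1S$ of vectors whose forward and backward $\varphi$-orbits avoid the cusps. Near the $i$-th cusp the metric is asymptotic to the surface-of-revolution model for $y=x^{r_i}$, and the bounds $K=-\sum r_i(r_i-1)\delta_i^{-2}+O(\delta_i^{-1})$, $\|\nabla^jK\|=\sum O(\delta_i^{-2-j})$ give precise control of the Jacobi equation, hence of $D\varphi_t$ and of the invariant cone fields, along excursions into the cusps; this already yields a dominated splitting $T(T^1S)=E^u\oplus E^0\oplus E^s$ on $\mathcal{G}$ together with stable and unstable manifolds. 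The flow is however only \emph{non}-uniformly hyperbolic: the expansion rate is comparable to $|K|^{1/2}\sim\delta_i^{-1}$, which is unbounded at the cusps, and correspondingly the invariant foliations are not uniformly $C^{1+\alpha}$ there. I would therefore introduce a new metric $\widehat g$ on $T^1S$, adapted to the cusp geometry (and in general anisotropic with respect to the hyperbolic splitting), together with a time rescaling $\widehat\varphi_\tau$ of $\varphi_t$ generated by $\kappa\cdot G$ for a suitable positive function $\kappa$ comparable to an appropriate power of $\delta_i$ near the $i$-th cusp, chosen so that with respect to $\widehat g$ the rescaled flow $\widehat\varphi_\tau$ has: bounded generator and uniformly bounded derivatives up to the needed order; uniform hyperbolicity, $\|D\widehat\varphi_\tau|_{E^s}\|\le C\lambda^{\tau}$ and $\|D\widehat\varphi_{-\tau}|_{E^u}\|\le C\lambda^{\tau}$ with $\lambda<1$ independent of the point; and uniformly $C^{1+\alpha}$ strong stable and strong unstable foliations with uniformly $C^{1+\alpha}$ holonomies. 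The rescaled flow preserves the finite smooth measure $\widehat\mu=c_0\,\kappa^{-1}\vol$, and the power of $\delta_i$ is chosen so that $\kappa^{-1}$ is integrable and the relation of $\widehat\mu$ to $\vol$ is controlled.

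\textbf{Step 2 (Symbolic dynamics and non-integrability).} Fix a compact ``core'' $\mathcal{C}_0\subset T^1S$ disjoint from the singular set. I would build a Markov coding for $\widehat\varphi_\tau$ consisting of finitely many $C^{1+\alpha}$ sections transverse to the flow inside $\mathcal{C}_0$ together with countably many sections coding the cusp excursions, and show that the return time to $\mathcal{C}_0$ has exponentially decaying tails — this is where $r_i>2$ is used: it forces the first-return map of the model cusp to a fixed circle $\{\delta_i=\epsilon_0\}$ to be uniformly hyperbolic and makes deep excursions exponentially rare in rescaled return time, whereas for $r_i\le 2$ (or for the product regions that occur when $3g-3+n>1$) uniform hyperbolicity of the core map or adequate tails would fail. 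I would also verify the uniform non-integrability (UNI) condition: on a surface the strong stable and strong unstable foliations of a hyperbolic flow are nowhere jointly integrable, and since the required estimate is needed only over the compact core $\mathcal{C}_0$ it holds with uniform constants (equivalently, the temporal-distance/Dolgopyat function is not identically zero, with a quantitative lower bound on $\mathcal{C}_0$).

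\textbf{Step 3 (Dolgopyat estimates and transfer back).} With a $C^{1+\alpha}$ uniformly hyperbolic model, exponentially small return-time tails, and the UNI condition, I would run Dolgopyat's oscillatory-integral/transfer-operator scheme for the associated suspension semiflow: for the twisted transfer operators $\mathcal{L}_s$, $s=\sigma+ib$, one obtains a contraction $\|\mathcal{L}_s^N\|\le C\theta^{N}$ in a suitable Dolgopyat norm, uniformly for $|b|$ large, hence a resolvent bound in a strip and exponential decay of correlations of $\widehat\varphi_\tau$ for H\"older observables against $\widehat\mu$; a standard approximation promotes this to $C^1$ observables. Finally I would transport the estimate back to $\varphi_t$: the time change is admissible — $\kappa$ is bounded on $\mathcal{C}_0$, $\kappa^{-1}$ is integrable, and the $t\leftrightarrow\tau$ dictionary along orbits is controlled by the excursion estimates of Step 2 — so exponential mixing of $(\widehat\varphi_\tau,\widehat\mu)$ yields exponential mixing of $(\varphi_t,\vol)$ with only a change of constants, and in particular the stated bound in terms of $\|u_1\|_{C^1}\|u_2\|_{C^1}$.

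\textbf{Main obstacle.} The heart of the matter is Step 1: one must exhibit a \emph{single} pair (metric change, time rescaling) that simultaneously renders the flow uniformly hyperbolic with bounded geometry, renders the invariant foliations uniformly $C^{1+\alpha}$ (this part rests on the curvature-derivative bounds $\|\nabla^jK\|=O(\delta_i^{-2-j})$), keeps the time change admissible, and keeps the coding of cusp excursions under control — and it is precisely the compatibility of all of these with the exponents $r_i>2$ that distinguishes $\cM_{1,1}$ and $\cM_{0,4}$ from the higher-dimensional moduli spaces. Establishing the uniform $C^{1+\alpha}$-regularity of the rescaled foliations and the quantitative UNI estimate are the most delicate technical points.
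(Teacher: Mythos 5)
Your Steps 1--2 are essentially the paper's strategy: near the cusp the flow is time-changed by a factor comparable to $\delta$ (the paper takes the generator $\dot\psi=\bar\delta\,\dot\varphi$ and the $\star$ metric, so that $T^1S$ becomes complete and $\psi_t$ is a $C^3$ Anosov flow preserving the finite measure $\bar\delta^{-1}d\vol$), and this is what yields uniformly $C^{1+\alpha}$ invariant foliations, distortion control, ergodicity, and then a Young-tower/section with exponentially small return-time tails built from the quasi-Clairaut analysis of cusp excursions (your use of $r_i>2$ is exactly where it enters). However, your Step 3 contains a genuine gap: you propose to prove exponential mixing for the \emph{rescaled} flow and then ``transport the estimate back'' to $\varphi_t$ through the time change. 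Exponential decay of correlations is not invariant under time changes, even volume-preserving ones with integrable density; the orbitwise dictionary $t\leftrightarrow\tau$ is point-dependent and unbounded (the density $\bar\delta^{-1}$ blows up at the cusp), and there is no general principle that converts a correlation bound for $(\psi_\tau,\widehat\mu)$ into one for $(\varphi_t,\vol)$ ``with only a change of constants.'' The paper deliberately avoids this: the Anosov flow $\psi_t$ is used \emph{only} to produce completeness, ergodicity, the uniform $C^{1+\alpha}$ regularity of $\cW^{s/u}$, and H\"older distortion bounds (Corollary~\ref{c=distortion}); the section $\Sigma$, the return time $R$, and the expansion/tail estimates of Theorem~\ref{t=tower} are all constructed for the \emph{original} flow $\varphi_t$, and Ara\'ujo--Melbourne is applied to $\varphi_t$ directly. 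In fact the paper explicitly leaves open whether $\psi_t$ itself mixes exponentially, so your route would first have to settle that question and then still face the (unjustified) transfer.

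A secondary problem sits in your UNI verification. The assertion that for a hyperbolic flow on a $3$-manifold the strong stable and strong unstable foliations are ``nowhere jointly integrable'' is false in general (constant-roof suspensions are the standard counterexample); what makes UNI work here is that $\varphi_t$ preserves a contact $1$-form, and this is precisely the paper's argument for conclusion (8) of Theorem~\ref{t=tower}. Note that this is a second reason the tower must be built for $\varphi_t$ rather than for the time-changed flow: the rescaled flow $\psi_t$ preserves the volume $\bar\delta^{-1}\omega\wedge d\omega$ but is not obviously contact, so the quantitative UNI estimate you need in Step 2 would not come for free there. If you redirect your construction so that the coding, the return-time tails, and UNI are all stated for $\varphi_t$ (using the rescaled flow only as a source of uniform regularity and distortion), your outline becomes the paper's proof.
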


The regularity hypotheses on $u_1, u_2$ are not optimal.  See Corollary~\ref{c=main} in the last section for precise formulations.

Theorem~\ref{t=main} has a direct application to the dynamics of the Weil-Petersson flow,
which is the geodesic flow for the Weil-Petersson metric $\langle\cdot,\cdot\rangle_{WP}$ of the moduli spaces  $\cM_{g,n}$ of Riemann surfaces of genus $g \geq  0$
and $n \geq 0$ punctures, defined for $3g - 3 + n \geq 1$.   For a discussion of the WP metric and properties 
of its flow, see the recent, related work \cite{BMMW1}.
As a corollary, we obtain the following result, which originally motivated this study.

\begin{corollary}\label{c=WP}  The Weil-Petersson geodesic flow on $T^1\cM_{(g,n)}$ mixes exponentially fast when $(g,n) = (1,1)$ or $(0,4)$.
\end{corollary}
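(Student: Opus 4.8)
The plan is to deduce the corollary from Theorem~\ref{t=main} by verifying that, for $(g,n)=(1,1)$ and $(0,4)$, the moduli space $\cM_{g,n}$ equipped with the Weil-Petersson metric is (up to passing to an appropriate finite cover or orbifold chart) an incomplete negatively curved surface of the type covered by the theorem, with cusp exponent $r=2$ in the relevant sense --- or rather, with the precise asymptotics of the curvature near the compactification divisor matching the hypotheses. First I would recall that when $3g-3+n=1$ the Teichm\"uller space $\cT_{g,n}$ is a complex one-dimensional manifold, hence $\cM_{g,n}$ is a surface (in the orbifold sense); one replaces $\cM_{g,n}$ by a finite-index torsion-free cover coming from a level structure on the mapping class group, so that the quotient $S$ is an honest smooth surface and the WP metric descends, with the Deligne--Mumford boundary contributing finitely many punctures $p_1,\dots,p_k$. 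Mixing of the flow on the cover implies mixing on $\cM_{g,n}$ itself, since the volume and the flow are compatible with the covering projection and $C^1$ observables pull back to $C^1$ observables.

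The key analytic input is the asymptotic geometry of the WP metric near a boundary stratum. I would invoke the known expansions (Wolpert's work on the WP metric near the compactification, as used in \cite{BMMW1}): in a neighborhood of a node the WP metric in the plumbing coordinate $t$ behaves, to leading order, like the metric $|dt|^2 / (|t|^{?})$ of a surface of revolution $y = x^r$ --- more precisely one checks that the distance $\delta$ to the cusp point and the Gaussian curvature $K$ satisfy $K(p) = -r(r-1)/\delta(p)^2 + O(\delta(p)^{-1})$ with the appropriate value of $r$, together with the derivative bounds $\|\nabla^j K\| = O(\delta^{-2-j})$ for $j=1,2,3$. This is precisely where the hypothesis $3g-3+n=1$ is essential: in complex dimension one there is a single node, the WP metric has the sharp asymptotics needed, and the resulting $r$ satisfies $r>2$. (In higher-dimensional moduli spaces the analogous flow fails even to be rapidly mixing, as noted in the introduction, so no such reduction can hold there.) I would also record that the WP metric on $\cT_{g,n}$ is negatively curved and geodesically convex --- this is classical (Tromba, Wolpert) --- and that the metric completion of $\cM_{g,n}$ is the Deligne--Mumford compactification $\overline{\cM}_{g,n}$, a compact (orbifold) metric space, so the ``extends to a complete distance metric $\rho$ on $X$'' hypothesis holds with $X = \overline{\cM}_{g,n}$ (or its smooth cover). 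The required $C^5$ regularity of the WP metric on the open surface is automatic, since the metric is real-analytic on $\cT_{g,n}$.

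Having checked all the hypotheses of Theorem~\ref{t=main} for the cover $S$, the theorem yields constants $c,C>0$ with the stated exponential decay of correlations for $C^1$ observables on $T^1S$; pushing forward along the finite covering $T^1S \to T^1\cM_{g,n}$ gives the same estimate on $T^1\cM_{g,n}$, which is the assertion of the corollary. The main obstacle I anticipate is \emph{not} the dynamical part --- that is entirely supplied by Theorem~\ref{t=main} --- but the verification that Wolpert's asymptotic expansions for the WP metric genuinely give curvature derivative control of the precise polynomial order $O(\delta^{-2-j})$ for $j=1,2,3$, as opposed to merely leading-order control of $K$ itself; one must track the plumbing-coordinate expansions carefully enough, and translate from the holomorphic coordinate $t$ to the Riemannian distance $\delta$, to see that the error terms and their covariant derivatives decay at the rates demanded by the hypotheses. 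Confirming the exact exponent $r$ (and that $r>2$, so that the flow lies in the exponentially mixing regime rather than the borderline case) is the other point requiring care.
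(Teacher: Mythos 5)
Your proposal is correct and follows essentially the same route as the paper: the paper's proof of Corollary~\ref{c=WP} is a one-line citation of Wolpert \cite{Wol11}, who verifies exactly the hypotheses of Theorem~\ref{t=main} (curvature asymptotics with $r=3>2$, derivative bounds, geodesic convexity, completion by the compactified moduli space) for the WP metric when $3g-3+n=1$. Your additional remarks about passing to a torsion-free finite cover to handle the orbifold structure and about pushing the correlation estimate back down are sensible elaborations of points the paper leaves implicit, and the verification you flag as the ``main obstacle'' is precisely what is delegated to \cite{Wol11}.
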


\begin{proof}[Proof of Corollary]  Wolpert shows in \cite{Wol11} that the hypotheses of Theorem~\ref{t=main} are satisfied by the WP metric on $\cM_{g,n}$, for $3g - 3 + n = 1$.
\end{proof}

Mixing of the WP flow (for all $(g,n)$) had previously been established in \cite{BMW}.  For $(g,n) \notin \{ (1,1), (0,4)\}$, the conclusions of Corollary~\ref{c=WP} do {\em not} hold \cite{BMMW1}: for every $k>0$, there exist 
compactly supported, 
$C^k$ test functions $u_1, u_2$ such that the correlation between $u_1$ and $u_2\circ\varphi_T$ decays at best polynomially in $T$.

\begin{remark} The geodesic convexity assumption in Theorem~\ref{t=main} can be replaced by a variety of other equivalent assumptions.  For example, it is enough to assume that 
$\delta_i =  \beta_i + o(\delta_i)$, where $\beta_i$ is a convex  function (as is the case in the WP metric).  Alternatively, one may assume a more detailed expansion for the metric in the neighborhood of the cusps.  For example, the assumptions near the cusp are satisfied for a surface of revolution for the function $y= u(x)x^r$, where $u\colon [0,1]\to \RR_{\geq 0}$ is $C^5$, with $u(0)\neq 0$ and $r>2$.  One can easily formulate further perturbations of this metric outside the class of surfaces of revolutions for which the hypotheses of Theorem~\ref{t=main} hold near $\delta=0$.
\end{remark}

To simplify the exposition and reduce as much as possible the use of unspecified constants, we will assume
in our proof that $k=1$, so that $ S$ has only one cusp. 

\subsection{Discussion}  
In a landmark paper \cite{D}, Dolgopyat  established that the geodesic flow for any negatively-curved compact surface is exponentially mixing.  His techniques, building in part on earlier work of Ruelle, Pollicott and Chernov, have since been extracted and generalized in a series of papers, first by Baladi-Vall\'ee \cite{BV}, then Avila-Gou\"ezel-Yoccoz \cite{AGY}, and most recently in the work of Ara\'ujo-Melbourne \cite{AM}, upon which this paper relies.  

Ultimately, the obstructions to applying Dolgopyat's original argument in this context are purely technical, but to overcome these obstructions in any context is the heart of the matter.  The solution to the analogous problem in the billiards context -- exponential mixing for Sinai billiards of finite horizon -- has only been recently established \cite{BDL}.

To prove exponential mixing using the symbolic-dynamical approach of Dolgopyat, Baladi-Vall\'ee $\hbox{et. al.}$, one  constructs a section to the flow
with certain analytic and symbolic dynamical properties.  In sum, one seeks a surface $\Sigma\subset T^1S$ transverse to the flow $\varphi_t$ in the three manifold $T^1S$ on which the dynamics of the return map can be tightly organized. 

  In particular, we seek a return time function $R\colon \Sigma_0\to \RR_{>0}$ 
defined on a full measure subset $\Sigma_0\subset \Sigma$, with
$\varphi_{R(v)}(v)\in \Sigma$ for all $v\in \Sigma_0$ and so that the dynamics of $F\colon v\mapsto \varphi_{R(v)}(v)$ on $\Sigma_0$ are hyperbolic and can be modeled on a  full shift on countably many symbols.  For $\varphi_t$ to be exponentially mixing, the function $R$ must be constant along stable manifolds, have
exponential tails and satisfy a non-integrability condition (UNI) (which will hold automatically if the flow $\varphi_t$ preserves a contact form, as is the case here). 

 Whereas in \cite{BV} and \cite{AGY} the map $F$ is required to be piecewise uniformly $C^2$, the regularity of $F$ is relaxed to $C^{1+\alpha}$ in \cite{AM}.  This relaxation in regularity might seem mild, but it is crucial in applications to nonuniformly hyperbolic flows with singularities.  The reason is that the surface $\Sigma$ is required to be saturated
by leaves of the (strong) stable foliation $\cW^s$ for the flow $\varphi_t$.   The smoothness of the foliation $\cW^s$ thus dictates the smoothness of the surface $\Sigma$ which then determines the smoothness of $F$ (up to the smoothness of the original flow $\varphi_t$).   {\em Even in the case of contact Anosov flows in dimension 3,} the foliation $\cW^s$ is no better than $C^{1+\alpha}$, for some $\alpha\in(0,1)$, unless the flow is algebraic in nature.\footnote{This issue is bypassed in the application to the Teichm\"uller flow in \cite{AG, AGY} because there the stable and unstable foliations are locally affine.}

While it has long been known that this $C^{1+\alpha}$ regularity condition holds for the stable and unstable foliations of contact Anosov flows in dimension 3, this is far from the case for singular and nonuniformly hyperbolic flows, even in low dimension.  In the context of this paper, the geodesic flow $\varphi_t$ is not even complete, and the standard singular hyperbolic theory fails to produce $\varphi_t$-invariant foliations $\cW^u$ and $\cW^s$, let alone foliations with $C^{1+\alpha}$ regularity.  

The flows $\varphi_t$ considered here, while incomplete, bear several resemblances to Anosov flows.  Most notably, there exist  $D\varphi_t$-invariant stable and unstable cone fields that are defined {\em everywhere} in $T^1S$.  The angle between these cone fields 
tends to zero as the basepoint in $T^1S$ approaches the singularity.  The action of $D\varphi_t$ in these cones 
is strongly hyperbolic, with the strength of the hyperbolicity approaching infinity as the orbit comes close to the 
singularity. 

The key observation in this paper is that by changing the Riemannian metric on $T^1S$ {\em and} performing a natural time change in  $\varphi_t$ one obtains a volume-preserving  {\em Anosov} flow on a {\em complete} Riemannian manifold of finite volume.  This time change does not change orbits and has a predictable effect on stable and unstable bundles.   One can apply all of the known machinery for Anosov flows to this rescaled flow, and transferring the information back to the original flow,  one concludes that $\varphi_t$ possesses invariant
stable and unstable foliations $\cW^u$ and $\cW^s$ that are locally uniformly $C^{1+\alpha}$.   This gives the crucial input in constructing the section $\Sigma$ and return function $R$ defined above.

In the setting of Weil-Petersson geometry, one can summarize the results of this time change:  in the exceptional case $3g - 3 + n=1$,  the Weil-Petersson geodesic flow, when run at unit speed {\em in the Teichm\"uller metric} is (like the Teichm\"uller flow) an Anosov flow.  For $3g - 3 + n>1$, the WP flow is not Anosov, even when viewed in the Teichm\"uller metric (or an equivalent Riemannian metric such as in \cite{McMo}), but it might be fruitful to study the flow from this perspective.  We remark here that Hamenst\"adt \cite{Ham} has recently constructed measurable orbit equivalences between the WP and Teichm\"uller geodesic flows 
for all $3g - 3 + n\geq 1$.

A different  approach, using anisotropic function spaces, has been employed by Liverani to establish exponential mixing for contact Anosov flows in arbitrary dimension, even when the foliations $\cW^u$ and $\cW^s$ fail to be $C^1$ \cite{Liv}.  This method is more holistic (though no less technical) as the arguments take place in the manifold itself (not a section) and  avoid symbolic dynamics. It would be interesting to attempt to import this machinery to the present  context.  This is the approach employed in the recent work of  Baladi, Demers and Liverani  on Sinai billiards in \cite{BDL} mentioned above.  

This paper is organized as follows.  In Section~\ref{s=prelim} we recall some facts about geodesic flows and basic comparison lemmas for ODEs.  In Section~\ref{s=deltaregularity}, we establish (under the hypotheses of Theorem~\ref{t=main}) $C^4$ regularity for the functions $\delta_i$ measuring distance to the cusps in $S$.  The arguments there bear much in common with standard proofs of regularity of Busemann functions in negative curvature, but additional attention to detail is required to obtain the correct order estimates on the size of the derivatives of the $\delta_i$.  In Section~\ref{s=revolution} we establish basic geometric properties of the surfaces considered here, in close analogy to properties of surfaces of revolution.  These results refine some known properties of the Weil-Petersson metric.

Section~\ref{s=global} addresses the global properties of the flow $\varphi_t$.  Here we construct a new Riemannian metric
on $T^1S$, which we call the $\star$ metric,  in which $T^1S$ is complete.  Rescaling $\varphi_t$ to be unit speed in the $\star$ metric, we obtain a new flow $\psi_t$ which we prove is Anosov, with {\em uniform} bounds on its first three derivatives (in the $\star$ metric).  We derive consequences of this, including ergodicity of $\varphi_t$ and existence and $C^{1+\alpha}$ regularity of $\varphi_t$ invariant unstable and stable foliations $\cW^s$ and $\cW^u$.

In the final section (Section~\ref{s=expmix}), we construct the section $\Sigma$ to the flow and return time function $R$ satisfying the hypotheses of the Ara\'ujo-Melbourne theorem.  In essence this is equivalent to constructing a Young tower for the return map to $\Sigma$ and is carried out using standard methods.  Here the properties of geodesics established in Section~\ref{s=revolution}  come into play in describing the dynamics of the return map of the flow to the compact part of $T^1S$.

We thank Scott Wolpert, Sebastien Gou\"ezel, Carlangelo Liverani  and Curtis McMullen for  useful conversations, and Viviane Baladi and Ian Melbourne for comments on a draft of this paper. 

\section{Notation and preliminaries}\label{s=prelim}

Let $S$ be an oriented surface endowed with a Riemannian metric.  As usual $\langle v,w\rangle$  denotes the inner product of two vectors and $\nabla$ is the Levi-Civita connection defined by the Riemannian metric. It is the unique connection that is symmetric and compatible with the metric.

 The surface $S$ carries a unique almost complex structure compatible with the metric. 
We denote this structure by $J$;  for $v \in T^1_p S$, the vector $J v$ is the unique tangent vector in $T^1_pS$ such that $(v, Jv)$ is a positively oriented orthonormal frame.

The covariant derivative along a curve $t \mapsto c(t)$ in $S$ is denoted by $D_c$, $\frac D{dt}$ or simply $'$ if it is not necessary to specify the curve; if $V(t)$ is a vector field along $c$ that extends to a vector field $\widehat V$ on $S$, we have $V'(t) = \nabla_{\dot c(t)} \widehat V$.

Given a smooth map $(s,t) \mapsto \alpha(s,t)\in S$, we let $\frac D{\partial s}$ denote covariant differentiation along a curve of the form $s \mapsto \alpha(s,t)$ for a fixed $t$. Similarly  $\frac D{\partial t}$ denotes
covariant differentiation along a curve of the form $t \mapsto \alpha(s,t)$ for a fixed $s$. The symmetry of the Levi-Civita connection means that 
 $$
 \frac D{\partial s}\frac{\partial\alpha}{\partial t}(s,t) = \frac D{\partial t}\frac{\partial\alpha}{\partial s}(s,t)
 $$
 for all $s$ and $t$.

A geodesic segment $\gamma\colon I\to S$ is a curve satisfying $\gamma''(t) = D_\gamma \dot \gamma(t) = 0$, for all $t\in I$.  Throughout this paper,  all geodesics are assumed to be unit speed: $\|\dot\gamma\|\equiv 1$.

The Riemannian curvature tensor $R$ is defined by
$$
R(A,B)C = (\nabla_A\nabla_B - \nabla_B\nabla_A - \nabla_{[A,B]})C
$$
and the Gaussian curvature $K\colon S\to \RR$ is defined by
\[K(p) =\langle R(v, Jv)Jv, v\rangle,
\]
where $v\in T^1_pS$ is an arbitrary unit vector.

For $v\in TS$, we represent each element $\xi\in T_vTS$ in the standard way as a pair
$\xi = (v_1,v_2)$ with $v_1\in T_pS$ and $v_2\in T_pS$, as follows.
Each  element $\xi\in T_vTS$ is tangent to a curve $V\colon (-1,1) \to T^1S$ with $V(0) = v$.   Let
$c = \pi\circ V\colon (-1,1) \to S$ be the curve of basepoints of $V$ in $S$, where 
$\pi\colon TS\to S$ is the standard projection.  Then $\xi$ is represented
by the pair
 $$
 (\dot c(0),  D_cV(0))  \in T_pS\times T_pS.
 $$  

Regarding $TTS$ as a bundle over $S$ in this way
gives rise to a natural Riemannian metric on $TS$, called the {\em Sasaki metric}.  In this metric, the
inner product of two elements $(v_1,w_1)$ and $(v_2,w_2)$ of $T_vTS$ is defined:
$$
\langle (v_1,w_1) , (v_2,w_2) \rangle_{Sas} = \langle v_1 , v_2 \rangle  + \langle w_1 , w_2 \rangle .
$$
This metric is induced by a symplectic form $d\omega$ on $TTS$; for vectors $(v_1,w_1)$ and 
$(v_2,w_2)$ in $T_vTS$, we have:
$$
d\omega((v_1,w_1) , (v_2,w_2)) = \langle v_1,w_2 \rangle - \langle w_1,v_2 \rangle.
$$
This symplectic form is the pull back of the canonical symplectic form on the cotangent bundle $T^*S$ by the map from $TS$ to $T^*S$ induced by identifying a vector $v \in T_pS$ with the linear function $\langle v, \cdot\rangle$ on $T_pS$.

\subsection{The geodesic flow and Jacobi fields}

For $v \in TS$ let $\gamma_v$ denote the  unique geodesic $\gamma_v$ satisfying $\dot\gamma_v(0) = v$. The geodesic flow $\varphi_t : TS\to TS$ is defined by
 $$
 \varphi_t(v) = \dot\gamma_v(t),
 $$
 wherever this is well-defined. The geodesic flow is always defined locally.

The {\em geodesic spray} is the vector field $\dot\varphi$ on $TS$ (that is, a section of $TTS$) generating the geodesic flow.  In the natural coordinates on  $TTS$ given by the connection,
we have $\dot\varphi(v) = (v,0)$, for each $v\in TS$.  The spray is tangent to the level sets $\|\cdot\| = const$.  Henceforth when we refer to the geodesic flow $\varphi_t$, we implicity mean the restriction of this flow to the unit tangent bundle $T^1S$.

  Since the geodesic flow is Hamiltonian, it  preserves a natural volume form on $T^1S$ called the Liouville  volume form.  When the integral of this form is finite, it induces a unique probability measure on $T^1S$ called the {\em Liouville measure} or {\em Liouville volume}.

Consider now a one-parameter family of geodesics, that is a map $\alpha:(-1,1)^2\to S$ with the property that
$\alpha(s,\cdot)$ is a geodesic for each $s\in (-1,1)$.  Denote by $\cJ(t)$ the vector field  
$$
\cJ(t) = \frac{\partial \alpha}{\partial s}(0,t)
$$
along the geodesic $\gamma(t) = \alpha(0,t)$.
Then $\cJ$ satisfies the {\em Jacobi equation:}
\begin{equation}\label{e=jacobi}
\cJ'' + R(\cJ,\dot\gamma)\dot\gamma = 0,
\end{equation}
in which $'$ denotes covariant differentiation along $\gamma$.
Since this  is a second order linear ODE,
the pair of 
vectors $(\cJ(0), \cJ'(0)) \in T_{\gamma(0)}M \times T_{\gamma(0)}M$ uniquely determines the vectors
$\cJ(t)$ and $\cJ'(t)$ along $\gamma(t)$. A vector field $\cJ$ along a geodesic $\gamma$ satisfying the Jacobi equation is called a {\em Jacobi field}.

The pair $(\cJ(t),\cJ'(t))$ corresponds in the manner described above to the tangent vector at $s= t$ to the 
curve $s \mapsto  \frac{\partial \alpha}{\partial t}(s,t) = \varphi_t \circ V(s)$, which is $D\varphi_t(V'(0))$. Thus
\begin{proposition} \label{prop:key}
The image of the tangent vector $(v_1,v_2)\in T_vTS$ under the derivative of the
geodesic flow $D_v\varphi_t$ is the tangent vector $(\cJ(t),\cJ'(t))\in T_{\varphi_t(v)}TS$, where $\cJ$ is the unique
Jacobi field along $\gamma$ satisfying $\cJ(0) = v_1$ and $\cJ'(0) = v_2$.
\end{proposition}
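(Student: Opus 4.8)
The plan is to realize the given tangent vector by an explicit variation through geodesics and then appeal to uniqueness of solutions of the Jacobi equation~\eqref{e=jacobi}.

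First I would pick a curve $V\colon(-1,1)\to TS$ with $V(0)=v$ that represents $\xi=(v_1,v_2)$: writing $c=\pi\circ V$ for its curve of basepoints, this means $\dot c(0)=v_1$ and $D_cV(0)=v_2$. Such a $V$ exists --- choose any $c$ with $c(0)=\pi(v)$ and $\dot c(0)=v_1$, and let $V$ be the vector field along $c$ with $V(0)=v$ and $D_cV(0)=v_2$, obtained by solving a linear first-order ODE. Now set
\[
\alpha(s,t)=\pi\bigl(\varphi_t(V(s))\bigr)=\gamma_{V(s)}(t),
\]
which is defined for $s$ near $0$ and $t$ in a neighbourhood of any given time at which $\varphi_t(v)$ is defined, since the geodesic flow is defined locally. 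By construction $t\mapsto\alpha(s,t)$ is a geodesic for each fixed $s$, so $\alpha$ is a one-parameter family of geodesics with central geodesic $\gamma(t):=\alpha(0,t)=\gamma_v(t)$, and hence $\cJ(t):=\frac{\partial\alpha}{\partial s}(0,t)$ is a Jacobi field along $\gamma$ by~\eqref{e=jacobi}.

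Next I would verify the initial conditions. Setting $t=0$ gives $\cJ(0)=\frac{\partial\alpha}{\partial s}(0,0)=\frac{d}{ds}\big|_{s=0}\pi(V(s))=\dot c(0)=v_1$. For the derivative, observe that $\frac{\partial\alpha}{\partial t}(s,0)=\dot\gamma_{V(s)}(0)=V(s)$, so by the symmetry of the Levi-Civita connection,
\[
\cJ'(0)=\frac{D}{\partial t}\frac{\partial\alpha}{\partial s}(0,0)=\frac{D}{\partial s}\frac{\partial\alpha}{\partial t}(0,0)=D_cV(0)=v_2 .
\]
Thus $\cJ$ is the unique Jacobi field along $\gamma$ with $\cJ(0)=v_1$ and $\cJ'(0)=v_2$.

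Finally I would identify $D_v\varphi_t(\xi)$ with $(\cJ(t),\cJ'(t))$. By definition $D_v\varphi_t(\xi)$ is the velocity at $s=0$ of the curve $s\mapsto\varphi_t(V(s))$ in $TS$; its curve of basepoints is $s\mapsto\alpha(s,t)$. Under the identification of $T_{\varphi_t(v)}TS$ with $T_{\gamma(t)}S\times T_{\gamma(t)}S$ recalled above, this velocity is represented by the pair whose first component is $\frac{d}{ds}\big|_{s=0}\alpha(s,t)=\frac{\partial\alpha}{\partial s}(0,t)=\cJ(t)$ and whose second component is
\[
\frac{D}{\partial s}\Big|_{s=0}\varphi_t(V(s))=\frac{D}{\partial s}\frac{\partial\alpha}{\partial t}(0,t)=\frac{D}{\partial t}\frac{\partial\alpha}{\partial s}(0,t)=\cJ'(t),
\]
where the last equalities again use symmetry of the connection together with $\frac{\partial\alpha}{\partial t}(s,t)=\dot\gamma_{V(s)}(t)=\varphi_t(V(s))$. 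This is precisely the claimed formula. There is no genuine obstacle here: the only points demanding attention are the existence of a curve $V$ representing an arbitrary pair $(v_1,v_2)$ and the two applications of $\frac{D}{\partial s}\frac{\partial\alpha}{\partial t}=\frac{D}{\partial t}\frac{\partial\alpha}{\partial s}$, and the whole argument is purely local in $t$, so incompleteness of $\varphi_t$ plays no role.
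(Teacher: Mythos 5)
Your proof is correct and follows the same route the paper sketches: represent $(v_1,v_2)$ by a curve $V$ in $TS$, form the variation of geodesics $\alpha(s,t)=\gamma_{V(s)}(t)$, use symmetry of the connection to match the initial data $\cJ(0)=v_1$, $\cJ'(0)=v_2$, and identify $D_v\varphi_t(\xi)$ with $(\cJ(t),\cJ'(t))$ via $\varphi_t\circ V(s)=\frac{\partial\alpha}{\partial t}(s,t)$. You have simply filled in the details (existence of $V$, the two symmetry computations) that the paper leaves implicit in the paragraph preceding the proposition.
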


Computing the Wronskian of the Jacobi field $\dot\gamma$ and an arbitrary Jacobi field $\cJ$ shows that $\langle \cJ', \dot\gamma \rangle$ is constant.
It follows that if $\cJ'(t_0) \perp \dot\gamma(t_0)$ for some $t_0$, then $\cJ'(t) \perp \dot\gamma(t)$ for all $t$. Similarly if $\cJ(t_0) \perp \dot\gamma(t_0)$ and $\cJ'(t_0) \perp \dot\gamma(t_0)$ for some $t_0$, then $\cJ(t) \perp \dot\gamma(t)$ and $\cJ'(t) \perp \dot\gamma(t)$ for all $t$; in this  case we call $\cJ$ a {\em perpendicular Jacobi field}.
If $\alpha$ is a variation of geodesics giving rise to a perpendicular Jacobi field, then we 
call $\alpha$ a {\em perpendicular variation of geodesics}.

 The space of all perpendicular Jacobi fields along a unit speed geodesic $\gamma$ 
corresponds to the orthogonal complement $\dot\varphi^\perp(v)$ (in the Sasaki metric) to the geodesic spray $\dot\varphi(v)$ at the point $v = \dot\gamma(0) \in T^1S$.  To estimate the norm of the derivative $D\varphi_t$ on $TT^1S$, it suffices to restrict attention to vectors in the invariant subspace $\dot\varphi^\perp$; that is, it suffices to estimate the growth of perpendicular Jacobi fields along unit speed geodesics.

Because $S$ is a surface, the Jacobi equation (\ref{e=jacobi}) of a perpendicular Jacobi field along a unit speed geodesic segment can be expressed as a scalar ODE in one variable.  Given such a geodesic 
$\gamma\colon I\to S$, any perpendicular Jacobi field $\cJ$ along $\gamma$ can be written in the form
$(\cJ(t), \cJ'(t)) = (j(t)J\dot\gamma(t), j'(t)J\dot\gamma(t))$, where $j\colon I\to\RR$ satisfies the
 {\em scalar Jacobi equation:}
\begin{equation}\label{e=scalarjacobi}
j''(t) =  -K(\gamma(t)) j(t).
\end{equation}
To analyze solutions to (\ref{e=scalarjacobi}) it is often convenient to consider the functions
$u(t) = j'(t)/j(t)$ and $\zeta(t) =  j(t)/j'(t)$ which satisfy the Riccati equations
$u'(t) = -K(\gamma(t)) - u^2(t)$   and $\zeta'(t) = 1  + K(\gamma(t))\zeta^2(t)$, respectively.
In the next subsection, we describe some techniques for analyzing solutions to these types of equations.

\subsection{Comparison lemmas for Ordinary Differential Equations}\label{s=comparison}

We will use a few basic comparison lemmas for solutions to ordinary differential equations.
The first is standard and is presented without proof:

\begin{lemma}\label{l=comparison}{\bf [Basic comparison]}
Let $F\colon \RR \times  [t_0, t_1] \to \RR$ be $C^1$, and let $\zeta\colon [t_0, t_1] \to \RR$ be a solution to the ODE
\begin{equation}\label{e=basicODE}
\zeta'(t) = F(\zeta(t), t).
\end{equation}
Suppose that $\underline u,\overline u\colon [t_0,t_1]\to\RR$ are $C^1$ functions satisfying $\underline u(t_0) \leq \zeta(t_0) \leq \overline u(t_0)$.  Then the following hold:
\begin{itemize}
\item If $F(\overline u(t), t) \leq \overline u'(t)$
for all $t\in [t_0, t_1]$,  then $\zeta(t)\leq \overline u(t)$ for all $t\in [t_0,t_1]$.

\item If $F(\overline u(t), t) < \overline u'(t)$
for all $t\in [t_0, t_1]$,  then $\zeta(t) < \overline u(t)$ for all $t\in (t_0,t_1]$.

\item If $F(\underline u(t), t) \geq \underline u'(t)$
for all $t\in [t_0, t_1]$,  then $\zeta(t)\geq \underline u(t)$ for all $t\in [t_0,t_1]$.

\item If $F(\underline u(t), t) > \underline u'(t)$
for all $t\in [t_0, t_1]$,  then $\zeta(t) > \underline u(t)$ for all $t\in (t_0,t_1]$.

\end{itemize}

\end{lemma}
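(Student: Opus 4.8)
The plan is a standard Gr\"onwall-type comparison argument. I would establish the two upper-barrier statements directly; the two lower-barrier statements then follow by applying those to $-\zeta$ (with upper barrier $-\underline u$) and to the $C^1$ function $\widetilde F(x,t) := -F(-x,t)$, for which all the hypotheses transform correctly.

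First I would set $w(t) := \overline u(t) - \zeta(t)$, a $C^1$ function with $w(t_0) \geq 0$. Using that $F$ is $C^1$, I would write the increment of $F$ in its first argument as an integral,
$$
F\bigl(\overline u(t), t\bigr) - F\bigl(\zeta(t), t\bigr) = g(t)\, w(t), \qquad g(t) := \int_0^1 \frac{\partial F}{\partial x}\bigl(\zeta(t) + s\, w(t),\, t\bigr)\, ds ,
$$
the point being that $g \colon [t_0, t_1] \to \RR$ is \emph{continuous}, since $\partial F/\partial x$ is continuous and $(s,t)\mapsto \zeta(t)+s\,w(t)$ is continuous, so the integrand is continuous on $[0,1]\times[t_0,t_1]$. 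Substituting the ODE $\zeta' = F(\zeta,\cdot)$ and the barrier inequality for $\overline u$ then gives
$$
w'(t) = \overline u'(t) - F\bigl(\zeta(t),t\bigr) \;\geq\; F\bigl(\overline u(t),t\bigr) - F\bigl(\zeta(t),t\bigr) = g(t)\, w(t),
$$
with a strict inequality at every $t$ under the strict barrier hypothesis.

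Next I would multiply through by the positive integrating factor $e^{-G(t)}$, where $G(t) := \int_{t_0}^t g(s)\,ds$, to get $\frac{d}{dt}\bigl(w(t)e^{-G(t)}\bigr) = \bigl(w'(t) - g(t)w(t)\bigr)e^{-G(t)} \geq 0$, with $> 0$ in the strict case. Integrating from $t_0$ yields $w(t)e^{-G(t)} \geq w(t_0) \geq 0$, hence $w(t) \geq 0$ on $[t_0,t_1]$, which is the claim $\zeta \leq \overline u$; in the strict case, positivity of the derivative on the whole interval forces $w(t)e^{-G(t)} > w(t_0) \geq 0$ for every $t > t_0$, giving $\zeta(t) < \overline u(t)$ on $(t_0,t_1]$.

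I do not expect a genuine obstacle here; the only subtle point — and the only place the full $C^1$ hypothesis on $F$ (rather than mere continuity) is used — is ensuring that the coefficient $g$ extracted by the mean-value/fundamental-theorem-of-calculus step is continuous, so that the integrating-factor manipulation is legitimate. An alternative route that sidesteps $g$ is a first-time-of-contact argument: if $\zeta$ first meets $\overline u$ at some $\tau \in (t_0,t_1]$ then $(\zeta - \overline u)'(\tau) \geq 0$, whereas the barrier inequality gives $(\zeta - \overline u)'(\tau) = F(\overline u(\tau),\tau) - \overline u'(\tau) \leq 0$; this is an outright contradiction in the strict case, and in the non-strict case one first replaces $\overline u$ by $\overline u + \eps\, e^{\lambda(t-t_0)}$ with $\lambda$ exceeding $\sup_{[t_0,t_1]}|g|$ and then lets $\eps \to 0$. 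I would present the Gr\"onwall version, since it dispatches all four bullets uniformly.
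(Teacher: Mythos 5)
Your argument is correct and complete: the Gr\"onwall-type estimate with the coefficient $g(t)=\int_0^1 \partial_x F(\zeta(t)+s\,w(t),t)\,ds$ extracted by the fundamental theorem of calculus, the integrating factor $e^{-G}$, and the reduction of the lower-barrier statements to the upper-barrier ones via $\widetilde F(x,t)=-F(-x,t)$ all check out, including the strict-inequality cases. The paper itself states this lemma as standard and gives no proof, so there is nothing to compare against; your proof (and the first-contact alternative you sketch) is exactly the kind of standard argument the authors had in mind.
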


We will have several occasions to  deal with Riccati equations of the form
$\zeta'(t) = 1 - k^2(t)\zeta^2(t)$ on  an interval $[t_0,t_1]$ (where typically $-k^2(t) = K(\gamma(t))$ for some geodesic segment $\gamma$).
Since the curvature of the surfaces we consider is not bounded away from $-\infty$, most of the ODEs we deal with will have unbounded coefficients.  This necessitates reproving some standard results about solutions.  A key basic result is the following.

\begin{lemma}\label{l=monotone2}{\bf[Existence of Unstable Riccati Solutions]}
Suppose $k\colon (t_0, t_1] \to \RR_{>0}$ is a $C^1$ function satisfying
$\lim_{t\to t_0} k(t) = \infty$.
Then there exists a unique solution $\zeta(t)$ to the Riccati equation
\begin{equation}\label{e=Riccsimple}
\zeta' = 1 - k^2 \zeta^2
\end{equation}
for $t\in(t_0,t_1]$ satisfying $\zeta(t)>0$ on $(t_0,t_1]$ 
 and $\lim_{t\to t_0} \zeta(t)= 0$.

Moreover, if $\underline k\colon (t_0,t_1] \to \RR_{>0}$ is any $C^1$ function satisfying
$\underline k'(t) <  0$ and  $\underline k(t) \leq k(t)$, for all $t\in (t_0,t_1]$, then
$\zeta(t) \leq \underline k(t)^{-1}$,
for all $t\in [t_0,t_1]$.
\end{lemma}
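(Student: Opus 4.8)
The plan is to obtain the solution $\zeta$ as a limit of solutions to well-posed initial value problems on subintervals. For each $s\in(t_0,t_1]$, let $\zeta_s$ denote the solution of \eqref{e=Riccsimple} with initial condition $\zeta_s(s)=0$; this exists and is unique near $s$ by the Picard--Lindel\"of theorem, since $k$ is $C^1$ on $(t_0,t_1]$. First I would show $\zeta_s$ stays trapped in the region $0<\zeta<k^{-1}$ as long as it is defined to the right of $s$: at a point where $\zeta_s=0$ we have $\zeta_s'=1>0$, and at a point where $\zeta_s = k^{-1}$ we have $\zeta_s' = 1 - k^2 k^{-2} = 0$ while $(k^{-1})' = -k'k^{-2}$. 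To compare $\zeta_s$ with the barrier $k^{-1}$ I would rather use the cleaner upper barrier $\underline k^{-1}$ supplied in the hypothesis (or, if only $k$ is available a priori, first prove the statement for the monotone barrier and deduce the trapping): since $\underline k' < 0$ we have $(\underline k^{-1})' = -\underline k'\underline k^{-2} > 0 \ge 1 - k^2\underline k^{-2} \ge 1 - k^2 (\underline k^{-1})^2$ wherever $\zeta \le \underline k^{-1}$ — wait, I must be careful with the sign of $F(\overline u,t)-\overline u'$. Concretely, with $F(\zeta,t) = 1 - k^2\zeta^2$ and $\overline u = \underline k^{-1}$, one checks $F(\overline u(t),t) = 1 - k(t)^2\underline k(t)^{-2} \le 1 \le \underline k(t)^{-1}\cdot\underline k(t) \cdot(\text{something})$; the honest computation is $F(\overline u,t) = 1-(k/\underline k)^2 \le 0 < -\underline k'\underline k^{-2} = \overline u'(t)$, so Lemma \ref{l=comparison} applies and gives $\zeta_s \le \underline k^{-1}$ on $[s,t_1]$, which in particular keeps $\zeta_s$ bounded and hence globally defined on all of $[s,t_1]$. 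Positivity on $(s,t_1]$ follows from the last bullet of Lemma \ref{l=comparison} applied with the lower barrier $\underline u \equiv 0$, since $F(0,t) = 1 > 0 = \underline u'$.

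Next I would establish monotonicity of the family: if $s < s'$ then $\zeta_{s'}(s') = 0 < \zeta_s(s')$ (the latter by the positivity just proved), so by uniqueness and the comparison lemma $\zeta_{s'}(t) \le \zeta_s(t)$ for all $t \ge s'$; more precisely two solutions of the same ODE cannot cross, so $\zeta_{s'} < \zeta_s$ on $[s',t_1]$. Thus for each fixed $t$, the map $s\mapsto \zeta_s(t)$ (defined for $s \le t$) is nonincreasing in $s$ and bounded below by $0$, hence the limit
\[
\zeta(t) := \lim_{s\to t_0^+} \zeta_s(t)
\]
exists. Standard continuous dependence on initial conditions over the compact subintervals $[s_1, t_1] \subset (t_0,t_1]$ shows that $\zeta$ solves \eqref{e=Riccsimple} on $(t_0,t_1]$, and the bound $0 \le \zeta_s \le \underline k^{-1}$ passes to the limit, giving the asserted estimate $\zeta(t) \le \underline k(t)^{-1}$ on all of $(t_0,t_1]$ (and at $t_0$ in the limiting sense). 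Positivity of $\zeta$ on $(t_0,t_1]$: fix $t^\ast \in (t_0,t_1]$; then $\zeta(t^\ast) \ge \zeta_{s}(t^\ast)$ is false (wrong direction), so instead I would argue $\zeta(t^\ast) = \lim_s \zeta_s(t^\ast)$ and note each $\zeta_s(t^\ast) > 0$ — but the infimum could be $0$; to rule this out, compare $\zeta$ with the solution of $\zeta' = 1 - k^2\zeta^2$ started positive at some interior point and note that by no-crossing $\zeta$ lies above it. Alternatively, run a lower-barrier argument near $t_0$: since we want $\zeta \to 0$ anyway, positivity just to the right is what matters, and that follows because $\zeta' = 1 - k^2\zeta^2 \ge 1/2$ whenever $\zeta \le \frac{1}{\sqrt 2}k^{-1}$, forcing $\zeta$ to increase off of $0$.

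For the boundary behaviour $\lim_{t\to t_0^+}\zeta(t) = 0$: the upper bound gives $\limsup_{t\to t_0^+}\zeta(t) \le \limsup_{t\to t_0^+} \underline k(t)^{-1}$, and since $\underline k(t) \le k(t)$ with $k(t)\to\infty$ — here I need $\underline k(t)\to\infty$ as well, which is forced because $\underline k$ is positive and decreasing so it has a limit $L\ge 0$ as $t\to t_0^+$, and if $L<\infty$ then... actually $\underline k$ decreasing means $\underline k(t)$ is \emph{largest} near $t_0$, so $\underline k(t)^{-1}$ is \emph{smallest} near $t_0$; the bound $\underline k \le k$ is the wrong direction to force $\underline k\to\infty$. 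I would instead choose an explicit comparison function, e.g. take $\underline k(t) = k(t_1) + (t_1 - t)$ restricted suitably, no — the clean route is: the hypothesis allows \emph{any} such $\underline k$, and since $k(t)\to\infty$ we can for each $\varepsilon>0$ pick a $C^1$ decreasing $\underline k_\varepsilon$ with $\underline k_\varepsilon \le k$ and $\underline k_\varepsilon(t_0^+) = +\infty$ (e.g. dominate a slowly-growing lower envelope of $k$), yielding $\zeta(t) \le \underline k_\varepsilon(t)^{-1} \to 0$. Finally, \textbf{uniqueness}: if $\tilde\zeta$ is another positive solution on $(t_0,t_1]$ with $\tilde\zeta(t_0^+) = 0$, then for each $s$, since $\tilde\zeta(s)>0 = \zeta_s(s)$, no-crossing gives $\tilde\zeta \ge \zeta_s$ on $[s,t_1]$, hence $\tilde\zeta \ge \zeta$ in the limit; conversely $\zeta(s) = \lim_{s'\to t_0}\zeta_{s'}(s)$ and $\tilde\zeta(s')\to 0$ forces $\tilde\zeta(s) \le \zeta_{s'}(s)$ for $s'$ small (since $\tilde\zeta(s') < \zeta_{s'}(s')$ is false — they agree in value $0$ only in the limit; here one uses that $\tilde\zeta(s')\to0$ so eventually $\tilde\zeta(s') < \zeta_{s'}(s') \cdot$, hmm). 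The main obstacle is precisely pinning down this uniqueness and the two-sided squeeze at $t_0$ rigorously; I expect the right tool is a Gronwall-type estimate for the difference $w = \zeta - \tilde\zeta$, which satisfies $w' = -k^2(\zeta+\tilde\zeta)w$, a linear equation with \emph{negative} coefficient $-k^2(\zeta+\tilde\zeta) \le 0$, so $|w|$ is nonincreasing; combined with $w(t_0^+) = 0$ this forces $w\equiv 0$. That last observation is the crux and makes uniqueness clean once the barrier estimates are in place.
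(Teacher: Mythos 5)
Your construction---solutions $\zeta_s$ vanishing at interior points $s$, trapped under a decreasing barrier of the form (decreasing minorant of $k$)$^{-1}$ via Lemma~\ref{l=comparison}, monotone as $s\to t_0^+$, with uniqueness from the sign of $(\zeta-\tilde\zeta)'=-k^2(\zeta+\tilde\zeta)(\zeta-\tilde\zeta)$ together with vanishing at $t_0$---is essentially the paper's proof, which runs the same scheme over a sequence $t_n\downarrow t_0$ and uses a single decreasing $k_0$ with $\underline k\le k_0\le k$ and $k_0^{-1}\to 0$ to get the upper bound and the boundary limit in one stroke. The only corrections needed are cosmetic: since $\zeta_s(t)$ \emph{increases} as $s$ decreases, the limit exists because the family is bounded \emph{above} (by $\underline k(t)^{-1}$), and for the same reason $\zeta(t)\ge\zeta_s(t)>0$ is in fact the correct direction, so positivity of the limit is immediate rather than needing your fallback argument.
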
 

\begin{proof} Let $\underline k$ be a function satisfying the hypotheses of the lemma. 
Then there is a function $k_0: (t_0,t_1] \to \RR_{>0}$ such that $k_0'(t) < 0$ and $\underline k(t) \leq k_0(t) \leq k(t)$ for all $t \in (t_0,t_1]$ and $k_0(t)^{-1} \to 0$ as $t \to t_0$. Observe that  $(d/dt)(k_0^{-1}(t)) > 0 \geq 1 - k(t)^2k_0(t)^{-2}$ for $t_0 < t \leq t_1$.

Now fix a decreasing sequence $t_n\to t_0$ in $(t_0, t_1)$. 
For each $n > 1$ let $\zeta_n$ be the solution to (\ref{e=Riccsimple}) on $[t_n,t_1]$ with  $\zeta_n(t_n)=0$. 
We can  
apply Lemma~\ref{l=comparison} to equation (\ref{e=Riccsimple}) on the interval $[t_n,t_1]$ with  $\underline u(t) = 0$ and $\overline u(t) = k_0(t)^{-1}$. This gives us $0  < \zeta_n(t) \leq k_0(t)^{-1}$ for $t_n < t \leq t_1$.
We can also apply Lemma~\ref{l=comparison} on this interval with $\zeta = \zeta_m$ for $m \geq n$ and $\underline u = \zeta_n$. This gives us $\zeta_m(t) \geq \zeta_n(t)$, for $t_n \leq  t \leq t_1$.

The sequence of solutions $\zeta_n$ is thus increasing, positive and bounded above by $k_0^{-1}$.  It follows
that the function $\zeta := \lim_{n\to \infty} \zeta_n$ is a solution to (\ref{e=Riccsimple}), is
 positive on $(t_0, t_1]$, is bounded above by $k_0^{-1}$, and thus satisfies $\lim_{t\to t_0} \zeta(t) = 0$.

It remains to show that $\zeta$ is the only  solution of (\ref{e=Riccsimple}) with the desired properties. Suppose $\eta$ is another such solution. Since the graphs of two solutions of (\ref{e=Riccsimple}) cannot cross, we may assume that $\zeta(t) \geq \eta(t)  \geq 0$ for $t_0 \leq t \leq t_1$. But then 
 $$
(\zeta - \eta)'(t) = k(t)^2[(\eta(t)^2 - \zeta(t)^2] \leq 0
 $$
 for $t_0 < t \leq t_1$. Since $(\zeta - \eta)(t) \to 0$ as $t \to t_0$, this is possible only if $\zeta (t) = \eta(t)$ for $t_0 \leq t \leq t_1$.
\end{proof}

We call the solution of the Riccati equation  defined by the previous lemma the {\em unstable solution} on $(t_0,t_1]$.

\begin{lemma}\label{l=monotone2.5}{\bf[Comparison of Unstable Riccati Solutions]}
For $i = 1,2$, let $k_i : (t_0, t_1] \to \RR_{>0}$ be a $C^1$ function satisfying
$\lim_{t\to t_0} k_i(t) = \infty$ and let $\zeta_i: (t_0, t_1] \to \RR_{>0}$ be the unstable solution.
Suppose  
$k_1(t) \geq k_2(t)$ for all $t\in(t_0,t_1]$. Then $\zeta_1(t)\leq \zeta_2(t)$ for all $t\in[t_0,t_1]$.
\end{lemma}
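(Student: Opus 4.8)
The plan is to exploit the monotonicity/comparison mechanism already established in Lemma~\ref{l=comparison} together with the existence result Lemma~\ref{l=monotone2}, rather than re-running the approximation argument from scratch. The key point is that both $\zeta_1$ and $\zeta_2$ are solutions of \emph{different} Riccati equations, $\zeta_i' = 1 - k_i^2 \zeta_i^2$, so one cannot directly invoke that the graphs of solutions of a single ODE do not cross; instead one compares $\zeta_1$ to $\zeta_2$ viewed as a sub- or supersolution of the equation governing the other.

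First I would fix attention on the equation $\zeta' = 1 - k_1^2 \zeta^2$, whose right-hand side is $F_1(\zeta,t) = 1 - k_1(t)^2 \zeta^2$. I claim $\zeta_2$ is a supersolution: since $\zeta_2 > 0$ and $k_1(t) \geq k_2(t) > 0$ on $(t_0,t_1]$, we have
\[
F_1(\zeta_2(t), t) = 1 - k_1(t)^2 \zeta_2(t)^2 \leq 1 - k_2(t)^2 \zeta_2(t)^2 = \zeta_2'(t)
\]
for all $t \in (t_0,t_1]$. So $\zeta_2$ satisfies the differential inequality $F_1(\zeta_2(t),t) \leq \zeta_2'(t)$ required in the third bullet of Lemma~\ref{l=comparison} (with $\zeta_2$ playing the role of $\underline u$ and $\zeta_1$ playing the role of $\zeta$).

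The remaining issue is the initial condition: Lemma~\ref{l=comparison} is stated on a closed interval $[t_0,t_1]$ and requires $\underline u(t_0) \leq \zeta(t_0)$, but here $t_0$ is only a limit point where both solutions tend to $0$ and $k_i(t) \to \infty$. The fix is to run the comparison on each subinterval $[s, t_1]$ with $s \in (t_0,t_1)$. On such an interval both $k_1, k_2$ are $C^1$ and bounded, so Lemma~\ref{l=comparison} applies verbatim provided $\zeta_2(s) \leq \zeta_1(s)$; but this is exactly what we want to prove, so a direct application is circular. Instead I would argue by contradiction: suppose $\zeta_1(\tau) > \zeta_2(\tau)$ for some $\tau \in (t_0,t_1]$. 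Let $s^* = \inf\{ s \in (t_0,\tau] : \zeta_1 > \zeta_2 \text{ on } (s,\tau]\}$. If $s^* > t_0$ then by continuity $\zeta_1(s^*) = \zeta_2(s^*)$, and applying the strict comparison (second bullet of Lemma~\ref{l=comparison}, with the strict inequality coming from $k_1 > k_2$ wherever $\zeta_2 \neq 0$, or more simply a direct ODE-uniqueness argument at the crossing point) gives a contradiction to $s^*$ being the infimum; if $s^* = t_0$ then $\zeta_1 > \zeta_2$ on all of $(t_0,\tau]$, and one computes as in the uniqueness part of Lemma~\ref{l=monotone2} that $(\zeta_1 - \zeta_2)'(t) = 1 - k_1^2\zeta_1^2 - 1 + k_2^2 \zeta_2^2 \leq k_2^2(\zeta_2^2 - \zeta_1^2) < 0$ on $(t_0,\tau]$, so $\zeta_1 - \zeta_2$ is decreasing there; since $(\zeta_1 - \zeta_2)(t) \to 0$ as $t \to t_0$, this forces $(\zeta_1-\zeta_2)(\tau) < 0$, contradicting $\zeta_1(\tau) > \zeta_2(\tau)$.

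The main obstacle is purely this boundary bookkeeping at $t_0$: the comparison lemma wants a genuine initial inequality, and the natural "initial" data lives only as a limit. The cleanest resolution is the monotone-differences computation $(\zeta_1 - \zeta_2)' \leq k_2^2(\zeta_2 - \zeta_1)(\zeta_2 + \zeta_1) \leq 0$ whenever $\zeta_1 \geq \zeta_2$, which together with $\zeta_i(t) \to 0$ upgrades the one-point comparison at any interior $s$ to the full conclusion; alternatively, one can note that $\zeta_2$ is itself an admissible $\underline k^{-1}$-type upper barrier for the defining approximation sequence $\zeta_{1,n}$ of $\zeta_1$ (the solutions vanishing at $t_n \to t_0$) and pass to the limit, recovering $\zeta_1 = \lim_n \zeta_{1,n} \leq \zeta_2$ directly. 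Either route yields $\zeta_1(t) \leq \zeta_2(t)$ on $(t_0,t_1]$, and continuity (both sides $\to 0$) extends this to the closed interval $[t_0,t_1]$.
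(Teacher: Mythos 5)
Your proof is correct and follows essentially the same route as the paper's: the non-strict comparison lemma to propagate the inequality $\zeta_2\geq\zeta_1$ forward once it holds, plus the monotone-difference computation $(\zeta_1-\zeta_2)'=k_2^2\zeta_2^2-k_1^2\zeta_1^2\leq 0$ when $\zeta_1\geq\zeta_2$, together with the boundary condition $\zeta_i(t)\to 0$, to handle the initial interval. One small remark: in your Case~1 you invoke the strict form of Lemma~\ref{l=comparison} appealing to ``$k_1>k_2$,'' but the hypothesis is only $k_1\geq k_2$, so the strict inequality need not hold; however the non-strict first bullet applied on $[s^*,\tau]$ with $\zeta_1(s^*)=\zeta_2(s^*)$ already gives $\zeta_1\leq\zeta_2$ there, contradicting $\zeta_1(\tau)>\zeta_2(\tau)$, so the argument goes through (and, as you note at the end, the monotone-difference computation alone handles both cases cleanly).
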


\begin{proof} Suppose $\zeta_2(t'_0) \geq \zeta_1(t'_0)$ for some $t'_0 \in (t_0,t_1]$. Then we can apply Lemma~\ref{l=comparison} to the equation $\zeta' = 1 - k_1^2 \zeta^2$ with $\zeta = \zeta_1$ and $\overline u = \zeta_2$ to obtain $\zeta_2(t)\geq \zeta_1(t)$ for all $t\in[t'_0,t_1]$. 
It now suffices to show that if there is $t'_1 \in (t_0,t_1]$ such that $\zeta_1(t) \geq  \zeta_2(t)$ for all $t\in[t_0,t'_1]$, then  we must have $\zeta_1(t) =  \zeta_2(t)$ for all $t\in[t_0,t'_1]$. But if 
$\zeta_1 \geq \zeta_2 \geq 0$ on $(t_0,t_1']$ we have
 $$
 (\zeta_1 - \zeta_2)'(t) = k_2(t)^2\zeta_2(t)^2 - k_1(t)^2\zeta_1(t)^2 \leq 0
$$
 for $t_0 < t \leq t'_1$. Since $(\zeta_1 - \zeta_2)(t) \to 0$ as $t \to t_0$, this is possible only if $\zeta_1 (t) = \zeta_2(t)$ for $t_0 \leq t \leq t'_1$.
\end{proof}

\begin{lemma}\label{l=monotone3}
Let  $k\colon (0, t_1] \to \RR_{>0}$ be a $C^1$ function satisfying
$\lim_{t\to 0} k(t) = \infty$,
and let $\zeta(t)$ be the unstable solution. Let $r > 1$.

\begin{enumerate}
\item If  $k(t)^2 \geq r(r-1)/t^2$ for all $t \in (0,t_1]$, then
$ \zeta(t)  \leq  t/r$  for all $t \in (0,t_1]$. 

\item If  $k(t)^2 \leq r(r-1)/t^2$ for all $t \in (0,t_1]$, then
$ \zeta(t)  \geq  t/r$  for all $t \in (0,t_1]$.

\item  Suppose $N>0$, $0<t_2<\min\{t_1, (r-1)/N\}$ and
$$
k(t)^2 \in \left [ \frac{r(r-1)}{t^2}  - \frac Nt ,   \frac{r(r-1)}{t^2}  +  \frac Nt \right]\qquad\text{for all $t \in (0,t_2]$.}
$$
Then there exists $M>0$ such that 
$$
\zeta(t) \in \left[ \frac{t}{r} - Mt^2, \frac{t}{r} + Mt^2\right] \qquad\text{for all $t \in (0,t_2]$.}
$$
\end{enumerate}
\end{lemma}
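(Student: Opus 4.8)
The plan is to obtain all three parts from the comparison principle of Lemma~\ref{l=monotone2.5} together with explicit model solutions. The starting observation is that $w_0(t):=t/r$ is \emph{exactly} the unstable solution of the Riccati equation $\zeta'=1-k_0(t)^2\zeta^2$ for the model coefficient $k_0(t):=\sqrt{r(r-1)}/t$: one checks by substitution that $w_0'(t)=1/r=1-\frac{r(r-1)}{t^2}\cdot\frac{t^2}{r^2}=1-k_0(t)^2w_0(t)^2$, and since $w_0$ is positive on $(0,t_1]$ and $w_0(t)\to 0$ as $t\to 0$, the uniqueness clause of Lemma~\ref{l=monotone2} identifies $w_0$ as the unstable solution. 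As $k_0$ is $C^1$, positive and blows up at $0$, Lemma~\ref{l=monotone2.5} applies directly: in case (1) one has $k\ge k_0$, hence $\zeta\le w_0=t/r$; in case (2) one has $k\le k_0$, hence $\zeta\ge t/r$.

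For part (3) I would use the same principle, but with model coefficients reverse-engineered from trial solutions. If $w\colon(0,t_2']\to\RR_{>0}$ is $C^1$ with $w(t)\to 0$ as $t\to 0$ and $w'(t)<1$ on $(0,t_2']$, set $k_w(t):=\sqrt{(1-w'(t))/w(t)^2}$. Then $k_w$ is $C^1$ and positive on $(0,t_2']$ and blows up at $0$ (because $1-w'(t)\to 1-1/r>0$ while $w(t)^2\to 0$), and by construction $w$ solves $w'=1-k_w^2w^2$ with $w(t)\to 0$; so, once more by Lemma~\ref{l=monotone2}, $w$ is the unstable solution of the Riccati equation with coefficient $k_w$. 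I would run this with the trial solutions $\overline w(t):=t/r+Ct^2$ and $\underline w(t):=t/r-Ct^2$, for a constant $C>0$ and a threshold $t_2'\in(0,t_2]$ to be fixed below; note that $\overline w>0$ always, that $\underline w>0$ once $t_2'<1/(Cr)$, that $1-\overline w'>0$ once $t_2'<(r-1)/(2Cr)$, and that $1-\underline w'>0$ always.

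It then suffices to verify the two inequalities $k_{\overline w}(t)\le k(t)\le k_{\underline w}(t)$ for $t\in(0,t_2']$: granting them, Lemma~\ref{l=monotone2.5} gives $\underline w(t)\le\zeta(t)\le\overline w(t)$, i.e.\ $\zeta(t)\in[t/r-Ct^2,\ t/r+Ct^2]$ on $(0,t_2']$. After multiplying by $t^2$ these inequalities read
\[
\frac{(r-1)/r-2Ct}{(1/r+Ct)^2}\ \le\ k(t)^2t^2\ \le\ \frac{(r-1)/r+2Ct}{(1/r-Ct)^2},
\]
and Taylor expansion at $t=0$ gives $k_{\overline w}(t)^2t^2=r(r-1)-2Cr^3t+O(t^2)$ and $k_{\underline w}(t)^2t^2=r(r-1)+2Cr^3t+O(t^2)$, while the hypothesis says $k(t)^2t^2\in[r(r-1)-Nt,\ r(r-1)+Nt]$. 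Hence both inequalities hold provided $2Cr^3>N$ (so the terms linear in $t$ have the correct strict sign) and $t_2'$ is small enough that the $O(t^2)$ remainders cannot overturn them; for instance $C:=N/r^3$ works, with $t_2'$ taken below all the thresholds above and below the one coming from the remainder estimate. To pass from $(0,t_2']$ to the full interval $(0,t_2]$, note that $\zeta$ is continuous and positive on the compact interval $[t_2',t_2]$, so $|\zeta(t)-t/r|$ is bounded there, hence at most $(t_2')^{-2}t^2$ times a constant; enlarging $C$ to a suitable $M$ absorbs this and yields the assertion on all of $(0,t_2]$.

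The only genuinely delicate point is obtaining the lower bound on $\zeta$ with the \emph{sharp} leading coefficient $1/r$: the ``moreover'' clause of Lemma~\ref{l=monotone2}, applied to the coefficient $\sqrt{r(r-1)/t^2-N/t}$, only yields the far weaker bound $\zeta(t)\le t/\sqrt{r(r-1)-Nt}$, whose leading term $t/\sqrt{r(r-1)}$ is strictly larger than $t/r$. The reverse-engineering device above is exactly what makes a sharp two-sided comparison available, and the rest of the argument is just the elementary Taylor inequalities displayed above together with the bookkeeping needed to confirm that the constructed coefficients $k_{\overline w}$ and $k_{\underline w}$ satisfy the positivity, $C^1$, and blow-up hypotheses of Lemmas~\ref{l=monotone2} and~\ref{l=monotone2.5}.
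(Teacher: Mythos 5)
Your argument is correct. For parts (1) and (2) you do exactly what the paper does: identify $t/r$ as the unstable solution for the model coefficient $\sqrt{r(r-1)}/t$ via the uniqueness clause of Lemma~\ref{l=monotone2} and invoke Lemma~\ref{l=monotone2.5}. For part (3), however, your mechanism is genuinely different from the paper's. The paper stays entirely inside parts (1) and (2): choosing $\delta$ with $t_2<1/(2\delta)<(r-1)/N$, it checks that the hypothesis squeezes $k(t)^2$ between the exact model coefficients $(r-\delta\tau)(r-\delta\tau-1)/t^2$ and $(r+\delta\tau)(r+\delta\tau-1)/t^2$ on each interval $(0,\tau]$, applies parts (1)--(2) with the perturbed parameter $r\pm\delta\tau$, evaluates at $t=\tau$ to get $\zeta(t)\in[t/(r+\delta t),\,t/(r-\delta t)]$ on all of $(0,t_2]$, and then converts this to the $t/r\pm Mt^2$ form with $M>2\delta/r$; no Taylor remainders and no separate treatment of a compact piece are needed. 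You instead perturb the \emph{solution} ($t/r\pm Ct^2$), reverse-engineer coefficients $k_w=\sqrt{(1-w')/w^2}$ so that the trial functions are themselves unstable solutions, compare coefficients via a Taylor expansion with $2Cr^3>N$, and then extend from a small interval $(0,t_2']$ to $(0,t_2]$ by enlarging $M$ using boundedness of $\zeta-t/r$ on $[t_2',t_2]$. Both routes are sound; the paper's is shorter and gives the estimate on the whole interval in one stroke by re-using the parameter family $r\mapsto r\pm\delta\tau$ of exactly solvable models, while your sub/supersolution construction is more systematic and would adapt to perturbations of the coefficient not expressible by shifting $r$, at the cost of the remainder bookkeeping and the compactness step. (One cosmetic remark: in your closing paragraph you speak of the ``lower bound on $\zeta$'' but then quote the moreover clause of Lemma~\ref{l=monotone2}, which gives an upper bound; the substance of the remark --- that the moreover clause alone cannot produce the sharp coefficient $1/r$ on either side --- is correct.)
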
 

\begin{proof}
1~and 2. \  These follow from Lemma~\ref{l=monotone2.5} because $\zeta(t) = t/r$ is the unstable solution for
$$
\zeta' = 1 - \frac{r(r-1)}{t^2}\zeta^2.
$$

\medskip

3. \  Choose $\delta >0$ such that $0<t_2<1/2\delta<(r-1)/N$. Then,
 $$
\frac{(r - \delta t)(r - \delta  t-1)}{t^2} \leq \frac{r(r - 1)}{t^2}  - \frac Nt 
\quad\text{and}\quad
\frac{(r + \delta t)(r + \delta t -1)}{t^2} \geq \frac{r(r+1)}{t^2}  + \frac Nt
$$
for $0 < t \leq t_2$.  It  follows from parts 1 and 2 of this lemma that for each $\tau \in (0,t_2]$ we have
 $\zeta(t) \in \left[  t/{(r + \delta\tau)},   t/{(r - \delta\tau)} \right]$,
 for all $t \in (0,\tau]$. Consequently,
$ \zeta(t)  \in \left[  t/{(r +\delta t)},   t/{(r -\delta t)} \right]$,
 for all $t \in (0,t_2]$.
Now choose $M > 2\delta/r$. We then  have
$$
\frac{t}{r} - Mt^2 \leq \frac t{(r +\delta t)} \quad \text{and}\quad
\frac  t{(r - \delta t)} \leq  \frac{t}{r} + Mt^2,
$$
for $0< t \leq t_2$. We conclude that $\zeta(t) \in [t/r - Mt^2, t/r + Mt^2]$, for $0\leq t \leq t_2$.
\end{proof}

\section{Regularity of the distance $\delta$ to the cusp}\label{s=deltaregularity}

Suppose $S$ satisfies the hypotheses of Theorem~\ref{t=main}, with $k=1$.  
Before considering the global properties of the metric on $S$, we introduce local coordinates about the puncture $p_1$ and study the behavior of geodesics that remain in this cuspidal region during some time interval.

In this section and the next, we thus assume that the punctured disk $\DD^\ast$ has been endowed with an incomplete Riemannian metric, whose completion is the closed disk $\overline \DD$.  
Assume that the lift of this metric to $\widetilde{\DD^\ast}$ is {\em geodesically convex:} that is, any two points in  $\widetilde{\DD^\ast}$ can be connected by a unique geodesic in $\widetilde{\DD^\ast}$.

Let $\rho$ be the Riemannian distance metric on $\overline \DD$ and for $z\in\DD$, let
$\delta(z) = \rho(z,0)$. For $\delta_0\in (0,1)$, we denote by $\DD^\ast(\delta_0)$ the set of
$z\in\DD^\ast$ with $\delta(z) < \delta_0$.

Assume that that there exists $r>2$ such that for all $z\neq 0$ the curvature of the Riemannian metric satisfies:
\begin{equation}\label{e=Kassump}
0 > K(z) =  - \frac{r(r-1)}{\delta(z)^2} + O(\delta(z)^{-1}),
\end{equation}
and 
\begin{equation}\label{e=nablaKassump}
 \|\nabla^j  K(z)\| =    O({\delta(z)^{-2-j}}),
\end{equation}
for $j=1,2,3$.

The main result of this section establishes regularity of the function $\delta$ and estimates on the size of its derivatives.  We also introduce a function $c$ that measures the geodesic curvature of the level sets of $\delta$ and establish some of its properties.  
The results in this section establish in this incomplete,  singular setting the standard regularity properties of Busemann functions for compact, negatively curved manifolds  (see, e.g. \cite{HeintzeImHof}) -- in particular, Busemann functions for a $C^k$ metric are $C^{k-1}$.  The main techniques are thus fairly standard but require some care in the use of comparison lemmas for ODEs.  To avoid tedium, we have described many calculations in detail but have left others to the reader.

\begin{proposition}\label{p=c} The cusp distance function $\delta$ is $C^4$.  
Let $V = \nabla\delta$, and let  $c\colon \DD^\ast\to \RR_{>0}$ be the geodesic curvature function defined by
\begin{equation}\label{e=cdef}
c = \langle \nabla_{JV} V , JV\rangle. 
\end{equation}
Then:
\begin{enumerate} 
\item  $\nabla_{JV} V = [JV,V] = cJV$.
\item  for any vector field $U$:
$\nabla_UV= c \langle U,JV\rangle JV,\,\hbox{ and } \, \nabla_U JV=-c \langle U,JV\rangle V$.
\item  $c = {r}/{\delta} + O(1)$.
\item  $\| \nabla  c\| = O(\delta^{-2})$.
\item  $\| \nabla^2  c\| = O(\delta^{-3})$.
\end{enumerate}
\end{proposition}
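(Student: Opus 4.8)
The plan is to handle the five assertions in increasing order of difficulty. The one genuinely delicate input, on which everything else depends, is the $C^4$-regularity of the cusp distance function $\delta$; granting it, (1)--(5) follow from the first-variation structure of a distance function together with the Riccati and Jacobi-field machinery of Section~\ref{s=comparison}.

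\textbf{Regularity of $\delta$ and the radial foliation --- the main obstacle.} I would first show that on a punctured neighborhood $\DD^\ast(\delta_0)$ of the cusp every point is joined to the cusp by a \emph{unique} minimizing geodesic: existence comes from a limiting argument (minimize over paths joining the point to points approaching the cusp), uniqueness from geodesic convexity together with negative curvature, which forbids geodesic bigons. These radial geodesics sweep out $\DD^\ast(\delta_0)$, $\delta$ restricts to arc length along each of them, and the level sets $\{\delta = s\}$ are Jordan curves around the cusp. The crucial point is that this foliation is $C^4$, so that $\delta$ is $C^4$, $V := \nabla\delta$ is a $C^3$ unit vector field (the eikonal identity $\|\nabla\delta\| \equiv 1$ is automatic), and the integral curves of $V$ are exactly the radial geodesics, whence $\nabla_V V = 0$. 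This is the analogue, in the present incomplete and singular setting, of the classical fact that Busemann functions of a $C^k$ metric are $C^{k-1}$ (cf.\ \cite{HeintzeImHof}). \emph{I expect this to be the hard part}: the argument is standard in spirit (no conjugate points, implicit function theorem), but it must be carried out \emph{uniformly up to the cusp}, where the curvature is unbounded; here the comparison lemmas of Section~\ref{s=comparison} --- especially Lemma~\ref{l=monotone3} --- do the quantitative work and at the same time provide the a priori bounds on the derivatives of $\delta$ near the cusp that are used in (4) and (5).

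\textbf{Structure equations (1)--(2) and the asymptotics (3).} With $(V, JV)$ a $C^3$ orthonormal frame, metric compatibility gives $\langle\nabla_{JV}V, V\rangle = \tfrac12\, JV\langle V, V\rangle = 0$, so $\nabla_{JV}V = \langle\nabla_{JV}V, JV\rangle\, JV = cJV$ by the definition of $c$; likewise $\langle\nabla_V JV, V\rangle = -\langle JV, \nabla_V V\rangle = 0$ and $\langle\nabla_V JV, JV\rangle = 0$ force $\nabla_V JV = 0$, whence $[JV, V] = \nabla_{JV}V - \nabla_V JV = cJV$. This is (1); (2) follows by decomposing $U = \langle U, V\rangle V + \langle U, JV\rangle JV$ and using $\nabla_V V = \nabla_V JV = 0$, $\nabla_{JV}V = cJV$ and $\nabla_{JV}JV = -cV$ (the last from $\langle\nabla_{JV}JV, V\rangle = -\langle JV, \nabla_{JV}V\rangle = -c$). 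For (3), fix a radial geodesic $\gamma$ and parametrize it by $t = \delta(\gamma(t))$. The standard Jacobi-field description of the second fundamental form of distance spheres identifies $c(\gamma(t)) = \langle\nabla_{JV}V, JV\rangle$, the geodesic curvature of $\{\delta = t\}$, with $j'(t)/j(t)$, where $j > 0$ solves $j'' = -(K\circ\gamma)\,j$ and vanishes at the cusp; equivalently $\zeta := 1/(c\circ\gamma) = j/j'$ is the unstable solution (Lemma~\ref{l=monotone2}) of $\zeta' = 1 - (-K\circ\gamma)\,\zeta^2$. Along $\gamma$ the curvature hypothesis reads $-K\circ\gamma(t) = r(r-1)/t^2 + O(1/t)$ uniformly, so Lemma~\ref{l=monotone3}(3) gives $\zeta(t) \in [\, t/r - Mt^2,\, t/r + Mt^2 \,]$, hence $c(\gamma(t)) = 1/\zeta(t) = r/t + O(1) = r/\delta + O(1)$; positivity of $j$ and $j'$ gives $c > 0$, and the estimate persists off the cusp since both sides are bounded there.

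\textbf{Derivative bounds (4)--(5).} Differentiating $c(\gamma(t)) = j'/j$ along $\gamma$ yields the Riccati identity $V(c) = -K - c^2$ on $\DD^\ast$, which by (3) equals $-r/\delta^2 + O(\delta^{-1}) = O(\delta^{-2})$. For the transverse derivative put $\phi := JV(c) = \langle\nabla c, JV\rangle$; applying $[JV, V] = cJV$ and $\nabla_V JV = 0$ to this identity,
\[
V(\phi) = JV\big(V(c)\big) - c\, JV(c) = -\langle\nabla K, JV\rangle - 3c\,\phi,
\]
a linear first-order ODE along $\gamma$ with forcing $O(\delta^{-3}) = O(t^{-3})$ (by the hypothesis on $\nabla K$) and integrating factor $j^3 \asymp t^{3r}$ (since $j \asymp t^r$ by the previous step). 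As $3r - 3 > -1$ the forcing is integrable at the cusp, and the regularity of $\delta$ forces the fast-growing ($\asymp t^{-3r}$) homogeneous component of $\phi$ to be absent: writing $\phi = W/j^3$ with $W := j\,(\partial_\theta j)' - j'\,\partial_\theta j$ the ``Wronskian'' of the variation of $\gamma$ through radial geodesics, one has $W' = -j^3\langle\nabla K, JV\rangle$ and $W \to 0$ at the cusp (all four factors do), so $W = O(t^{3r-2})$ and $\phi = O(t^{-2}) = O(\delta^{-2})$. Together with $V(c) = O(\delta^{-2})$ this is (4). For (5) one differentiates once more: $V(V(c)) = -V(K) - 2cV(c) = O(\delta^{-3})$, $\nabla^2 c(V, JV) = V(\phi) = -\langle\nabla K, JV\rangle - 3c\phi = O(\delta^{-3})$, and $\psi := JV(\phi)$ satisfies, again via $[JV, V] = cJV$,
\[
V(\psi) = -JV\langle\nabla K, JV\rangle - 3\phi^2 - 4c\,\psi;
\]
rewriting $JV\langle\nabla K, JV\rangle = \nabla^2 K(JV, JV) - c\langle\nabla K, V\rangle$ and using $\|\nabla^2 K\| = O(\delta^{-4})$, $\|\nabla K\| = O(\delta^{-3})$ and (4) shows the forcing is $O(\delta^{-4})$; with integrating factor $j^4 \asymp t^{4r}$ and $4r - 4 > -1$, the same argument (again with vanishing homogeneous part) gives $\psi = O(t^{-3})$. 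Since $\nabla^2 c(JV, JV) = \psi + cV(c)$, all components of $\nabla^2 c$ in the frame $(V, JV)$ are $O(\delta^{-3})$, which is (5).
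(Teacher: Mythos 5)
Your proposal correctly isolates the load-bearing difficulty — the $C^4$ regularity of $\delta$ up to the cusp — and you derive (1)–(3) from it essentially as the paper does (orthonormal-frame structure equations; $\zeta = 1/c$ as the unstable Riccati solution, then Lemma~\ref{l=monotone3}(3)). But the regularity is not a preliminary you can ``grant'': it is the first assertion of the proposition, and proving it occupies the large majority of the paper's argument (Steps~0 through~4). The paper does this by approximating $\delta$ by the distance functions $\delta_n(\cdot) = \rho(\cdot,\gamma_{0,z_0}(t_n))$, showing $\delta_n$, $\nabla\delta_n$, $\zeta_n$, $\partial_s\zeta_n$, and then second derivatives all form Cauchy sequences locally uniformly, with explicit rate $O(t_n/t)$-type bounds obtained from the comparison Lemma~\ref{l=comparison}. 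Saying ``standard in spirit, implicit function theorem, \dots\ uniformly up to the cusp'' names the right ingredients but does not supply them: the entire content is obtaining convergence that is uniform as $\delta\to 0$ where $K\to-\infty$. That is a genuine gap.

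For (4)–(5) you take a genuinely different route from the paper, and it is worth noting. The paper obtains $\|\nabla\nu\| = O(1)$ and $\|\nabla^2\nu\| = O(\delta^{-1})$ (hence (4), (5) via $\nu = c^{-1}$) as a byproduct of the regularity proof, by differentiating the truncated Riccati equations~(\ref{e=zetan}) with respect to $s$ and controlling $|\partial_s\zeta_n - \partial_s\zeta_m|$ via (\ref{e=wndiff2}). You instead work directly on the limit objects: from $V(c) = -K - c^2$ and $[V,JV] = -cJV$ you derive closed linear ODEs for $\phi = JV(c)$ and $\psi = JV(\phi)$ along radial geodesics with integrating factors $j^3$, $j^4$ and forcing of the ``right'' order by the curvature-derivative hypotheses, then conclude from $3r-3 > -1$, $4r-4 > -1$ that only the decaying particular solution survives. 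This is cleaner and more conceptual than the paper's estimate-by-estimate approach, and your ODEs check out. However, it has a circularity issue in addition to the regularity gap: to even write $\phi = JV(c)$ and $\psi = JV(\phi)$ you need $c$ to be twice continuously differentiable, which is precisely what has not been shown; and the boundary condition $W = j^3\phi \to 0$ at the cusp (needed to kill the $j^{-3} \asymp t^{-3r}$ homogeneous solution) is asserted via ``all four factors do'' but actually requires an argument — it is not immediate that $\partial_s j'$ vanishes at $t=0$ with the right rate uniformly in $s$. The paper's approach establishes exactly this kind of control through the approximating sequence. In short: your reduction of (4)–(5) to ODE identities is a nice way to organize the estimates once regularity is in hand, but as written the proof bootstraps on conclusions it has not established.
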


\begin{corollary}\label{l=cuspdist} 
The  function $\delta$ satisfies:  $\|\nabla \delta\| = 1$ and $\|\nabla^i \delta\| = O(\delta^{1-i})$, for $i=2,3,4$.  
\end{corollary}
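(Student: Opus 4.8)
The plan is to obtain the Corollary as a formal consequence of Proposition~\ref{p=c}, by tracking the power of $\delta^{-1}$ carried by each successive covariant derivative of the unit field $V=\nabla\delta$. First, $\|\nabla\delta\|=1$ is the standard fact that a Riemannian distance function has unit gradient wherever it is differentiable, which here is all of $\DD^\ast$ since $\delta\in C^4$ by Proposition~\ref{p=c}: $\delta$ is $1$-Lipschitz, so $\|\nabla\delta\|\le 1$, while the minimizing geodesic from any $z\in\DD^\ast$ toward the cusp (unique by geodesic convexity) decreases $\delta$ at unit speed, forcing $\|\nabla\delta\|\ge 1$. In particular $\nabla^2\delta=\nabla V$, and, consistently with Proposition~\ref{p=c}(2), $\nabla_V V=0$.

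The structural input is Proposition~\ref{p=c}(2): in the orthonormal frame $\{V,JV\}$ one has $\nabla_U V=c\langle U,JV\rangle JV$ and $\nabla_U JV=-c\langle U,JV\rangle V$ for every vector field $U$, i.e. a single covariant differentiation of either frame field returns $\pm c$ times the other. Iterating this, together with the metric compatibility of $\nabla$ (so that differentiating the coframe $\langle\cdot,V\rangle,\langle\cdot,JV\rangle$ again costs one factor of $c$) and the fact that $\nabla J=0$ on an oriented surface, one proves by induction on $m\ge 1$ that $\nabla^m V$ is a finite sum of tensors, each a product of factors $\nabla^{a_1}c,\dots,\nabla^{a_j}c$ (with $a_i\ge 0$) times a unit frame field, tensored with a bounded-norm combination of the coframe, and subject to the bookkeeping identity $\sum_{i=1}^{j}(a_i+1)=m$: the total ``weight'' increases by exactly $1$ at each step, since each new derivative either raises some $a_i$ by $1$ or creates a new factor $\nabla^0c=c$, with no other contribution. (This is the one place requiring care: one must check that the correction terms of the form $\nabla_{\nabla_W U}V$ that appear when $\nabla^m V$ is written honestly as a tensor rather than as iterated directional derivatives also carry weight $m$; they do, because $\nabla_W U$ is itself $O(c)$ when $U,W$ are frame fields.) Consequently
\[
\|\nabla^m V\|\;\le\; C\!\!\sum_{\substack{a_i\ge 0\\ \sum_i(a_i+1)=m}}\!\!\|\nabla^{a_1}c\|\cdots\|\nabla^{a_j}c\|.
\]

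To conclude, Proposition~\ref{p=c}(3)--(5) gives $\|\nabla^a c\|=O(\delta^{-1-a})$ for $a=0,1,2$, so each summand above is $O\!\big(\prod_i\delta^{-1-a_i}\big)=O\big(\delta^{-\sum_i(1+a_i)}\big)=O(\delta^{-m})$; hence $\|\nabla^m V\|=O(\delta^{-m})$, that is, $\|\nabla^{m+1}\delta\|=O(\delta^{1-(m+1)})$. Taking $m=1,2,3$ — which invokes $\nabla^a c$ only for $a\le 2$, exactly the range supplied by Proposition~\ref{p=c} — yields the estimates for $i=2,3,4$. The main obstacle is purely the induction in the second paragraph: once the weight identity $\sum_i(a_i+1)=m$ is established, the final estimates are an immediate substitution of the bounds from Proposition~\ref{p=c}.
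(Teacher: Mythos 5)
Your proposal is correct and follows essentially the same route as the paper, which likewise deduces the corollary from $\nabla_U\delta=\langle U,V\rangle$, the frame identity $\nabla_U V=c\langle U,JV\rangle JV=O(\delta^{-1})\|U\|$, and the bounds $\|\nabla^j c\|=O(\delta^{-1-j})$ from Proposition~\ref{p=c}; you have merely made explicit the weight-counting induction the paper leaves implicit.
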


\begin{proof}  This follows from the facts: $\nabla_U \delta = \langle U, V \rangle$, 
$\nabla_U V =   c \langle U,JV\rangle JV = O(\delta^{-1})\|U\|$,
and $\|\nabla^j c\| = O(\delta^{-1-j})$, $j=1, 2$, proved in Proposition~\ref{p=c}.
\end{proof}

\begin{proof}[Proof of Proposition~\ref{p=c}]  We prove first that $\delta$ is $C^4$, in several steps.

\medskip

\noindent{\bf Step 0: $\delta$ is continuous.}  
We realize the universal cover of the punctured  disk ${\DD^\ast}$  as the strip
$\RR\times (0,1)$ with the deck transformations $(x,t)\mapsto (x+n,t)$, $n\in \ZZ$.    Endow $\RR\times (0,1)$  with the lifted metric, which is geodesically convex by assumption, and lift $\delta$ to a function $\tilde \delta$.
By assumption,  the completion of  ${\DD^\ast}$ is $\bar\DD$, and so the completion
 $\overline{\RR\times (0,1)}$ in this metric is the union of $\RR\times (0,1]$ with a single point $p^\ast$.

Since $\RR\times (0,1)$  is negatively curved and geodesically convex, it is in particular $CAT(0)$.  The $CAT(0)$ property is preserved under completion, and so  $\overline{\RR\times (0,1]}$ is also $CAT(0)$.
Thus for every 
 for every  $\tilde z_0 \in \RR\times (0,1]$, there is a unique
unit-speed geodesic from $\tilde z_0$ to $p^\ast$.   This projects to a (unique) geodesic in 
$\DD$ from $z_0$ to $0$.

Fix $z_0\in  \DD^\ast$ with lift $\tilde z_0 \in \RR\times (0,1]$, and let $\gamma_{0,z_0}\colon [0,\delta(z_0)]\to \overline \DD$ be
the unit-speed geodesic from $0$ to $z_0$ found by the previous argument.  It has the property
that  $\delta(\gamma_{0,z_0}(t))=   \rho(0, \gamma_{0,z_0}(t) ) = t$  for every $t\in [0,\delta(z_0)]$.
Let $t_n\to 0$ be a sequence of times
in $(0,\delta(z_0))$, and define a sequence of  functions
$\tilde \delta_n\colon\RR\times (0,1] \to \RR_{>0}$ by $\delta_n(\tilde z) =  \tilde\rho(\tilde z, \tilde\gamma_{0,\tilde z_0}(t_n))$. The $\tilde \delta_n$  are convex, $C^3$ away from $\tilde\gamma_{0, \tilde z_0}(t_n)$,  and $\|\nabla \delta_n\| = 1$ for all $n$. 

\begin{lemma} For every $\tilde z\in\RR\times (0,1]$ and all $m\geq n$, we have
\begin{equation}\label{e=deltadiff}|\tilde \delta_n(\tilde z) - \tilde \delta_m(\tilde z)| \leq t_n-t_m\leq t_n,
\end{equation}
\end{lemma}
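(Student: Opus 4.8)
The plan is to deduce the estimate directly from the triangle inequality for the (completed) metric $\tilde\rho$. Write $q_k := \tilde\gamma_{0,\tilde z_0}(t_k)\in \RR\times(0,1]$; this is a \emph{fixed} point depending only on $k$ and not on $\tilde z$. By the definition of $\tilde\delta_k$ we have $\tilde\delta_k(\tilde z)=\tilde\rho(\tilde z,q_k)$, so that for every $\tilde z\in\RR\times(0,1]$,
\[
\bigl|\tilde\delta_n(\tilde z)-\tilde\delta_m(\tilde z)\bigr|=\bigl|\tilde\rho(\tilde z,q_n)-\tilde\rho(\tilde z,q_m)\bigr|\le \tilde\rho(q_n,q_m).
\]
Thus everything reduces to bounding $\tilde\rho(q_n,q_m)$, which is independent of $\tilde z$.

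The second step is to compute $\tilde\rho(q_n,q_m)=t_n-t_m$, using that $\tilde\gamma_{0,\tilde z_0}$ is a unit-speed minimizing geodesic in the completion $\overline{\RR\times(0,1]}$. Indeed, by Step~0 that completion is $CAT(0)$, so the geodesic from $p^\ast$ to $\tilde z_0$ is the \emph{unique} geodesic joining those points and is globally minimizing, of length $\tilde\rho(p^\ast,\tilde z_0)=\delta(z_0)$. Its restriction to the subinterval $[t_m,t_n]$ (recall $t_m\le t_n$, since $m\ge n$ and the sequence $(t_k)$ decreases to $0$) is then again unit speed and minimizing, so $\tilde\rho(q_n,q_m)$ equals the length $t_n-t_m$ of that restriction. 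Plugging this into the displayed inequality and using $t_m>0$ gives $\bigl|\tilde\delta_n(\tilde z)-\tilde\delta_m(\tilde z)\bigr|\le t_n-t_m\le t_n$, which is the claim.

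I do not anticipate a genuine obstacle: the statement is a soft consequence of the triangle inequality, and this lemma is merely the Cauchy-type estimate that will later yield uniform convergence of the $\tilde\delta_n$ and hence continuity of $\delta$. The one point worth a sentence of justification is that a sub-arc of the minimizing geodesic $\tilde\gamma_{0,\tilde z_0}$ is itself minimizing between its endpoints; this is immediate in a $CAT(0)$ space, and in any case follows from the elementary fact that the restriction of a length-minimizing curve to a subinterval minimizes length between its endpoints. One should also keep the indexing convention straight, so that $m\ge n$ forces $t_m\le t_n$; if $(t_k)$ were not assumed monotone, the bound would read $|t_n-t_m|$ on the right and the same argument applies verbatim.
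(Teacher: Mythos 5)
Your proof is correct and takes the same route as the paper, which simply cites the triangle inequality; you have spelled out the reverse triangle inequality $|\tilde\rho(\tilde z,q_n)-\tilde\rho(\tilde z,q_m)|\le\tilde\rho(q_n,q_m)$ and the fact that a sub-arc of the unit-speed minimizing geodesic $\tilde\gamma_{0,\tilde z_0}$ has length $t_n-t_m$, which is exactly what the paper leaves implicit. Note that for the stated bound you only need $\tilde\rho(q_n,q_m)\le t_n-t_m$, which is immediate from the length of the sub-arc, so the appeal to the $CAT(0)$ property and global minimality, while valid, is not strictly necessary.
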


\begin{proof} This follows from the triangle inequality.
\end{proof}

Since $\tilde\delta_n$ is Cauchy, it converges (locally uniformly in $\RR\times (0,1]$) to a continuous, convex function $\widehat\delta$.  Moreover $\widehat\delta(\tilde z)$ is the distance $\tilde\rho(\tilde z,p^\ast)$.  It follows that $\tilde\delta$ is continuous (and convex), and so $\delta$ is continuous.

\medskip

\noindent{\bf Step 1: $\delta$ is $C^1$.}   
Let $\tilde V_n = \nabla \tilde \delta_n$ be the corresponding sequence of radial vector fields on $\RR\times (0,1]$.

\begin{lemma}\label{l=Vn}
Fix $\tilde z\in\RR\times (0,1]$. For all $m\geq n$ sufficiently large, we have:
$$
\|\tilde V_n(\tilde z) - \tilde V_m(\tilde z)\| \leq  \frac{t_n}{\tilde \delta(\tilde z) - t_n}.
$$
Thus $\tilde V_n$ is a Cauchy sequence in the  local uniform topology.
\end{lemma}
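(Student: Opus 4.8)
The plan is to express $\tilde V_n(\tilde z)-\tilde V_m(\tilde z)$ as the integral of the infinitesimal rotation of the radial field as the center of the sphere slides along $\tilde\gamma_{0,\tilde z_0}$ from $\tilde\gamma_{0,\tilde z_0}(t_m)$ to $\tilde\gamma_{0,\tilde z_0}(t_n)$, and to bound each infinitesimal rotation by a Jacobi field estimate exploiting $K<0$.

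Set $q_s=\tilde\gamma_{0,\tilde z_0}(s)$ for $s\in(0,\delta(z_0)]$, and let $\tilde\delta_s=\tilde\rho(\cdot,q_s)$, $\tilde V_s=\nabla\tilde\delta_s$, so that $\tilde V_{t_n}=\tilde V_n$. Fix $\tilde z$ and take $n$ large enough that $t_n<\tilde\delta(\tilde z)$. For each $s\in[t_m,t_n]$ let $G_s\colon[0,d_s]\to\RR\times(0,1]$ denote the unit-speed minimizing geodesic from $\tilde z$ to $q_s$, where $d_s=\tilde\rho(\tilde z,q_s)=\tilde\delta_s(\tilde z)$. This geodesic exists and is unique by geodesic convexity, stays in the smooth locus $\RR\times(0,1]$ (a minimizing geodesic between two points of the manifold cannot pass through the cusp $p^\ast$), and depends smoothly on $s$ since the metric is smooth there and $K<0$ precludes conjugate points; and by the triangle inequality $d_s\ge\tilde\delta(\tilde z)-s\ge\tilde\delta(\tilde z)-t_n>0$.

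Next I would identify $\tfrac{d}{ds}\tilde V_s(\tilde z)$. Since $\tilde V_s(\tilde z)=-\dot G_s(0)$ and $s\mapsto\tilde V_s(\tilde z)$ is a curve in the fixed vector space $T_{\tilde z}(\RR\times(0,1])$, commuting the two covariant derivatives gives $\tfrac{d}{ds}\tilde V_s(\tilde z)=-\cJ_s'(0)$, where $\cJ_s(u)=\partial_s G_s(u)$ is the Jacobi field of the geodesic variation. The relevant structural facts are: $\cJ_s(0)=0$ since $G_s(0)\equiv\tilde z$; differentiating $G_s(d_s)=q_s$ and using the first variation of arclength, $\dot d_s=\langle\dot{\tilde\gamma}_{0,\tilde z_0}(s),\dot G_s(d_s)\rangle$, shows that $\cJ_s(d_s)$ is the component of the unit vector $\dot{\tilde\gamma}_{0,\tilde z_0}(s)$ orthogonal to $\dot G_s(d_s)$, so $\|\cJ_s(d_s)\|\le1$ and $\cJ_s(d_s)\perp\dot G_s$; and since $\langle\cJ_s',\dot G_s\rangle$ is constant while the tangential part of $\cJ_s$ equals $u\langle\cJ_s',\dot G_s\rangle$ (using $\cJ_s(0)=0$), its vanishing at $u=d_s$ forces $\cJ_s$ to be a perpendicular Jacobi field. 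I would then bound $\|\cJ_s'(0)\|$ via the scalar equation $(\ref{e=scalarjacobi})$: writing $\cJ_s=j_s\,J\dot G_s$, the function $j_s$ satisfies $j_s''=-K(G_s)j_s$ with $j_s(0)=0$ and $|j_s(d_s)|\le1$; if $j_s\not\equiv0$ then, there being no conjugate points in nonpositive curvature, $j_s$ has no zero on $(0,d_s]$, and after a sign change $j_s>0$ and $j_s''>0$ there, so $j_s$ is convex, $u\mapsto j_s(u)/u$ is nondecreasing, and $\|\cJ_s'(0)\|=|j_s'(0)|=\lim_{u\to0^+}j_s(u)/u\le j_s(d_s)/d_s\le1/d_s\le1/(\tilde\delta(\tilde z)-t_n)$ (the case $j_s\equiv0$ being trivial). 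Integrating, $\|\tilde V_n(\tilde z)-\tilde V_m(\tilde z)\|\le\int_{t_m}^{t_n}\|\cJ_s'(0)\|\,ds\le(t_n-t_m)/(\tilde\delta(\tilde z)-t_n)\le t_n/(\tilde\delta(\tilde z)-t_n)$; since $t_n\to0$ and $\tilde\delta$ is bounded below on compacta, this yields the asserted locally uniform Cauchy estimate.

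I expect the main obstacle to be less a single computation than keeping the differential-geometric bookkeeping honest in the incomplete setting — ensuring the geodesics $G_s$ exist, avoid $p^\ast$, and vary smoothly, which rests on the $CAT(0)$/geodesic-convexity structure set up in Step~0 together with $K<0$ — and computing the boundary value $\cJ_s(d_s)$ correctly from the first variation formula, which is exactly what delivers the sharp constant (a direct $CAT(0)$ comparison-triangle estimate yields the same bound only up to a factor of $2$).
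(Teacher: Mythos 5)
Your argument is correct and complete.  The paper itself does not prove Lemma~\ref{l=Vn} at all --- its ``proof'' is the single sentence that this is a standard argument in nonpositive curvature using $\|\nabla\delta_n\|=1$ --- so there is no internal proof to compare against.  What you supply is precisely the Jacobi-field version of that standard argument: writing $\tilde V_s = -\dot G_s(0)$ and differentiating in the sliding parameter $s$ gives $\tfrac{d}{ds}\tilde V_s(\tilde z) = -\cJ_s'(0)$ by symmetry of the connection; the first-variation computation correctly identifies $\cJ_s(d_s)$ as the orthogonal component of the unit vector $\dot{\tilde\gamma}_{0,\tilde z_0}(s)$, hence $\|\cJ_s(d_s)\|\le 1$ and $\cJ_s$ is perpendicular; and the convexity of the scalar Jacobi field $j_s$ (vanishing at $0$, $j_s''=-Kj_s\ge 0$) gives $|j_s'(0)|\le j_s(d_s)/d_s \le 1/(\tilde\delta(\tilde z)-t_n)$.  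Integrating over $s\in[t_m,t_n]$ recovers exactly the claimed constant $t_n/(\tilde\delta(\tilde z)-t_n)$.  Your remark that the Jacobi-field route gives the sharp constant while a direct $CAT(0)$ comparison-triangle estimate (the other ``standard argument'') loses a constant factor is accurate and worth keeping; both use $\|\nabla\delta_n\|=1$, which enters via $\|\cJ_s(d_s)\|\le 1$ in your formulation.  The only place that merits a word of caution is the existence, smooth dependence, and avoidance of $p^\ast$ for the geodesics $G_s$, but you flag exactly this and ground it in the geodesic convexity and $CAT(0)$ structure from Step~0 together with $K<0$, which is the right justification.
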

\begin{proof}
This is a standard argument in negative curvature (in fact nonpositive curvature suffices).  
This uses that $\|\nabla \delta_n\| = 1$ for all $n$. 
\end{proof}

This lemma implies that $\delta$ is $C^1$. Let $\tilde V$ be the local uniform limit of the $\tilde V_n$: by definition, $V = \nabla \tilde \delta$.
Let $V = \nabla \delta$ be the projection of $\tilde V$ to $\DD^\ast$.
It remains to show that $V$ is $C^3$, which implies that $\delta$ is $C^4$.  

\medskip

\noindent{\bf Step 2: $V$ is $C^1$.}

\begin{lemma}\label{l=VisC1}  There exists $\delta_0>0$, such that for every $z_0\in \DD^\ast$ with $\delta(z_0) < \delta_0$,
the following holds.
For every vector field $U$,  $ \nabla_U V$ exists and is continuous in a neighborhood of $\gamma_{0,z_0}((0,\delta(z_0)])$.
Moreover: 
\begin{equation}\label{e=jvstuff} \nabla_{JV} V (\gamma_{0,z_0}(t)) = \zeta(t)^{-1} JV(\gamma_{0,z_0}(t)),
\end{equation}
for all $t\in (0,\delta(z_0)]$, where $\zeta$ is the positive solution to the Riccati equation
\begin{equation}\label{e=zetaeq}
\zeta'(t) = 1 + K(\gamma_{0,z_0}(t)) \zeta(t)^2,
\end{equation}
given by Lemma~\ref{l=monotone2}, satisfying $\zeta(0) = 0$.
\end{lemma}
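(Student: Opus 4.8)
\medskip\noindent\emph{Proof strategy.} The plan is to realize $V=\nabla\delta$ as a $C^1$ limit of the radial fields $V_n:=\nabla\widetilde\delta_n$ attached to the approximating distance functions of Step~0, and to read off \eqref{e=jvstuff} from a Riccati analysis along $\gamma_{0,z_0}$. As in Step~0 I work in the $CAT(0)$ completion of the universal cover, in which $\gamma_{0,z_0}$ lifts to the unit-speed geodesic from the cusp point $p^\ast$ to $\tilde z_0$ and $\widetilde\delta_n(\cdot)=\widetilde\rho(\cdot,\tilde p_n)$ with $\tilde p_n=\widetilde\gamma_{0,\tilde z_0}(t_n)\to p^\ast$; since the projection to $\DD^\ast$ is a local isometry it suffices to argue upstairs. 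Because the metric is $C^5$ and the cover is $CAT(0)$ (no conjugate points, no cut locus), $\widetilde\delta_n$ is $C^3$ and $V_n$ is $C^2$ away from $\tilde p_n$. The key reduction: $\|V_n\|\equiv 1$ and the integral curves of $V_n$ are unit-speed geodesics issuing from $\tilde p_n$, so $\nabla_{V_n}V_n=0$ and $\langle\nabla_{JV_n}V_n,V_n\rangle=\frac12(JV_n)\langle V_n,V_n\rangle=0$; hence $\nabla V_n$ is entirely encoded by the geodesic curvature $u_n:=\langle\nabla_{JV_n}V_n,JV_n\rangle$ of the level sets $\{\widetilde\delta_n=\mathrm{const}\}$, through $\nabla_U V_n=u_n\langle U,JV_n\rangle JV_n$.

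Along the $V_n$-geodesic through $\tilde z_0$, which by geodesic convexity is exactly $\widetilde\gamma_{0,\tilde z_0}|_{[t_n,\delta(z_0)]}$ reparametrized by arc length from $\tilde p_n$, the curvature $u_n$ obeys the scalar Riccati equation $u_n'=-K-u_n^2$ with $u_n\to+\infty$ at the foot; thus $\zeta_n:=1/u_n$ solves $\zeta_n'=1+K\zeta_n^2$, vanishes at $t_n$, and stays positive afterwards. So along $\gamma_{0,z_0}$ the $\zeta_n$ are precisely the approximating solutions built in the proof of Lemma~\ref{l=monotone2} for $k(t)^2:=-K(\gamma_{0,z_0}(t))$. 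Since $\delta(\gamma_{0,z_0}(t))=t$, hypothesis \eqref{e=Kassump} gives $k(t)^2=r(r-1)t^{-2}+O(t^{-1})\to\infty$ with $k$ positive and $C^1$; hence Lemma~\ref{l=monotone2} furnishes the unique positive solution $\zeta$ of \eqref{e=zetaeq} on $(0,\delta(z_0)]$ with $\zeta(0)=0$, realized as $\zeta=\lim_n\zeta_n$ (monotone increasing), while Lemma~\ref{l=monotone3}(3) gives $\zeta(t)=t/r+O(t^2)$, so $\zeta$ is positive and squeezed between positive multiples of $t$ near the cusp. By Dini's theorem $\zeta_n\to\zeta$, hence $u_n\to\zeta^{-1}$, locally uniformly on $(0,\delta(z_0)]$.

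To promote this to a two-dimensional neighborhood I run the same analysis uniformly over the radial family. Fix $t_\ast\in(0,\delta(z_0)]$ and a small neighborhood $W$ of $\gamma_{0,z_0}(t_\ast)$ on which $\delta$ is bounded below, so that $|K|$, $\|\nabla K\|$, $\|\nabla^2 K\|$ are bounded on $W$ and $K\le-\kappa^2<0$ there. For $z\in W$ the $V_n$-geodesic through $z$ runs from $\tilde p_n$; as $\tilde p_n\to p^\ast$ these converge in $C^1$, uniformly in $z$, to the radial geodesic $\gamma_{0,z}$ (the convergence already extracted in Step~0 and Lemma~\ref{l=Vn}), and $\widetilde\delta_n(z)\to\widetilde\delta(z)$. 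Comparing $\zeta_n$ with constant-curvature barriers $b^{-1}\tanh(b\,\cdot\,)$ via Lemma~\ref{l=comparison} gives uniform-in-$n$ two-sided bounds on $u_n|_W$, and differentiating the Riccati equation (using the bounds on $\nabla K$, $\nabla^2 K$) gives uniform bounds on $\nabla u_n$ and $\nabla^2 u_n$ on $W$; together with $V_n\to V$ in $C^0$ and Arzel\`a--Ascoli this yields $V_n\to V$ in $C^1$ on $W$. Hence $\nabla V=\lim_n\nabla V_n$ is continuous on $W$, satisfies $\nabla_V V=0$, and along $\gamma_{0,z_0}$ has $JV$-component $\lim_n u_n=\zeta^{-1}$, which is \eqref{e=jvstuff}; taking the union over $t_\ast$ and over a nested exhaustion toward the cusp produces the asserted neighborhood of $\gamma_{0,z_0}((0,\delta(z_0)])$.

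I expect the last paragraph to be the main obstacle. The approximating spheres have \emph{moving} foot points $\tilde p_n$, so the Riccati comparison has to be carried out along a varying family of geodesics rather than the single geodesic $\gamma_{0,z_0}$, and it must be kept uniform as $z$ approaches the cusp, where the curvature --- hence the coefficient of the Riccati equation --- diverges like $t^{-2}$. Making the convergence of the foot-geodesics to the radial geodesics uniform near the cusp, and extracting uniform $C^1$ control of $u_n$ there, is exactly the point where the quantitative estimates of Section~\ref{s=comparison} (in particular the a~priori bound $\zeta_n(t)\le t$ near the feet and the monotonicity of unstable Riccati solutions in the foot parameter from Lemmas~\ref{l=monotone2}--\ref{l=monotone3}) must be combined with the $CAT(0)$ geometry of Step~0: this is the classical Busemann-regularity argument of \cite{HeintzeImHof}, except that each appeal to boundedness of $K$ has to be replaced by an appropriate comparison lemma.
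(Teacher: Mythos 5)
Your proposal follows the paper's high-level reduction -- $\nabla_{V_n}V_n=0$ and $\nabla V_n$ is encoded by the scalar $u_n$ obeying a Riccati equation -- but diverges on the key quantitative step, and the divergence conceals a genuine gap.

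Along the radial geodesic $\gamma_{0,z_0}$ you invoke monotonicity of $\zeta_n$ plus Dini's theorem to get locally uniform convergence $\zeta_n\to\zeta$. That is a clean alternative to what the paper does, and it is correct, but it only gives the statement \eqref{e=jvstuff} \emph{pointwise along the radial geodesic}. To conclude that $\nabla V$ exists and is continuous on a two-dimensional neighborhood, the convergence $\zeta_n\to\zeta$ must be uniform in the transverse parameter $s$, where $s\mapsto\gamma_n(s,t)$ is the perpendicular variation through $\gamma_{0,z_0}$. For this you propose uniform bounds on $\nabla u_n$ and $\nabla^2 u_n$ over the neighborhood $W$ (away from the cusp) and then Arzel\`a--Ascoli. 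This is where the argument breaks. The quantity $\nabla u_n(z)$ for $z\in W$ is not a local object: $u_n(z)=1/\zeta_n(\delta_n(z))$ is obtained by integrating the Riccati equation from $t_n$ (next to the cusp) out to $z$, so $\nabla u_n$ solves a \emph{linearized} Riccati equation along the whole geodesic, whose coefficients involve $K\circ\gamma_n$ and $\nabla K\circ\gamma_n$, which blow up like $\delta^{-2}$ and $\delta^{-3}$ near the cusp. The fact that $K,\nabla K,\nabla^2K$ are bounded on $W$ is therefore irrelevant; one needs a priori control of the linearized solution through the cuspidal region, which is nontrivial. In the paper this control is exactly the content of Step~3 (Lemma~\ref{l=partialzetadiff} with the bound $|\partial_s\zeta_n|\leq C j_n$), which is a separate and harder argument proved \emph{after} Lemma~\ref{l=VisC1}, and which is needed only to upgrade $\delta$ from $C^2$ to $C^3$. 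You are thus proposing to prove more than what Lemma~\ref{l=VisC1} asserts, and the ``more'' is where your sketch is incomplete.

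The paper avoids this entirely by proving a direct first-order Cauchy estimate
$|\zeta_m(s,t)-\zeta_n(s,t)|\leq C\,t_n$, uniform in $s$ and in $t\geq(1+2/\epsilon)t_n$ (its equation \eqref{e=zetamzetan}). This is obtained by subtracting the Riccati equations for $\zeta_m$ and $\zeta_n$, using the a priori two-sided bound $\zeta_n\asymp t-t_n$ (its \eqref{e=zetamu}) and the estimate $|K\circ\gamma_n-K\circ\gamma_m|=O(t_n t^{-2})$ (via $\|\nabla K\|=O(\delta^{-3})$ and $\rho(\gamma_n,\gamma_m)=O(t_n)$), and then a one-sided comparison (Lemma~\ref{l=comparison}) for the difference $y=\zeta_n-\zeta_m$. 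This yields uniform convergence of $\zeta_n(s,\cdot)$ in the transverse direction directly, with no need for second-order estimates or Arzel\`a--Ascoli. To repair your argument you would need to carry out this Cauchy estimate, or else actually prove the linearized Riccati bound you appeal to -- but then you are effectively doing Step~3 of the paper, which is both more work and not needed for this lemma. Your final paragraph correctly flags that this is the crux, but as written the proposal does not contain the estimate that closes the gap.
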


\begin{proof} Fix $\delta_0>0$ (we will specify how small it must be later).  Fix $z_0$ with $\delta(z_0)\leq \delta_0$, and denote by $\gamma$ the geodesic $\gamma_{0,z_0}$.  

 For each $n$, define a perpendicular, radial variation of geodesics $\gamma_n(s,t)$ by the properties:
  $\gamma_n(0,t) = \gamma(t)$,  $\gamma_n(s,t_n) = \gamma(t_n)$   and
$$\frac{D^2 \gamma_n}{\partial s\partial t} (s,\delta_0) =  J\dot \gamma_n (s, \delta_0),$$ 
for all $s,t$ with $t\geq t_n$ (and $s$ belonging to a small, fixed neighborhood of $0$).  Let $\delta_n\colon \DD^\ast \to \RR_{>0}$ be defined by $\delta_n(z) = \rho(z, \gamma(t_n))$;  
then for $t\geq t_n$, and $s$ sufficiently small, we have
\[\delta_n(\gamma_n(s,t)) = t - t_n.\]
It follows that for any $\epsilon>0$, if $n$ sufficiently large and $t\geq  (1+2/\varepsilon)t_n$, we have
\begin{equation}\label{e=uniformtn}
\delta(\gamma_n(s,t))  \in[ (1-\epsilon)t + \epsilon t_n , t ].
\end{equation}

We have already shown (working on the universal cover) that in a neighborhood of $\gamma$, we have $\delta_n\to\delta$ and $V_n =\nabla \delta_n \to V$ uniformly on compact sets.
Let $\gamma(s,t)$ be the limiting variation of geodesics, which satisfies
$\gamma(0,t) = \gamma(t)$, and $\delta(\gamma(s,t)) = t$.  At this point we have shown that $\gamma$ is $C^1$,
with $\partial \gamma/\partial s (s,\delta_0) = JV(\gamma(s,\delta_0))$.
Note that
$V_n(\gamma_n(0,t)) =  V(\gamma_n(0,t)) = V(\gamma(t))$,
for all $n$, $t\geq t_n$.

Since $V_n\to V$, it suffices to show that 
$$\nabla_{V_n} V_n (\gamma_n(s,t) )\to  \nabla_V V(\gamma(s,t) )\, \hbox{ and }  \nabla_{JV_n} V_n (\gamma_n(s,t)) \to  \nabla_{JV} V (\gamma(s,t)).
$$

The proof that $\nabla_{V_n} V_n (\gamma_n(s,t)) \to  \nabla_V V(\gamma(s,t))$ is immediate:
since $\gamma_n$ is a variation of geodesics, we have
$\gamma_n'' = \nabla_{V_n}V_n \equiv 0 \equiv \nabla_{V}V = \gamma''$.

We now show that $\nabla_{JV_n} V_n (\gamma_n(s,t)) \to  \nabla_{JV} V (\gamma(s,t))$.
 Let $j_n(s,t)$ be the scalar Jacobi field associated with the perpendicular variation $\gamma_n$:
$$
\frac{\partial\gamma_n}{\partial s} = j_n(s,t)J \dot\gamma_n(s,t) = j_n(s,t)J V_n(s,t).
$$
On the one hand,
$$ \frac{D}{\partial s} V_n(\gamma_n(s,t)) =  \frac{D^2}{\partial s\partial t} \gamma_n(s,t) = j_n'(s,t) J V_n(s,t),$$
while on the other hand,
$$ \frac{D}{\partial s} V_n(\gamma_n(s,t)) = \nabla_{j_n(s,t)J V_n(s,t)} V_n = j_n(s,t) \nabla_{J V_n} V_n (\gamma_n(s,t)).
$$

Writing $\zeta_n(s,t) = j_n(s,t)/j_n'(s,t)$, we thus have
$ \zeta_n(s,t)  \nabla_{J V_n} V_n (\gamma_n(s,t)) = J V_n (\gamma_n(s,t))$.
We prove that $\zeta_n(s,t)$ converges to $\zeta(s,t)$, the positive solution to (\ref{e=zetaeq}).

To see this,  we first establish uniform upper and lower bounds for $\zeta_n(s,t)$, for $t\geq (1+2/\epsilon)t_n$.  The  $\zeta_n$ satisfy the Riccati equation:
\begin{equation}\label{e=zetan}
\zeta_n'(s,t) = 1 + K(\gamma_n(s,t)) \zeta_n(s,t)^2,
\end{equation}
with $\zeta_n(s,t_n) = 0$, for all $s$.
Now $$K(\gamma_n(s,t)) = -\frac{r(r-1)}{\delta(\gamma_n(s,t))^2} + O(\delta(\gamma_n(s,t))^{-1}).$$
By (\ref{e=uniformtn}) we  thus have:
\begin{equation}\label{e=crudeKbounds}
K(\gamma_n(s,t)) \in \left[-\frac{r^2}{t^2} , -\frac{(r-1)^2}{ t^2} \right],
\end{equation}
if $\delta(z_0) \leq \delta_0$ is sufficiently small,  $t\geq (1+2/\epsilon) t_n$, and $n$ is sufficiently large. 

We show that there exists $\mu\in (0,1)$ such that,
for $n$ sufficiently large, we have 
\begin{equation}\label{e=zetamu}
\zeta_n (t)\in [\mu(t-t_n), \mu^{-1}(t-t_n)].
\end{equation}
To see the lower bound, let $u = \mu(t-t_n)$.  Then $u' = \mu$.  On the other hand,
when $\zeta_n(t) = u$, we have $\zeta_n' = 1 + K\mu^2(t-t_n)^2$. This is larger than $u'$ provided that
$1 + K\mu^2(t-t_n)^2 > \mu$.  But this will hold if
$1  - r^2\mu^2 > \mu$.
The upper bound is similarly obtained.
By Lemma~\ref{l=comparison}, $\zeta_n(t) \geq \mu(t-t_n)$ for all $t\geq t_n$, which proves (\ref{e=zetamu}).

We now use  Lemma~\ref{l=comparison} to prove that for some large but fixed $C$:
\begin{equation}\label{e=zetamzetan}
|(\zeta_{m} - \zeta_n)(s,t)|  \leq  C t_n,
\end{equation}
for all $s$ and $t\geq (1+2/\epsilon) t_n$.
For $m\geq n$, we have, since $\zeta_n(s, t_n) = 0$:
\[|(\zeta_{m} - \zeta_n)(s,t_n)| \leq |\zeta_{m}(s,t_n)| + |\zeta_n(s,t_n)| = |\zeta_{m}(s,t_n)| \leq t_n.\]
Subtracting the ODEs for $\zeta_m$ and $\zeta_n$, we have for $t>t_n$:
\[
(\zeta_n - \zeta_m)'(s,t) = K(\gamma_n(s,t)) \zeta_n(s,t)^2 - K(\gamma_m(s,t)) \zeta_m(s,t)^2 = 
\]
\[ \left( K(\gamma_n(s,t))   - K(\gamma_m(s,t)) \right) \zeta_m(s,t)^2 + 
  K(\gamma_n(s,t)) \left(\zeta_n(s,t)^2 -   \zeta_m(s,t)^2 \right )
\]
\[
= O\left( \frac{t_n (t-t_n)^2}{t^3}\right)  +   K(\gamma_n(s,t))(\zeta_n(s,t)+\zeta_m(s,t))(\zeta_n(s,t) - \zeta_m(s,t)),
\]
since $\|\nabla K(\gamma_n(s,t)) \|= O(\delta(\gamma_n(s,t))^{-3}) = O(t^{-3}$),
$\rho(\gamma_n(s,t), \gamma_m(s,t)) = O(t_n)$, and $\zeta_m(s,t) = O(t-t_m)$.
Writing $y = \zeta_n - \zeta_m$, we have that $y$ satisfies the ODE 
\begin{equation}\label{e=y}
y' = O\left( \frac{t_n(t-t_n)^2}{t^3}\right)  +   K(\gamma_n(s,t))(\zeta_n(s,t)+\zeta_m(s,t))y.
\end{equation}
Fix $n$ and let $u(t) = C t_n$.  Then $u'(t) = 0$, and $y'$ evaluated at $y=u$ is
\[
y' = O\left( \frac{t_n (t-t_n)^2}{t^3}\right)  +   K(\gamma_n(s,t))(\zeta_n(s,t)+\zeta_m(s,t))C t_n
\]
We claim that if $C$ is sufficiently large, then $y' \leq 0 = u'$.   To see this fix $N>0$ such that
\[
y' \leq  \frac{N t_n (t-t_n)^2}{t^3} +   K(\gamma_n(s,t))(\zeta_n(s,t)+\zeta_m(s,t))y.
\]
Then (\ref{e=crudeKbounds}) and (\ref{e=zetamu}) imply that
\[
y' \leq  \frac{N t_n (t-t_n)^2}{t^3}  -   2 \frac{(r-1)^2}{t^2} \mu(t-t_n) C t_n
\leq \frac{ t_n (t - t_n)}{t^2} \left( N  - 2 (r-1)^2 \mu C  \right).
\]
Clearly this is $\leq 0$ for $t\geq t_n$  if $C$ is sufficiently large.
Since $y(t_n) \leq  t_n\leq u$, for all $C\geq1$,
this implies by Lemma~\ref{l=comparison} that $y \leq u$ for all $t\geq (1+2/\epsilon)t_n$; a similar argument shows that $y\geq -u$, and hence if $C$ is sufficiently large and $m\geq n$, then for all $t\geq (1+2/\epsilon)t_n$ (\ref{e=zetamzetan}) holds.

Thus $|(\zeta_{m} - \zeta_n)(s,t)|$ tends to $0$ as $t_n\to 0$, with $s,t$ fixed.  Thus the $\zeta_n(s,t)$ converge, and since they satisfy 
(\ref{e=zetan}),
their limit $\zeta(s,t)$ satisfies (\ref{e=zetaeq}).
We obtain that the functions $\zeta_n(s,t) \nabla_{JV_n} V_n (\gamma_n(s,t)) =  JV_n(\gamma_n(s,t))$
converge locally uniformly, and hence $\nabla_{JV} V$ exists and is continuous.

 Since
$\nabla_{JV_n} V_n (\gamma_n(s,t)) = \zeta_n(s,t)^{-1} JV_n(\gamma_n(s,t))$,
 $\zeta_n\to \zeta$, and $JV_n\to JV$, 
we obtain (\ref{e=jvstuff}) by taking a limit and setting $s=0$.\end{proof}

In light of Lemma~\ref{l=VisC1}, we define a function $\nu\colon \DD^\ast\to \RR_{>0}$
as follows.  For each $z\in \DD^\ast$, let $\zeta\colon [0,\delta(z)]\to \RR_{>0}$ be the positive solution to
(\ref{e=zetaeq}) given by Lemma~\ref{l=monotone2}.  We then set $\nu(z):= \zeta(\delta(z))$.
It follows immediately from Lemma~\ref{l=VisC1} that for every $z\in \DD^\ast$,  we have
\begin{equation}\label{e=nuproperty}
\nabla_{JV} V(z)  = \nu(z)^{-1} JV(z).
\end{equation}

\medskip

\noindent{\bf Step 3: $\nu$ is $C^1$.}

To prove that $\delta$ is $C^3$, it thus suffices to show that $\nu$ is $C^1$.  
Equation (\ref{e=zetaeq})
implies that $\nabla_V\nu$ exists and is continuous.
It remains to show that 
 $\nabla_{JV}\nu$ exists and is continuous.

We fix $\delta_0>0$ as above and let $z_0\in \DD^\ast(\delta_0)$,  and  we reintroduce the variations of geodesics
 $\gamma_n(s,t)$ defined by the properties:
  $\gamma_n(0,t) = \gamma_{0,z_0}(t)$,  $\gamma_n(s,t_n) = \gamma_{0,z_0}(t_n)$   and 
$$\frac{D^2 \gamma_n}{\partial s\partial t} (s,\delta_0) =
j_n'(s,\delta_0)J\dot \gamma_n (s, \delta_0),$$
for all $s$.  As above, write $\gamma = \gamma_{0,z_0}$, and let $\gamma(s,t)$
be the limiting variation of geodesics.  
We observe that Lemma~\ref{l=VisC1} also implies that $j_n\to j$ and $j_n' \to j'$, locally uniformly,
where $\partial \gamma/\partial s = j JV$ and $D^2 \gamma/\partial s\partial t = j' JV$.
The convergence follows from the formulae:
\[
j_n'(s,t) =  \exp\left( \int_{t}^{\delta_0} K(\gamma_n(s,x))\zeta_n(x)\, dx\right), \;\, j'(s,t) =  \exp\left( \int_{t}^{\delta_0} K(\gamma(s,x))\zeta(x)\, dx\right),
\]
\[
j_n(s,t) = \zeta_n(s,t) j_n'(s,t), \quad\hbox{ and }\, j(s,t) = \zeta(s,t) j'(s,t).
\]
Thus the variation $\gamma$ is  $C^2$ on $\DD^\ast$, and  satisfies $\gamma(s,0) = 0$.
We record here a lemma, which follows easily from these formulae, combined with (\ref{e=deltadiff}), (\ref{e=crudeKbounds}) and (\ref{e=zetamzetan}).
\begin{lemma}\label{l=jratio} For all $t\geq t_n$, we have
${j_n'(t)}/{j_m'(t)} = 1+ O\left({t_n}/{t}\right)$,
and for all  $t\geq (1+2/\epsilon) t_n$, we have
${j_n(t)}/{j_m(t)} = 1+ O\left({t_n}/{t}\right)$.
\end{lemma}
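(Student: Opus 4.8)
The plan is to read the two ratio estimates directly off the integral formulae for $j_n'$ and $j_n$ recorded just before the statement, using the already-established bounds on the curvature and on the Riccati solutions $\zeta_n$. Write, for $t \ge t_n$,
\[
\frac{j_n'(t)}{j_m'(t)} = \exp\left( \int_t^{\delta_0} \bigl[ K(\gamma_n(s,x))\zeta_n(x) - K(\gamma_m(s,x))\zeta_m(x) \bigr]\, dx \right),
\]
so it suffices to show the integrand is $O(t_n/x)$ pointwise on $[t,\delta_0]$; integrating that bound gives $\int_t^{\delta_0} O(t_n/x)\,dx = O(t_n \log(1/t)) $ — and here one must be slightly more careful, because $t_n\log(1/t)$ is not $O(t_n/t)$. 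So in fact I would split: on the range $x \ge (1+2/\epsilon)t_n$ use the sharp bound below, and on the remaining short range $t_n \le x \le (1+2/\epsilon)t_n$ (only relevant when $t$ itself is that small) bound the integrand crudely by $O(1/x)$ using $\zeta_n(x) = O(x-t_n)$ and $K = O(x^{-2})$, contributing $O(1)$ to the exponent over an interval of length $O(t_n)$, hence $O(t_n)$ after exponentiating — wait, more carefully: $\int_{t_n}^{(1+2/\epsilon)t_n} O(1/x)\,dx = O(1)$, which is \emph{not} small, so this crude step is not good enough for the $j'$ claim down to $t = t_n$. The correct observation is that for $t_n \le x$ one has $\zeta_n(x) \le \mu^{-1}(x - t_n)$ from \eqref{e=zetamu} and $\zeta_m(x) \le \mu^{-1}(x-t_m) \le \mu^{-1} x$, while $K(\gamma_n) = O(x^{-2})$, so each of $K(\gamma_n)\zeta_n$ and $K(\gamma_m)\zeta_m$ is $O(1/x)$; their difference on $[t_n, (1+2/\epsilon)t_n]$ is then $O(1/x)$ and integrates to $O(1)$. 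Since the exponent is bounded, $j_n'(t)/j_m'(t)$ stays bounded, and the genuine $O(t_n/t)$ decay comes only from the tail $x \ge (1+2/\epsilon)t_n$. To reconcile this with the stated "$1 + O(t_n/t)$ for all $t \ge t_n$", note that for $t$ in the small range $t_n \le t \le (1+2/\epsilon)t_n$ one has $t_n/t \ge \epsilon/(2+\epsilon)$, i.e. $t_n/t$ is bounded below, so "$1 + O(t_n/t)$" there is simply the assertion that the ratio is bounded — which we have just checked — and for $t \ge (1+2/\epsilon)t_n$ the tail estimate gives the true rate.

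For the tail, on $x \ge (1+2/\epsilon)t_n$ I estimate the integrand as the sum of two terms. The first, $\bigl(K(\gamma_n(s,x)) - K(\gamma_m(s,x))\bigr)\zeta_m(x)$, is controlled by $\|\nabla K\| = O(\delta^{-3}) = O(x^{-3})$ together with $\rho(\gamma_n(s,x),\gamma_m(s,x)) = O(t_n)$ and $\zeta_m(x) = O(x)$, giving $O(t_n/x^2)$; this is exactly the estimate already carried out in the proof of \eqref{e=zetamzetan}. The second, $K(\gamma_n(s,x))(\zeta_n(x) - \zeta_m(x))$, uses $K(\gamma_n) = O(x^{-2})$ and the bound $|\zeta_n(x) - \zeta_m(x)| = O(t_n)$ from \eqref{e=zetamzetan} (valid for $x \ge (1+2/\epsilon)t_n$), giving $O(t_n/x^2)$ as well. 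Hence the integrand is $O(t_n/x^2)$ on the tail, and $\int_t^{\delta_0} O(t_n/x^2)\,dx = O(t_n/t)$. Therefore the exponent is $O(t_n/t)$, and since $e^z = 1 + O(z)$ for $z$ bounded, $j_n'(t)/j_m'(t) = 1 + O(t_n/t)$.

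For the ratio $j_n/j_m$, write $j_n(t) = \zeta_n(s,t)\, j_n'(s,t)$ and $j_m(t) = \zeta_m(s,t)\, j_m'(s,t)$, so
\[
\frac{j_n(t)}{j_m(t)} = \frac{\zeta_n(s,t)}{\zeta_m(s,t)} \cdot \frac{j_n'(s,t)}{j_m'(s,t)}.
\]
The second factor is $1 + O(t_n/t)$ by what was just proved. For the first, \eqref{e=zetamzetan} gives $\zeta_n - \zeta_m = O(t_n)$ and \eqref{e=zetamu} gives $\zeta_m(s,t) \ge \mu(t - t_m) \ge \mu(t - t_n)$, which for $t \ge (1+2/\epsilon)t_n$ satisfies $t - t_n \ge \frac{2}{2+\epsilon} t$, hence $\zeta_m(s,t) \ge c\, t$ for a fixed $c > 0$; therefore $\zeta_n(s,t)/\zeta_m(s,t) = 1 + O(t_n/t)$. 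Multiplying the two factors and absorbing the product of error terms yields $j_n(t)/j_m(t) = 1 + O(t_n/t)$ for $t \ge (1+2/\epsilon)t_n$, as claimed.

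\textbf{Main obstacle.} The only real subtlety is the one flagged above: a careless integration of the integrand bound produces a spurious logarithm ($t_n\log(1/t)$ rather than $t_n/t$), so one must genuinely exploit the $O(x^{-2})$ (rather than merely $O(x^{-1})$) decay of the \emph{difference} of the integrands on the tail, which in turn relies on the $\|\nabla K\| = O(\delta^{-3})$ hypothesis and on the already-proved sharp estimate \eqref{e=zetamzetan}. Everything else is bookkeeping with the comparison lemmas from Section~\ref{s=comparison} and the explicit formulae preceding the statement.
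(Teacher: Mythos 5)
Your proposal is correct and follows essentially the same route the paper intends: it reads the ratios off the exponential integral formulae for $j_n'$ and $j_n$, bounding the difference of integrands by $O(t_n/x^2)$ via the curvature bounds, the $O(t_n)$ separation of $\gamma_n$ and $\gamma_m$, and the estimate $|\zeta_n-\zeta_m|=O(t_n)$ from \eqref{e=zetamzetan}, then uses $j_n=\zeta_n j_n'$ together with \eqref{e=zetamu} for the second claim. Your extra care about the spurious logarithm and the near-$t_n$ range (where $t_n/t$ is bounded below, so the claim is just boundedness) is sound and fills in exactly the bookkeeping the paper leaves to the reader.
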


As above, let $\zeta_n(s,t)$ be the solution to (\ref{e=zetan}), and let $\zeta(s,t)$ be the solution to  (\ref{e=zetaeq}).
Note that $\nu(\gamma(s,t)) = \zeta(s,t)$.

 To prove that $\nabla_{JV}\nu$ exists and is continuous, we show that $\partial_s\zeta_n(s,t)$ converges uniformly to $\partial_s\zeta(s,t)$, the unique bounded solution to
\begin{equation}\label{e=partialszeta}
\partial_s\zeta'(s,t) = \partial_s K(\gamma(s,t)) \zeta(s,t)^2 + 2\zeta(s,t) K (\gamma(s,t)) \partial_s\zeta(s,t),
\end{equation}
which satisfies $\partial_s\zeta(s,0) = 0$, for all $s$.
Since $\partial_s\zeta = j(s,t) \nabla_{JV} \zeta$
and $j_n\to j >0$ locally uniformly, this will imply that $\nabla_{JV} \zeta$ exists and is continuous.

\begin{lemma}\label{l=partialzetadiff}  There exists $M>0$ such for all  $ m\geq n$ and all $t\geq (1+2/\epsilon) t_n$, we have
\begin{equation}\label{e=partialzetadiff}
|\partial_s\zeta_n (s,t) -  \partial_s\zeta_m(s,t)| \leq  M t_n j_n'(s,t).
\end{equation}
\end{lemma}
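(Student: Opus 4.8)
The plan is to treat $w_n:=\partial_s\zeta_n$ as the solution of a linear first‑order ODE in $t$ and to apply the comparison Lemma~\ref{l=comparison} with the $t$‑dependent barriers $\pm M t_n j_n'(s,t)$, just as $(\ref{e=zetamzetan})$ was proved with the constant barriers $\pm C t_n$. Differentiating $(\ref{e=zetan})$ in $s$ shows that $w_n$ solves
\[
\partial_t w_n = a_n + b_n\, w_n, \qquad w_n(s,t_n)=0,
\]
where $a_n:=\partial_s K(\gamma_n)\,\zeta_n^{\,2}$ (with $\partial_s K(\gamma_n)=\langle\nabla K(\gamma_n),\,j_n JV_n\rangle$) and $b_n:=2\zeta_n K(\gamma_n)<0$. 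I will use throughout that $\partial_t j_n'=-K(\gamma_n)\zeta_n j_n'\ge 0$, so $j_n'$ is nondecreasing in $t$; the bounds $\zeta_n=O(t)$ and $\zeta_n\ge c_0 t$ for $t\ge(1+2/\epsilon)t_n$ from $(\ref{e=zetamu})$; $K(\gamma_n)\in[-r^2t^{-2},-(r-1)^2t^{-2}]$ from $(\ref{e=crudeKbounds})$; and $\|\nabla^j K(\gamma_n(s,t))\|=O(t^{-2-j})$ from $(\ref{e=nablaKassump})$ and $(\ref{e=uniformtn})$. The first task is the a priori bound
\[
|\partial_s\zeta_n(s,t)|\le M'\,t\,j_n'(s,t),\qquad t_n<t\le\delta_0,
\]
uniform in large $n$ and in $s$. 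Here $|a_n|\le\|\nabla K(\gamma_n)\|\,|j_n|\,\zeta_n^{\,2}=O(t^{-3})\cdot\zeta_n j_n'\cdot\zeta_n^{\,2}=O(j_n')$ (using $j_n=\zeta_n j_n'$ and $\zeta_n=O(t)$), so Lemma~\ref{l=comparison} with $\overline u(t)=M't j_n'(s,t)$ applies: since $b_n<0$, the right‑hand side of the ODE at $\overline u$ is $\le a_n=O(j_n')$, while $\overline u'=M'j_n'(1+(-K)\zeta_n t)\ge M'j_n'$, so any $M'$ past the implied constant closes the inequality; the barrier $-\overline u$ gives the lower bound. (Equivalently one integrates directly, $w_n(s,t)=j_n'(s,t)^{-2}\int_{t_n}^t j_n'(s,\tau)^2 a_n(s,\tau)\,d\tau$, since $(j_n')^2$ is the integrating factor for $\partial_t-b_n$, and the power‑law control $j_n'(s,\tau)=O((\tau/t)^p j_n'(s,t))$ with $p>0$ — from $K(\gamma_n)\zeta_n\le -p/t$ — makes the integral $O(t j_n'(s,t))$.)

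Next, for $m\ge n$ the difference $y:=w_n-w_m$ solves
\[
\partial_t y=(a_n-a_m)+(b_n-b_m)\,w_m+b_n\,y,
\]
and I would show the two inhomogeneous terms are $O(t_n\,t^{-1}j_n'(s,t))$ for $t\ge(1+2/\epsilon)t_n$. Writing $a_n-a_m=(\partial_sK(\gamma_n)-\partial_sK(\gamma_m))\zeta_n^{\,2}+\partial_sK(\gamma_m)(\zeta_n^{\,2}-\zeta_m^{\,2})$ and $b_n-b_m=2(\zeta_n-\zeta_m)K(\gamma_n)+2\zeta_m(K(\gamma_n)-K(\gamma_m))$, and splitting each $\partial_sK$ difference into a change‑of‑base‑point part and a change‑of‑Jacobi‑field part, one uses the estimates already on hand: $\|\nabla K\|=O(t^{-3})$, $\|\nabla^2K\|=O(t^{-4})$, $\rho(\gamma_n,\gamma_m)=O(t_n)$, $\|V_n-V_m\|=O(t_n/t)$ (Lemma~\ref{l=Vn}), $|j_n-j_m|=O(t_nj_n')$ with $j_n'$ and $j_m'$ comparable (Lemma~\ref{l=jratio}), and $\zeta_n=O(t)$, $|\zeta_n-\zeta_m|=O(t_n)$ so $|\zeta_n^{\,2}-\zeta_m^{\,2}|=O(t_n t)$ (from $(\ref{e=zetamzetan})$), together with the a priori bound above for $w_m$. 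These give $|a_n-a_m|=O(t_n t^{-3}j_n')\cdot O(t^2)+O(t^{-2}j_n')\cdot O(t_n t)=O(t_n t^{-1}j_n')$ and $|b_n-b_m|=O(t_n t^{-2})$, hence $|(b_n-b_m)w_m|=O(t_n t^{-2})\cdot O(t j_n')=O(t_n t^{-1}j_n')$; so their sum is $\le C_* t_n t^{-1}j_n'$ on this range.

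Finally I would run Lemma~\ref{l=comparison} for $y$ on $[(1+2/\epsilon)t_n,\delta_0]$ with the barriers $\pm u$, where $u(t):=M t_n j_n'(s,t)$. On this range $b_n\le -2c_0(r-1)^2 t^{-1}<0$, while $u'=M t_n(-K\zeta_n)j_n'\ge 0$, so at $y=u$,
\[
\partial_t y\le C_* t_n t^{-1}j_n'+b_n M t_n j_n'\le\bigl(C_*-2c_0(r-1)^2 M\bigr)\,t_n t^{-1}j_n'\le 0\le u'
\]
once $M$ is large; the a priori bound handles the left endpoint, yielding $|y(s,(1+2/\epsilon)t_n)|\le u((1+2/\epsilon)t_n)$ after enlarging $M$ once more, and the barrier $-u$ gives the matching lower bound. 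Hence $|y(s,t)|\le M t_n j_n'(s,t)$ for all $t\ge(1+2/\epsilon)t_n$, which is $(\ref{e=partialzetadiff})$, with $M$ uniform in large $n$, in $m\ge n$ and in $s$.

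The main obstacle is not conceptual: this is the same mechanism as the proof of $(\ref{e=zetamzetan})$ — the damping term $b_n u$ is negative and of exactly the size $t_n t^{-1}j_n'$ of the forcing, so one large $M$ absorbs it, and the $t$‑dependence of the barrier only helps since $u'\ge 0$. The work lies in (i) recognizing that the a priori bound $|\partial_s\zeta_n|=O(t j_n')$ is needed and is not among the estimates already recorded, and (ii) the bookkeeping of the inhomogeneous terms — keeping every $O(\cdot)$ attached to the correct power of $t$ and absorbing the stray $j_n'$‑versus‑$j_m'$ discrepancies through Lemma~\ref{l=jratio} — since the purpose of this section is to pin down these orders exactly.
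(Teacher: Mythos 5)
Your proposal is correct and follows essentially the same route as the paper: differentiate the Riccati equation in $s$, establish the a priori bound $|\partial_s\zeta_n|=O(t\,j_n')$ (equivalent to the paper's $|w_n|\le C j_n$) by comparison, estimate the inhomogeneous terms in the ODE for $w_n-w_m$ at order $t_n t^{-1}j_n'$ using the same catalogue of bounds (Lemmas~\ref{l=Vn} and \ref{l=jratio}, (\ref{e=zetamu}), (\ref{e=zetamzetan}), and the curvature-derivative estimates), and close with Lemma~\ref{l=comparison} against the barrier $\pm M t_n j_n'$. The only differences are cosmetic (e.g.\ absorbing the forcing into the negative damping term $b_n y$ rather than into $u'$, and the integrating-factor remark), and your bookkeeping of the orders is consistent with what the paper needs.
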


\begin{proof}  
Differentiating equation (\ref{e=zetan}) with respect to $s$, we obtain:
\begin{equation}\label{e=partialzetan}
\partial_s\zeta_n'(s,t) = \partial_s K(\gamma_n(s,t)) \zeta_n(s,t)^2 + 2\zeta_n(s,t)  K(\gamma_n(s,t))\partial_s\zeta_n(s,t).
\end{equation}
Note that since $\zeta_n(s,t_n) = 0$, for all $n$, we have that $\partial_s\zeta_n(s,t_n) = 0$, for all $n$.

To simplify notation, fix $s$, and write $w_n(t) = \partial_s\zeta_n(s,t)$,
 $u_n(t) =  \zeta_n(s,t)$, and $u=\zeta(s,t)$.   Then $u_n(t_n) = 0$, for all $n$.  From equations (\ref{e=zetamu}) and  (\ref{e=zetamzetan}),  we have $|u_n| \leq C t$ and    $|u_n - u_m| \leq Ct_n$. Then equation (\ref{e=partialzetan}) gives
\begin{equation}\label{e=partialzetanshort} 
w_n' =  j_n  (\nabla_{JV_n} K ) u_n^2 + 2 u_n  (K\circ \gamma_n )w_n
\end{equation}

We first claim there exists $C>0$ such that $|w_n(t)|\leq C j_n(t)$, for all $t\geq t_n$.
Let $y = C j_n(t)$. Then $y' = Cj_n'(t)$,
whereas $w_n'$ evaluated at  $w_n = y$ gives $w_n' =  j_n (\nabla_{JV_n} K ) u_n^2 +  2 C u_n  (K\circ \gamma_n ) j_n$.
Then $w_n' \leq y'$ if and only if
$ j_n (\nabla_{JV_n} K)  u_n^2 +  2 C u_n  (K\circ \gamma_n ) j_n \leq Cj_n'$;
dividing through by $j_n'$, and recalling that $u_n = j_n/j_n'$, we are reduced to showing:
\[   \nabla_{JV_n} K  u_n^3 +  2 C u_n^2  (K\circ \gamma_n )  \leq C,
\]
which holds if and only if
$C(1- 2 u_n^2  (K\circ \gamma_n ) ) \geq  ( \nabla_{JV_n} K)  u_n^3$.
Since $u_n = O(t)$,  $K<0$, and  $\| \nabla_{JV_n} K \|=  O(\delta^{-3}) =O(t^{-3})$, for $n$ sufficiently large, this will hold provided that $C$
and $n$ are sufficiently large.  We conclude that
\begin{equation}\label{l=wnbound}
|w_n(t)| \leq C  j_n (t),
\end{equation}
for all $t\geq t_n$.

We next claim that  there exists $M>0$ such that for $m\geq n$ sufficiently large and $t\geq (1+2/\epsilon) t_n$, we have
\begin{equation}\label{e=wndiff}
|w_n(t) - w_m(t)| \leq  \ M t_n j_n'(t)
\end{equation}
This will give the conclusion (\ref{e=partialzetadiff}) of Lemma~\ref{l=partialzetadiff}.

For $m\geq n$, subtracting the corresponding equations in (\ref{e=partialzetanshort}), we obtain:
\begin{equation}\label{e=wnminuswm}
(w_n - w_m)'=j_n ( \nabla_{JV_n} K)  u_n^2  -   j_m ( \nabla_{JV_m} K)  u_m^2  + 2 u_n  (K\circ \gamma_n )w_n -  2 u_m  (K\circ \gamma_m )w_m.
\end{equation}

We claim that  there exists $N>0$  such that for  $m\geq n$, and 
$t\geq  (1+2/\epsilon) t_n$, we have
\begin{equation}\label{e=epsilonclaim}
|(w_n - w_m)'  -  2 u_n  (K\circ \gamma_n ) (w_n - w_m)|   \leq \frac{N t_n j_n }{t^2}.
\end{equation}

Assuming this claim, let us complete the proof of  (\ref{e=wndiff}). Let $N$ be given so that  (\ref{e=epsilonclaim}) holds for $m\geq n$, and 
$t\geq (1+2/\epsilon)t_n$.  Let
$y(t) = w_n(t) - w_m(t)$. Then $|y' - 2u_n  (K\circ \gamma_n ) y| \leq \epsilon j_n/t^2$, and
  (\ref{l=wnbound}) implies that
\[y((1+2/\epsilon)t_n) = |w_n( (1+2/\epsilon)t_n) - w_m( (1+2/\epsilon) t_n)| \leq |w_m((1+2/\epsilon)t_n)| + |w_n((1+2/\epsilon)t_n)| 
\]
\[  \leq  C( j_m ((1+2/\epsilon) t_n)  +  j_n ((1+2/\epsilon) t_n))\] \[ \leq  C\left( j_m'((1+2/\epsilon)t_n) u_m( (1+2/\epsilon)t_n)  +j_n'((1+2/\epsilon)t_n) u_n( (1+2/\epsilon)t_n) \right)\] \[< N t_n j_n'((1+2/\epsilon)t_n),\]
for some $N>0$, since
$u_m( (1+2/\epsilon)t_n) , u_n((1+2/\epsilon) t_n) \leq  2\epsilon^{-1}\mu^{-1} t_n$, by (\ref{e=zetamu}), and
$j_n'(t)/j_m'(t) =  1 + O(t_n/t)$, by Lemma~\ref{l=jratio}. This shows that  (\ref{e=wndiff}) holds at $t= (1+2/\epsilon)t_n$, provided $n$ is sufficiently large.

 We claim that there exists $M>0$ such that
for all such $m,n$, we have $|y(t)| \leq M t_n  j_n'(t) $, for $t\geq (1+2/\epsilon)t_n$.
We prove the upper bound; the lower bound is similar. We will employ Lemma~\ref{l=comparison}.

To this end, let  $z(t) = M t_n j_n'(t) $.   Note that
$z' = Mt_n j_n'' = -M t_n (K\circ\gamma_n) j_n$,
whereas  evaluating $y'$ at $y=z$, we get
\[
y'(t) \leq  2 M t_n u_n  (K\circ \gamma_n ) j_n'  + \frac{Nt_n j_m}{t^2} =  2M t_n  (K\circ \gamma_n ) j_n + \frac{N t_n j_m}{t^2}.
\]
To satisfy the hypotheses of Lemma~\ref{l=comparison}, we require that $y'(t)\leq z'(t)$ whenever $y=z$, which is implied by:
\[
2M t_n  (K\circ \gamma_n ) j_n + \frac{N t_n j_n}{t^2} \leq -M t_n (K\circ\gamma_n) j_n,
\]
or:
\[
\frac{N t_n j_n}{t^2} \leq -3 M t_n (K\circ\gamma_n) j_n.
\]
Since $-K\circ\gamma_n(t)  \geq (r-1)^2/t^2$ (by \ref{e=crudeKbounds}), we see that this will hold (for all $n$ sufficiently large) if $M> N/ 3(r-1)^2$.
This establishes (\ref{e=wndiff}).

We return to the proof of the claim that there exists an $N>0$ such that for $m\geq n$, and 
$t\geq (1+2/\epsilon) t_n$ the inequality  (\ref{e=epsilonclaim}) holds.  The proof amounts to adding and subtracting terms 
within the left hand side of (\ref{e=epsilonclaim}), varying one at a time the multiplied quantity in each term. The added and subtracted terms are grouped in twos and the absolute value of the difference in each pair is bounded above.   To illustrate, consider the difference appearing on the left hand side of (\ref{e=epsilonclaim}).  The first two terms appearing in that difference, coming from (\ref{e=wnminuswm}), are:
\[
j_n ( \nabla_{JV_n} K)  u_n^2  -   j_m ( \nabla_{JV_m} K)  u_m^2 \qquad \qquad \qquad \qquad \qquad \qquad \qquad \qquad\qquad \qquad \qquad \]
\[ \qquad\qquad \qquad=\left( j_n ( \nabla_{JV_n} K)  u_n^2  - j_m ( \nabla_{JV_n} K)  u_n^2 \right) + \left( j_m ( \nabla_{JV_n} K)  u_n^2 -   j_m ( \nabla_{JV_m} K)  u_m^2 \right).
\]
The quantity $ j_n ( \nabla_{JV_n} K)  u_n^2  - j_m ( \nabla_{JV_n} K)  u_n^2$ can be bounded,  and the remaining term $j_m ( \nabla_{JV_n} K)  u_n^2 -   j_m ( \nabla_{JV_m} K)  u_m^2 $ can be further decomposed, as follows.  First, using (\ref{e=zetamu}) to bound $|u_n|$, the assumption that $\|\nabla K\| = O(\delta^{-3})$ together with (\ref{e=uniformtn}) to bound $\|\nabla_{JV_n} K\|$, and the fact from Lemma~\ref{l=jratio} that $(j_m - j_n) = j_n O(t_n/t)$, we have that
\[
|j_n ( \nabla_{JV_n} K)  u_n^2  - j_m ( \nabla_{JV_n} K)  u_n^2| = 
j_n  u_n^2 \left| \nabla_{JV_n} K  \right|  \left| 1  - \frac{j_m}{j_n}\right| \leq j_n O\left(\frac{t_n }{t^2}\right) .
\]
Second, to deal with the remaining term $j_m ( \nabla_{JV_n} K)  u_n^2 -   j_m ( \nabla_{JV_m} K)  u_m^2$, we write:
\[
j_m ( \nabla_{JV_n} K)  u_n^2 -   j_m ( \nabla_{JV_m} K)  u_m^2 \qquad \qquad \qquad \qquad \qquad \qquad \qquad \qquad\qquad \qquad\qquad  \]
\[ \qquad \qquad= \left( j_m ( \nabla_{JV_n} K)  u_n^2 -   j_m ( \nabla_{JV_n} K)  u_m^2 \right) + \left(    j_m ( \nabla_{JV_n} K)  u_m^2 -  j_m ( \nabla_{JV_m} K)  u_m^2  \right),
\]
and bound each term separately in a similar way to give a bound on the initial quantity $|j_n ( \nabla_{JV_n} K)  u_n^2  -   j_m ( \nabla_{JV_m} K)  u_m^2| $ of order $j_n t_n/t^2$.   The same procedure is used to bound the remaining part of the difference
appearing in (\ref{e=epsilonclaim}), which is:
\[
|\left(2 u_n  (K\circ \gamma_n )w_n -  2 u_m  (K\circ \gamma_m )w_m\right)  -  2 u_n  (K\circ \gamma_n ) (w_n - w_m) | \qquad \qquad \qquad \qquad
\]
\[
 \qquad \qquad \qquad \qquad \qquad \qquad= |-  2 u_m  (K\circ \gamma_m )w_m + 2 u_n  (K\circ \gamma_n )w_m|.
\]
 In all, we use the following  bounds:
\begin{itemize}
\item $\delta$ is comparable to $t$,  by (\ref{e=uniformtn});
\item $u_n = O(t)$, by  (\ref{e=zetamu});
\item $|u_n - u_m|  = O( t_n)$,  by (\ref{e=zetamzetan});
\item $|j_n - j_m| =   O(j_n t_n/t)$, for $t\geq (1+2/\epsilon)t_n$, by Lemma~\ref{l=jratio};
\item $|K\circ\gamma_n - K\circ \gamma_m| = O(t_n t^{-2})$, since $|K| = O(t^{-2})$ and $d(\gamma_n,\gamma_m) = O(t_n)$;
\item $|\nabla_{JV_n} K - \nabla_{JV_m} K|  = O(t_n t^{-3})$, which uses the bounds on $\|\nabla K\|$ and $\|\nabla^2 K\|$, as well as  (\ref{e=uniformtn}) and Lemma~\ref{l=Vn}.
\end{itemize}
The details are left to the patient reader. 

\bigskip

 This finishes the verification of the claim in (\ref{e=epsilonclaim}), and thus the proof of Lemma~\ref{l=partialzetadiff}.\end{proof}

To finish the proof that $\nu$ is $C^1$, note that equation (\ref{e=partialzetadiff}) can then be re-expressed using (\ref{e=zetamu}):
\begin{equation}\label{e=wndiff2}
|\partial_s\zeta_n(s,t) - \partial_s\zeta_m(s,t)| \leq M t_n \zeta_n^{-1}(s,t) j_n(s,t) \leq \frac{M t_n j_n(s,t)}{\mu(t-t_n)} \leq \frac{2 M t_n j_n(s,t)}{\mu t}, 
\end{equation}
for $t\geq (1+2/\epsilon) t_n$.
Recalling that $j_n\to j$, 
we conclude that $\partial_s \zeta_n(s,t) \to \partial_s \zeta(s,t)$ locally uniformly in $s,t$.  The bounds
$|\partial_s \zeta_n(s,t)| \leq C j_n(s, t)$ from (\ref{l=wnbound}) become in the limit 
$|\partial_s \zeta(s,t)| \leq C j(s,t)$.  But $\partial_s\zeta(s,t) = j(s,t) \nabla_{JV} \zeta$, and so we conclude that
\begin{equation}\label{e=JVnubound}
|\nabla_{JV}\nu| \leq C.
\end{equation}

 \noindent{\bf Step 4: $\nu$ is $C^2$.}
A very similar proof to the one in Step 3 (with more terms to estimate, but using, in addition to the previously obtained bounds, the bound $\|\nabla^3 K\| = O(\delta^{-5})$) gives that $\nu$ is $C^2$ with $\|\nabla^2\nu\| = O(\delta^{-1})$. One obtains this estimate as in the previous step by bounding $\|\nabla_{V}^2\nu\|$, $\|\nabla_{V}\nabla_{JV}\nu\|$, $\|\nabla_{JV}\nabla_V\nu\|$, and $\|\nabla_{JV}^2\nu\|$.  Each of these is controlled by a  differential equation, whose solutions can be estimated using a double variation of geodesics
$\gamma(s_1,s_2,t)$.  The key point, illustrated by the previous computations,  is that because $K$ has the ``expected" order of derivatives with respect to $\delta$, any quantity obtained by solving a first-order linear differential equation derived from the Riccati equation with coefficients expressed in terms of these derivatives will have the ``expected" order in $\delta$ as well.  Thus,
$\|\nabla^i K\| = O(\delta^{-2-i})$ for $i=1,2,3$ implies that $\|\nabla^{i} \nu\| =  O(\delta^{1-i})$, for $i=1,2$.

This completes the proof that $\delta$ is $C^4$. 
We now turn to items 1-5.

\medskip

1. \ $\nabla_{JV}V - \nabla_V JV = [JV,V]$ by the symmetry of the Levi-Civita connection. But $\nabla_{V} JV =0$.

2. \  For arbitrary $U$, we have  $\nabla_U V=\langle U,V\rangle  \nabla_VV+\langle U,JV\rangle \nabla_{JV} V= c \langle U,JV\rangle JV$, giving the first conclusion: $\nabla_UV= c \langle U,JV\rangle JV$. The second conclusion follows from the first and the fact that $J$ is parallel.

3. \   Note that $\nu = c^{-1}$.  It then is equivalent to prove that
$\nu = {\delta}/{r} + O(\delta^2)$.
Fix $z$ and let $\gamma =  \gamma_{0,z}$.  Along this geodesic, $\zeta(t) = \nu(\gamma(t))$ satisfies the equation (\ref{e=zetaeq}).
On the other hand $-K(\gamma(t)) = r(r-1)/t^2 + O(t^{-1})$.  Part 3 of Lemma~\ref{l=monotone3} implies the desired result.
  
4. \   
The desired estimate $\nabla_{JV}c = O(\delta^{-2})$ is equivalent to $\nabla_{JV}\nu = O(1)$ because $\nabla_{JV}\nu = -c^{-2}\nabla_{JV}c$ and $c= O(\delta^{-1})$.   But (\ref{e=JVnubound}) gives that  $\nabla_{JV}\nu = O(1)$.

5. \   
The estimate $\|\nabla^2 c\| = O(\delta^{-3})$ is equivalent to $\|\nabla^2\nu\| = O(\delta^{-1})$.  This estimate was
obtained in Step 4 above. \end{proof}

\section{Geometry of the cusp: commonalities with surfaces of revolution}\label{s=revolution}

We continue to work locally in $\DD^\ast$ with a metric satisfying (\ref{e=Kassump}) and
(\ref{e=nablaKassump}).  In this section we establish properties of geodesics in this cuspidal region.  The theme of this section is that metrics of this form inherit many of the geodesic properties of 
a surface of revolution for a profile function $y= x^r$, with $r>2$.
In $\RR^3$, coordinates on this surface are 
\[(x, \phi)\mapsto (x, x^r\cos\phi, x^r\sin\phi), \quad x\in (0,1], \,\phi\in [0,2\pi].\]
 As remarked in the introduction, if $\delta$ denotes the distance to the cusp $(0,0,0)$ on this surface, then
$\delta(x,\phi)  = x + o(x)$ and (\ref{e=Kassump})  holds.  Other properties are:
\begin{itemize}
\item {\bf Area:}  The area of the region $\{\delta \leq t_0\}$ is $2 \pi (r+1)^{-1} t_0^{r+1} + O( t_0^{r+2})$.
\item {\bf Clairaut Integral:} Let $\gamma(t)$ be a geodesic segment in the surface of revolution, and let  $\theta(t)$ be the angle between $\dot\gamma(t)$ and the foliation $\{\phi = const.\}$.  Then
the function $t\mapsto  x(\gamma(t))^r \sin\theta(t)$ is constant.
\end{itemize}
We establish here in Sections~\ref{s=area}-\ref{s=QC} the analogues of these properties in our setting.  We also establish in Section~\ref{s=jacobi} some coarse invariance properties of positive Jacobi fields in $\DD^\ast$.

\subsection{Area}\label{s=area}

Fix $\delta_0>0$, and for $t_0 \leq \delta_0$,  denote by $\DD^\ast(t_0)$ the disk $\delta\leq t_0$.  Let $c$ be the geodesic curvature function defined in the previous section, $d\ell$ the arclength element and $d\vol$ is the area element defined by the metric.  

Let  $\gamma(s,t)$ be the radial unstable variation of geodesics
described in the previous section, defined by the properties
\begin{itemize}
\item $\gamma(s,0) = 0$, for all $s$,
\item $\gamma'(s,t) = V(\gamma(s,t))$, for all $s,t$, and
\item $\partial_s\gamma(s,\delta_0) = JV(\gamma(s,\delta_0))$, for all $s$.
\end{itemize}
The arclength element is found by differentiating  $\gamma(s,t)$
with respect to $s$:
\[d\ell(\gamma(s, t)) = {\partial_s \gamma} \, ds = {j(s,t)}\,d s.\]
Using part 3 of Proposition~\ref{p=c}, we estimate $j(s,t)$ by
 \[j(s,t) = j(s,\delta_0) \exp\left(\int_{t}^{\delta_0}  -c(\gamma(s,x))\, dx \,\right) = \exp\left(\int_{\delta_0}^t  (\frac{r}{x} + O(1))\, dx \,\right) 
=  \frac{t^r}{\delta_0^r} + O(t^{r+1}),\]
and so $d\ell(\gamma(s,t)) =  ({t^r}\delta_0^{-r} + O(t^{r+1}))  \, ds$.
We obtain that:
\begin{equation}\label{e=volume}
d\vol(\gamma(s, t)) = \left(  \frac{t^r}{\delta_0^{r}} + O(t^{r+1}) \right) ds\,dt.
\end{equation}
The volume of the region $\DD^\ast(t_0)$ is obtained by integrating 
$d\vol(\gamma(s, t))$ over the region $s\in[0,\ell(\delta_0)]; t\in [0,t_0]$,
where  $\ell(\delta_0)$ is the length
of the curve $\delta = \delta_0$.
It follows that
\[\vol(\DD^\ast(t_0)) =   \frac{\ell(\delta_0) t_0^{r+1}}{(r+1)\delta_0^{r}} + O(t_0^{r+2}).
\]

\subsection{The angular cuspidal  functions $a$ and $b$}

 For $v\in T^1 \DD^*$,  we define $a(v), b(v)$ by:
\begin{equation}\label{e=abdef}
a(v) = \langle v, V\rangle,\,\hbox{ and  }\, b(v) = \langle v, JV \rangle.
\end{equation}
Thus the vectors $v$ with $b(v)=0$ and $a(v) = -1$ point directly at the cusp $0$ -- that is, the geodesics that they determine hit the cusp in finite time -- and the vectors with
 $b(v)=0$ and $a(v) = 1$ point away from  $0$.

The functions $a,b\colon T^1\DD^*\to [-1,1]$ satisfy $a^2 + b^2 = 1$; in the example of the surface of revolution mentioned in the beginning of the section we have  $a(v)=\cos\theta(v)$ and $b=\sin\theta(v)$, where $\theta(v)$ is the angle 
between $v$ and the foliation $\{\phi = const.\}$, measured from the direction pointing into the cusp.
Recall the definition  $c(p) = \langle \nabla_{JV} V(p), JV(p)\rangle_p$ from (\ref{e=cdef}).  We study here how the functions
$a, b$ and $c$ vary along a geodesic in $\DD^\ast$.

\begin{lemma} \label{l=derivs}  Let $\gamma\colon[0,T]\to \DD^\ast$ be a geodesic segment, and for $t\in[0,T]$,
write $a(t) = a(\dot\gamma(t)), b(t) = b(\dot\gamma(t))$, and $c(t) = c(\gamma(t))$.  Then:
\begin{enumerate}
\item $\delta' = a$;
\item $a' = b^2c = r b^2/\delta + O(b^2)$;
\item $b' = -abc  = -rab/\delta + O(ab)$;
\end{enumerate}

\end{lemma}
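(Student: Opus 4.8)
The three identities are purely local computations with the Levi-Civita connection along the geodesic $\gamma$, using only Proposition~\ref{p=c}. The key relations are $\gamma' = \dot\gamma$, $\dot\gamma'' = 0$ (geodesic equation), $\nabla\delta = V$, $\|V\| = 1$, and the structure equations from part (2) of Proposition~\ref{p=c}: $\nabla_U V = c\langle U,JV\rangle JV$ and $\nabla_U JV = -c\langle U,JV\rangle V$. I will also write $\dot\gamma = aV + bJV$ along $\gamma$, which is just the decomposition of the unit vector $\dot\gamma$ in the orthonormal frame $(V,JV)$ at each point (legitimate since $a^2+b^2=1$).

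\textbf{Identity (1).} Differentiate $\delta(t) = \delta(\gamma(t))$: by the chain rule $\delta'(t) = \langle \nabla\delta, \dot\gamma\rangle = \langle V, \dot\gamma\rangle = a(t)$ by the definition of $a$. Done immediately.

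\textbf{Identities (2) and (3).} Differentiate $a(t) = \langle V(\gamma(t)), \dot\gamma(t)\rangle$ along $\gamma$, using compatibility of $\nabla$ with the metric:
\[
a'(t) = \langle \tfrac{D}{dt}V, \dot\gamma\rangle + \langle V, \dot\gamma'\rangle = \langle \nabla_{\dot\gamma}V, \dot\gamma\rangle + 0,
\]
since $\gamma$ is a geodesic. Now apply part (2) of Proposition~\ref{p=c} with $U = \dot\gamma$: $\nabla_{\dot\gamma}V = c\langle \dot\gamma, JV\rangle JV = c\,b\,JV$, so $\langle \nabla_{\dot\gamma}V,\dot\gamma\rangle = c\,b\,\langle JV,\dot\gamma\rangle = c\,b^2$. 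Substituting $c = r/\delta + O(1)$ from part (3) of Proposition~\ref{p=c} gives $a' = b^2 c = rb^2/\delta + O(b^2)$. Similarly, differentiate $b(t) = \langle JV(\gamma(t)), \dot\gamma(t)\rangle$:
\[
b'(t) = \langle \nabla_{\dot\gamma}JV, \dot\gamma\rangle + \langle JV, \dot\gamma'\rangle = \langle -c\langle\dot\gamma,JV\rangle V, \dot\gamma\rangle + 0 = -c\,b\,\langle V,\dot\gamma\rangle = -c\,ab,
\]
again using part (2) of Proposition~\ref{p=c} and then the geodesic equation; substituting $c = r/\delta + O(1)$ yields $b' = -abc = -rab/\delta + O(ab)$.

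\textbf{Main obstacle.} There is essentially no obstacle here—the content is entirely packaged in Proposition~\ref{p=c}, whose structure equations turn each derivative into an algebraic expression in $a$, $b$, $c$. The only point requiring a word of care is justifying $\dot\gamma = aV + bJV$ and that the frame $(V, JV)$ is defined and orthonormal along $\gamma((0,T])$, which follows since $\gamma$ stays in $\DD^\ast$ where $\delta$ is $C^4$ with $\|\nabla\delta\|=1$; and that these identities extend continuously to the endpoints if $\gamma$ touches $\partial\DD^\ast(\delta_0)$. I would state the computation cleanly and leave the substitution of the asymptotics for $c$ as the trivial final step.
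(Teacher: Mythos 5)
Your proof is correct and follows the same route as the paper: differentiate $\delta$, $a=\langle\dot\gamma,V\rangle$, $b=\langle\dot\gamma,JV\rangle$ along the geodesic, use the geodesic equation together with the structure equations $\nabla_U V= c\langle U,JV\rangle JV$ and $\nabla_U JV=-c\langle U,JV\rangle V$ from Proposition~\ref{p=c}(2), and then invoke $c=r/\delta+O(1)$ from Proposition~\ref{p=c}(3). The paper's own argument is exactly this short computation, so no further comment is needed.
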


\begin{proof} This is a straightforward verification. From the definitions, we have $\delta' = \langle \dot \gamma,  \nabla \delta\rangle = \langle \dot \gamma,  V \rangle = a$, and
$a'=\nabla_{\dot\gamma}\langle \dot\gamma,V\rangle=\langle\dot\gamma,\nabla_{\dot\gamma}V\rangle=\langle\dot\gamma, b c  JV\rangle= b^2 c$.
Similarly,
$b' =\langle\dot\gamma,\nabla_{\dot\gamma}JV\rangle=-\langle\dot\gamma,b c V\rangle=-abc$. We apply conclusion 3 of Proposition~\ref{p=c} to get the final estimate.
\end{proof}

\subsection{Quasi-Clairaut Relations}\label{s=QC}

We next prove that there is a Clairaut-type integral for geodesic rays in $\DD^*$.
Recall that  for $\delta_0\in(0,1)$,  $\DD^\ast(\delta_0)$ denotes the set of
$z\in\DD^\ast$ with $\delta(z) \leq \delta_0$. 

\begin{proposition}
\label{p=Clairaut}
If $\delta_0$ is sufficiently small,  then  there exists $C=1+O(\delta_0)$, such that for every geodesic segment
$\gamma_v\colon [0,T]\to \DD^\ast(\delta_0)$,
the following quasi-Clairaut formula holds, for all $t_1,t_2\in[0,T]$:
\[
C^{-1}\delta(\gamma_v(t_2))^rb(\dot\gamma_v(t_2))\leq \delta(\gamma_v(t_1))^rb(\dot \gamma_v(t_1))\leq C\delta(\gamma_v(t_2))^rb(\dot\gamma_v(t_2)).\]
\end{proposition}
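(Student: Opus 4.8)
The plan is to show that the logarithmic derivative of the quantity $g(t):=\delta(\gamma_v(t))^r\,b(\dot\gamma_v(t))$ along a geodesic is bounded by a constant times the velocity of $\delta$, so that integrating gives a bound of the form $|\log g(t_1)-\log g(t_2)|\le O(\delta_0)$. First I would compute $g'(t)$ using Lemma~\ref{l=derivs}. Since $\delta'=a$ and $b'=-abc$, we get
\[
\frac{g'(t)}{g(t)} = r\frac{\delta'}{\delta} + \frac{b'}{b} = r\frac{a}{\delta} - ac = a\left(\frac{r}{\delta} - c\right).
\]
Now conclusion~3 of Proposition~\ref{p=c} says $c = r/\delta + O(1)$, so $r/\delta - c = O(1)$, and since $|a|\le 1$ we obtain $|g'(t)/g(t)| = O(1)$ — but this alone only gives a bound depending on $T$, which is not good enough. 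The point is to get the extra factor that makes the bound depend only on $\delta_0$.

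The key improvement: rewrite $g'/g = a(r/\delta - c)$ and observe that $a = \delta'$, so
\[
\frac{d}{dt}\log g(t) = \delta'(t)\cdot\left(\frac{r}{\delta(t)} - c(\gamma_v(t))\right) = \delta'(t)\cdot O(1).
\]
Hence $\bigl|\log g(t_2) - \log g(t_1)\bigr| \le \int_{t_1}^{t_2} |\delta'(t)|\cdot O(1)\,dt$. The integral $\int |\delta'|\,dt$ is the total variation of $t\mapsto\delta(\gamma_v(t))$, which need not be bounded by $\delta_0$ in general (a geodesic could oscillate in and out). To handle this I would use the convexity-type information on $\delta$: by Lemma~\ref{l=derivs}(2), $\delta'' = a' = b^2 c \ge 0$ on $\DD^\ast(\delta_0)$ (since $c>0$), so $\delta\circ\gamma_v$ is a convex function of $t$. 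A convex function on an interval has total variation at most twice its maximum minus its infimum over the endpoints, and in any case a geodesic segment staying in $\DD^\ast(\delta_0)$ has $\delta\circ\gamma_v$ convex with values in $(0,\delta_0]$, so $\int_{t_1}^{t_2}|\delta'|\,dt$ is controlled: a convex function either is monotone on $[t_1,t_2]$ or has a single interior minimum, so its total variation over that subinterval is at most $2\delta_0$. Therefore $|\log g(t_2) - \log g(t_1)| \le 2\delta_0\cdot O(1) = O(\delta_0)$, and exponentiating gives the constant $C = e^{O(\delta_0)} = 1 + O(\delta_0)$, as claimed.

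The main obstacle is precisely the step controlling $\int|\delta'|\,dt$: one must use the convexity of $\delta\circ\gamma_v$ (which comes from $\delta'' = b^2 c \ge 0$, valid because $c>0$ throughout $\DD^\ast$) to prevent the total variation from accumulating, rather than naively bounding the integrand. I would also need to be slightly careful about the case $b=0$ at some point (the geodesic momentarily tangent to a level set of $\delta$, or pointing straight in/out): when $b=0$ the inequality is trivial at that parameter, and away from such parameters $b$ has constant sign (by uniqueness for the ODE $b' = -abc$, $b$ cannot change sign except through $b\equiv 0$), so $\log|b|$ is well-defined on the relevant subinterval and the computation above goes through with $b$ replaced by $|b|$. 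Finally I would record that the implied constant in $r/\delta - c = O(1)$ is uniform over $\DD^\ast(\delta_0)$ by Proposition~\ref{p=c}, so that $C$ depends only on $\delta_0$ and not on $v$ or $T$.
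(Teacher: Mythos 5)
Your proposal is correct and follows essentially the same route as the paper: compute $g'/g = a(r/\delta - c) = O(|\delta'|)$ using Lemma~\ref{l=derivs} and Proposition~\ref{p=c}(3), then use convexity of $\delta\circ\gamma_v$ (established in the paper's Lemma~\ref{l=convexd}) to bound $\int_{t_1}^{t_2}|\delta'|\,dt \leq 2\delta_0$, and exponentiate. Your observation that $\delta'' = b^2 c \geq 0$ holds everywhere (not just at critical points, which is all the paper explicitly checks) is a slightly cleaner justification of the convexity, and your remark about $b$ not changing sign via the ODE is a useful clarification, but the substance is identical.
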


\begin{proof}[Proof of Proposition~\ref{p=Clairaut}]
First note that the statement is trivially true if $b(\dot\gamma_v(0))=0$; hence we may assume $b(\dot\gamma_v(t))\neq 0$.
Let $\gamma_v\colon[t_1,t_2]\to \DD^\ast(\delta_0)$, where $\delta_0$ will be specified later.

As in the previous section, write  
$$
a(t) = a(\dot\gamma_v(t)), b(t) =  b(\dot\gamma_v(t)), c(t) =  c(\gamma_v(t)),\;\hbox{ and } 
\delta(t) =  \delta(\gamma_v(t)).
$$

The first main ingredient in the proof of Proposition~\ref{p=Clairaut} is the following lemma.

\begin{lemma}\label{l=convexd} 
For every $v\in T^1\DD^*$, the function $\delta$ is convex along $\gamma_v$ and strictly convex if  $a(v)\notin \{-1,+1\}$, 
\end{lemma}

\begin{proof} 
At any time $t$  where $\delta'(t)=a(t)=0$ we have $b(t)=1$ so that by Lemma~\ref{l=derivs}, we have
 $\delta''(t)=a'(t)= b(t)^2 c(t) = c(t)$, which is positive,
by Proposition~\ref{p=c}.
\end{proof}

Returning to the proof of Proposition~\ref{p=Clairaut}, let $g=\delta^rb$. We first calculate:
$$g' = (\delta^rb)'=r\delta^{r-1}b\delta'+\delta^rb'=r\delta^{r-1}ab+\delta^r\frac{-rab}{\delta}+O(|a|b\delta^{r}),$$
and so  $g'/g=O(|a|)$.  Thus there is a constant $C$ such that 
$\left|{g'}/{g}\right|\leq C|a|=C|\delta'|$.
Fixing $t_1 < t_2$, we have
 $$\left |\int_{t_1}^{t_2}  \frac{g'}{g} \right|\leq \int_{t_1}^{t_2} \left |\frac{g'}{g } \right|\leq \int_{t_1}^{t_2} C|\delta'|.$$  
Thus
$$\exp\left({-C\int_{t_1}^{t_2} |\delta'|}\right)\leq \frac{g(t_1)}{g(t_2)}\leq \exp\left({C\int_{t_1}^{t_2}|\delta'|}\right).$$

Since $\delta$ is convex and $\delta(t_1), \delta(t_2)\leq \delta_0$, we have $\delta(t)\leq \delta_0$ for all $t\in [t_1,t_2]$,
and there is at most one $t_*\in [t_1,t_2]$ where $\delta'$ vanishes. It follows easily that
$\int_{t_1}^{t_2} |\delta'| \leq  2 \delta_0$,
which implies the conclusion.
\end{proof}

\begin{corollary}\label{p=leave} Every unit-speed geodesic in $\DD^\ast$ that enters the region $\DD^\ast(\delta_0)$  leaves the
region in time $\leq 2\delta_0$.
\end{corollary}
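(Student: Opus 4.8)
The plan is to exhibit a single auxiliary function along the geodesic that is \emph{quantitatively} convex --- with second derivative bounded below by $1$ --- and whose first derivative stays bounded by $\delta_0$ in absolute value as long as the geodesic remains in $\DD^\ast(\delta_0)$; the bound on the sojourn time then drops out of the fundamental theorem of calculus.

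In detail: let $\gamma=\gamma_v\colon[t_1,t_2]\to\DD^\ast(\delta_0)$ be a unit-speed geodesic segment whose entire image lies in $\DD^\ast(\delta_0)$, so that it suffices to prove $t_2-t_1\le 2\delta_0$. Retaining the notation $a(t)=a(\dot\gamma(t))$, $b(t)=b(\dot\gamma(t))$, $c(t)=c(\gamma(t))$, $\delta(t)=\delta(\gamma(t))$ of Lemma~\ref{l=derivs}, I would set $f(t):=\frac12\,\delta(t)^2$ and compute, using $\delta'=a$ and $a'=b^2c$ from that lemma, that $f'=\delta a$ and $f''=a^2+\delta c\, b^2$. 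The first step is to note that Proposition~\ref{p=c}(3), namely $c=r/\delta+O(1)$ with $r>2$, forces $\delta(z)\,c(z)\ge 1$ throughout $\DD^\ast(\delta_0)$ once $\delta_0$ is chosen small enough; combined with $a^2+b^2=1$ this yields $f''(t)\ge a(t)^2+b(t)^2=1$ for all $t\in[t_1,t_2]$. The second step is the trivial bound $|f'(t)|=\delta(t)\,|a(t)|\le\delta(t)\le\delta_0$, which holds precisely because $\gamma$ stays in $\DD^\ast(\delta_0)$. Integrating the first inequality and inserting the second then gives $t_2-t_1\le\int_{t_1}^{t_2}f''(t)\,dt=f'(t_2)-f'(t_1)\le 2\delta_0$, as desired.

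The only delicate point --- and it is a minor one --- is the verification of $\delta c\ge 1$ in the cuspidal region, which is where the smallness of $\delta_0$ (and the hypothesis $r>2$, although $r>1$ would already do) is used; everything else is a direct substitution from Lemma~\ref{l=derivs} together with $a^2+b^2=1$. I note that this argument uses neither the quasi-Clairaut relation of Proposition~\ref{p=Clairaut} nor the bare convexity statement of Lemma~\ref{l=convexd}: the inequality $f''\ge 1$ is a sharp quantitative strengthening of the latter (``$\delta^2$ is super-convex along geodesics''), and it is exactly this strengthening that produces the clean constant $2\delta_0$.
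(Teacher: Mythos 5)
Your argument is correct, and it is worth noting that the paper itself offers no explicit proof of this corollary: it is stated immediately after Proposition~\ref{p=Clairaut}, whose proof establishes the convexity of $\delta$ along geodesics (Lemma~\ref{l=convexd}) and the bound $\int_{t_1}^{t_2}|\delta'|\le 2\delta_0$, but that bound controls the total variation of $\delta$, not the sojourn time, so a quantitative ingredient is genuinely needed and you supply it. Your computation $f''=(\tfrac12\delta^2)''=a^2+\delta c\,b^2\ge a^2+b^2=1$ is right, granted $\delta c\ge 1$ on $\DD^\ast(\delta_0)$, which indeed follows from Proposition~\ref{p=c}(3) since $\delta c=r+O(\delta)$ and $r>2$ (as you say, $r>1$ with $\delta_0$ small would do); combined with $|f'|=\delta|a|\le\delta_0$ this gives exactly the constant $2\delta_0$, using nothing beyond Lemma~\ref{l=derivs}, and it is a sharp strengthening of Lemma~\ref{l=convexd} rather than a consequence of the quasi-Clairaut relation. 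For comparison, there is also a synthetic route that produces the same constant with no curvature asymptotics at all: lifting to the geodesically convex universal cover, whose completion is $CAT(0)$ (as in Step 0 of the proof of Proposition~\ref{p=c}), the entry and exit points of the segment are at distance at most $\delta(\gamma(t_1))+\delta(\gamma(t_2))\le 2\delta_0$ from one another via the cusp point, and the lifted chord is the unique, hence minimizing, geodesic between them, so its length $t_2-t_1$ is at most $2\delta_0$; your differential argument buys an explicit, purely local verification in the same calculus as Lemma~\ref{l=derivs}, while the synthetic one buys independence from the expansion of $c$. One cosmetic caveat, which concerns the statement rather than your proof: a radial geodesic with $a\equiv -1$ never ``leaves'' the region but terminates at the cusp within time $\delta_0$; your estimate handles this too, since it bounds the length of any maximal subsegment contained in $\DD^\ast(\delta_0)$.
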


\subsection{Cuspidal Jacobi fields}\label{s=jacobi}

For $\gamma$ a geodesic segment in $\DD^\ast$, we consider solutions to the Riccati equation:
\begin{equation}\label{e=zeta}
\zeta'(t) = 1+K(\gamma(t))  \zeta(t)^2,
\end{equation}
which is defined on a time interval containing $0$.  The next lemma  shows that there is a ``cone condition" on initial data that is preserved by solutions to (\ref{e=zeta}).  We use this in the next section to construct an invariant cone field  for solutions to  (\ref{e=zeta}) in $S$.

\begin{lemma}\label{l=cone} For every $\epsilon >0 $ there exists $\delta_0>0$  such that
the following holds for every geodesic segment $\gamma\colon[0,T]\to \DD^\ast(\delta_0)$.
Let  $\zeta$ be a solution to  (\ref{e=zeta}) along $\gamma$.
\begin{enumerate}
\item  If $\zeta(0) \leq  \delta(v)/(r-1-\epsilon )$, then
$\zeta(t) \leq \delta(t)/(r-1-\epsilon)$ for all $t\in [0, T]$, and
\item if $\zeta(0) \geq \delta(v)/(r+\epsilon)$, 
 then $\zeta(t)\geq  \delta(t)/(r+\epsilon)$, for all $t\in [0, T]$.
\end{enumerate}
\end{lemma}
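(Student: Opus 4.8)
The plan is to use the basic comparison lemma (Lemma~\ref{l=comparison}) applied to the Riccati equation (\ref{e=zeta}), with the two candidate barrier functions $\overline u(t) = \delta(t)/(r-1-\epsilon)$ and $\underline u(t) = \delta(t)/(r+\epsilon)$ respectively. The key point is that, along a geodesic segment, $\delta(t)$ evolves according to Lemma~\ref{l=derivs}, so we can compute $\overline u'$ and $\underline u'$ explicitly and compare them with the right-hand side $F(\zeta,t) = 1 + K(\gamma(t))\zeta^2$ evaluated at $\zeta = \overline u(t)$ (resp. $\underline u(t)$). Since the curvature hypothesis (\ref{e=Kassump}) gives $K(\gamma(t)) = -r(r-1)/\delta(t)^2 + O(\delta(t)^{-1})$, this comparison reduces to an elementary algebraic inequality involving $r$, $\epsilon$, $\delta(t)$, and $a(t) = \delta'(t)$, valid once $\delta_0$ is small enough to absorb the $O(\delta^{-1})$ error terms.

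More concretely, for part (1): writing $\delta = \delta(t)$ and $a = a(t) = \delta'(t)$, Lemma~\ref{l=derivs} gives $\overline u'(t) = a/(r-1-\epsilon)$. On the other hand, evaluating the Riccati right-hand side at $\zeta = \overline u$ gives
\[
1 + K(\gamma(t))\,\frac{\delta^2}{(r-1-\epsilon)^2} = 1 - \frac{r(r-1)}{(r-1-\epsilon)^2} + O(\delta) = -\frac{r(r-1) - (r-1-\epsilon)^2}{(r-1-\epsilon)^2} + O(\delta).
\]
The numerator $r(r-1) - (r-1-\epsilon)^2$ is strictly positive (it equals $(r-1)\epsilon + \epsilon(r-1-\epsilon) + \epsilon^2 \cdot(\text{stuff})$; in any case one checks it is $>0$ for small $\epsilon$ since at $\epsilon=0$ it is $r(r-1)-(r-1)^2 = r-1 > 0$), so this quantity is a negative constant plus $O(\delta)$, hence $\leq 0$ once $\delta_0$ is small. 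Meanwhile $\overline u'(t) = a/(r-1-\epsilon)$ can be of either sign since $a \in [-1,1]$, so the naive pointwise comparison $F(\overline u(t),t) \leq \overline u'(t)$ does \emph{not} hold everywhere — this is the main obstacle, and it is why the lemma needs more than a one-line invocation of Lemma~\ref{l=comparison}.

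To get around this I would argue as follows. By Lemma~\ref{l=convexd}, $\delta$ is convex along $\gamma$, so $a = \delta'$ is nondecreasing; hence the set where $a < 0$ is an initial interval $[0,t_*]$ and the set where $a > 0$ is a terminal interval $[t_*, T]$ (with $t_*$ possibly degenerate). On $[t_*, T]$ we have $a \geq 0$, so $\overline u'(t) \geq 0 \geq F(\overline u(t), t)$ and the comparison lemma applies directly, giving $\zeta(t) \leq \overline u(t)$ throughout $[t_*,T]$ provided $\zeta(t_*) \leq \overline u(t_*)$. For the initial interval $[0,t_*]$ where $a \leq 0$, I would instead run the comparison backward in time from $t_*$, or equivalently use the time-reversed geodesic: reversing $t$ flips the sign of $a$, turning the bad region into a good one, and the Riccati equation for the stable direction behaves compatibly. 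Thus starting from the hypothesis $\zeta(0) \leq \delta(0)/(r-1-\epsilon)$ one propagates forward through $[0,t_*]$ using a backward comparison argument and then forward through $[t_*,T]$ directly; the convexity of $\delta$ is exactly what guarantees these two regimes fit together into a single monotone picture. Part (2) is entirely analogous with $\underline u(t) = \delta(t)/(r+\epsilon)$: there the relevant numerator is $(r+\epsilon)^2 - r(r-1) = r + \epsilon(2r+\epsilon) > 0$, so $F(\underline u(t),t) - \underline u'(t)$ is a positive constant plus $O(\delta) + O(|a|)$, which is $\geq 0$ on the region $a \leq 0$ and handled by a symmetric (forward/backward) argument on the region $a \geq 0$. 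The only real work, beyond setting up these sign bookkeeping cases, is checking the two elementary inequalities on $r,\epsilon$ and choosing $\delta_0$ small enough (uniformly in the geodesic, which is automatic since all estimates depend only on $r$, $\epsilon$, and the implied constants in (\ref{e=Kassump})) to dominate the error terms.
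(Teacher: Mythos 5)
The setup you chose (the barriers $\overline u=\delta/(r-1-\epsilon)$, $\underline u=\delta/(r+\epsilon)$, Lemma~\ref{l=comparison}, $\delta'=a$ from Lemma~\ref{l=derivs}, and absorbing the $O(\delta^{-1})$ error in (\ref{e=Kassump}) by shrinking $\delta_0$) is exactly the paper's, but the ``main obstacle'' you then identify is not there, and the detour you build around it is where your proof breaks. For the upper barrier you only checked that $1+K\overline u^{\,2}$ is a negative constant plus $O(\delta)$; the relevant question is whether it lies below $\overline u'=a/(r-1-\epsilon)\ge -1/(r-1-\epsilon)$, and in the worst case $a=-1$ the required inequality, after multiplying by $(r-1-\epsilon)^2$ and using (\ref{e=Kassump}), reduces to $r(r-1)-(r-1-\epsilon)^2-(r-1-\epsilon)=\epsilon(2r-1-\epsilon)>0$ up to an $O(\delta)$ error, which holds for $\delta_0$ small since $r>2$. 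Similarly for the lower barrier the worst case $a=+1$ leaves the margin $(r+\epsilon)^2-r(r-1)-(r+\epsilon)=\epsilon(2r-1+\epsilon)>0$. The $\epsilon$-widening of the cone edges is precisely what buys this slack, so the naive pointwise comparison $F(\overline u,t)\le \overline u'$ (resp.\ $F(\underline u,t)\ge\underline u'$) holds on all of $[0,T]$, and a single application of Lemma~\ref{l=comparison} finishes both parts; this three-line computation is the paper's entire proof.

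Because of this, the split at $t_*$ via Lemma~\ref{l=convexd} is unnecessary, and the piece of your argument meant to handle $[0,t_*]$ has a genuine gap. You propose to ``propagate forward through $[0,t_*]$ using a backward comparison from $t_*$,'' but your hypothesis $\zeta(0)\le\overline u(0)$ sits at $t=0$: a comparison initialized at $t_*$ has nothing to start from, since the inequality at $t_*$ is exactly what you are trying to establish. Moreover, time reversal sends an unstable-type Riccati solution $\zeta$ to $-\zeta(t_*-\cdot)$, so the upper bound $\zeta\le\delta/(r-1-\epsilon)$ becomes a lower bound by the \emph{negative} barrier $-\delta/(r-1-\epsilon)$ (a stable-cone statement), which is neither part (1) nor part (2) and is not something you have proved; the phrase ``the Riccati equation for the stable direction behaves compatibly'' is carrying all the weight without justification. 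As written the proof is therefore incomplete; carrying out the worst-case estimate above and applying Lemma~\ref{l=comparison} directly on $[0,T]$ both repairs it and recovers the paper's argument.
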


\begin{proof} We establish the lower bound first. To this end let $w=\delta/(r+\epsilon)$. Then
$w' = a/(r+\epsilon)$.  To show that $w\leq \zeta$, it suffices by Lemma~\ref{l=comparison} to show that
$$
 \frac{a}{r+\epsilon} \leq  1 + K w^2 = 1 + \left( - \frac{r(r-1)}{\delta^2} + O(\delta^{-1}) \right)\frac{\delta^2}{(r+\epsilon)^2};
$$
equivalently, it suffices to show that
$ r - ar +2r \epsilon - a\epsilon + \epsilon^2 \geq O(\delta)$.
Since $a\leq 1$, this clearly will hold if $\delta_0$ is sufficiently small.
The upper bound is proved similarly.
\end{proof}

\section{Global properties of the flow in $T^1 S$}\label{s=global}

Now consider the surface $S$ with one puncture, satisying the hypotheses of Theorem~\ref{t=main}.  Let $\delta$ be the distance to the cusp.
For $\delta_0>0$, denote by  $ \cN(\delta_0) = \{p: \delta(p)\leq \delta_0\}$ the convex
$\delta_0$-neighborhood of the cusp.  In this section, we modify the function $\delta$ outside of a neighborhood  $\cN(\delta_1)$ and use the modified function $\bar\delta$ to construct a $D\varphi_t$-invariant conefield on $T T^1S$.  We also use the modified function $\bar\delta$ to construct a new Riemannian  metric on $T^1S$,
called the $\star$ metric, that makes $T^1S$ complete.  

Having done this, we consider the flow $\psi_t$  on $T^1S$ given by rescaling $\varphi_t$ to have unit speed in the $\star$ metric.  We prove that this flow is Anosov in the $\star$ metric and preserves a smooth, finite volume.  This allows us to conclude that $\varphi_t$ is ergodic and has smooth invariant stable and unstable foliations on which $\varphi_t$ acts with bounded distortion.

\subsection{Invariant cone field}
In this subsection, we prove the following key technical result, which we will use to prove that a rescaled version of $\varphi_t$ is Anosov.
\begin{proposition}\label{p=cones4}{\bf[Cones]} For every $\epsilon>0$ sufficiently small, if $\delta_1$ is sufficiently small, then the following holds.

 There exists $\beta>0$, an extension $\bar\delta:S\to(0,\infty)$ of $\delta|_{\cN(\delta_1/2)}$ and a function  $\chi\colon S\to [\beta, r-1-\epsilon]$ satisfying $\chi(p) = r-1-\epsilon$ for $\bar\delta(p)  \leq \delta_1$,
\[\chi(p) - \|\nabla \bar\delta(p)\| \geq  \beta,\]
for all $p\in S$,
 and  such that the following holds.  Let $\gamma\colon [0,T]\to  S$ be a geodesic segment in $S$.  Then:
\begin{enumerate}
\item If  $u$ is a solution to  (\ref{e=Ricplain}) below then
\[ u(0) \in \left[\frac{\chi(\gamma(0))}{\bar\delta(\gamma(0))}, \frac{r+\epsilon}{\bar\delta(\gamma(0))}\right]\implies u(t) \in \left[\frac{\chi(\gamma(t))}{\bar\delta(\gamma(t))}, \frac{r+\epsilon}{\bar\delta(\gamma(t))}\right],\]
for all $t\in [0,T]$,
\item  If  $u$ is a solution to (\ref{e=Ricplain}) below then
\[ u(T) \in \left[- \frac{r+\epsilon}{\bar\delta(\gamma(T))}, -\frac{\chi(\gamma(T))}{\bar\delta(\gamma(T))}\right] \implies u(t) \in  \left[- \frac{r+\epsilon}{\bar\delta(\gamma(r))}, -\frac{\chi(\gamma(t))}{\bar\delta(\gamma(t))}\right], \]
for all $t\in [0,T]$.
\end{enumerate}
\end{proposition}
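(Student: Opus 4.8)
The plan is to build $\bar\delta$ and $\chi$ by hand so that the two cone conditions reduce, on the region near the cusp, to Lemma~\ref{l=cone}, and, on the compact part, to a crude Riccati comparison using the (now bounded) curvature. First I would fix $\epsilon>0$ small and choose $\delta_1$ small enough that Lemma~\ref{l=cone} applies on $\DD^\ast(\delta_1)$ with the given $\epsilon$ (and so that the quasi-Clairaut estimates of Section~\ref{s=revolution} are available there). On $\cN(\delta_1/2)$ we set $\bar\delta = \delta$; on $S\setminus\cN(\delta_1)$ we let $\bar\delta$ be a smooth positive function, constant equal to some value $\geq\delta_1$, chosen so that $\|\nabla\bar\delta\|$ is as small as we like there (e.g.\ $\bar\delta\equiv\delta_1$ outside a slightly larger neighborhood), and on the collar $\delta_1/2\leq\delta\leq\delta_1$ we interpolate, keeping $\|\nabla\bar\delta\|\leq 1$ throughout (possible since $\|\nabla\delta\|=1$ by Corollary~\ref{l=cuspdist} and we are only flattening). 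Then I would define $\chi$ to equal $r-1-\epsilon$ on $\{\bar\delta\leq\delta_1\}$ and to be a small constant $\beta_0>0$ on the region where $\bar\delta$ is constant, interpolating in between so that $\chi(p)-\|\nabla\bar\delta(p)\|\geq\beta$ for a uniform $\beta>0$; this is arranged by making $\|\nabla\bar\delta\|$ small wherever $\chi$ is forced to be small, and using $\chi=r-1-\epsilon$, $\|\nabla\bar\delta\|=1$ near the cusp (here $r-1-\epsilon>1$ after shrinking $\epsilon$, since $r>2$).

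Next, with $\bar\delta,\chi$ in hand, I would verify the invariance of the upper cone $[\chi/\bar\delta,\,(r+\epsilon)/\bar\delta]$ under solutions $u$ of the Riccati equation $u'=-K(\gamma(t))-u^2$ (this is ``(\ref{e=Ricplain})''). The argument is the Basic Comparison Lemma~\ref{l=comparison} applied separately to the lower boundary $\underline u(t)=\chi(\gamma(t))/\bar\delta(\gamma(t))$ and the upper boundary $\overline u(t)=(r+\epsilon)/\bar\delta(\gamma(t))$. For the upper boundary, differentiating along $\gamma$ gives $\overline u' = -(r+\epsilon)\bar\delta'/\bar\delta^2$, and since $|\bar\delta'|=|\langle\dot\gamma,\nabla\bar\delta\rangle|\leq\|\nabla\bar\delta\|\leq 1$ we get $|\overline u'|\leq (r+\epsilon)/\bar\delta^2$; meanwhile $-K-\overline u^2$ at the boundary is, near the cusp, $\tfrac{r(r-1)}{\delta^2}+O(\delta^{-1})-\tfrac{(r+\epsilon)^2}{\delta^2}$, which one checks is $\geq\overline u'$ exactly as in the proof of Lemma~\ref{l=cone}(2) (indeed near the cusp this \emph{is} Lemma~\ref{l=cone}(2), since there $\bar\delta=\delta$). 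On the compact part $K$ is bounded, $\bar\delta$ is bounded below, and $(r+\epsilon)/\bar\delta$ can be made to lie above $\sqrt{-K}$, so $-K-\overline u^2<0$ while $\overline u'$ is small; a symmetric computation handles the lower boundary $\chi/\bar\delta$ using $\chi\geq\beta$ and the bound $\chi-\|\nabla\bar\delta\|\geq\beta$ to dominate the $O(\delta^{-1})$ and $O(1)$ error terms. This yields item (1); item (2) is the time-reversed statement and follows by running the flow backward (equivalently, applying Lemma~\ref{l=comparison} to the reversed equation), observing that the lower cone for $u$ is the negative of the upper cone for the reversed Riccati solution.

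The main obstacle is the matching on the collar $\delta_1/2\leq\bar\delta\leq\delta_1$, where $\bar\delta$ transitions from $\delta$ (gradient norm $1$) to a near-constant function (gradient norm near $0$), and simultaneously $\chi$ must transition from $r-1-\epsilon$ down toward $\beta_0$ while preserving $\chi-\|\nabla\bar\delta\|\geq\beta$ \emph{and} the Riccati inequalities $-K-\overline u^2\geq\overline u'$, $-K-\underline u^2\leq\underline u'$. The key realization that makes this work is that both error terms in the curvature hypothesis are of strictly lower order ($O(\delta^{-1})$ rather than $\delta^{-2}$) and the discrepancy $r(r-1)-(r+\epsilon)^2<0$ as well as $\chi(\chi+1)$ vs.\ $r(r-1)$ leave room of size $\sim\delta^{-2}$ to absorb the extra terms $|\chi'|/\bar\delta$, $\chi|\bar\delta'|/\bar\delta^2$ coming from the now-variable $\chi$ and $\bar\delta$, \emph{provided} the collar $[\delta_1/2,\delta_1]$ is traversed slowly enough in the $\bar\delta$ variable; shrinking $\delta_1$ makes all these error terms small relative to the structural gap. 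I would carry out the collar estimate by writing everything in terms of $\bar\delta$ as the independent variable (legitimate since $\delta$ is monotone along any geodesic that crosses the collar, or by splitting at the unique critical point of $\delta$ as in Lemma~\ref{l=convexd}) and checking the two differential inequalities are implied by $\|\nabla\bar\delta\|\leq 1$, $\beta\leq\chi\leq r-1-\epsilon$, $\chi-\|\nabla\bar\delta\|\geq\beta$, and $\delta_1$ small — the details being routine but tedious, and left to the reader in the same spirit as the earlier sections.
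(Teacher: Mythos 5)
Your plan follows the paper's architecture (extend $\delta$, interpolate a lower-edge function $\chi$ down to a small constant, verify both cone edges with Lemma~\ref{l=comparison}, quote Lemma~\ref{l=cone} deep in the cusp, reverse time for item (2)), but there is a genuine gap in the collar step that you declare routine, and it is not a matter of tedium. Your key claim --- that the extra terms $|\chi'|/\bar\delta$, $\chi|\bar\delta'|/\bar\delta^2$ are absorbed by a ``room of size $\sim\delta^{-2}$'' once $\delta_1$ is small --- is quantitatively false. Along a radial geodesic heading into the cusp ($\bar\delta'=-1$), at a point where $\chi$ is still near $r-1-\epsilon$ and $\bar\delta=\delta$, the slack in the lower-edge inequality $-K-\chi^2/\bar\delta^2\geq \chi'/\bar\delta-\chi\bar\delta'/\bar\delta^2$ equals $\bigl(r(r-1)-(r-1-\epsilon)(r-\epsilon)\bigr)\delta^{-2}+O(\delta^{-1})=\epsilon(2r-1-\epsilon)\,\delta^{-2}+O(\delta^{-1})$, i.e.\ only an $O(\epsilon)$ fraction of the leading terms (the same is true for the upper edge in its worst direction). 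If $\chi$ is forced to drop from $r-1-\epsilon$ to a small constant within a collar of bounded multiplicative width (your $[\delta_1/2,\delta_1]$ or a ``slightly larger neighborhood''), then at worst-case points $\chi'=\|\nabla\chi\|\gtrsim (r-1)/\delta_1$, so $\chi'/\bar\delta\sim(r-1)\delta_1^{-2}$: the offending term scales exactly like the main terms, with a constant independent of $\delta_1$, and for $\epsilon$ small the inequality fails right where the transition begins. (More precisely, since $r(r-1)-\chi(\chi+1)$ vanishes at $\chi=r-1$, the admissible decay rate of $\chi$ at the start of the transition is only $O(\epsilon/\delta)$, so completing the transition requires a collar whose multiplicative length blows up as $\epsilon\to 0$.) This is exactly why the paper's Lemma~\ref{l=cone2} works with \emph{two} scales: the lower edge $g$ is interpolated, via the affine function $\eta$, over the long collar $\mu\delta_0\leq\delta\leq\delta_0$ with $\mu=\mu(\epsilon)$ small, the inequality (\ref{e=etacomp2}) being checked at the endpoints (the inner endpoint forces $\mu\lesssim\epsilon$); only afterwards is $\delta_1$ set equal to $\mu\delta_0$ and $\bar\delta$ flattened inside the small collar $[\delta_1/2,\delta_1]$. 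Shrinking a single scale $\delta_1$ cannot reproduce this.

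Two secondary points are also glossed over, though they are fixable. First, your constant value of $\bar\delta$ ``$\geq\delta_1$'' is constrained from the wrong side: the upper-edge invariance needs $-K\leq(r+\epsilon)\bigl((r+\epsilon)-\bar\delta'\bigr)/\bar\delta^2$ everywhere, and since $\sup_{S\setminus\cN(\delta_1)}(-K)\approx r(r-1)/\delta_1^2$, the constant must be bounded above by (roughly) $\delta_1$; the paper takes $\lambda\delta_1$ with $\lambda<1$, which also keeps $\bar\delta\leq\delta$ on the collar, avoiding an $O(\epsilon)$-tight condition on $\bar\delta/\delta$ at points where $\|\nabla\bar\delta\|$ is close to $1$, and makes a smooth flattening with $\|\nabla\bar\delta\|\leq1$ possible at all (with the constant exactly $\delta_1$ and the flattening confined to $\cN(\delta_1)\setminus\cN(\delta_1/2)$, the constraint $\|\nabla\bar\delta\|\leq1$ would force $\bar\delta\equiv\delta$ there). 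Second, on the compact part the lower-edge condition is not about absorbing $O(\delta^{-1})$ errors: it is the pointwise requirement $\chi/\bar\delta\leq\sqrt{-K}$, so the small constant value of $\chi$ must be tied to $\inf_S\sqrt{-K}$ (the paper's $\theta\leq\kappa_0$), a constraint your sketch never records. As written, then, the invariance of the lower cone edge --- the heart of the proposition --- is not established.
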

\begin{proof}
The proof is broken into a few steps. 
\subsubsection{The lower edge of the cone: $j'/j\geq g$.}
\begin{lemma}\label{l=cone2}  For every $\epsilon>0$, there exist $\mu \in (0,1) $ and for every
$\delta_0>0$ sufficiently small, a continuous function  $g \colon S\to (0,\infty)$
with the following properties:
\begin{enumerate}
\item For every $p\in S$, we have
$g(p) \leq  {(r- 1- \epsilon)}/{ \delta(p) }$.
\item For every $p\in \cN(\mu\delta_0)$, we have
$g (p) =   {(r- 1- \epsilon)}/{ \delta(p) }$.
\item
Let $\gamma\colon [0,T]\to S$ be a geodesic segment in $ S$, and let  $u$ be any solution to  
\begin{equation}\label{e=Ricplain} u'(t) = -K(\gamma(t))-u(t)^2.
\end{equation}
  Suppose that
$u(0) \geq g(\gamma(0))$.
Then $u(t) \geq g(\gamma(t))$,
for all $t\in[0,T]$.

\end{enumerate}

\end{lemma}

\begin{proof}

Given $\epsilon < (r-2)/2$, we choose 
$\delta_0\in(0,1)$ sufficiently small according to  Lemma~\ref{l=cone}. 
Let $-\kappa_0^2$ be an upper bound on the curvature on $S$, and let
\[
\theta = \min\left\{\kappa_0, \inf_{p\in S}\frac{r-1-\epsilon}{\delta(p)}\right\}.
\]

We fix $\mu = \mu(\epsilon) >0$ very small (to be specified later).
Let $\eta\colon [\mu\delta_0,\delta_0]\to \RR_{>0}$ be the  affine function satisfying
\[\eta(\mu\delta_0) = r-1-\epsilon,\,\hbox{ and }\eta(\delta_0) = \theta\delta_0,\]
and define $g\colon S\to (0,\infty)$ by:
\[g(p) = \begin{cases} 
 \theta &\mbox{ if } p\in S\setminus \cN(\delta_0), \\ 
 \frac{\eta(\delta(p))}{\delta(p)}& \mbox{ if } p\in \cN(\delta_0) \setminus \cN(\mu\delta_0), 
\\ 
 \frac{r- 1- \epsilon}{ \delta(p) } &\mbox{ if } p\in \cN(\mu\delta_0). \\ 
 \end{cases} 
\]
By construction, $g$ satisfy conditions 1 and 2.
We check invariance of the condition $u(t)\geq g(\gamma(t))$;  to this end, let $\gamma\colon [0,T]\to S$ be a geodesic, and suppose that $u$ is a solution to (\ref{e=Ricplain}) satisfying $u(0)\geq g(\gamma(0))$.
By breaking $\gamma$ into pieces if necessary, we may assume that one of the following holds:
\begin{enumerate}
\item[] {\bf Case 1.} $\gamma[0,T]\subset S\setminus \cN(\delta_0)$,
\item[] {\bf Case 2.} $\gamma[0,T]\subset \cN(\delta_0) \setminus \cN(\mu\delta_0)$, or
\item[] {\bf Case 3.} $\gamma[0,T]\subset \cN(\mu\delta_0)$.
\end{enumerate}

 Cases 1 and 3 are pretty trivial.  In Case 1, $g\equiv \theta$, and the fact that $-K\geq \kappa_0^2 \geq \theta^2$ implies that the condition $u\geq \theta$ is invariant. In Case 3, $g = (r-1-\epsilon)/\delta$, and we apply Lemma~\ref{l=cone}.

In Case 2,  we will apply Lemma~\ref{l=comparison} to the function $u_0(t):= g(\gamma(t))$.
Differentiating $u_0$, we have
\[
u_0'(t) = \left(\frac{\eta(\delta(\gamma(t)))}{\delta(t)}\right)' = \frac{\eta'(\delta(\gamma(t))) a(\gamma(t))}{\delta(\gamma(t))} - \frac{\eta(\delta(\gamma(t)))a(\gamma(t))}{\delta(\gamma(t))^2}.
\]
Lemma~\ref{l=comparison} implies that $u(t)\geq u_0(t)$ for all $t\in[0,T]$ provided that
$u(0)\geq u_0(0)$ and $-K(\gamma(t)) - u_0(t)^2 \geq u_0'(t)$, for all $t$.  The latter is equivalent to:
\begin{equation}\label{e=u0comp} -K(\gamma(t)) -\left( \frac{\eta(\delta(\gamma(t)))}{\delta(\gamma(t))} \right)^2\geq \frac{\eta'(\delta(\gamma(t))) a(\gamma(t))}{\delta(\gamma(t))} - \frac{\eta(\delta(\gamma(t)))a(\gamma(t))}{\delta(\gamma(t))^2}.
\end{equation}
Since $K = - r(r-1)/\delta^2 + O(1/\delta)$, if $\epsilon$ and $\delta_0$ are sufficiently small, inequality (\ref{e=u0comp}) will hold provided that
\begin{equation}\label{e=etacomp}
r(r-1-\epsilon) - \eta^2  \geq a\eta' \delta - a \eta.
\end{equation}
Since $\eta \in (0, r-1-\epsilon]$ and $\eta' < 0$, inequality (\ref{e=etacomp}) holds automatically when $a\geq 0$.  For $a\leq 0$, inequality  (\ref{e=etacomp}) will hold provided that for all $\delta\in [\mu\delta_0,\delta_0]$, we have:
\begin{equation}\label{e=etacomp2}
r(r-1-\epsilon) - \eta^2  - \eta  \geq -\eta' \delta.
\end{equation}
Since
$-\eta' \leq   {(r-1-\epsilon)}/{((1-\mu)\delta_0)}$,
we are reduced to proving the inequality
\[
r(r-1-\epsilon) - \eta^2  - \eta  \geq   \frac{(r-1-\epsilon)\delta} {(1-\mu)\delta_0}.
\]
To verify this, it suffices to show that the correct inequality holds at the endpoints
$\delta = \mu\delta_0$ and $\delta = \delta_0$; this is easily verified  provided $\delta_0$ and $\mu = \mu(\epsilon)$
are sufficiently small.\end{proof}

\subsubsection{Definition of the modified distance function
 $\bar\delta$.}\label{ss=bardelta}

Fix $\epsilon < (r-2)/2$,  and let $\delta_0>0$  and $\mu\in(0,1)$ be given by Lemma~\ref{l=cone2}.  Let  $\delta_1 = \mu\delta_0$.
Since $S\setminus\cN(\delta_1)$ is compact, we may assume that $\delta_0$ (and hence $\delta_1$) is small enough that 
\[
-\kappa_1^2(\delta_1):= \inf_{p\in S\setminus\cN(\delta_1)} K(p) >  - \frac{(r+\epsilon)(r-1+\epsilon) }{\delta_1^2}.
\]
Fix $\lambda\in (0,1)$ close enough to $1$ that 
\[
-\kappa_1^2(\delta_1) >  - \frac{(r+\epsilon)(r-1+\epsilon) }{(\lambda\delta_1)^2},
\]
and
$\beta_0:= \lambda(r-1-\epsilon) -1  > 0$.

 We extend $\delta$ to  a $C^4$ function $\bar\delta\colon  S\to \RR_{>0}$
satisfying $\bar\delta(p) = \delta(p)$ for $p\in \cN(\delta_1/2)$ and $\bar\delta(p) =\lambda\delta_1$,
for $p\in   S \setminus \cN(\delta_1)$.  We may do this so that
$\bar\delta/\delta \geq \lambda$, and $\|\nabla\bar\delta\|\leq 1$ 
in $\cN(\delta_1)$.
 We also denote by $\bar\delta$ the function on $T^1 S$ defined by
$\bar\delta(v) = \bar\delta(\pi(v))$,
which is constant on the fibers of $T^1 S$.  Thus if $v\in T^1 S$, and $t\in \RR$, we have:
\[
\bar\delta(\varphi_t(v)) = \bar\delta(\gamma_v(t)),
\]
and we will at times write these expressions interchangeably.

Let $g\colon S\to \RR_{>0}$ be the lower cone function given by Lemma~\ref{l=cone2}.  Define
$\chi\colon S \to \RR_{>0}$ by $\chi(p) = \bar\delta(p) g(p)$.  As with
$\bar\delta$, we will lift $\chi$ to a function on $T^1S$ and write $\chi(v) = \chi(\pi(v))$.
Our choice of $\lambda$ ensures that the following lemma holds

\begin{lemma}\label{l=chi}  For all $p\in S$, we have $\chi(p) \leq r-1-\epsilon$.
There exists $\beta >0$ such that for all $p\in S$, we have
$\chi(p) - \|\nabla \bar\delta(p)\| \geq  \beta$.
\end{lemma}
\begin{proof} 
The first assertion follows easily from the fact that
$\bar\delta/\delta \leq 1$ and part 1 of Lemma~\ref{l=cone2}.

 If $p\in S\setminus \cN(\delta_1)$, then $\nabla\bar\delta(p)=0$, and the conclusion holds with $\beta_1 = \inf_{S\setminus \cN(\delta_1)} \chi > 0$.  If $p\in  \cN(\delta_1)$, then $\chi(p) = (r-1-\epsilon)\bar\delta(p)/\delta(p)$,  $\bar\delta(p)/\delta(p) \geq \lambda$ and $\|\nabla\bar\delta(p)\|\leq 1$; thus
$\chi(p) - \|\nabla \bar\delta(p)\| \geq  \lambda(r -1-\epsilon) - 1 = \beta_0>0$.
We conclude by setting $\beta = \min\{\beta_0,\beta_1\}$.
\end{proof}

\subsubsection{The upper edge of the cone: $j'/j\leq (r+\epsilon)/\bar\delta$.}

Using the modified cuspidal distance function $\bar\delta$, we now can define an upper edge to an invariant cone field for solutions to (\ref{e=Ricplain}).
\begin{lemma}\label{l=cone3} Let $\bar\delta$ be defined as in Section~\ref{ss=bardelta}.
Let $\gamma\colon [0,T]\to S$ be a geodesic segment in $S$, and let  $u$ be any solution to  (\ref{e=Ricplain}) with
$u(0) \leq (r+\epsilon)/\bar\delta(\gamma(0))$.
Then $u(t) \leq (r+\epsilon)/\bar\delta(\gamma(t))$,
for all $t\in[0,T]$.
\end{lemma}
\begin{proof} This is a straightforward application of Lemma~\ref{l=comparison}, using only the facts  that $\|\nabla \bar\delta\|\leq 1$, and
$-K(p) \leq {(r+\epsilon)(r-1+\epsilon) }/{\bar\delta(p)^2}$,
for all $p\in S$.
\end{proof}

Lemmas~\ref{l=cone2}, \ref{l=chi}  and \ref{l=cone3} can be applied as well to the flow $\varphi_{-t}$ to obtain invariant negative cones for solutions to the equation (\ref{e=Ricplain}). 
One can do this using the same functions $\chi,\bar\delta$ satisfying both (1) and (2) in the conclusion of Proposition~\ref{p=cones4}
This completes the proof of the proposition.  \end{proof}

\subsection{An adapted, complete metric on $T^1 S$}\label{s=adapted}

Define a new Riemannian metric on $T^1 S$ by
\[\langle (w_1, w_1'), (w_2, w_2') \rangle_{\star,v} =  \frac{1}{\bar\delta(v)^2}\langle w_1, w_2 \rangle_{\pi(v)} +
{\langle w_1', w_2' \rangle_{\pi(v)}},\]
for $v\in T^1S$.

\begin{remark} The $\star$ metric on $T^1S$ is comparable (i.e. bi-Lipschitz equivalent) to the induced Sasaki metric for the so-called Ricci metric on 
$S$.  (The Ricci metric  on $S$ is obtained by scaling the original metric by $-K$.)  We briefly explain.

  Define a metric $\langle\cdot,\cdot\rangle_\dagger$ on $S$ by conformally rescaling the original metric, as follows:
\[
\langle\cdot,\cdot\rangle_\dagger = \bar\delta^{-2}  \langle\cdot,\cdot\rangle.
\]
This is comparable to the Ricci metric, since $-K$ is comparable to $\bar\delta^{-2}$.

We claim that the metric on $T^1S$ induced by the Sasaki metric for $\langle\cdot,\cdot\rangle_\dagger$  is comparable
to  $\langle\cdot,\cdot\rangle_\star$.
Here is a crude sketch of the proof.
The unit tangent bundle $T^1S$ for the original metric is clearly not the $\langle\cdot,\cdot\rangle_\dagger$ unit tangent bundle, but angles remain the same, and so $\langle\cdot,\cdot\rangle$ angular distance in
the vertical fibers of $T^1S$ coincides with $\langle\cdot,\cdot\rangle_\dagger$ 
angular distance.  On the other hand, $\dagger$-distance in the horizontal fibers of $TS$ (with respect to the original connection) is the original distance scaled by $\bar\delta^{-1}$.  Thus the formulas are comparable.

As it is more convenient to work with the $\star$ metric, we will not pursue here further the properties of the $\dagger$ metric on $S$, but one can prove that (for $\delta_0$ sufficiently small) it is complete,  negatively curved with pinched curvature, and of finite volume.  In the case where the original metric is the WP metric, the $\dagger$ metric is comparable to the Teichm\"uller metric, which is the hyperbolic metric.  We will not be using the Riemannian properties of the $\star$ metric beyond completeness and finite volume.
\end{remark}

\medskip

Let $\rho_\star$ be the Riemannian distance on $T^1 S$ induced by $\langle\;,\,\rangle_\star$.
\begin{lemma} 
 $\rho_\star$ is complete.
\end{lemma}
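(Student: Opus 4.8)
The plan is to show that a Cauchy sequence in $(T^1S, \rho_\star)$ cannot escape to the cusp, and hence converges, using the fact that the $\bar\delta^{-2}$ weighting on the horizontal part of the $\star$ metric blows up the length of any curve that runs into the singularity. First I would reduce to a local statement: since $S\setminus\cN(\delta_1)$ is compact and the $\star$ metric is a smooth (uniformly comparable to Sasaki) metric there, completeness is only in question for sequences $v_n\in T^1S$ with $\bar\delta(v_n)=\delta(\pi(v_n))\to 0$, i.e. sequences whose basepoints approach the cusp. So it suffices to prove that any $\rho_\star$-Cauchy sequence stays in a region $\{\bar\delta\geq \eta\}$ for some $\eta>0$; once that is known, the sequence lies in a compact subset of $T^1S$ on which $\rho_\star$ is bi-Lipschitz to the (complete) Sasaki metric, and Cauchy sequences converge.

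The key estimate is a lower bound for $\rho_\star$-distance in terms of the change in $\bar\delta$. Given a $C^1$ path $t\mapsto V(t)\in T^1S$ with basepoint curve $c(t)=\pi(V(t))$, its $\star$-speed is at least $\|\dot c(t)\|/\bar\delta(V(t))$ (dropping the nonnegative vertical term), so
\[
\mathrm{length}_\star(V) \;\geq\; \int \frac{\|\dot c(t)\|}{\bar\delta(c(t))}\,dt \;\geq\; \left|\int \frac{d}{dt}\,\log\bar\delta(c(t))\,dt\right| \;=\; \left|\log\frac{\bar\delta(c(t_1))}{\bar\delta(c(t_0))}\right|,
\]
using $\|\nabla\bar\delta\|\leq 1$ (which holds in $\cN(\delta_1)$ by the construction in Section~\ref{ss=bardelta}, and trivially outside where $\bar\delta$ is constant), so that $|\frac{d}{dt}\log\bar\delta(c(t))| = |\langle\nabla\bar\delta, \dot c\rangle|/\bar\delta \leq \|\dot c\|/\bar\delta$. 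Taking the infimum over paths gives
\[
\rho_\star(v,w) \;\geq\; \bigl|\log\bar\delta(v) - \log\bar\delta(w)\bigr|
\]
for all $v,w\in T^1S$. This is the crucial point: the function $\log\bar\delta$ is $1$-Lipschitz from $(T^1S,\rho_\star)$ to $\RR$, but $\log\bar\delta\to -\infty$ at the cusp, so no Cauchy sequence can approach the cusp. Concretely, if $v_n$ is $\rho_\star$-Cauchy, then $\log\bar\delta(v_n)$ is Cauchy in $\RR$, hence bounded, hence $\bar\delta(v_n)\geq\eta>0$ for all $n$.

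With this in hand the argument concludes quickly: the $v_n$ eventually lie in the compact set $K=\{v\in T^1S: \bar\delta(v)\geq\eta\}$ (using that $\{p\in X : \rho(p,p_1)\geq\eta\}$ is a closed subset of the closed surface $X$ and the fibers of $T^1S$ are compact). On $K$ the two metrics $\rho_\star$ and $\rho_{Sas}$ are uniformly comparable, so $(v_n)$ is $\rho_{Sas}$-Cauchy; since $S$ is a smooth surface and $K$ is compact, $(v_n)$ converges in $T^1S$, and the limit is the $\rho_\star$-limit as well. The only mild subtlety — the main "obstacle," though it is routine — is checking that $\|\nabla\bar\delta\|\leq 1$ globally (not just $\delta$): inside $\cN(\delta_1/2)$ this is Corollary~\ref{l=cuspdist}, in the transition region $\cN(\delta_1)\setminus\cN(\delta_1/2)$ it is arranged by the choice of extension in Section~\ref{ss=bardelta}, and outside $\cN(\delta_1)$ the function $\bar\delta$ is constant so its gradient vanishes; one should also note the estimate holds with $\|\nabla\bar\delta\|$ replaced by any constant bound, since only boundedness of $\log\bar\delta$ along Cauchy sequences is needed.
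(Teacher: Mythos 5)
Your proof is correct and rests on the same geometric fact the paper uses, but the packaging is genuinely different. The paper invokes Hopf--Rinow and checks geodesic completeness: since the projection of a unit-speed $\star$-geodesic has original-metric speed $\leq \bar\delta$, it satisfies $|d\delta/dt|\leq\delta$ near the cusp and so $\delta(t)\geq \delta(0)e^{-t}>0$ for all finite $t$, i.e.\ the geodesic never exits $T^1S$. You instead prove Cauchy completeness directly, by extracting the same logarithmic estimate as the clean inequality $\rho_\star(v,w)\geq|\log\bar\delta(v)-\log\bar\delta(w)|$ (using $\|\nabla\bar\delta\|\leq1$, which indeed holds globally by construction of $\bar\delta$), so that $\log\bar\delta$ is $1$-Lipschitz and a Cauchy sequence cannot have $\bar\delta\to0$. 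Your route avoids Hopf--Rinow entirely and makes the Lipschitz estimate explicit — which is arguably more informative, since the same inequality is what one would want for further quantitative work near the cusp — while the paper's is shorter. One small point worth tightening: when you say $\rho_\star$ and $\rho_{Sas}$ are uniformly comparable on $K=\{\bar\delta\geq\eta\}$, note that $\rho_\star$ allows paths that dip below $K$; but your own log estimate shows a path starting and ending in $K$ that enters $\{\bar\delta<\eta/2\}$ already has $\star$-length $\geq 2\log 2$, so for $\rho_\star$-close pairs the minimizing path stays in $\{\bar\delta\geq\eta/2\}$ and the comparison applies. Alternatively, once the sequence is trapped in the compact set $K$ one can just extract a topologically convergent subsequence and use that a Cauchy sequence with a convergent subsequence converges.
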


\begin{proof} By the Hopf-Rinow theorem, it suffices to show that  any  
$\star$-geodesic  is defined for all time. The only way in which a geodesic in $T^1S$ can stop being defined is for its projection to 
$S$ to hit the cusp. But the projection to $S$ of a $\star$-geodesic is a curve that has speed $\delta$ when it is at distance $\delta$ from the cusp in the geometry of our Riemannian metric $\langle \cdot,\cdot \rangle$ on $S$. It is clear that such a curve cannot reach the cusp in finite time.
\end{proof}

\subsection{Lie brackets and $\star$-covariant differentiation on $T^1S$}
If $X$ is a vector field on $S$, then $X$ has two well-defined lifts $X^h$ and $X^v$ to vector fields on
$TS$, the {\em horizontal} and {\em vertical} lifts, respectively. They are defined by
\[
X^h(u) = (X(\pi(u)),0),\quad\hbox{ and } X^v(u) = (0, X(\pi(u))),
\]
for $u\in T^1S$.
The following formulas for Lie brackets of such lifts are standard; see \cite{Ko}.
\begin{lemma} Let $X$ and $Y$ be arbitrary vector fields on $S$. Then
\begin{itemize} 
\item $[X^v, Y^v]_u = 0$
\item $[X^h, Y^v]_u = (0, \nabla_XY)$
\item $[X^h, Y^h]_u = ([X,Y], - R(X,Y)u)$
\end{itemize}
\end{lemma}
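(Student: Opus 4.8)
The plan is to verify each of the three bracket formulas by working in a local frame and reducing to the defining identities for horizontal and vertical lifts. Throughout I will use the standard decomposition $T_uTS \cong T_{\pi(u)}S \oplus T_{\pi(u)}S$ coming from the Levi-Civita connection, under which a vector field $Z$ on $TS$ is recorded as a pair (horizontal component, vertical component), and I will use that $X^h$ is $\pi$-related to $X$ while $X^v$ is $\pi$-related to the zero field. Since all three identities are tensorial in $X$ and $Y$ (both sides are $C^\infty(S)$-bilinear up to the first-order correction terms, which are themselves handled by the Leibniz rule), it suffices to check them when $X$ and $Y$ are coordinate vector fields $\partial_i$, $\partial_j$, so that $[X,Y]=0$ on $S$.

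First I would dispose of $[X^v,Y^v]=0$: the vertical lift of $\partial_i$ in a natural chart $(x^k,v^k)$ on $TS$ is simply $\partial/\partial v^i$ (up to connection terms that vanish along the vertical directions), and two such coordinate fields commute; alternatively, vertical lifts are tangent to the fibers and the restriction of the lift to a fiber is the constant vector field $X(\pi(u))$ under the canonical identification of the fiber with $T_{\pi(u)}S$, so the bracket of two fiberwise-constant fields vanishes. Next, for $[X^h,Y^v]$, I would compute $X^h(Y^v)$: since $X^h$ differentiates in horizontal directions, moving along a horizontal curve covariantly differentiates the vertical part, producing $(0,\nabla_X Y)$; the term $Y^v(X^h)$ contributes nothing to the bracket because $X^h$ has zero vertical part and $Y^v$ has zero horizontal part, and differentiating the horizontal component of $X^h$ in a purely vertical direction gives zero (the horizontal lift is constant along fibers). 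Carefully bookkeeping the horizontal components shows they cancel, leaving $[X^h,Y^v]_u=(0,\nabla_XY)$.

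The main work — and the step I expect to be the genuine obstacle — is $[X^h,Y^h]_u = ([X,Y], -R(X,Y)u)$. The horizontal part is unsurprising: $\pi_*[X^h,Y^h] = [\pi_*X^h,\pi_*Y^h] = [X,Y]$ since lifts are $\pi$-related. The vertical part is where the curvature enters, and it is exactly the geometric content of the identity: the failure of horizontal lifts to bracket-close is measured by the curvature of the connection. I would extract this by evaluating both sides on a test function, or more cleanly by using the symmetry of the connection together with the curvature tensor's definition $R(A,B) = \nabla_A\nabla_B - \nabla_B\nabla_A - \nabla_{[A,B]}$: one represents $u$ locally as a section, computes the second covariant derivatives along the two horizontal directions, and reads off that the vertical discrepancy is precisely $-R(X,Y)u$ when $[X,Y]=0$, with the general case following by the correction already accounted for in the horizontal component. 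Since the formulas are quoted as standard from \cite{Ko}, I would present the reduction to coordinate fields and the two easy brackets in full, then indicate the curvature computation for the third with enough detail to be convincing but cite \cite{Ko} for the routine chart-level verification.
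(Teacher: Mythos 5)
The paper itself does not prove this lemma; it simply records the formulas as standard and cites \cite{Ko}, so there is no in-paper argument to compare against. Your plan --- reduce to coordinate fields $\partial_i,\partial_j$ and verify the three identities in a natural chart $(x^k,v^k)$ on $TS$ --- is sound, handles the first identity correctly, and if carried out faithfully would yield all three. But the heuristic you give for the middle bracket is not merely imprecise: it is backwards, and contains a false claim that would mislead a reader who trusted it instead of computing. In a chart the lifts are $X^h = \partial_{x^i} - \Gamma^k_{il}v^l\,\partial_{v^k}$ and $Y^v = \partial_{v^j}$. The coefficients of $Y^v$ are constant, so the piece you credit with producing the answer, namely $X^h$ acting on $Y^v$, in fact vanishes. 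The entire bracket comes from the term you dismiss: $-Y^v(X^h) = \Gamma^k_{ij}\,\partial_{v^k} = (\nabla_X Y)^v$, precisely because the Christoffel correction in $X^h$ depends linearly on $v$. Your parenthetical ``the horizontal lift is constant along fibers'' is therefore false in coordinates (what vanishes is the \emph{connection-decomposed} vertical component $(X^h)_{\mathrm{vert}}=0$, not the $\partial_{v^k}$ part of the coordinate expression), and the assertion that $Y^v(X^h)$ contributes nothing is exactly the opposite of what happens.

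The same $v$-dependence of the horizontal lifts is also the mechanism behind the third identity: the vertical component $-R(X,Y)u$ of $[X^h,Y^h]$ arises from differentiating the Christoffel corrections $\Gamma^k_{il}v^l$ against each other, and a coordinate computation (or the second Bianchi-style regrouping you gesture at) produces $\partial_i\Gamma^k_{jl} - \partial_j\Gamma^k_{il} + \Gamma^k_{im}\Gamma^m_{jl} - \Gamma^k_{jm}\Gamma^m_{il} = R^k_{lij}$, i.e.\ exactly $-R(\partial_i,\partial_j)u$ in the vertical slot. So the plan is fine and the reduction to coordinate fields is legitimate (both sides transform the same way under $X\mapsto gX$, $Y\mapsto fY$, as you note), but replace the ``constant along fibers'' heuristic with the correct observation that the bracket structure is carried entirely by the $v$-dependence of the horizontal lifts; otherwise the sketch as written is internally inconsistent with the computation it claims to summarize.
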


Recall the definitions of $\bar V = \nabla \bar\delta$ and $J\bar V$.  To simplify notation, and since the calculations that follow are only interesting in the thin part $\cN(\delta_1/2)$ where
$\delta = \bar\delta$,  we will write
$V, JV$, and $\delta$ for their barred counterparts in what follows.

\begin{lemma}\label{l=brackets} Denote by $V^h, JV^h, V^v, JV^v$ the horizontal and vertical lifts, respectively, of $V$ and $JV$.  Then for $u\in \cN(\delta_1/2)$, we have:
\begin{enumerate}
\item $[V^v, JV^v]_u = [V^h, JV^v]_u = [V^h, V^v]_u = 0$,
\item $[JV^h, V^v]_u = (0, cJV)$
\item $[JV^h, V^h]_u = (cJV,  -R(JV,V)u)$
\item $[JV^h, JV^v]_u = (0, -cV)$
\end{enumerate}
\end{lemma}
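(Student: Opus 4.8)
The plan is to apply the three standard Lie-bracket formulas for horizontal and vertical lifts (the preceding lemma) with the specific vector fields $X = JV$, $Y = V$ on $S$, then use the geometry of the cusp recorded in Proposition~\ref{p=c} to simplify the results. The only nontrivial inputs are the identities $\nabla_{JV} V = cJV$ and $\nabla_{JV} JV = -cV$ (items 1 and 2 of Proposition~\ref{p=c}), together with $\nabla_V V = 0$ and $\nabla_V JV = 0$ (the latter since $V = \nabla\delta$ is the radial field, whose integral curves are unit-speed geodesics, and $J$ is parallel). Everything happens in $\cN(\delta_1/2)$, where $\bar\delta = \delta$, so the formulas of Proposition~\ref{p=c} apply verbatim.

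First I would dispatch item (1): $[V^v, JV^v]_u = 0$ is immediate from $[X^v, Y^v] = 0$; $[V^h, JV^v]_u = (0, \nabla_V JV) = (0,0)$ since $\nabla_V JV = 0$; and $[V^h, V^v]_u = (0, \nabla_V V) = (0,0)$. For item (2), apply $[X^h, Y^v]_u = (0, \nabla_X Y)$ with $X = JV$, $Y = V$: this gives $[JV^h, V^v]_u = (0, \nabla_{JV} V) = (0, cJV)$ by Proposition~\ref{p=c}(1). For item (4), apply the same formula with $X = JV$, $Y = JV$: $[JV^h, JV^v]_u = (0, \nabla_{JV} JV) = (0, -cV)$ by Proposition~\ref{p=c}(2) (with $U = JV$, since $\langle JV, JV\rangle = 1$). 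For item (3), apply $[X^h, Y^h]_u = ([X,Y], -R(X,Y)u)$ with $X = JV$, $Y = V$; the first slot is $[JV, V] = \nabla_{JV} V - \nabla_V JV = cJV - 0 = cJV$ (again Proposition~\ref{p=c}(1), or directly its statement $[JV,V] = cJV$), and the second slot is $-R(JV,V)u$, giving $[JV^h, V^h]_u = (cJV, -R(JV,V)u)$.

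Since each step is a direct substitution into a stated formula followed by one appeal to Proposition~\ref{p=c}, there is no real obstacle here; the lemma is essentially bookkeeping. The only point requiring a word of care is making sure $\nabla_V JV = 0$ and $\nabla_V V = 0$ are justified — these follow because the integral curves of $V = \nabla\delta$ are the unit-speed radial geodesics $\gamma_{0,z}$ (so $\nabla_V V = \gamma'' = 0$) and because $J$ is parallel with respect to the Levi-Civita connection (so $\nabla_V (JV) = J(\nabla_V V) = 0$); both facts were already used in the proof of Proposition~\ref{p=c}. I would write the proof as a short enumerated list mirroring the statement, citing the bracket formulas of the previous lemma and Proposition~\ref{p=c} at the appropriate places.
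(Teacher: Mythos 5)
Your proposal is correct and matches the paper's proof, which likewise treats the lemma as a direct application of the standard bracket formulas for lifts together with $\nabla_{JV}V=[JV,V]=cJV$ (and $\nabla_V V=0$, $\nabla_V JV=0$) from Proposition~\ref{p=c}. Your extra word of justification for $\nabla_V V=0$ and $\nabla_V JV=0$ is exactly the observation the paper relies on as well.
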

\begin{proof} This is a direct application of the previous lemma and the fact that
 $\nabla_{JV} V = [JV,V] = cJV$ from Proposition~\ref{p=c}.
\end{proof}

Observe that $\|V^h\|_\star = \|JV^h\|_\star = \delta^{-1}$, and
 $\|V^v\|_\star = \|JV^v\|_\star = 1$.  We have:
\begin{lemma}  Let $X$ and $Y$ be arbitrary vector fields on $S$ with $\|X\|=\|Y\| = 1$,
and denote by $X^h, X^v, Y^h, Y^v$ their horizontal and vertical lifts.  Then
\[
\|\nabla^\star_{X^h} Y^h\|_\star = O(\delta^{-2}), \, \|\nabla^\star_{X^h} Y^v\|_\star = O(\delta^{-1}), \, \|\nabla^\star_{X^v} Y^h\|_\star = O(\delta^{-1}),\]
and
\[ 
\nabla^\star_{X^v} Y^v = 0.
\]
In particular, the $\star$ connection is summarized in  Table 1.
\end{lemma}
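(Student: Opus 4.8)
\emph{Proof proposal.} The plan is to compute $\nabla^\star$ directly from the Koszul formula
\[
2\langle \nabla^\star_A B, C\rangle_\star = A\langle B,C\rangle_\star + B\langle C,A\rangle_\star - C\langle A,B\rangle_\star + \langle [A,B],C\rangle_\star - \langle [B,C],A\rangle_\star + \langle [C,A],B\rangle_\star,
\]
with $A$ and $B$ each a horizontal or vertical lift of one of the unit vector fields (for the applications in this paper it suffices to take $X,Y\in\{V,JV\}$, for which $\langle X,Y\rangle$ is constant and $\|\nabla_X Y\| = O(\delta^{-1})$ by Proposition~\ref{p=c}), and with $C$ ranging over a $\star$-orthonormal frame. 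Such a frame is obtained from an orthonormal frame $\{e_1,e_2\}$ of $(TS,\langle\cdot,\cdot\rangle)$ by taking the $\star$-unit horizontal vectors $\delta e_i^h$ together with a $\star$-unit vertical vector, so $\|\nabla^\star_A B\|_\star^2 = \delta^2\sum_i\langle\nabla^\star_A B, e_i^h\rangle_\star^2 + (\text{vertical part})^2$; thus a bound $\langle\nabla^\star_A B, e_i^h\rangle_\star = O(\delta^{-p})$ gives a horizontal contribution of size $O(\delta^{1-p})$ to $\|\nabla^\star_A B\|_\star$, and keeping these $\bar\delta$-weights straight is the one bookkeeping point that requires care.

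The estimates rest on two observations. First, $\bar\delta$ is pulled back from $S$ and is constant on the fibers of $T^1S$: every vertical lift annihilates $\bar\delta$ and $\bar\delta^{-2}$, while $X^h(\bar\delta) = \langle X,\nabla\bar\delta\rangle = O(1)$ (it vanishes outside $\cN(\delta_1)$ and has norm $\le\|\nabla\bar\delta\|\le 1$ inside), so $X^h(\bar\delta^{-2}) = O(\delta^{-3})$; together with the fact that $\langle X,Y\rangle$ is constant this makes the first three Koszul terms either vanish or be $O(\delta^{-3})$ in the $h$--$h$ case (and smaller otherwise). Second, the Lie brackets of lifts are given by Lemma~\ref{l=brackets}: the nonzero contributions come from $[X^h,Y^h] = ([X,Y],-R(X,Y)u)$, whose vertical part $R(X,Y)u$ has norm $O(|K|)=O(\delta^{-2})$ and whose horizontal part $[X,Y]$ has norm $O(\delta^{-1})$, and from the mixed brackets $[X^h,Y^v]=(0,\nabla_X Y)$ with $\|\nabla_X Y\| = O(\delta^{-1})$. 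Pairing these against $C$ in the $\star$ metric (which weights horizontal parts by $\bar\delta^{-2}$ and vertical parts by $1$) and dividing by $\|C\|_\star$, one reads off the four claimed orders case by case; e.g. with $A=X^h$, $B=Y^h$ the only surviving term when $C=Z^v$ is $\langle[X^h,Y^h],Z^v\rangle_\star = -\langle R(X,Y)u,Z\rangle = O(\delta^{-2})$ while the horizontal pairings are $O(\delta^{-3})$, whence $\|\nabla^\star_{X^h}Y^h\|_\star = O(\delta^{-2})$, and the remaining three cases are entirely analogous.

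The identity $\nabla^\star_{X^v}Y^v = 0$ is the cleanest: with $A=X^v$, $B=Y^v$ the bracket $[X^v,Y^v]$ vanishes, the first three terms vanish against any vertical $C$ (since $X^v,Y^v,Z^v$ project trivially under $\pi_*$), and against $C=Z^h$ the surviving terms are exactly $-Z\langle X,Y\rangle + \langle\nabla_Z Y,X\rangle + \langle\nabla_Z X,Y\rangle$, which is zero by metric compatibility on $S$; hence $\nabla^\star_{X^v}Y^v$ is $\star$-orthogonal to both the horizontal and vertical distributions and is therefore $0$ (in fact for arbitrary unit $X,Y$). The main obstacle is thus not conceptual but purely computational — writing out the six Koszul terms in each of the remaining combinations and invoking the two observations above — and once this is done, repeating the computation with the explicit brackets of Lemma~\ref{l=brackets} in place of their orders, together with $c = r/\delta + O(1)$ from Proposition~\ref{p=c}, yields the exact entries recorded in Table~1.
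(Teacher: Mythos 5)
Your proposal is correct and takes essentially the same route as the paper: the paper's proof is precisely an application of Koszul's formula together with the bracket formulas of Lemma~\ref{l=brackets} and the order estimates $\|\nabla\bar\delta\|\leq 1$, $c=r/\delta+O(1)$, $|K|=O(\delta^{-2})$ from Proposition~\ref{p=c}, with the explicit entries of Table~1 deferred to a companion computation. Your bookkeeping of the $\bar\delta$-weights in the $\star$-norm, the case analysis over horizontal/vertical test vectors, and the vanishing of $\nabla^\star_{X^v}Y^v$ by metric compatibility match the paper's calculation (and your restriction to $X,Y\in\{V,JV\}$ is consistent with how the lemma is actually used).
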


\begin{table}[ht]
\resizebox{\textwidth}{!}{%
\begingroup
\setlength{\tabcolsep}{2pt} 
\renewcommand{\arraystretch}{1.6} 
\begin{tabular}{ r|c|c|c|c| }
\multicolumn{1}{r}{}
 &  \multicolumn{1}{c}{$V^h$}
 & \multicolumn{1}{c}{$JV^h$} 
&  \multicolumn{1}{c}{$V^v$}
 & \multicolumn{1}{c}{$JV^v$} 
\\
\cline{2-5}
$V^h$ &  & $-\delta^{-1} JV^h $ &  & \\
 & $- \delta^{-1} V^h$ & $+ \frac{1}{2} \langle R(JV,V)u) ,V\rangle  V^v  $ &$ - \frac{\delta^2}{2} \langle R(JV,V)u), V\rangle JV^h$ &$ - \frac{\delta^2}{2} \langle R(JV,V)u, JV\rangle JV^h$  \\
 &  & $ + \frac{1}{2} \langle R(JV,V)u, JV\rangle JV^v$ & & \\
\cline{2-5}
$JV^h$  &   $(- \delta^{-1} + c)JV^h $  & &  & \\
 &  $ -\frac{1}{2} \langle R(JV,V)u, V \rangle V^v$ &  $( \delta^{-1} -  c ) V^h  $  & $\frac{\delta^2}{2}\langle  R(JV,V)u),V\rangle V^h$ & $\frac{\delta^2}{2}\langle R(JV,V)u),JV\rangle V^h$\\
 &  $ -\frac{1}{2} \langle R(JV,V)u, JV\rangle JV^v$ &  & $+c JV^v $& $-cV^v$\\
\cline{2-5}
$V^v$ & $ -\frac{\delta^2} 2  \langle R(JV,V)u,V\rangle JV^h$ & $ \frac{\delta^2}{2}\langle R(JV,V)u),V\rangle V^h$ & $0$ & $0$ \\
\cline{2-5}
$JV^v$ & $-\frac{\delta^2}{2}   \langle  R(JV,V)u), JV \rangle JV^h$ & $\frac{\delta^2}{2}   \langle  R(JV,V)u), JV \rangle V^h$ & $0$ & $0$ \\
\cline{2-5}
\end{tabular}
\endgroup}
\end{table}
\noindent \centerline{{\bf Table 1:} $\nabla^\star_X Y(u)$, for $u\in T^1\cN(\delta_1/2)$, where $X$ is the row vector field,} \\ \centerline{and $Y$ is the column vector field.}

\bigskip\medskip

\begin{proof} The proof is a calculation using Lemma~\ref{l=brackets} and
Koszul's formula:
\[\begin{array}{cc}2 \langle \nabla_X^\star Y, Z\rangle_\star &=\, X (\langle Y,Z\rangle_\star) + Y (\langle X,Z\rangle_\star) - Z (\langle X,Y\rangle_\star) \\&\quad+\, \langle[X,Y],Z\rangle_\star - \langle [X,Z],Y\rangle_\star - \langle[Y,Z],X\rangle_\star.\end{array}\]
The details can be found in \cite{preprint}.
\end{proof}

\begin{lemma} Let $a(w) = \langle w, V(\pi(w))\rangle, b(w) =\langle w, JV(\pi(w))\rangle$ be defined as above.  Then
\begin{enumerate}
\item  $V^h(a) = V^h(b) = 0$,
\item $JV^h(a) = bc$, and $JV^h(b) = -ac$,
\item $V^v(a) =1$, and  $V^v(b) =0$
\item $JV^v(a) =0$, and  $JV^v(b) =1$
\end{enumerate}
\end{lemma}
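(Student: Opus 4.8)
The plan is to reduce all eight identities to a single computation: the directional derivative of a function of the form $w\mapsto\langle w,Z(\pi(w))\rangle$ along the horizontal or vertical lift of a vector field $X$ on $S$, where $Z$ is any smooth vector field on $S$. Recall from the preliminaries that a tangent vector $\xi\in T_wTS$ is represented by a pair $(v_1,v_2)=(\dot c(0),D_cW(0))$, where $W\colon(-1,1)\to TS$ is a curve with $W(0)=w$ and $c=\pi\circ W$ is its curve of basepoints. Writing $f_Z(w):=\langle w,Z(\pi(w))\rangle$ and differentiating the inner product of the two vector fields $t\mapsto W(t)$ and $t\mapsto Z(c(t))$ along $c$, the product rule for the Levi-Civita connection gives
\[
\xi(f_Z)=\frac{d}{dt}\Big|_{t=0}\langle W(t),Z(c(t))\rangle=\langle v_2,Z(\pi(w))\rangle+\langle w,\nabla_{v_1}Z\rangle.
\]
Specializing to $\xi=X^h(w)=(X(\pi(w)),0)$ yields $X^h(f_Z)=\langle w,\nabla_XZ\rangle$, and specializing to $\xi=X^v(w)=(0,X(\pi(w)))$ yields $X^v(f_Z)=\langle X,Z\rangle$, a function pulled back from $S$.

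With this formula in hand the lemma is immediate, feeding in the structure of $\nabla V$ and $\nabla JV$ from part (2) of Proposition~\ref{p=c}, namely $\nabla_UV=c\langle U,JV\rangle JV$ and $\nabla_UJV=-c\langle U,JV\rangle V$. Since $a=f_V$ and $b=f_{JV}$: for the horizontal lifts, $V^h(a)=\langle w,\nabla_VV\rangle=c\langle V,JV\rangle\langle w,JV\rangle=0$ and likewise $V^h(b)=\langle w,\nabla_VJV\rangle=0$, while $JV^h(a)=\langle w,\nabla_{JV}V\rangle=c\langle w,JV\rangle=cb$ and $JV^h(b)=\langle w,\nabla_{JV}JV\rangle=-c\langle w,V\rangle=-ca$, using $\langle JV,JV\rangle=1$; for the vertical lifts, $V^v(a)=\langle V,V\rangle=1$, $V^v(b)=\langle V,JV\rangle=0$, $JV^v(a)=\langle JV,V\rangle=0$, and $JV^v(b)=\langle JV,JV\rangle=1$. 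This establishes items (1)--(4).

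There is essentially no obstacle here; the only point requiring mild care is getting the general differentiation formula right — remembering that the vertical component $v_2$ of a lift pairs with $Z$ directly (no connection appears), that the horizontal component passes through $\nabla$, and that $f_Z$ restricted to a fibre is linear, so its vertical derivative is independent of $w$. Once that formula is recorded, every identity is a one-line substitution using $\langle V,JV\rangle=0$, $\|V\|=\|JV\|=1$, and the two covariant-derivative formulas from Proposition~\ref{p=c}.
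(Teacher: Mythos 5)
Your proof is correct and follows essentially the same route as the paper: both reduce each identity to metric compatibility of the connection together with the formulas $\nabla_U V = c\langle U,JV\rangle JV$ and $\nabla_U JV = -c\langle U,JV\rangle V$ from Proposition~\ref{p=c}, the paper doing so case by case via parallel translation and fiberwise curves while you package the same computation into one general formula $\xi(f_Z)=\langle v_2,Z\rangle+\langle w,\nabla_{v_1}Z\rangle$. No gaps; the unified formula is a clean, equivalent organization of the paper's argument.
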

\begin{proof} \ 1. \
To differentiate a function $\phi$ on $T^1S$ with respect to $V^h$ at $w\in T^1S$, we parallel translate $w$ along the geodesic $\gamma(t)$
through $\pi(w)$ tangent to $V$ to obtain  $\Pi_t^V(w)$,  and then differentiate the function $\phi(\Pi^V_t(w))$ with respect to $t$ at $t=0$.   Since $\gamma$ is a geodesic tangent to $V$, the angle between $\Pi^V_t(w)$ and $V$ remains constant, and so $a(\Pi_t^V(w))$ and $b(\Pi_t^V(w))$ are both constant.  Thus their derivatives are both zero.

2.\ 
Let $\Pi_t^{JV}(w)$ be the parallel translate of $w$ along the integral curve of the vector field $JV$ through $\pi(w)$. Then

\[JV^h(a)(w) =  \frac{d}{dt} \langle \Pi_t^{JV}(w), V\rangle \vert_{t=0} = \langle w,  \nabla_{JV} V\rangle = 
\langle w,  c JV\rangle = bc,
\]
and
\[JV^h(b)(w) =  \frac{d}{dt} \langle \Pi_t^{JV}(w), JV\rangle \vert_{t=0} = \langle w,  \nabla_{JV} JV\rangle = 
\langle w,  -c V\rangle = -ac.
\]

3.\ 
To compute the derivative $V^v\phi$ at $w$, we differentiate $\phi(w+tV)$ at $t=0$ in the fiber over $\pi(w)$.
Thus
\[V^v(a)(w) =  \frac{d}{dt} \langle w + tV, V\rangle \vert_{t=0}  = \frac{d}{dt} t \langle V, V\rangle \vert_{t=0} = 1,
\]
and
\[V^v(b)(w) =  \frac{d}{dt} \langle w + tV, JV\rangle \vert_{t=0}  = \frac{d}{dt} t \langle V, JV\rangle \vert_{t=0} = 0.
\]

4.\ 
To compute the derivative $JV^v\phi$ at $w$, we differentiate $\phi(w+tJV)$ at $t=0$ in the fiber over $\pi(w)$. The calculations are similar to those in 3.
\end{proof}

\bigskip

\begin{proposition}\label{p=starnabla}  Let $X$ be any  vector field on  $T^1S$ with $\|X\|_\star = 1$.
Then $$\|\nabla^{\star}_X \dot\varphi\|_\star = O(\delta^{-1}).$$ In particular:
\begin{enumerate}
\item $\nabla^\star_{V^h} \dot \varphi =  -a \delta^{-1} V^h - b \delta^{-1} JV^h - \frac{b^2 K}{2} V^v + \frac{ab K}{2} JV^v $
\item $\nabla^\star_{JV^h} \dot \varphi = b\delta^{-1}V^h - a\delta^{-1} JV^h +  \frac12 Kab V^v - \frac12 Ka^2JV^v$
\item $\nabla^\star_{V^v} \dot \varphi =  \frac{K ab\delta^2} {2}   JV^h+ \left(\frac{-Kb^2\delta^2}{2} + 1\right) V^h $
\item $\nabla^\star_{JV^v} \dot \varphi = \left(1-\frac{Ka^2\delta^2}{2}\right) JV^h +  \frac{K ab \delta^2}{2} V^h $
\end{enumerate}
\end{proposition}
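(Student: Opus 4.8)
The plan is to expand the geodesic spray in the moving frame $\{V^h,JV^h,V^v,JV^v\}$ on $T^1\cN(\delta_1/2)$ and differentiate termwise, using Table~1 for the $\star$-covariant derivatives of the frame fields and the lemma immediately preceding the proposition for the derivatives of the coefficient functions $a$ and $b$. The starting observation is the identity $\dot\varphi = aV^h + bJV^h$: indeed $\dot\varphi(v)=(v,0)$ in the natural horizontal/vertical splitting, and since $(V,JV)$ is a positively oriented orthonormal frame with $a=\langle v,V\rangle$, $b=\langle v,JV\rangle$, we have $v = aV+bJV$. By the Leibniz rule for the $\star$-Levi-Civita connection,
\[
\nabla^\star_X\dot\varphi = X(a)\,V^h + a\,\nabla^\star_X V^h + X(b)\,JV^h + b\,\nabla^\star_X JV^h,
\]
so it suffices to evaluate this for $X$ ranging over the four frame fields.

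For each such $X$ I would substitute the values of $X(a),X(b)$ from the preceding lemma and of $\nabla^\star_X V^h$, $\nabla^\star_X JV^h$ from the $V^h$- and $JV^h$-columns of Table~1. The only algebraic simplification required is to turn the curvature terms into scalars: since $S$ is a surface, $R(A,B)C = K\,(\langle B,C\rangle A - \langle A,C\rangle B)$ pointwise, so with $u = aV+bJV$ one obtains $\langle R(JV,V)u,V\rangle = -bK$ and $\langle R(JV,V)u,JV\rangle = aK$. Plugging these in and collecting terms yields formulas (1)--(4) verbatim. A worthwhile internal check occurs in the case $X=JV^h$: the geodesic-curvature terms proportional to $c$ — which enter through $JV^h(a)=bc$, $JV^h(b)=-ac$, and through the $c$'s in the $JV^h$-row of Table~1 — must cancel identically, since $\dot\varphi$ is the geodesic spray; if they do not, a sign has been mishandled somewhere.

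For the displayed bound $\|\nabla^\star_X\dot\varphi\|_\star = O(\delta^{-1})$ with $\|X\|_\star=1$, I would write $X = \alpha V^h + \beta JV^h + \alpha'V^v + \beta'JV^v$ and note that, because $\|V^h\|_\star=\|JV^h\|_\star=\delta^{-1}$ while $\|V^v\|_\star=\|JV^v\|_\star=1$, the normalization forces $|\alpha|,|\beta| = O(\delta)$ and $|\alpha'|,|\beta'| = O(1)$. Reading off (1)--(4): the two horizontal-argument formulas have $\star$-norm $O(\delta^{-2})$ (using $\|V^h\|_\star=\delta^{-1}$ and $|K|=O(\delta^{-2})$), and the two vertical-argument ones have $\star$-norm $O(\delta^{-1})$ (using $K\delta^2 = O(1)$ and $\|V^h\|_\star=\delta^{-1}$); multiplying by the respective coefficient sizes and summing gives $O(\delta^{-1})$. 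On the complement of $\cN(\delta_1/2)$, where $\bar\delta$ is bounded between two positive constants and all quantities are smooth on a compact set, the bound is automatic. I do not anticipate a genuine obstacle here: the entire argument is a finite linear-algebra computation in this frame, and the only real demand is bookkeeping discipline in tracking signs and confirming the $c$-cancellation in the $JV^h$ row.
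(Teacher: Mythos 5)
Your proposal is correct and essentially reproduces the paper's own proof: write $\dot\varphi = aV^h + bJV^h$, differentiate with the Leibniz rule, substitute the frame-derivative formulas for $a,b$ from the preceding lemma and the $\nabla^\star$-values from Table~1, and convert the curvature pairings via $\langle R(JV,V)u,V\rangle=-bK$, $\langle R(JV,V)u,JV\rangle=aK$; the paper only writes out the $V^h$ case and defers the rest. Your remarks — the $c$-cancellation check in the $JV^h$ case, and the norm-balancing argument using $\|V^h\|_\star=\delta^{-1}$, $\|V^v\|_\star=1$, and $K\delta^2=O(1)$ to convert the $O(\delta^{-2})$ bounds on horizontal arguments into the stated $O(\delta^{-1})$ for unit $\star$-vectors — are accurate and fill in what the paper leaves implicit.
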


\begin{proof} The proof is just a calculation.  To see 1, for example, observe that
\[
 \nabla^\star_{V^h} \dot\varphi =
  \nabla^\star_{V^h}  (aV^h + bJV^h)
=  a  \nabla^\star_{V^h} V_h + b \nabla^\star_{V^h} JV_h  \]
\[= -a \delta^{-1} V^h - b  \delta^{-1} JV^h + \frac{b}{2} \langle R(JV,V)u ,V\rangle  V^v + \frac{b}{2} \langle R(JV,V)u ,JV\rangle JV^v,
\]
where $u=aV + bJV$.
Thus
\[\nabla^\star_{V^h} \dot\varphi =  -a \delta^{-1} V^h - b \delta^{-1} JV^h - \frac{b^2 K}{2} V^v + \frac{ab K}{2} JV^v .
\]

The other formulas are proved similarly; see \cite{preprint} for the details.
\end{proof}

\subsection{Time change to an Anosov flow}

As above, let $\dot\varphi$ be the geodesic spray; i.e. the generator of the geodesic flow on $T^1 S$.
Define a new flow $\psi_t$ on $T^1 S$ with generator
$$
\dot\psi(v) = \bar\delta(v) \dot\varphi(v).
$$
One might ask first whether this flow is complete; that is, is it defined for all time $t\in\RR$, for each $v\in T^1 S$?  Note that the original flow $\varphi_t$ is not complete, since it is the geodesic flow of an incomplete manifold.  The completeness of $\psi_t$ follows from the completeness of $T^1 S$ in the 
$\star$-metric defined above, and the following lemma.

\begin{lemma} The vector field $\dot\psi$ is $C^3$, and there exists a constant $C>0$ such that for every
$v\in T^1 S$, 
$$\|\dot\psi\|_\star =  1, \quad\hbox{and } \, \|{\nabla^{\star}} ^i\dot\psi\|_\star \leq C, \quad \hbox{ for } i=1,2,3.
$$

The flow $\dot\psi_t$ preserves a finite measure $\mu$ on $T^1\Sigma$ that is equivalent to Liouville volume
for the original metric:
$d\mu =  { \bar\delta }^{-1} {d\vol}$.

\end{lemma}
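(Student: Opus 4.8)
The plan is to establish the four assertions in turn; the bulk of the work is the uniform bound on the first three $\star$-covariant derivatives of $\dot\psi$, everything else being short.

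\emph{Regularity and norm.} Since $\bar\delta$ extends the $C^4$ function $\delta$ of Proposition~\ref{p=c} and is itself $C^4$, and the geodesic spray $\dot\varphi$ is $C^4$ (the ambient metric being $C^5$), the product $\dot\psi = \bar\delta\,\dot\varphi$ is at least $C^3$. For the norm, recall that in the natural coordinates on $TTS$ the spray is purely horizontal, $\dot\varphi(v) = (v,0)$; by the very definition of the $\star$ metric this gives $\|\dot\varphi(v)\|_\star^2 = \bar\delta(v)^{-2}\langle v,v\rangle_{\pi(v)} = \bar\delta(v)^{-2}$ on $T^1S$, so $\|\dot\varphi\|_\star = \bar\delta^{-1}$ and $\|\dot\psi\|_\star = \bar\delta\cdot\bar\delta^{-1} = 1$.

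\emph{First derivative.} For a $\star$-unit vector field $X$ on $T^1S$ write $\nabla^\star_X\dot\psi = (X\bar\delta)\,\dot\varphi + \bar\delta\,\nabla^\star_X\dot\varphi$. Because $\bar\delta$ is constant on the fibers of $T^1S$ and $\|\nabla\bar\delta\|\le 1$, decomposing $X$ into horizontal and vertical parts and using that the $\star$-norm of the horizontal part bounds its Sasaki norm by $\bar\delta$ shows $|X\bar\delta| = O(\bar\delta)$; combined with $\|\dot\varphi\|_\star = \bar\delta^{-1}$ this yields $\|(X\bar\delta)\,\dot\varphi\|_\star = O(1)$, while Proposition~\ref{p=starnabla} gives $\|\bar\delta\,\nabla^\star_X\dot\varphi\|_\star = \bar\delta\cdot O(\bar\delta^{-1}) = O(1)$, so $\|\nabla^\star\dot\psi\|_\star = O(1)$.

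\emph{Second and third derivatives.} Here I would induct, differentiating the explicit formulae of Proposition~\ref{p=starnabla} together with the Lie-bracket and Koszul computations behind the $\star$-connection table (Table~1). The underlying mechanism — and the reason the $\star$ metric was chosen this way — is a scaling principle: every scalar quantity entering these formulae ($\bar\delta^{\pm1}$, $c$, $a$, $b$, $K$, and the components $\langle R(JV,V)u,\cdot\rangle$) has a well-defined $\bar\delta$-order that is \emph{preserved} under $\star$-covariant differentiation, precisely because $\|\nabla^i\bar\delta\| = O(\bar\delta^{1-i})$ for $i\le 4$ (Corollary~\ref{l=cuspdist}) and $\|\nabla^j K\| = O(\bar\delta^{-2-j})$ for $j\le 3$ (hypothesis of Theorem~\ref{t=main}). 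Expanding $\nabla^\star_Y\nabla^\star_X\dot\psi$ and $\nabla^\star_Z\nabla^\star_Y\nabla^\star_X\dot\psi$ by Leibniz and tracking these $\bar\delta$-powers through each term, one finds every term is $O(1)$ in $\star$-norm, uniformly as the basepoint approaches the cusp and, by compactness of $S\setminus\cN(\delta_1)$, everywhere else. I expect this bookkeeping — not any single estimate — to be the main obstacle, and it is exactly where the hypotheses forcing $C^5$ regularity and three controlled derivatives of $K$ are consumed, which is why the argument stops at $i=3$.

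\emph{Invariant measure.} The geodesic flow preserves Liouville volume, i.e.\ $\mathcal{L}_{\dot\varphi}\,\vol = 0$; as $\vol$ is a top-degree form this says $d\,\iota_{\dot\varphi}\vol = 0$. For the time-changed field $\dot\psi = \bar\delta\,\dot\varphi$ one has $\iota_{\dot\psi}(\bar\delta^{-1}\vol) = \iota_{\dot\varphi}\vol$, hence $\mathcal{L}_{\dot\psi}(\bar\delta^{-1}\vol) = d\,\iota_{\dot\psi}(\bar\delta^{-1}\vol) = d\,\iota_{\dot\varphi}\vol = 0$, so $\psi_t$ preserves $d\mu := \bar\delta^{-1}\,d\vol$, which is equivalent to $\vol$ since $\bar\delta^{-1}$ is positive and bounded off the cusp. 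Finiteness reduces to integrability of $\bar\delta^{-1}$ near the cusp: by the area computation of Section~\ref{s=area} the Liouville volume of $T^1\DD^\ast(t_0)$ is comparable to $\int_0^{t_0} t^r\,dt$ (the fiber direction contributing a bounded factor), so the $\mu$-mass near the cusp is comparable to $\int_0^{t_0} t^{r-1}\,dt < \infty$ because $r>2$; together with compactness of $T^1(S\setminus\cN(\delta_1))$ this gives $\mu(T^1S)<\infty$.
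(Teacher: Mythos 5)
Your proposal is correct and follows essentially the same route as the paper: the norm computation $\|\dot\psi\|_\star=\|\bar\delta\dot\varphi\|_\star=1$, the Leibniz splitting $\nabla^\star_X(\bar\delta\dot\varphi)=(X\bar\delta)\dot\varphi+\bar\delta\,\nabla^\star_X\dot\varphi$ with the two inputs $\|{\nabla^\star}^i\bar\delta\|_\star=O(\bar\delta)$ (from $\|\nabla^i\bar\delta\|=O(\bar\delta^{1-i})$ and the $\bar\delta$-scaling of horizontal $\star$-unit vectors) and $\|{\nabla^\star}^i\dot\varphi\|_\star=O(\bar\delta^{-1})$ obtained by differentiating the formulas of Proposition~\ref{p=starnabla} while tracking the orders of $\nabla^jK$, $\nabla^jc$, $\nabla^j\bar\delta$, and the finiteness of $\mu$ via the volume expansion (\ref{e=volume}) are exactly the paper's argument, at the same level of detail for the $i=2,3$ bookkeeping. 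The only deviation is the invariance of $\mu$: the paper writes $d\vol=\omega\wedge d\omega$ for the canonical one-form and uses $\iota_{\dot\psi}\bigl(\bar\delta^{-1}\omega\wedge d\omega\bigr)=d\omega$, whereas you apply Cartan's formula directly to the top-degree form, using $\iota_{\dot\psi}(\bar\delta^{-1}\vol)=\iota_{\dot\varphi}\vol$ and $d\,\iota_{\dot\varphi}\vol=\cL_{\dot\varphi}\vol=0$ --- an equally valid and slightly more elementary one-line variant that needs only Liouville-volume preservation rather than the contact structure.
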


\begin{proof}  By definition of the $\star$ metric, we have $\|\dot\psi\|_\star = \|\bar\delta\dot\varphi\|_\star= 1$.

Since $\bar\delta(v) = \bar\delta(\pi(v))$, the derivatives of $\bar\delta$ have no vertical component, and
 Corollary~\ref{l=cuspdist} gives that $\|\nabla^i \bar\delta\| = O(\bar\delta^{1-i})$.  A unit horizonal
vector in the $\star$ norm is of the form $(\xi_H, 0)$, where $\|\xi_H\| = \bar\delta$.  Thus 
\begin{equation}\label{e=diffdelta}
\|{\nabla^\star}^i\bar\delta\|_\star = \bar\delta^i \|{\nabla}^i \bar\delta\| = O(\bar\delta),
\end{equation}
for $i=1,2,3$.

Proposition~\ref{p=starnabla} implies $\|\nabla^\star\dot\varphi\|_\star$ has magnitude $\bar\delta(v)^{-1}$ in the  $\star$ metric.  A similar calculation taking higher covariant derivatives of the formulas in Proposition~\ref{p=starnabla}  and using the facts that
$\|\nabla K\| = O(\bar\delta^{-3})$,  $\|\nabla^2 K\| = O(\bar\delta^{-4})$,
$\|\nabla^i c\| = \delta^{-1-i}$, and  (\ref{e=diffdelta})
gives that 
\begin{equation}\label{e=diffdotvarphi}
\|{\nabla^\star}^i\dot\varphi\|_\star = O(\bar\delta^{-1}),
\end{equation} for $i=1,2,3$.

  Combining (\ref{e=diffdotvarphi}) and (\ref{e=diffdelta}), we obtain that
$$\|\nabla^\star(\bar\delta\dot\varphi)\|_\star = \| \nabla^\star (\bar\delta) \dot\varphi \|_\star  + \|\bar\delta \nabla^\star \dot\varphi\|_\star
\leq  \| \nabla^\star (\bar\delta) \|_\star \| \dot\varphi \|_\star  + \bar\delta \| \nabla^\star \dot\varphi\|_\star = O(1).
$$
Similarly, we obtain that $\|{\nabla^\star}^i(\bar\delta\dot\varphi)\|_\star  = O(1)$, for $i= 2,3$.

Let $\omega$ be the canonical one form on the tangent bundle $T S$ with respect to the
original metric.  Then $\varphi^\ast_t \omega = \omega$, for all $t$, and
$d\vol = \omega\wedge d\omega$ on $T^1S$.   We have that: 
$$
\cL_{\dot\psi}\left( \bar\delta^{-1} \omega\wedge d\omega\right)  = d\left(\iota_{\dot\psi} \left(\bar\delta^{-1} \omega\wedge d\omega \right)\right) = d (d\omega) = 0,
$$
since $\bar\delta^{-1} \omega(\dot\psi) = \omega(\dot\varphi) \equiv 1$.  Thus
$\psi_t$ preserves the smooth measure $\mu$ defined by $d\mu = \bar\delta^{-1} \omega\wedge d\omega = \bar\delta^{-1} d\vol$.

To see that $\mu(T^1S)<\infty$, we use the expression for $d\vol$ from  (\ref{e=volume}) and integrate:
\[
\mu(T^1S) = \int_{T^1S}  \bar\delta^{-1} d\vol = O\left( \int_{0}^{\delta_0} x^{r-1} \,{dx} \right)  < \infty.
\]\end{proof}

The flow $\psi_t$ is a time change of $\varphi_t$; that is, it has the same orbits, but they are traversed at a different speed, depending on the distance to the singular locus.  Indeed, defining the cocycle $\tau\colon T^1 S\times \RR\to \RR$ by the implicit  formula
\begin{equation}\label{e=taudef}
\int_0^{\tau(v,t)} \frac{dx}{\bar\delta(\varphi_x(v))} = t,
\end{equation}
we have that $\psi_{t}(v) = \varphi_{\tau(v,t)}(v)$,
for all $v\in T^1 S$,  $t\in\RR$.  This gives an alternate way to see the completeness of the flow $\psi$: the
function $\bar\delta$ clearly remains positive along orbits of $\psi$ for all time.

\begin{theorem}\label{t=accelanosov}
The flow $\psi_t$ is an Anosov flow in the $\star$-metric.  That is, there exists a $D\psi_t$-invariant, continuous splitting
of the tangent bundle:
$$
T\left(T^1 S\right) = E^u_\psi\oplus \RR\dot\psi \oplus E^s_\psi
$$
and constants $C>0$,  $\lambda > 1$ such that for every $v\in T^1 S$, and every $t>0$:
\begin{itemize}
\item $\xi\in E^u_\psi(v) \implies \|D\psi_{-t} (\xi)\|_\star \leq C\lambda^{-t} \|\xi\|_\star$, and
\item  $\xi\in E^s_\psi(v) \implies \|D\psi_t (\xi)\|_\star \leq C\lambda^{-t} \|\xi\|_\star$.
\end{itemize}
\end{theorem}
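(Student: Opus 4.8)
The plan is to reduce the Anosov property of $\psi_t$ to uniform hyperbolicity of its \emph{normal linear cocycle}, which we identify with the (time-changed) Jacobi cocycle, and then read off the contraction/expansion estimates from the invariant cone field of Proposition~\ref{p=cones4} after rescaling time. Recall that the subbundle $\dot\varphi^\perp\subset T(T^1S)$ of perpendicular Jacobi fields is $D\varphi_t$-invariant, that $\psi_t(v)=\varphi_{\tau(v,t)}(v)$ is a time change (so $D\psi_t(\xi)=D\varphi_{\tau(v,t)}(\xi)\ \mathrm{mod}\ \RR\dot\psi$ for $\xi\in\dot\varphi^\perp$), and that, directly from the definition of the $\star$ metric, $\dot\varphi^\perp$ is precisely the $\star$-orthogonal complement of $\RR\dot\psi$ inside $T(T^1S)$. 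Hence, up to the reparametrization $t\mapsto\tau(v,t)$, the normal cocycle of $\psi_t$ is the restriction of $D\varphi$ to perpendicular Jacobi fields; parametrizing a line in $\dot\varphi^\perp$ by the slope $u=j'/j$ of a Jacobi pair $(j,j')$ with $j''=-K(\gamma_v(t))j$, this cocycle is governed by the Riccati equation \eqref{e=Ricplain}.

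First I construct the invariant subbundles. Every $\varphi$-orbit either stays in the compact part $S\setminus\cN(\delta_0)$ or makes excursions into $\cN(\delta_0)$ each of bounded $\varphi$-time (Corollary~\ref{p=leave}); combining this with the monotonicity/existence statements in Lemma~\ref{l=monotone2} one obtains along each complete orbit a unique ``unstable'' Riccati solution $u^u$ (the forward-defined solution arising as a backward limit of solutions vanishing in the past) and, symmetrically, a unique ``stable'' solution $u^s$, both depending continuously on the orbit. By Proposition~\ref{p=cones4}(1),(2) the positive cone $u\in[\chi/\bar\delta,(r+\epsilon)/\bar\delta]$ is forward invariant and the negative cone $u\in[-(r+\epsilon)/\bar\delta,-\chi/\bar\delta]$ is backward invariant, so $u^u\bar\delta\in[\chi,r+\epsilon]$ and $u^s\bar\delta\in[-(r+\epsilon),-\chi]$ everywhere along the orbit. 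Put $E^u_\varphi(v)=\RR\cdot(J\dot\gamma_v(0),u^u(0)J\dot\gamma_v(0))$ and define $E^s_\varphi(v)$ analogously; since $\chi\ge\beta>0$ (Lemma~\ref{l=chi}) these are transverse lines in $\dot\varphi^\perp(v)$.

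The crucial point is the uniform hyperbolicity in the $\star$ metric, which is where the time change does the work. For a Jacobi vector $(jJ\dot\gamma,j'J\dot\gamma)$ with slope $u^u$ one has, using $\|J\dot\gamma\|=1$,
\[
\bigl\|(jJ\dot\gamma,j'J\dot\gamma)\bigr\|_\star^2=\bar\delta^{-2}j^2+j'^2=\bar\delta^{-2}j^2\bigl(1+(u^u\bar\delta)^2\bigr),
\]
and since $u^u\bar\delta\in[\beta,r+\epsilon]$ the quantity $\bigl\|(jJ\dot\gamma,j'J\dot\gamma)\bigr\|_\star$ is comparable to $\bar\delta^{-1}|j|$ with constants depending only on $\beta,r,\epsilon$. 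Differentiating in $\varphi$-time and using $j'/j=u^u$ together with $\frac{d}{dt}\log\bar\delta=\langle\dot\gamma,\nabla\bar\delta\rangle/\bar\delta$,
\[
\frac{d}{dt}\log\bigl(\bar\delta^{-1}|j|\bigr)=u^u-\frac{\langle\dot\gamma,\nabla\bar\delta\rangle}{\bar\delta}\ \ge\ \frac{\chi-\|\nabla\bar\delta\|}{\bar\delta}\ \ge\ \frac{\beta}{\bar\delta},
\]
by Lemma~\ref{l=chi}. Since $\psi$-time $s$ and $\varphi$-time $t$ satisfy $dt=\bar\delta(\varphi_t v)\,ds$ (see \eqref{e=taudef}), this becomes $\frac{d}{ds}\log(\bar\delta^{-1}|j|)\ge\beta$: the position-dependent $\varphi$-time rate $\beta/\bar\delta$ is converted into a \emph{uniform} $\psi$-time rate $\beta$. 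Hence $D\psi_{-s}$ contracts vectors along $E^u_\varphi$ by at least $e^{-\beta s}$, with a uniform multiplicative constant coming from the comparability above; applying the same computation to $\psi_{-t}$ (i.e.\ to $u^s$ and the negative cone) yields uniform contraction of $D\psi_s$ by $e^{-\beta s}$ along $E^s_\varphi$ in forward time.

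Finally I upgrade this to a genuine $D\psi_t$-invariant splitting of $T(T^1S)$. Since $D\psi_t$ maps $\dot\varphi^\perp$ into $\dot\varphi^\perp\oplus\RR\dot\psi$, the honest unstable line $E^u_\psi(v)$ is the graph over $E^u_\varphi(v)$ of the unique linear map into $\RR\dot\psi$ for which $\|D\psi_{-s}(\cdot)\|_\star\to0$ as $s\to\infty$; this tilt is well defined because the $\RR\dot\psi$-component of $D\psi_{-s}|_{\dot\varphi^\perp}$ is a Cauchy family (its increments are controlled by the $e^{-\beta s}$ contraction of the $\dot\varphi^\perp$-part) and is bounded uniformly by the $\star$-derivative bounds on $\dot\psi$ established in the preceding lemma. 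Defining $E^s_\psi$ symmetrically and keeping $\RR\dot\psi$ in the center gives a continuous, $D\psi_t$-invariant splitting $T(T^1S)=E^u_\psi\oplus\RR\dot\psi\oplus E^s_\psi$, transverse because the projections of $E^u_\psi,E^s_\psi$ to $\dot\varphi^\perp$ are the transverse lines $E^u_\varphi,E^s_\varphi$ and neither of $E^u_\psi,E^s_\psi$ meets $\RR\dot\psi$; combining with the previous paragraph gives $\|D\psi_{-t}\xi\|_\star\le Ce^{-\beta t}\|\xi\|_\star$ on $E^u_\psi$ and $\|D\psi_t\xi\|_\star\le Ce^{-\beta t}\|\xi\|_\star$ on $E^s_\psi$, so $\psi_t$ is Anosov with $\lambda=e^{\beta}$. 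The main obstacle is the non-compactness of $T^1S$: one must know the invariant subbundles exist \emph{globally} and that the hyperbolicity is \emph{uniform} into the cusp. The first is handled by the bounded-excursion property (Corollary~\ref{p=leave}) together with the confinement of the Riccati solutions to the cones of Proposition~\ref{p=cones4}; the second is exactly what the $\star$ metric and the time change $dt=\bar\delta\,ds$ accomplish, turning the blow-up of the hyperbolicity constant near the singularity into the uniform rate $\beta$ supplied by Lemma~\ref{l=chi}.
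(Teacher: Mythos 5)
Your argument is correct and uses the same core estimate as the paper (the position-dependent $\varphi$-time rate $\beta/\bar\delta$ becoming the uniform $\psi$-time rate $\beta$ after the change $dt=\bar\delta\,ds$, via the cone invariance of Proposition~\ref{p=cones4} and the gap $\chi-\|\nabla\bar\delta\|\geq\beta$ of Lemma~\ref{l=chi}), but the route from cone invariance to the Anosov splitting is genuinely different. The paper proceeds by the cone-field criterion for Anosov flows: it defines cones $\cC^\pm(v)$ in $T_v(T^1S)$ that explicitly include a component in the $\RR\dot\psi$ direction (a parameter $|x|\leq B|y|$), proves $D_v\psi_1(\cC^+)\subset\cC^+$ by bounding the shear term $D_v\tau(v,1)(\xi)$ and choosing $B$ large enough, and then invokes the standard cone criterion. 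Your approach instead constructs the invariant line bundles directly: $E^u_\varphi\subset\dot\varphi^\perp$ as the unstable Riccati solution confined to the cone, and then $E^u_\psi$ as its graph over $\RR\dot\psi$ via a Cauchy/telescoping argument whose convergence is driven by the $e^{-\beta s}$ contraction of the $\dot\varphi^\perp$ component. Both are valid; the paper's cone criterion sidesteps the need to prove convergence of a limit and supplies the splitting from general principles once the cone estimates are in place, while your construction produces the subbundles explicitly and makes the tilt into $\RR\dot\psi$ quantitative (bounded by the shear over time one divided by $1-e^{-\beta}$). Note that in both arguments the uniform bound on the time-one shear $D_v\tau(v,\pm1)$ is needed, and both rely on the same source for it: the uniform $C^1$ $\star$-bounds on $\dot\psi$. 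One small point worth spelling out more carefully is the existence and continuity of the global Riccati solutions $u^u,u^s$ along every complete orbit, which in the paper is subsumed by the cone criterion (it does not need to construct them); in your route, this is a standard consequence of forward-invariance and compactness of the cone interval $[\chi/\bar\delta,(r+\epsilon)/\bar\delta]$ together with the monotonicity of Lemma~\ref{l=comparison}, but it deserves a sentence.
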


From Theorem~\ref{t=accelanosov} we obtain several important properties of both $\psi_t$ and $\varphi_t$.
The first is ergodicity.
Since volume preserving Anosov flows are ergodic, the flows $\varphi_t$ and $\psi_t$ have the same orbits, 
and the $\star$ volume is equivalent to (i.e. has the same zero sets as)  the original volume on $T^1S$, we obtain:
\begin{corollary}\label{c=ergodic}
The flow $\psi_t$ is ergodic with respect to the invariant volume $\mu$.  Consequently, $\varphi_t$ is ergodic with respect to volume.
\end{corollary}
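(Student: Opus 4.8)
The plan is to establish ergodicity of $(\psi_t,\mu)$ by the Hopf argument and then transfer it to $\varphi_t$ by purely measure-theoretic means. By Theorem~\ref{t=accelanosov} the flow $\psi_t$ is Anosov in the complete $\star$-metric, and — by the lemma establishing that $\dot\psi$ is $C^3$ with uniformly bounded $\star$-derivatives — it preserves the finite smooth measure $\mu$ (with $d\mu=\bar\delta^{-1}\,d\vol$) and has uniformly bounded geometry: $\|\dot\psi\|_\star=1$ and $\|\nabla^{\star\,i}\dot\psi\|_\star\leq C$ for $i=1,2,3$. First I would invoke the strong stable and unstable foliations $\cW^s_\psi,\cW^u_\psi$ of $\psi_t$; the point is that these are absolutely continuous, since the Anosov splitting comes with these \emph{uniform} bounds and so the holonomy Jacobians between transversals to $\cW^{s}_\psi$ (resp.\ $\cW^u_\psi$) are finite and the usual Anosov absolute-continuity estimates apply verbatim even though $T^1S$ is noncompact.

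With absolute continuity in hand I would run the classical Hopf argument. Given a $\psi_t$-invariant $f\in L^2(\mu)$, the Birkhoff ergodic theorem applied forward and backward shows that $f$ agrees $\mu$-almost everywhere with a function constant along $\psi$-orbits and along $\cW^s_\psi$-leaves, and also with one constant along $\psi$-orbits and along $\cW^u_\psi$-leaves; absolute continuity forces these two functions to coincide and hence to be locally constant in the local product charts $\cW^u_\psi\times\cW^{cs}_\psi$. Because the $\star$-metric is complete with bounded geometry these product charts have uniformly bounded-below size, and $T^1S$ is connected, so a chain of overlapping charts joins any two points of a full-measure set; hence $f$ is $\mu$-a.e.\ constant, i.e.\ $(\psi_t,\mu)$ is ergodic. (If one prefers, finiteness of $\mu$ and Poincar\'e recurrence first confine the argument to the full-measure set of recurrent points, where this chaining is automatic.)

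Finally I would transfer ergodicity to $\varphi_t$. The flows $\psi_t$ and $\varphi_t$ have the same oriented orbits, so a measurable set is $\psi_t$-invariant if and only if it is $\varphi_t$-invariant; and $\mu$ and $\vol$ are mutually absolutely continuous because $d\mu=\bar\delta^{-1}\,d\vol$ with $\bar\delta>0$ everywhere. Hence for any $\varphi_t$-invariant set $A$, ergodicity of $(\psi_t,\mu)$ gives $\mu(A)\in\{0,\mu(T^1S)\}$, and equivalence of the measures then gives $\vol(A)\in\{0,\vol(T^1S)\}$, so $\varphi_t$ is ergodic with respect to $\vol$. The main obstacle is the noncompactness of $T^1S$: the slogan ``volume-preserving Anosov flows are ergodic'' is classical for compact manifolds, and here one must instead use the finite-volume, bounded-geometry version of it — which is exactly why Theorem~\ref{t=accelanosov} was proved with uniform derivative bounds in a \emph{complete} metric. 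The two points requiring genuine care are the absolute continuity of $\cW^s_\psi,\cW^u_\psi$ in this noncompact setting and the global (connectedness/recurrence) step of the Hopf argument; both follow from completeness together with the uniform bounds, but neither is a black box.
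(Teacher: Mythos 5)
Your proposal is correct and follows essentially the same route as the paper: the paper simply cites the classical fact that volume-preserving Anosov flows are ergodic (which Theorem~\ref{t=accelanosov} and the finite smooth invariant measure $\mu$ make applicable) and then transfers ergodicity to $\varphi_t$ using that the two flows share orbits and that $\mu$ and $\vol$ are equivalent. Your unpacking of that classical fact via the Hopf argument, with attention to the noncompact but complete, bounded-geometry setting, is exactly the content behind the citation, so there is no substantive difference in approach.
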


In the next corollary we obtain a splitting of $TT^1S$, invariant under $D\varphi_t$ .

\begin{corollary} $D\varphi_t$ has an invariant singular hyperbolic splitting 
$$
T\left(T^1 S\right) = E^u_\varphi \oplus \RR\dot\varphi \oplus E^s_\varphi,
$$
with $E^u_\varphi $ and $E^s_\varphi $  given by intersecting $E^u_\psi \oplus  \RR\dot\psi$ and $E^s_\psi \oplus  \RR\dot\psi$ with the smooth, $D\varphi_t$-invariant bundle $\dot\varphi^\perp$.
\end{corollary}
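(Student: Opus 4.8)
The plan is to derive the splitting formally from Theorem~\ref{t=accelanosov} together with the smooth invariant plane field $\dot\varphi^\perp$, and only afterward to read off the (non-uniform) hyperbolic estimates for $\varphi_t$ from those for $\psi_t$ via the time change. First I would record the structural facts. Since $\dot\psi = \bar\delta\,\dot\varphi$, the flows $\psi_t$ and $\varphi_t$ have the same orbit foliation, so $\RR\dot\psi = \RR\dot\varphi$, and the weak bundles $E^{wu}_\psi := E^u_\psi\oplus\RR\dot\psi$ and $E^{ws}_\psi := E^s_\psi\oplus\RR\dot\psi$ are tangent to the weak unstable and weak stable foliations of $\psi_t$, whose leaves are unions of $\psi$-orbits and hence of $\varphi$-orbits; consequently these two bundles are invariant under $D\varphi_t$ as well as under $D\psi_t$. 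On the other hand $\dot\varphi^\perp$ is a smooth rank-$2$ subbundle of $T(T^1S)$ that is $D\varphi_t$-invariant: as recalled in Section~\ref{s=prelim}, constancy of the Wronskian $\langle\cJ',\dot\gamma\rangle$ along geodesics means that perpendicular Jacobi data are preserved by $D\varphi_t$.

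Next I would define $E^u_\varphi := E^{wu}_\psi\cap\dot\varphi^\perp$ and $E^s_\varphi := E^{ws}_\psi\cap\dot\varphi^\perp$ and check that they are continuous line fields assembling into the asserted splitting. Because $T(T^1S) = \RR\dot\varphi\oplus\dot\varphi^\perp$ and $\RR\dot\varphi = \RR\dot\psi\subset E^{wu}_\psi$, we have $E^{wu}_\psi + \dot\varphi^\perp = T(T^1S)$, so $\dim E^u_\varphi = 2+2-3 = 1$, and likewise $\dim E^s_\varphi = 1$; continuity is inherited from that of $E^u_\psi$, $E^s_\psi$, and $D\varphi_t$-invariance of each summand is automatic since each is an intersection of $D\varphi_t$-invariant bundles. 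Since $E^u_\varphi\subset\dot\varphi^\perp$ meets $\RR\dot\varphi$ trivially and has complementary dimension inside $E^{wu}_\psi$, we get $E^u_\varphi\oplus\RR\dot\varphi = E^{wu}_\psi$, and symmetrically $E^s_\varphi\oplus\RR\dot\varphi = E^{ws}_\psi$; as $E^{wu}_\psi\cap E^{ws}_\psi = \RR\dot\psi$ for the Anosov flow $\psi_t$, we obtain $E^u_\varphi\cap E^s_\varphi\subseteq\RR\dot\psi\cap\dot\varphi^\perp = 0$, and a dimension count then gives $T(T^1S) = E^u_\varphi\oplus\RR\dot\varphi\oplus E^s_\varphi$.

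For the hyperbolic behavior I would transfer the contraction and expansion from $\psi_t$. Given $\xi\in E^u_\varphi(v)$, decompose $\xi = \xi^u + s\,\dot\psi(v)$ with $\xi^u\in E^u_\psi(v)$; since $\varphi_t$ reparametrizes $\psi$ through the cocycle $\tau$ of (\ref{e=taudef}), the projection of $D_v\varphi_t(\xi)$ onto $E^u_\psi(\varphi_t v)$ along $\dot\psi$ is $D\psi_{\sigma}(\xi^u)$ for the corresponding rescaled time $\sigma$, whose $\star$-norm decays exponentially as $t\to-\infty$ by Theorem~\ref{t=accelanosov}; converting $\star$-norms back to Sasaki norms on $\dot\varphi^\perp$ costs only controlled powers of $\bar\delta$ along the orbit, which yields the (non-uniform) contraction of $E^u_\varphi$ under $D\varphi_{-t}$ and, symmetrically, of $E^s_\varphi$ under $D\varphi_t$. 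The quantitative form one ultimately wants — rates of hyperbolicity that grow, and angle $\angle(E^u_\varphi,E^s_\varphi)$ that degenerates like $\delta$, as an orbit enters the cusp — is exactly what is encoded in the invariant cone field of Proposition~\ref{p=cones4} and the Riccati estimates of Section~\ref{s=revolution}, so I would phrase ``singular hyperbolic'' in those terms and observe that $E^u_\varphi$ and $E^s_\varphi$ inherit their positions inside the positive and negative cones.

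The main obstacle is not the linear algebra but the bookkeeping in this last step: one must justify that ``weak (un)stable foliation'' is a legitimate notion for $\psi_t$ (it is, since by the completeness lemma $\psi_t$ is a genuine Anosov flow on the complete finite-volume manifold $(T^1S,\langle\cdot,\cdot\rangle_\star)$), and one must control the comparison between the $\star$- and Sasaki-metrics along orbits that run deep into the cusp, where $\bar\delta$ is small. This is precisely where $\|\dot\psi\|_\star = 1$, the time-change identity $\psi_t(v)=\varphi_{\tau(v,t)}(v)$, and the derivative bounds of Corollary~\ref{l=cuspdist} do the work.
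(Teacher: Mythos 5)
Your argument is correct and is essentially the paper's (implicit) one: the paper states this corollary without proof precisely because, as you show, the bundles $E^{u}_\varphi$ and $E^{s}_\varphi$ defined by intersecting the $D\varphi_t$-invariant weak bundles $E^u_\psi\oplus\RR\dot\psi$, $E^s_\psi\oplus\RR\dot\psi$ with the smooth invariant plane field $\dot\varphi^\perp$ give the splitting by linear algebra, and the hyperbolic estimates transfer from Theorem~\ref{t=accelanosov} through the time change $\psi_t(v)=\varphi_{\tau(v,t)}(v)$. Your elaboration of the dimension count, the $D\varphi_t$-invariance of the weak bundles, and the $\star$-versus-Sasaki comparison is consistent with what the paper takes for granted.
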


Since the weak stable and unstable distributions of a $C^3$ Anosov flow in dimension $3$ are $C^{1+\alpha}$, for some $\alpha>0$, we also obtain:

\begin{corollary} The distributions $E^u_\psi \oplus  \RR\dot\psi$ and $E^s_\psi \oplus  \RR\dot\psi$ 
are $C^{1+\alpha}$, for some $\alpha>0$.  The distributions $E^u_\varphi$ and $E^s_\varphi$ are also $C^{1+\alpha}$, when measured in the $\star$ metric.   Thus in the compact part $\bar\delta\geq \delta_0$, the
distributions $E^u_\varphi$ and $E^s_\varphi$ are uniformly $C^{1+\alpha}$.
\end{corollary}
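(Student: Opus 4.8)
The final statement asserts $C^{1+\alpha}$ regularity of the weak (un)stable distributions $E^{u,s}_\psi\oplus\RR\dot\psi$ of the Anosov flow $\psi_t$, and then transfers this to $E^{u,s}_\varphi$ measured in the $\star$ metric, with uniform $C^{1+\alpha}$ bounds on the compact part. The proof has three pieces: (i) cite the general regularity theorem for $C^3$ Anosov flows in dimension three to get $C^{1+\alpha}$ for the $\psi$-distributions; (ii) relate $E^{u,s}_\varphi$ to $E^{u,s}_\psi$ and deduce their $\star$-regularity; (iii) observe that on the compact part $\{\bar\delta\geq\delta_0\}$ the $\star$ metric and the Sasaki metric are uniformly comparable, so $C^{1+\alpha}$ in one is $C^{1+\alpha}$ in the other, with uniform constants.

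First I would invoke the classical regularity result: for a $C^k$ Anosov flow with $k\geq 2$ on a manifold of dimension $n$, the weak stable and weak unstable distributions are always $C^1$, and on a $3$-manifold they are in fact $C^{1+\alpha}$ for some $\alpha>0$ (this is the standard Hirsch--Pugh--Shub / Hasselblatt bunching estimate; in dimension three the relevant spectral gap condition is automatic because each of $E^u_\psi$ and $E^s_\psi$ is one-dimensional, so the bunching ratios are controlled purely by the flow's hyperbolicity exponents, and these are uniform by Theorem~\ref{t=accelanosov} together with the uniform bounds $\|{\nabla^\star}^i\dot\psi\|_\star\leq C$, $i=1,2,3$, from the lemma preceding it). Applying this to $\psi_t$ gives that $E^u_\psi\oplus\RR\dot\psi$ and $E^s_\psi\oplus\RR\dot\psi$ are $C^{1+\alpha}$ as subbundles of $T(T^1S)$, where regularity and H\"older constants are measured in the $\star$ metric and are uniform over all of $T^1S$ (this is where completeness and the uniform derivative bounds on $\dot\psi$ are essential — they are exactly the hypotheses the general theorem needs).

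Next I would transfer this to $\varphi_t$. By the corollary just above, $E^u_\varphi=(E^u_\psi\oplus\RR\dot\psi)\cap\dot\varphi^\perp$ and similarly for $E^s_\varphi$, where $\dot\varphi^\perp$ is the smooth (indeed real-analytic in the fiber directions) $D\varphi_t$-invariant subbundle of perpendicular Jacobi data. The intersection of a $C^{1+\alpha}$ plane field with a smooth plane field is again $C^{1+\alpha}$, provided the intersection is transverse and of constant dimension — both of which hold here since $\dot\psi=\bar\delta\dot\varphi$ is a nonvanishing section of $\RR\dot\varphi$, hence $\RR\dot\psi=\RR\dot\varphi$ is transverse to $\dot\varphi^\perp$, and $\dim E^u_\varphi=\dim E^s_\varphi=1$ everywhere. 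Concretely one writes a vector in $E^u_\psi\oplus\RR\dot\psi$ as $\xi = \xi^\perp + t\,\dot\varphi$ and solves $\langle\xi,\dot\varphi\rangle_{Sas}=0$ for $t$ as a $C^{1+\alpha}$ function of the base point; this expresses $E^u_\varphi$ as the graph of a $C^{1+\alpha}$ map, all in the $\star$-smooth structure (the underlying smooth structure of $T^1S$ is unchanged by the metric rescaling, so ``$C^{1+\alpha}$ in the $\star$ metric'' just means $C^{1+\alpha}$ with $\star$-measured H\"older seminorms). This yields the claim that $E^u_\varphi$ and $E^s_\varphi$ are $C^{1+\alpha}$ when measured in the $\star$ metric. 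Finally, on the compact set $K=\{\bar\delta\geq\delta_0\}$ the two metrics $\langle\cdot,\cdot\rangle_\star$ and $\langle\cdot,\cdot\rangle_{Sas}$ satisfy $C^{-1}\langle\cdot,\cdot\rangle_{Sas}\leq\langle\cdot,\cdot\rangle_\star\leq C\langle\cdot,\cdot\rangle_{Sas}$ for a constant $C$ depending only on $\delta_0$ (immediate from the formula for $\langle\cdot,\cdot\rangle_\star$, since $\bar\delta$ is bounded above and below on $K$), and likewise the two Levi-Civita connections differ by a $C^1$-bounded tensor on $K$; hence the $\star$-$C^{1+\alpha}$ estimate restricts to a Sasaki-$C^{1+\alpha}$ estimate on $K$ with a uniform constant, giving the ``uniformly $C^{1+\alpha}$'' conclusion on $\bar\delta\geq\delta_0$.

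\textbf{Main obstacle.} The one point needing genuine care is the dimension-three $C^{1+\alpha}$ bootstrap for the $\psi$-distributions: one must check that the bunching hypothesis of the general regularity theorem is met \emph{uniformly}, i.e. that the ratio of the weakest contraction rate in $E^s_\psi$ (or expansion in $E^u_\psi$) to the differential of the flow along the orbit is bounded by a fixed power — this is automatic for $1$-dimensional sub-bundles on a compact manifold but here $T^1S$ is noncompact, so one must use that the rescaled flow $\psi_t$ has \emph{uniform} hyperbolicity constants (from Theorem~\ref{t=accelanosov}) and \emph{uniformly bounded} derivatives $\|{\nabla^\star}^i\dot\psi\|_\star\leq C$ to run the Hirsch--Pugh--Shub argument on the complete manifold $(T^1S,\star)$ exactly as in the compact case. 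Everything else — the intersection-with-$\dot\varphi^\perp$ step and the metric-comparison on the compact part — is routine linear algebra and elementary estimates.
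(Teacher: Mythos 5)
Your proposal is correct and follows essentially the same route as the paper, which derives the corollary by applying the standard $C^{1+\alpha}$ regularity theorem for weak distributions of $C^3$ Anosov flows in dimension three to $\psi_t$ (using the uniform $\star$-bounds on $\dot\psi$ and its covariant derivatives to handle noncompactness), then obtaining $E^{u,s}_\varphi$ by intersection with the smooth bundle $\dot\varphi^\perp$ and using comparability of the $\star$ and Sasaki metrics on $\{\bar\delta\geq\delta_0\}$ for the uniform statement. Your write-up simply makes explicit the bunching/uniformity check that the paper leaves implicit.
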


\begin{remark} If $\bar\delta(p)\leq \delta_1/2$, then the vector $V(p) = \nabla\bar\delta(p)$ points directly away from the cusp.
It is not difficult to see that 
the unstable manifold $\cW^u_\psi(V(p))$ consists of the restriction of the vector field $V$ to the circle
$\bar\delta = \bar\delta(v)$.  For these vectors,  the unstable bundles $E^u_\psi$ and $E^u_\varphi$ coincide.
\end{remark}

Finally we obtain the key bounds on distortion for the flow $\varphi_t$ that will be used to prove exponential mixing.

\begin{corollary}[Distortion control]\label{c=distortion}   For $t\in \RR$ and $v\in T^1S$ denote by  $\|D^s_v\psi_t\|_\star$ and $\|D^u_v\psi_t\|_\star$ the $\star$- norm of the restriction of $D_v\psi_t$ to $E^s_\psi$ and $E^u_\psi$, respectively.  Similarly define
$\|D^s_v\varphi_t\|_\star$ and $\|D^u_v\varphi_t\|_\star$ using the bundles $E^s_\varphi$ and $E^u_\varphi$.
There exist $\theta>0$, $C\geq 1$ and   $\sigma>0$ such that for every $v\in T^1S$:
\begin{enumerate}
\item If $w\in \cW^s_\psi(v, \sigma)$ and $w'\in \cW^u_\psi(v, \sigma)$,  then for all $t>0$:
\[ \left|\log  \|D^s_v\psi_t\|_\star - \log  \|D^s_w\psi_t\|_\star  \right|   \leq  C \rho_\star(v,w)^{\theta},
\]
and
\[ \left |\log  \|D^u_v\psi_{-t}\|_\star - \log  \|D^u_{w'}\psi_{-t}\|_\star  \right|   \leq  C \rho_\star(v,w')^{\theta}.
\]
\item If $w\in \cW^s_\varphi(v, \sigma)$ and $w'\in \cW^u_\varphi(v, \sigma)$,  then for all $t>0$:
\[\left |\log  \|D^s_v\varphi_t\|_\star - \log  \|D^s_w\varphi_t\|_\star  \right |   \leq  C \rho_\star(v,w)^{\theta},
\]
and
\[ \left |\log  \|D^u_v\varphi_{-t}\|_\star - \log  \|D^u_{w'}\varphi_{-t}\|_\star  \right|   \leq  C \rho_\star(v,w')^{\theta}.
\]

\end{enumerate}
\end{corollary}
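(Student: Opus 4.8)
The plan is to handle both parts by the same device: express the logarithm of the relevant expansion cocycle as a time-integral of an \emph{infinitesimal rate} along the orbit, and then bound the difference of two such integrals using uniform H\"older continuity of the rate (in the $\star$-metric) together with exponential contraction of $\rho_\star$ along the appropriate strong leaf. All uniformity will come from Theorem~\ref{t=accelanosov} and the lemma preceding it: in the $\star$-metric $\psi_t$ is a uniformly $C^3$, uniformly hyperbolic Anosov flow on a complete manifold of bounded geometry, so (by the classical regularity theory, which applies with uniform constants in this setting) its one-dimensional bundles $E^s_\psi,E^u_\psi$ are uniformly $\theta_0$-H\"older in the $\star$-metric for some $\theta_0\in(0,1)$.

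For part (1) I would first record, using $\dim E^s_\psi=1$ and the chain rule, the cocycle identity $\|D^s_v\psi_{t+t'}\|_\star=\|D^s_{\psi_t v}\psi_{t'}\|_\star\,\|D^s_v\psi_t\|_\star$, whence
\[
\log\|D^s_v\psi_t\|_\star=\int_0^t\phi^s(\psi_r v)\,dr,\qquad \phi^s(v):=\tfrac{d}{dt}\big|_{t=0}\log\|D^s_v\psi_t\|_\star .
\]
Writing $\phi^s$ in terms of a $\theta_0$-H\"older unit section of $E^s_\psi$ and the globally $C^2$ vector field $\dot\psi$ (which has uniformly bounded $\star$-covariant derivatives) shows that $\phi^s$ is bounded and uniformly $\theta_0$-H\"older on $T^1S$, and similarly for $\phi^u(v):=\tfrac{d}{dt}\big|_{0}\log\|D^u_v\psi_{-t}\|_\star$. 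For $\sigma$ small and $w\in\cW^s_\psi(v,\sigma)$ one has $\rho_\star(\psi_r v,\psi_r w)\le C_0\lambda^{-r}\rho_\star(v,w)$ for all $r\ge 0$, so
\[
\bigl|\log\|D^s_v\psi_t\|_\star-\log\|D^s_w\psi_t\|_\star\bigr|\le\int_0^t\bigl|\phi^s(\psi_r v)-\phi^s(\psi_r w)\bigr|\,dr\le C\rho_\star(v,w)^{\theta_0}\int_0^\infty\lambda^{-\theta_0 r}\,dr,
\]
which is the desired bound with $\theta\le\theta_0$; the unstable inequality is the same statement for $\psi_{-t},E^u_\psi,\phi^u,\cW^u_\psi$.

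For part (2) I would work in the smooth $D\varphi_t$-invariant plane bundle $\dot\varphi^\perp$, where (since $S$ is a surface) $E^s_\varphi(v)$ is the line spanned by $\bigl(J\dot\gamma_v(0),\Phi(v)J\dot\gamma_v(0)\bigr)$, with $\Phi(v)$ the slope at $0$ of the stable solution of the scalar Jacobi equation~(\ref{e=scalarjacobi}) along $\gamma_v$; then $\Phi(\varphi_r v)$ is the slope at time $r$, and a direct computation of the ratio of $\star$-norms under $D\varphi_t$ gives
\[
\log\|D^s_v\varphi_t\|_\star=\int_0^t\Phi(\varphi_r v)\,dr+\tfrac12 h(\varphi_t v)-\tfrac12 h(v),\qquad h(v):=\log\bigl(\bar\delta(v)^{-2}+\Phi(v)^2\bigr).
\]
By the cone bounds of Proposition~\ref{p=cones4}, $\tilde\Phi:=\bar\delta\,\Phi$ takes values in a fixed interval $[-(r+\epsilon),-\beta]$, and by the $C^{1+\alpha}$ regularity of $E^s_\varphi$ in the $\star$-metric established above, $\tilde\Phi$ is uniformly Lipschitz in $\star$; together with $\|\nabla^\star\bar\delta\|_\star=O(\bar\delta)$ this gives that $h$ is uniformly Lipschitz in $\star$ and that $|\Phi(\varphi_r v)-\Phi(\varphi_r w)|\le C\,\rho_\star(\varphi_r v,\varphi_r w)/\bar\delta(\varphi_r v)$. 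In the difference $\log\|D^s_v\varphi_t\|_\star-\log\|D^s_w\varphi_t\|_\star$ the two boundary terms are each $\le C\rho_\star(v,w)^{\theta}$, since $h$ is $\star$-Lipschitz and $\rho_\star$ contracts along $\cW^s_\varphi$. For the integral term I would set $\sigma(v,r):=\int_0^r\bar\delta(\varphi_x v)^{-1}\,dx$ (the $\psi$-time elapsed along the orbit of $v$); because $D_v\varphi_r$ and $D_v\psi_{\sigma(v,r)}$ differ only by a multiple of $\dot\varphi$ and $\psi_t$ is uniformly hyperbolic, $\rho_\star(\varphi_r v,\varphi_r w)\le C_0\lambda^{-\sigma(v,r)}\rho_\star(v,w)$ for $w\in\cW^s_\varphi(v,\sigma)$, and the substitution $u=\sigma(v,r)$ (so $du=\bar\delta(\varphi_r v)^{-1}dr$) yields
\[
\int_0^t\bigl|\Phi(\varphi_r v)-\Phi(\varphi_r w)\bigr|\,dr\le C\rho_\star(v,w)\int_0^t\frac{\lambda^{-\sigma(v,r)}}{\bar\delta(\varphi_r v)}\,dr=C\rho_\star(v,w)\int_0^{\sigma(v,t)}\lambda^{-u}\,du\le\frac{C\rho_\star(v,w)}{\log\lambda},
\]
which is $\le C'\rho_\star(v,w)^{\theta}$ since $\rho_\star(v,w)\le\sigma$. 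Summing, this gives part (2), and the unstable inequality follows by the same argument for $\varphi_{-t}$.

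The hard part is part (2) near the cusp: there $\Phi$ is unbounded, so neither $\int_0^t\Phi(\varphi_r v)\,dr$ nor its $v$-variation can be estimated crudely, and the argument only closes because (a) the a priori cone estimates of Section~\ref{s=revolution}/Proposition~\ref{p=cones4} pin the \emph{rescaled} slope $\bar\delta\Phi$ inside a fixed compact interval, and (b) the offending factor $\bar\delta^{-1}$ is exactly the Jacobian $d\sigma/dr$ of the reparametrization to the Anosov flow $\psi_t$, so it disappears after integrating in $\psi$-time, where contraction occurs at the fixed rate $\lambda>1$. The other place where care is needed is the \emph{uniformity} of the H\"older inputs — that $\phi^s,\phi^u$, and $\bar\delta\Phi,h$, are H\"older with constants independent of the point, even arbitrarily deep in the cusp — which is exactly what the passage to the $\star$-metric secures, since there $\psi_t$ is a genuine uniformly $C^3$, uniformly hyperbolic Anosov flow on a complete bounded-geometry manifold.
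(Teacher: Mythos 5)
Your proof is correct, but it takes a genuinely different route from the paper's, which is much shorter. The paper disposes of part (2) in a single observation: the restriction $\varphi_t\vert_{\cW^s_\varphi(v)}$ factors as $P_2\circ\psi_{T}\vert_{\cW^s_\psi}\circ P_1$, where $P_1,P_2$ are projections along flow lines between $\cW^s_\varphi$-leaves and $\cW^s_\psi$-leaves and $T$ is the corresponding $\psi$-time; since these projections are uniformly $C^2$ (the weak-stable foliation is common to both flows, and $\cW^s_\varphi,\cW^s_\psi$ are uniformly transverse, uniformly smooth sections of it), the $\varphi_t$-distortion inherits the $\psi_t$-distortion bound immediately. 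Your argument instead writes $\log\|D^s_v\varphi_t\|_\star$ explicitly as $\int_0^t\Phi(\varphi_r v)\,dr+\tfrac12h(\varphi_t v)-\tfrac12h(v)$ and controls the integral by substituting $\psi$-time. Both are valid; the paper's is a one-paragraph structural reduction, while yours is self-contained and makes the mechanism transparent (the $\bar\delta^{-1}$ blow-up of $\Phi$ near the cusp is exactly absorbed by $d\sigma/dr$), which is a real pedagogical gain.

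One step you state without proof deserves a line of care: the bound $\rho_\star(\varphi_r v,\varphi_r w)\le C_0\lambda^{-\sigma(v,r)}\rho_\star(v,w)$ for $w\in\cW^s_\varphi(v,\sigma)$. Your justification (``$D_v\varphi_r$ and $D_v\psi_{\sigma(v,r)}$ differ by a multiple of $\dot\varphi$'') is true at the single point $v$, but $\varphi_r w=\psi_{\sigma(w,r)}w\neq\psi_{\sigma(v,r)}w$ for $w\neq v$, and $\varphi_r w$ lies on $\cW^s_\varphi(\varphi_r v)$ while $\psi_{\sigma(v,r)}w$ would lie near $\cW^s_\psi(\varphi_r v)$. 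To convert the known $\psi$-contraction on $\cW^s_\psi$ into the stated $\varphi$-contraction on $\cW^s_\varphi$ one must pass through the flow-line projection between $\cW^s_\varphi$ and $\cW^s_\psi$, using that it is uniformly bi-Lipschitz and that $\sigma(w,r)-\sigma(v,r)$ stays uniformly bounded for $w\in\cW^s_\varphi(v,\sigma)$. This is a small and standard repair, but it is amusing that it is precisely the flow-line-projection comparison on which the paper's own (shorter) proof rests; so you have not entirely avoided that idea, only deferred it to one auxiliary estimate. With that point spelled out, the proposal is complete and correct.
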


\begin{proof} The results for $\psi_t$ are standard properties of Anosov flows.  For $\varphi_t$, we need only note that the map induced by $\varphi_t$ between any two $\cW^s_{\varphi}$ manifolds on the same orbit is just the composition of the map induced by $\psi_t$ between the corresponding  $\cW^s_{\psi}$ manifolds with projections along flow lines at both ends between the $\cW^s_{\varphi}$ and $\cW^s_{\psi}$ manifolds.  These latter projections are uniformly $C^2$.
\end{proof}

\begin{remark}  It is not hard to see that the stable and unstable
bundles $E^u_\psi$ and $E^s_\psi$ are not jointly integrable.  It follows that the Anosov flow $\psi_t$ is mixing with respect to the measure $\mu$. This leads to the question: is $\psi_t$ exponentially mixing (if, for example, $\delta_0$ is chosen small enough in the construction)?
\end{remark}

\subsection{Proof of Theorem~\ref{t=accelanosov}}

By standard arguments in smooth dynamics, to prove that $\psi_t$ is an Anosov flow, it suffices to find nontrivial cone fields $\cC^+$ and $\cC^-$ over $T^1 S$ and constants $C>0$, $\lambda>1$
with the properties:
\begin{itemize}
\item $\cC^+(v)\cap \cC^-(v) = \{0\}$, and $\cC^\pm(v)\cap \RR\dot\psi = \{0\}$;
\item $D_v\psi_1(\cC^+(v)) \subset \cC^+(\psi_1(v))$, and $D_v\psi_{-1}(\cC^-(v)) \subset \cC^-(\psi_{-1}(v))$; and
\item For all $t>0$, and all $\xi^+\in \cC^+(v)$ and  $\xi^-\in \cC^-(v)$, we have
$$
\|D_v\psi_t(\xi^+)\|_\star \geq C\lambda^{t}\, \hbox{ and } \|D_v\psi_{-t}(\xi^-)\|_\star \geq C\lambda^{t}.
$$
\end{itemize}

The derivative of $\psi_t$ restricted to $\dot\varphi^\perp$ has a component in the $\dot\varphi$ direction of $TT^1 S$ owing to the time change.  We have:
\begin{equation}\label{e=dpsi}
D_v\psi_t(\xi) = D_v\varphi_{\tau(v,t)} (\xi) =  D_v\varphi_{s}\vert_{s=\tau(v,t)} (\xi) +  D_v\tau(v,t)(\xi) \, \dot\varphi(v).
\end{equation}
 Our strategy to find the cone fields is summarized in two steps.
\begin{enumerate}
\item Use the properties of $D_v\varphi_{\tau_t}$ previously obtained in Lemma~\ref{l=cone2} to define the perpendicular components (i.e. in $\dot\varphi^\perp$) of $\cC^\pm$. 
\item Using a bound on the ``shear term" $D_v\tau_t(\xi)$ in the $\star$ norm,  we then define the components of $\cC^{\pm}$ in the
$\RR\dot\varphi$ direction.
\end{enumerate}
We carry out these steps in the following sections.

\subsubsection{Action of $D\varphi_s$}
Here we fix $t>0$ and study the action of $D_v\varphi_s\colon \dot\varphi^\perp(v) \to \dot\varphi^\perp(\varphi_s(v))$
at $s = \tau(v,t)$. The derivatives of $\tau$ do not enter into these calculations; we are essentially
establishing properties of the original flow $\varphi_s$ (as measured in the $\star$-metric).

\begin{proposition}\label{p=yzstuff}  For any $v\in T^1 S$, and any real numbers $y_0, z_0$
satisfying $z_0/y_0 \in [\chi(v), r+\epsilon]$, the following holds.
For $s\geq 0$, define $y_s$ and $z_s$ by:
$$
D_v\varphi_{s} \left(y_0\bar\delta(v)  Jv, z_0 J v \right) = \left(y_{s}\bar\delta(\gamma_v(s))  J \dot\gamma_v(s), z_{s} J \dot\gamma_v(s)\right).
$$
Then:
\begin{enumerate}
\item $z_s/y_s \in [\chi(\varphi_s(v)), r+\epsilon]$, for all $s\geq 0 $,
and 
\item  for every $t>0$:
 $$\frac{y_{\tau(v,t)}}{y_0}  \geq e^{\beta t}.$$
\end{enumerate}
\end{proposition}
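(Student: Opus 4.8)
The plan is to reduce the statement to the scalar Jacobi/Riccati formalism of Section~\ref{s=prelim} and then feed in the invariant cone of Proposition~\ref{p=cones4}. First I would invoke Proposition~\ref{prop:key}: since the vector $(y_0\bar\delta(v)\,Jv,\,z_0\,Jv)$ lies in $\dot\varphi^\perp(v)$, its image $D_v\varphi_s(y_0\bar\delta(v)Jv,z_0Jv)$ equals $(\cJ(s),\cJ'(s))$, where $\cJ$ is the perpendicular Jacobi field along $\gamma_v$ with $\cJ(0)=y_0\bar\delta(v)Jv$ and $\cJ'(0)=z_0Jv$. Writing $\cJ(s)=j(s)J\dot\gamma_v(s)$ with $j$ a solution of the scalar Jacobi equation (\ref{e=scalarjacobi}), one has $j(0)=y_0\bar\delta(v)$, $j'(0)=z_0$, and comparing with the formula defining $y_s,z_s$,
\[ y_s=\frac{j(s)}{\bar\delta(\gamma_v(s))},\qquad z_s=j'(s). \]
Consequently, if $u=j'/j$ is the corresponding solution of the Riccati equation (\ref{e=Ricplain}), then $z_s/y_s=u(s)\,\bar\delta(\gamma_v(s))$, and the hypothesis $z_0/y_0\in[\chi(v),r+\epsilon]$ becomes exactly $u(0)\in[\chi(v)/\bar\delta(v),\,(r+\epsilon)/\bar\delta(v)]$. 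I note here that $\tau(v,t)$ is finite and $\gamma_v$ is defined on $[0,\tau(v,t)]$ because the flow $\psi_t$ is complete (Section~\ref{s=adapted}); all the estimates below take place on that interval.

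For part (1), I would apply Proposition~\ref{p=cones4}(1) with $\gamma=\gamma_v$: the cone condition on $u(0)$ propagates, giving $u(s)\in[\chi(\gamma_v(s))/\bar\delta(\gamma_v(s)),\,(r+\epsilon)/\bar\delta(\gamma_v(s))]$ for every $s\in[0,\tau(v,t)]$, hence
\[ \frac{z_s}{y_s}=u(s)\,\bar\delta(\gamma_v(s))\in[\chi(\gamma_v(s)),\,r+\epsilon]=[\chi(\varphi_s(v)),\,r+\epsilon]. \]
For part (2), the idea is to bound the logarithmic derivative of $y_s$ from below. Since $\log y_s=\log j(s)-\log\bar\delta(\gamma_v(s))$, one computes
\[ \frac{d}{ds}\log y_s=u(s)-\frac{\langle\nabla\bar\delta(\gamma_v(s)),\dot\gamma_v(s)\rangle}{\bar\delta(\gamma_v(s))}. \]
Combining the lower bound $u(s)\ge\chi(\gamma_v(s))/\bar\delta(\gamma_v(s))$ from part (1), the Cauchy-Schwarz estimate $\langle\nabla\bar\delta,\dot\gamma_v\rangle\le\|\nabla\bar\delta\|$ (recall $\|\dot\gamma_v\|=1$), and the gap inequality $\chi(p)-\|\nabla\bar\delta(p)\|\ge\beta$ from Proposition~\ref{p=cones4}, I obtain
\[ \frac{d}{ds}\log y_s\ge\frac{\chi(\gamma_v(s))-\|\nabla\bar\delta(\gamma_v(s))\|}{\bar\delta(\gamma_v(s))}\ge\frac{\beta}{\bar\delta(\gamma_v(s))}. \]
Integrating over $[0,\tau(v,t)]$ and using the defining identity (\ref{e=taudef}) for the time-change cocycle $\tau$,
\[ \log\frac{y_{\tau(v,t)}}{y_0}\ge\beta\int_0^{\tau(v,t)}\frac{ds}{\bar\delta(\varphi_s(v))}=\beta t, \]
which is the desired estimate.

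The genuinely routine content is the bookkeeping in the first paragraph that matches the pair $(y_s,z_s)$ to $(j,j')$ and hence to the Riccati solution $u$. I expect the only delicate point to be making sure the geodesic and the cone propagation are valid on the whole interval $[0,\tau(v,t)]$ — which is precisely where completeness of $\psi_t$ in the $\star$-metric is used — and no genuine obstacle beyond that, since Proposition~\ref{p=cones4} already packages both the invariant cone and the uniform positive gap $\chi-\|\nabla\bar\delta\|\ge\beta$ that drives the exponential growth rate.
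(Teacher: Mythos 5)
Your proposal is correct and follows essentially the same route as the paper: identify $(y_s,z_s)$ with a perpendicular scalar Jacobi field $j=\bar\delta\, y$ so that $z_s/y_s=\bar\delta\,(j'/j)$, propagate the cone via Proposition~\ref{p=cones4}, and then integrate $\frac{d}{ds}\log y_s\ge \beta/\bar\delta$ using $\chi-\|\nabla\bar\delta\|\ge\beta$ together with the defining identity (\ref{e=taudef}) for $\tau(v,t)$. The only cosmetic difference is your explicit remark on completeness of $\psi_t$, which the paper leaves implicit.
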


\begin{proof}
For $s>0$, let  $j(s)= \bar\delta(\gamma_v(s)) y_s$.  Then, since 
$(j(s)  J \dot\gamma_v(s), j'(s)  J \dot\gamma_v(s))$
is a perpendicular Jacobi field, the definition of $y_s$, $z_s$ implies that  $j'(s) = z_s$.  In particular,
$j'(s)/j(s)  = z_s/\bar\delta(\gamma_v(s)) y_s$.

Suppose that $z_0/y_0  \in [\chi(v), r+\epsilon]$.   Then
\begin{equation}\label{e=yzcone}
j'(s)/j(s) \in  \left[\frac{\chi(\gamma(s))}{\bar\delta(\gamma_v(s))}, \frac{r+\epsilon}{\bar\delta(\gamma_v(s))}\right]
\end{equation}
holds for $s=0$, and 
Proposition~\ref{p=cones4}  implies that (\ref{e=yzcone}) holds for all $s>0$.  We conclude that
$z_s /y_s \in [\chi(\varphi_s(v)), r+\epsilon]$, for all $s>0$.

Turning to the second item in the proposition, we have that
$$
\frac{\bar\delta(\gamma_v(s)) y_s}{\bar\delta(\gamma_v(0)) y_0 }= \frac{j(s)}{j(0)} =  \exp\left( \int_0^s \frac{j'(u)}{j(u)}\,du\right),
$$
which gives that
$y_s/y_0 = \bar\delta(\gamma_v(0))/\bar\delta(\gamma_v(s)) \exp(  \int_0^s  z_u/(\bar\delta(\gamma_v(u)) y_u) ds),$
and so
\[
y_s  = y_0\exp\left(\int_0^s  \frac{-D\bar\delta(\dot\varphi(\varphi_u(v)))}{\bar \delta(\varphi_u(v))}  + \frac{z_u}{y_u \bar\delta(\varphi_u(v))} \,du \right).
\]
Since $z_u/y_u  \geq  \chi(\varphi_u(v))$, for $u\leq s$, and $\chi - \|\nabla\delta\| >\beta$,  we have
\[
\frac{y_s}{y_0}  \geq \exp\left(\int_0^s \frac{\beta}{\bar\delta(\varphi_u(v))}\,du\right).
\]
We make the substitution $s=\tau(v,t)$  and use the fact that
 $\int_0^{\tau(v,t)} \bar\delta(\varphi_u(v))^{-1} \,du = t$ to obtain the conclusion.
\end{proof}

\subsubsection{Invariant cone fields}

We define invariant stable and unstable cones $\cC^-$ and $\cC^+$.  The angle between  $\cC^+$ and $\cC^-$  will be uniformly bounded in the  $\star$-metric, as will be the angle between either of them and $\dot\psi$.   We establish the properties of  $\cC^+$ in detail; the analogous properties for $\cC^-$ are obtained by the same proof, reversing the direction of time.

Fix $B>0$ to be specified later, and let
\[\cC^+(v):= \{ \left(\bar\delta(v) \left(x v + y Jv\right), z Jv\right) :  z/y \in \left[\chi(v),r+\epsilon\right] \, \& \, |x| \leq B |y|\}\cup\{0\},
\]
and 
\[\cC^-(v):= \{ \left(\bar\delta(v) \left(x v + y Jv\right), z Jv\right) :  z /y \in\left[
-(r+\epsilon), -\chi(v)\right] \,\&\, |x| \leq B |y|\}\cup\{0\}.
\]
Note that since $\chi$ is bounded below away from $0$,  if $\xi =  \left(\bar\delta(v) \left(x v + y Jv\right), z Jv\right) \in \cC^\pm(v)$, then $\|\xi\|_\star$ is uniformly comparable to both $|y|$ and $|z|$.

\begin{lemma} If $B>0$ is sufficiently large, then $D_v\psi_1(\cC^+(v)) \subset \cC^+(\psi_1(v))$, and there exists  $C>0$ such that
$$\xi\in \cC^+(v)  \implies \|D\psi_t (\xi)\|_\star \geq C e^{\beta t} \|\xi\|_\star,
$$
for all $t\geq 0$.
\end{lemma}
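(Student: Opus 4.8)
The plan is to split each cone vector into its flow direction and its perpendicular component, propagate the perpendicular part with Proposition~\ref{p=yzstuff}, and show that the time change only introduces a bounded ``shear'' in the flow direction which the expanding perpendicular part absorbs once $B$ is large. Concretely, write $\xi\in\cC^+(v)$ as $\xi = x\,\dot\psi(v) + \xi^\perp$, where $\xi^\perp = (\bar\delta(v)\,y\,Jv,\; z\,Jv)\in\dot\varphi^\perp(v)$, $z/y\in[\chi(v),r+\epsilon]$ and $|x|\le B|y|$; this uses $\dot\psi(v)=\bar\delta(v)\dot\varphi(v)$ and $\dot\varphi(v)=(v,0)$ in the connection coordinates, and gives $\|\xi\|_\star^2 = x^2+y^2+z^2$. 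Since $\dot\psi$ generates $\psi_t$, we have $D_v\psi_t(\dot\psi(v)) = \dot\psi(w)$ with $w:=\psi_t(v)$; applying (\ref{e=dpsi}) to $\xi^\perp$ and Proposition~\ref{p=yzstuff} to the perpendicular evolution $D_v\varphi_s|_{s=\tau(v,t)}(\xi^\perp) = (y_\tau\bar\delta(w)Jw,\; z_\tau Jw)$ (where $y_\tau = y_{\tau(v,t)}$, etc.) and collecting terms yields
\[
D_v\psi_t(\xi) = \bigl(\bar\delta(w)(x_{\mathrm{new}}\,w + y_\tau\,Jw),\; z_\tau\,Jw\bigr),\qquad x_{\mathrm{new}} = x + \frac{D_v\tau(v,t)(\xi^\perp)}{\bar\delta(w)}.
\]
By Proposition~\ref{p=yzstuff}, $z_\tau/y_\tau\in[\chi(w),r+\epsilon]$ and $|y_\tau|\ge e^{\beta t}|y|$, so the perpendicular part of $D_v\psi_1(\xi)$ already satisfies the cone condition at $w$, and $D_v\psi_1(\cC^+(v))\subset\cC^+(\psi_1(v))$ reduces to the single inequality $|x_{\mathrm{new}}|\le B|y_\tau|$; since $|x|\le B|y|$ and $|y_\tau|\ge e^\beta|y|$, it is enough to prove a shear estimate $|D_v\tau(v,1)(\xi^\perp)|\le B(e^\beta-1)\,\bar\delta(w)\,|y|$.

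The heart of the argument is to bound $D_v\tau(v,1)(\xi^\perp)$ by $\bar\delta(w)\,|y|$ times a constant that does \emph{not} depend on $B$. I would differentiate the defining identity (\ref{e=taudef}) at $t=1$ in the direction $\xi^\perp$ (Leibniz rule with variable upper limit), getting
\[
\frac{D_v\tau(v,1)(\xi^\perp)}{\bar\delta(w)} = -\int_0^{\tau(v,1)} D_v\!\left(\frac{1}{\bar\delta(\varphi_x(v))}\right)\!(\xi^\perp)\,dx.
\]
Since $\bar\delta$ depends only on the footpoint and the horizontal component of $D_v\varphi_x(\xi^\perp)$ is $y_x\,\bar\delta(\varphi_x v)\,J\dot\gamma_v(x)$ by Proposition~\ref{p=yzstuff}, the integrand equals $-\bar\delta(\varphi_x v)^{-2}\,y_x\,\bar\delta(\varphi_x v)\,\langle\nabla\bar\delta, J\dot\gamma_v(x)\rangle$, which is bounded in absolute value by $|y_x|/\bar\delta(\varphi_x v)$ because $\|\nabla\bar\delta\|\le1$. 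Changing variables $x = \tau(v,\sigma)$ (so $dx = \bar\delta(\varphi_x v)\,d\sigma$ and $\sigma\in[0,1]$) reduces the estimate to $\int_0^1 |y_{\tau(v,\sigma)}|\,d\sigma$; and the formula $y_s = y\exp\!\big(\int_0^s(-D\bar\delta(\dot\varphi)/\bar\delta + z_u/(y_u\bar\delta))\,du\big)$ from the proof of Proposition~\ref{p=yzstuff}, combined with $|D\bar\delta(\dot\varphi)|\le\|\nabla\bar\delta\|\le1$, $0\le z_u/y_u\le r+\epsilon$ and $\int_0^{\tau(v,\sigma)}\bar\delta(\varphi_u v)^{-1}\,du = \sigma$, gives $|y_{\tau(v,\sigma)}|\le e^{(1+r+\epsilon)\sigma}|y|\le e^{1+r+\epsilon}|y|$ on $[0,1]$. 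Hence $|D_v\tau(v,1)(\xi^\perp)|\le e^{1+r+\epsilon}\,\bar\delta(w)\,|y|$, and taking $B\ge e^{1+r+\epsilon}/(e^\beta-1)$ establishes the invariance (for $\xi=0$, equivalently $y=0$, the statement is trivial).

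For the expansion bound, the explicit form of $D_v\psi_t(\xi)$ above and the definition of the $\star$ metric at $w$ give
\[
\|D_v\psi_t(\xi)\|_\star^2 = x_{\mathrm{new}}^2 + y_\tau^2 + z_\tau^2 \ge y_\tau^2 \ge e^{2\beta t}y^2,
\]
while $\|\xi\|_\star^2 = x^2+y^2+z^2 \le (B^2+1+(r+\epsilon)^2)y^2$; hence $\|D_v\psi_t(\xi)\|_\star\ge C\,e^{\beta t}\|\xi\|_\star$ with $C=(B^2+1+(r+\epsilon)^2)^{-1/2}$. The step I expect to be the real obstacle is the shear estimate of the second paragraph, and in particular the bookkeeping showing its constant is independent of $B$ — this is what legitimizes ``$B$ sufficiently large'' — which hinges on rewriting the $\varphi$-time integral over $[0,\tau(v,1)]$ as a $\star$-time integral over the unit interval and feeding in the already-established control of the Jacobi coordinate $y_x$ from Proposition~\ref{p=yzstuff}; everything else is linear algebra together with that proposition.
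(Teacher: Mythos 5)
Your argument is correct and follows the paper's proof in its essentials: the same decomposition of a cone vector into a $\dot\psi$-component plus a perpendicular part, the same use of (\ref{e=dpsi}) and Proposition~\ref{p=yzstuff}, and the same choice of $B$ large enough that the exponential growth of the $y$-coordinate absorbs the shear term. The only real difference is that you derive the uniform shear bound explicitly (differentiating (\ref{e=taudef}) and converting the resulting integral to $\psi$-time), whereas the paper simply absorbs this into the constant $C_1=(1+1/\beta)\|D_v\tau(1,\cdot)\|_\star$; your version makes the uniformity in $v$, which the paper leaves implicit, transparent.
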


\begin{proof} Recall that $\dot\psi(v) = (\bar\delta(v) v,0)$, and
$D_v\psi_t(\dot\psi(v)) = \dot\psi(\psi_t(v))$.  
Applying $D_v\psi_t$ to $\xi = \left(\bar\delta(v) \left(x_0 v + y_0 Jv\right), z_0 Jv\right)$ and using (\ref{e=dpsi}), we get
$$D_v\psi_t(\xi)
=   D_v\psi_t(x_0 \dot\psi(v)) + D_v\psi_t(\bar\delta(v) y_0 Jv, z_0 Jv)$$
\[
=   D_v\psi_t(x_0 \dot\psi(v)) + D_v\varphi_{s}\vert_{s=\tau(v,t)}  \left(y_0\bar\delta(v)  Jv, z_0 Jv\right) + D_v\tau(v,t) \left(y_0\bar\delta(v) Jv, z_0 Jv\right)\dot\varphi(\psi_t(v))
\]
$$ =  \left(x_0 \bar\delta(\psi_t(v)) +D_v\tau(v,t)\left(y_0\bar\delta(v) Jv, z_0 Jv\right)\right)  \left(\psi_t(v),0\right)\qquad\qquad\qquad\qquad\qquad\qquad\qquad\,\,\,
$$
$$\qquad\qquad\qquad\qquad\qquad\qquad\qquad+ D_v\varphi_{s}\mid_{s=\tau(v,t)}\left(y_0\bar\delta(v)  J\psi_t(v), z_ 0 J\psi_t(v)\right).
$$
$$
=: (x_\tau \bar\delta(\varphi_\tau(v)) \varphi_\tau(v), 0 ) + ( y_\tau \bar\delta(\varphi_\tau(v)) J\varphi_\tau(v),  z_\tau J\varphi_\tau(v)),
$$
where in the last expression we've used the abbreviation $\tau = \tau(v,t)$.  

Assume that $\xi\in \cC^+(v)$ and 
without loss of generality that  $y_0\geq 0$.  This implies that
$z_0  \in [\chi(v) y_0, (r+\epsilon)y_0]$  and $|x_0| \leq B |y_0|$. 
 Proposition~\ref{p=yzstuff} implies that  for any $t>0$:
\[|x_\tau|  = |x_0 + \frac{1}{\bar\delta(\varphi_{\tau}(v))} D_v\tau(v,t)\left(y_0\bar\delta(v) Jv, z_0 Jv\right) |  \leq  |x_0| +  \|D_v\tau(t,\cdot)\|_\star \| (y_0\bar\delta(v) Jv, z_0 Jv) \|_\star\]
 \[\leq  |x_0| +   (1+1/\beta)  \|D_v\tau(t,\cdot)\|_\star  |z_0|,\]
$y_\tau \geq e^{\beta t} y_0$, 
and
$z_\tau \in  \left[ \chi(\varphi_\tau(v))  y_\tau, (r+\epsilon) y_\tau\right]$.

Now fix $t=1$, and let $\tau_1 = \tau(v,1)$.
We want to show that $D_v\psi_1(\xi) \in \cC^+(\psi_1(v)) =\cC^+(\varphi_{\tau_1}(v)) $;  i.e. that
$z_{\tau_1}/y_{\tau_1} \in [\chi(\psi_1(v)),r+\epsilon]$  and $|x_{\tau_1}| \leq B |y_{\tau_1}|$.

From the previous discussion, we have $z_{\tau_1} \in  \left[ \chi(\varphi_{\tau_1}(v))  y_{\tau_1}, (r+\epsilon) y_{\tau_1}\right]$,
and setting $C_1 =  (1+1/\beta)\|D_v\tau(1,\cdot)\|_\star$, we also have:
\[|x_{\tau_1}| \leq |x_0| +  C_1|z_0|  \leq B y_0+  C_1|z_0| 
\leq  (B + C_1(r+\epsilon)) y_0 \leq  (B + C_1e(r+\epsilon))e^{-\beta} y_{\tau_1}.
\]
Thus we want choose $B$ such that $(B + C_1(r+\epsilon))e^{-\beta}\leq B$,
which holds if
\[
B\geq \frac{C_1 (r+\epsilon) e^{-\beta}}{1- e^{-\beta}}.
\]
A similar argument works for $y_0<0$.

Finally if $\xi\in \cC^+(v)$, then $\|D\psi_t (\xi)\|_\star$ is uniformly comparable to $|y_\tau|$;
since $|y_\tau|$ grows exponentially on the order $e^{\beta t}$, so does
$\|D_v\psi_t\xi\|_\star$.
\end{proof}

\section{Exponential Mixing}\label{s=expmix}

As mentioned in the introduction, to prove exponential mixing of $\varphi_t$, we will construct a 
Young tower -- a special section to the flow -- whose return times have exponential tails.
Since orbits of $\varphi_t$ spend only a bounded amount of time in the cuspidal region,
ensuring exponential tails for the return time is not difficult.

Fix $\delta_2\leq\delta_1/2$ sufficiently small, and denote by $ Z =  Z(\delta_2)$ the circle $\{p: \bar\delta(p)=\delta_2\}$.
Then $ Z$ lifts to two distinguished circles $ Z^u,  Z^s\subset T^1_{ Z} S$
in the unit tangent bundle:
\[
 Z^u:= \{ \nabla \bar\delta(p) :  p\in  Z\},\; \hbox{ and }  Z^s:= \{- \nabla \bar\delta(p) :  p\in  Z\}.
\]
Then $ Z^u$ is a closed leaf of the unstable foliation $\cW^u$  for $\varphi_t$, and  $ Z^s$ is a closed leaf of the stable foliation.  In a neighborhood $U$ of $ Z^u$ in $T^1S$, there is a well-defined projection
$\pi^{cs}\colon U\to  Z^u$ along local leaves of the weak-stable foliation $\cW^{cs}$ for $\varphi_t$.  Since the foliation
$\cW^{cs}$ is $C^{1+\alpha}$, the map $\pi^{cs}$ is a 
$C^{1+\alpha}$ fibration.

We will prove:
\begin{theorem}\label{t=tower}  For any $v_0\in  Z^u$, there are  constants $C \geq 1$, $\lambda, \alpha \in(0,1)$,  a collection of disjoint, open subintervals $\cI = \{\Delta_j : j\geq 1 \}$, with  $\Delta_j\subset \Delta_0 :=  Z^u\setminus \{v_0\}$, for $j\geq 1$,
and a  function $R\colon \bigcup \cI \to [C^{-1},\infty)$ such that:

\begin{enumerate}
\item  $\left|\Delta_0 \setminus \bigcup \cI \right|=0$, where $|\cdot|$ denotes Lebesgue measure on unstable leaves.
\item  For each $v\in \bigcup \cI$, there exists $v'\in \Delta_0$ such that
$\varphi_{R(v)}(v) \in \cW^s_{loc}(v')$.
\item Define $F\colon \cI \to \Delta_0$ by $F(v) =  \pi^{cs}\varphi_{R(v)}(v)$.
For each $j\geq 1$ there is a  diffeomorphism $h_j:  \Delta_0\to \Delta_j$  such that for all $v\in \Delta_0$:
 \[F\circ h_j(v) = v.\]
\item $h_j$ is a uniform contraction: $d(h_j(v_1), h_j(v_2)) \leq \lambda$.
\item $\log h_j'$ is uniformly $C^{1+\alpha}$:
\[|\log h_j'(v_1)- \log h_j'(v_2)| \leq  C d(v_1,v_2)^{\alpha},
\]
for all $v_1, v_2 \in \Delta_0$.
\item  $\|(R\circ h_j)'\|_\infty \leq C$ for all $j$.
\item   For each $k>0$, we have
$\left| \left\{v\in \bigcup \cI \ :  R(v)\geq k \right\} \right| \leq C  \lambda^{k}$; moreover,
there exists $\epsilon>0$ such that
\[\sum_j \exp(\epsilon |R\circ h_j|_\infty) |h_j'|_\infty<\infty.\]
\item (UNI Condition) For $n\geq 1$, let $R_n = \sum_{i=0}^{n-1} R\circ F^i$ (where defined)
 and let 
\[\cH_n = \{h_{\mathbf j}:= h_{j_n}\circ h_{j_{n-1}} \circ\cdots \circ h_{j_1}: {\mathbf j} = (j_1, \ldots, j_n),\,  j_k\geq 1 \}\] be the set of inverse branches of $F^n$, which satisfy $F^n\circ h_{\mathbf j} =  id_{\Delta_0}$, for all $h_{\mathbf j} \in \cH_n$.  Then there exists $D>0$ such that, for all $N \geq 1$, there exist $n\geq N$ and 
$h_{\mathbf j_1} , h_{\mathbf j_2} \in \cH_n$ such that
\[\inf_{v\in \Delta_0} \left|\left(R_n\circ h_{\mathbf j_1} - R_n\circ h_{\mathbf j_2}\right)'(v) \right| \geq D.
\]

\end{enumerate}

\end{theorem}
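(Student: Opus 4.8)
I will take the cross-section to be $\Sigma:=\bigcup_{v\in Z^u}\cW^s_{loc}(v)$, the saturation of the closed unstable circle $Z^u$ by local strong-stable leaves. Since $\cW^s$ is $C^{1+\alpha}$ and uniformly transverse to the flow, $\Sigma$ is a $C^{1+\alpha}$ surface transverse to $\varphi_t$ possessing a flow-box neighbourhood, so any first return to $\Sigma$ takes at least some definite time $C^{-1}>0$. Recall that $Z^u$ is a closed leaf of $\cW^u$, so $TZ^u=E^u_\varphi$ along $Z^u$; hence, for $R$ the return time furnished by the construction below, $F:=\pi^{cs}\circ\varphi_R$ maps $Z^u$ to $Z^u$ and expands the $Z^u$-direction, because the uniform hyperbolicity of $\psi_t$ (Theorem~\ref{t=accelanosov}), transferred to $\varphi_t$, expands $E^u_\varphi$ in the $\star$-metric, while $D\pi^{cs}$, the derivative of a $C^{1+\alpha}$ holonomy along $\cW^{cs}$, has norm and conorm bounded away from $0$ and $\infty$. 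To produce full branches I flow $Z^u$ forward: $\varphi_t$ is topologically mixing (a volume-preserving Anosov flow that is not a suspension, by the non-joint-integrability of $E^u_\psi,E^s_\psi$ noted above), so the forward images of $Z^u$ eventually cross the flow box of $\Sigma$ in \emph{full} unstable segments, i.e.\ segments whose $\pi^{cs}$-image is all of $Z^u$. Deleting the point $v_0$ to turn $Z^u$ into the interval $\Delta_0$, I let $\{\Delta_j\}_{j\ge1}$ be the connected components of the set of $v\in\Delta_0$ whose orbit first meets $\Sigma$ inside such a full segment. Then $F|_{\Delta_j}\colon\Delta_j\to\Delta_0$ is a diffeomorphism onto $\Delta_0$ (fullness); its inverse $h_j$ is a uniform contraction (the bound $R\ge C^{-1}$ forces a definite hyperbolic expansion of $TZ^u=E^u_\varphi$); and $\Delta_0\setminus\bigcup_j\Delta_j$ is Lebesgue-null, since the non-full parts of the forward iterates of $Z^u$ shrink exponentially under mixing, and absolute continuity of $\cW^u$ turns this into a statement about Lebesgue measure on $Z^u$. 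This yields items (1)--(4), with (2) built into the construction.

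\textbf{Distortion estimates (items (5), (6)).} Writing $h_j=(F|_{\Delta_j})^{-1}$ and differentiating $v\mapsto\varphi_{R(v)}(v)$, the ``shear'' term $(D_vR)\,\dot\varphi$ lies in $\RR\dot\varphi\subset T\cW^{cs}$ and is therefore annihilated by $D\pi^{cs}$; hence $F'$ on $\Delta_j$ equals $D\pi^{cs}$ composed with the restriction of $D\varphi_{R}$ to $E^u_\varphi$, so that $\log h_j'=-\log(F'\circ h_j)$ is, up to the log of the $\cW^{cs}$-holonomy derivative, the negative log of the unstable expansion factor of $\varphi_R$. The $\theta$-H\"older control of $\log\|D^u\varphi_t\|_\star$ along unstable leaves from Corollary~\ref{c=distortion}, together with the $C^{1+\alpha}$ regularity of $\pi^{cs}$, then gives $|\log h_j'(v_1)-\log h_j'(v_2)|\le C\,d(v_1,v_2)^{\alpha}$ uniformly in $j$, which is (5). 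For (6) I will show that, along an unstable leaf, the return-time derivative is dominated by the unstable expansion of the flow over time $R$; composing with the contraction $h_j$, which decays at precisely that rate, then bounds $(R\circ h_j)'$ uniformly. Controlling the portion of this derivative accumulated while the orbit is inside the cuspidal region is exactly where the geometry of Section~\ref{s=revolution} is used: the quasi-Clairaut relation (Proposition~\ref{p=Clairaut}), convexity of $\delta$ along geodesics (Lemma~\ref{l=convexd}), and the coarse invariance of cuspidal Jacobi fields of Section~\ref{s=jacobi}.

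\textbf{Exponential tails and (UNI).} For (7), the decisive input is Corollary~\ref{p=leave}: every excursion of an orbit into $\cN(\delta_0)$ lasts at most $2\delta_0$, so up to a bounded additive term $R$ agrees with the first-return time to $\Sigma$ of a uniformly hyperbolic flow on the compact part of $T^1S$; for such a flow $\{v:R(v)\ge k\}$ has measure $O(\lambda^{k})$ by the classical argument — a long return forces the orbit to shadow a closed orbit for a long time, which is exponentially unlikely — and the distortion control of Corollary~\ref{c=distortion} transfers this to the conditional (Lebesgue) measure on $Z^u$; choosing $\epsilon>0$ smaller than the exponential rate then gives $\sum_j\exp(\epsilon|R\circ h_j|_\infty)|h_j'|_\infty<\infty$. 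Finally, (UNI) holds because $\varphi_t$ preserves a contact form: were the condition to fail for all $n$, the roof function $R$ would be cohomologous over the subshift to a function with a locally constant structure incompatible with the non-joint-integrability of $E^u_\psi$ and $E^s_\psi$; quantifying this produces the uniform lower bound $D>0$, by the standard verification of the non-integrability condition for contact flows due to Dolgopyat and to Ara\'ujo--Melbourne.

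\textbf{Main obstacle.} No single estimate above is hard in isolation; the genuine difficulty — already dispatched in the previous sections — is that $\varphi_t$ is only nonuniformly hyperbolic and incomplete, so a naive construction would have Gibbs--Markov constants blowing up near the cusp. The construction is viable precisely because the rescaled Anosov flow $\psi_t$ in the $\star$-metric (Theorem~\ref{t=accelanosov}) furnishes uniformly $C^{1+\alpha}$ invariant foliations and the uniform distortion bounds of Corollary~\ref{c=distortion}. Within this section the most technical point is (6): bounding $(R\circ h_j)'$ requires tracking how an unstable leaf spreads during a cuspidal excursion, which is exactly what the refined geodesic estimates of Sections~\ref{s=deltaregularity}--\ref{s=revolution} were built to control.
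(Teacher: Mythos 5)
Your high-level frame (stable-saturated section over $Z^u$, inverse branches $h_j$ of $\pi^{cs}\circ\varphi_R$, distortion from Corollary~\ref{c=distortion} plus $C^{1+\alpha}$ holonomy, UNI from the contact form) matches the paper, but the core of the theorem is the quantitative structure in items (1) and (7), and there your argument has a genuine gap. You derive full measure and exponential tails from ``topological mixing: the non-full parts of the forward iterates of $Z^u$ shrink exponentially'' and from ``a long return forces the orbit to shadow a closed orbit for a long time, which is exponentially unlikely.'' Neither of these is a proof: topological mixing gives no rate at all, and the shadowing heuristic is not the classical route to tail bounds for tower return times (and gives no measure estimate on $\{R\ge k\}$ as stated). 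What is actually needed, and what the paper supplies, is a uniform ``large images in bounded time'' mechanism: Lemma~\ref{l=meets} shows every unstable segment of a fixed scale in the thick part reaches $\cT_{in}(\delta_2,\eta_0)$ within a uniform time $U_0$, and the inductive chopping algorithm, with the bounded-distortion ratio bounds (\ref{e=ratio1}), (\ref{e=ratio2}) and Lemma~\ref{l=IBcompare}, shows that in each time window of length $U$ a definite proportion of the surviving (gap) measure is converted into returned (border) intervals, giving $|\cG_{kU}|\le\lambda_0^k$ (Lemma~\ref{l=lambda0}) and hence Lemma~\ref{l=R_0bound}. Without some substitute for this inductive, uniformly quantitative step, your items (1) and (7) are unsubstantiated, and (7) is precisely the hypothesis the Ara\'ujo--Melbourne machinery cannot do without.

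A secondary issue: your branches are declared to be ``connected components of the set of $v$ whose orbit first meets $\Sigma$ inside a full segment,'' but such components do not automatically map diffeomorphically onto $\Delta_0$; one must chop so that each $\Delta_j$ corresponds to exactly one full crossing. The paper avoids this delicacy by a different mechanism for fullness, which your proposal never uses: a passage near the cusp stretches an unstable ``fundamental interval'' so that its image spirals once around $\Sigma_{out}$ and projects onto all of $Z^u$ (the construction of $\Sigma_{in},\Sigma_{out}$, the map $\cR$, and Lemma~\ref{l=intervalstretch}); full branches and the smallness of border intervals then come from the cusp geometry itself (quasi-Clairaut), rather than from an abstract mixing/crossing argument in the compact part. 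Your alternative could in principle be made to work for an Anosov flow, but as written the decisive tail estimate is missing.
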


A recent result of Ara\'ujo-Melbourne \cite{AM} shows that conditions (1)--(8) imply exponential mixing of $\varphi_t$.
For $\theta\in (0,1]$, define $C^\theta (T^1S)$ to be the set of of $L^\infty$ functions $u \colon T^1S\to \RR$ such that
 $\|u\|_\theta  := |u|_\infty + |u|_\theta < \infty$, where
\[|u|_\theta := \sup_{v \neq v' } \frac{ |u(v)-u(v')|}{\rho(v,v')^\theta}.\]

\begin{corollary}\label{c=main} The flow $\varphi_t$ is exponentially mixing:  for every $\theta\in (0,1]$, there exist constants  $c, C > 0$ such that
for every $u_1, u_2 \in C^\theta(T^1S)$, we have
\[\left| \int_{T^1S} u_1 \, u_2\circ\varphi_t  \,d\vol - \int  u_1\, d\vol  \int  u_2\, d\vol   \right| \leq Ce^{-ct}\|u_1\|_{\theta} \|u_2\|_{\theta},
\]
for all $t>0$.
\end{corollary}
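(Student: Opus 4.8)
The plan is to deduce Corollary~\ref{c=main} from Theorem~\ref{t=tower} by invoking the theorem of Ara\'ujo-Melbourne \cite{AM}, which takes as input a flow on a manifold together with a cross-section carrying a Young tower whose base map is Gibbs-Markov, whose roof function has exponential tails and is (up to a $C^{1+\alpha}$ stable holonomy) constant along local stable leaves, and which satisfies the UNI non-integrability condition, and outputs exponential decay of correlations for H\"older observables. First I would check that Theorem~\ref{t=tower} supplies exactly this input for $\varphi_t$: items (3)--(5) make $F\colon\bigcup\cI\to\Delta_0$ a full-branch Gibbs-Markov map with uniformly bounded distortion ($\log h_j'$ uniformly H\"older); item (7) is the exponential tail bound, and the attendant summability $\sum_j e^{\epsilon|R\circ h_j|_\infty}|h_j'|_\infty<\infty$ is what drives the spectral gap of the weighted transfer operator on the tower; items (2) and (6), together with the $C^{1+\alpha}$ regularity of the weak-stable foliation $\cW^{cs}$ and the $C^{1+\alpha}$ fibration $\pi^{cs}$ established in Section~\ref{s=global}, make $R$ a roof function with uniformly bounded derivative along $Z^u$ that is constant on local stable leaves modulo $C^{1+\alpha}$ holonomy; and item (8) is the UNI condition verbatim. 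Finally, since $\varphi_t$ is ergodic for $\vol$ (Corollary~\ref{c=ergodic}) and item (1) holds, the tower accounts for $\vol$-a.e.\ orbit and $\vol$ corresponds to the tower's SRB measure, whose quotient along stable leaves is the $F$-invariant measure equivalent to Lebesgue on $Z^u$.

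With these verifications, \cite{AM} yields constants $c=c(\theta)>0$ and $C=C(\theta)>0$, depending only on $\theta$, on the H\"older exponent $\alpha$ of $\cW^s$, on the tail rate $\lambda$ and on the UNI constant $D$, such that the claimed correlation bound holds for all $u_1,u_2\in C^\theta(T^1S)$. The point requiring genuine attention is the matching of observable classes: the tower here sits over the single unstable leaf $Z^u=\{\bar\delta=\delta_2\}$ rather than over a transversal to $\cW^s$, so one must verify that the flow-box coordinates $(v,s)\mapsto\varphi_s(v)$ near $Z^u$, composed with the stable-holonomy identification, give a bi-H\"older dictionary between $\rho$-H\"older functions on $T^1S$ and the dynamically H\"older observables on the tower to which \cite{AM} applies. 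This uses: that local stable manifolds contract exponentially in $\rho_\star$ (by the Anosov property of $\psi_t$, Theorem~\ref{t=accelanosov}), hence also in $\rho$ since $\rho\le\rho_\star$ pointwise by the definition of the $\star$ metric; that the holonomies of the $C^{1+\alpha}$ foliation $\cW^s$ are H\"older; and the standard estimate that replacing $u_2\circ\varphi_t$ by its average over local stable manifolds costs only an error that is exponentially small in $t$, uniformly. These approximation steps are routine (cf.\ \cite{AGY,AM}) and I would carry them out as in the cited references.

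The main obstacle is precisely this last bookkeeping: controlling the H\"older norms of the inverse branches $h_{\mathbf j}$ and of $R\circ h_{\mathbf j}$ along arbitrarily deep compositions (items (5) and (6) of Theorem~\ref{t=tower}), together with the distortion of the weak-stable projection $\pi^{cs}$, so that the abstract transfer-operator estimate of \cite{AM} really translates into a correlation bound for $\rho$-H\"older observables on $T^1S$ with the stated uniform constants. Once this dictionary is in place, Corollary~\ref{c=main} follows; and Theorem~\ref{t=main} is the special case $\theta=1$, since any bounded $C^1$ function $u$ on $T^1S$ is Lipschitz for the geodesic distance with $\|u\|_1\le\|u\|_{C^1}$.
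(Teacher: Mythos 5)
Your proposal is correct and follows essentially the same route as the paper: verify that conditions (1)--(8) of Theorem~\ref{t=tower} realize $\varphi_t$ as (the natural extension of) a $C^{1+\alpha}$ skew product flow with exponential tails and UNI, then invoke the Ara\'ujo--Melbourne theorem. The only cosmetic difference is in passing to $C^\theta$ observables: the paper simply cites the mollification argument of Remark~3.4 in \cite{AM}, whereas you sketch a direct stable-averaging/H\"older-holonomy approximation, which amounts to the same standard step.
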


\begin{proof}  In the language of \cite{AM}, conditions (1)-(8) in Theorem~\ref{t=tower} imply that we can express the ergodic flow $\varphi_t$ as the natural extension of 
a $C^{1+\alpha}$ skew product flow satisfying the UNI condition.  See the discussion in \cite{AM} after Remark 4.1. Theorem 3.3 in \cite{AM} then applies to give that $\varphi_t$ is exponentially mixing for a suitable function space of observables, in particular those that are $C^3$.  A standard mollification argument gives exponential mixing for observables
in $C^\theta$ (see Remark 3.4 in \cite{AM}).
\end{proof}

The construction is carried out in two parts.  First, in Subsection~\ref{ss=sections}, we isolate those orbits that leave the thick part of $T^1S$ and travel deeply into the cusp.  These orbits are easily described on a topological level using the Quasi-Clairaut relation developed in Section~\ref{s=QC}.  We give a precise description of the first return map to the thick part for these orbits.  Next, in Subsection~\ref{ss=thick},  we analyze the orbits beginning in $ Z^u$ and decompose into pieces visiting the thin part in a controlled way.  We combine these analyses to obtain the desired decomposition of $\Delta_0$ in Theorem~\ref{t=tower}.

\subsection{Constructing sections to the flow in the cusp} \label{ss=sections}

We will work with $\delta_2 \leq \delta_1/2$ so that for $\bar\delta(p)\leq \delta_2$, we have $\bar\delta(p) = \delta(p)$ and $\chi(p) = r-1-\epsilon$, where $\chi$ is the function appearing in Proposition~\ref{p=cones4}.

Let $ \cT(\delta_2)\subset T^1S$   be the torus  consisting of all unit tangent vectors to $S$ with footpoint in $ Z(\delta_2)$:
\[\cT(\delta_2)= T^1_{ Z(\delta_2)}S.\]
This torus is transverse to the vector field $\dot\varphi$, except at the two circles
\[ C^{+} :=  \{ J\nabla \bar\delta(p) :  p\in  Z\},\; \hbox{ and }  C^{-}:= \{-J \nabla \bar\delta(p) :  p\in  Z\}.
\]

Let $\widehat\cW^u$ and $\widehat\cW^s$ be the laminations of $\cT(\delta_2)$ obtained by
intersecting leaves of the weak foliations $\cW^{cu}$ and $\cW^{cs}$ with $\cT(\delta_2)$.
On $\cT(\delta_2)\setminus ( C^+\cup  C^-)$, the laminations $\widehat\cW^u$ and $\widehat\cW^s$ 
are transverse foliations with $1$-dimensional leaves.
Each lamination $\widehat\cW^u$  and  $\widehat\cW^s$ has exactly one closed leaf,   the curves $ Z^u$ and  $ Z^s$ respectively, which are also unstable and stable manifolds for  $\varphi_t$.

\begin{figure}[ht]
\begin{center}\includegraphics[scale=0.4]{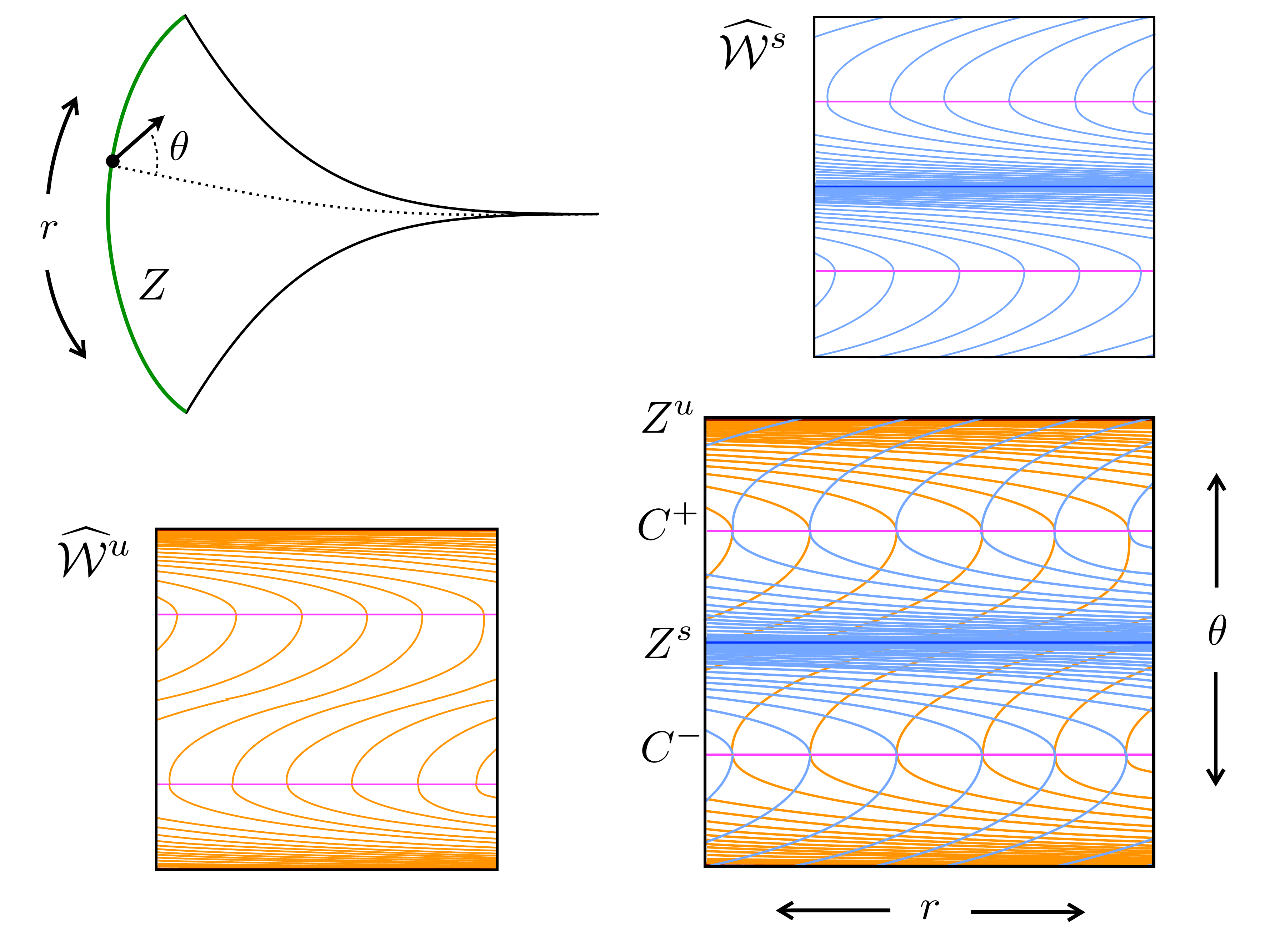}\end{center}
\caption{The laminations  $\widehat\cW^u$ and $\widehat\cW^s$. The singular loci $C^{\pm}$ are the labeled circles where the leaves of $\widehat\cW^u$ and $\widehat\cW^s$ become tangent.}
\end{figure}

For $\eta_0>0$ we define two open subsets $ \cT_{in}(\delta_2, \eta_0)$ and
$ \cT_{out}(\delta_2, \eta_0)$ of $\cT(\delta_2)$ as follows:
\[\cT_{in}(\delta_2, \eta_0) := \{v\in \cT(\delta_2) : a(v) < 0\,\,\&\, |b(v)| \leq \eta_0\}, \]
and
\[\cT_{out}(\delta_2, \eta_0) := \{v\in \cT (\delta_2): a(v) >0\,\,\&\, |b(v)| \leq \eta_0\}, \]
where $a$ and $b$ are defined by (\ref{e=abdef}).
Note that $ Z^u\subset   \cT_{out}(\delta_2, \eta_0)$, and
$ Z^s\subset   \cT_{in}(\delta_2, \eta_0)$, for all $\eta_0>0$.

If $\eta_0<1$, then $ \cT_{out}(\delta_2, \eta_0)$ and  $\cT_{in}(\delta_2,\eta_0)$
are disjoint from $C^{\pm}$, and so $\widehat\cW^u$ and  $\widehat\cW^s$
form uniformly transverse foliations in these cylinders. 

Proposition~\ref{p=Clairaut} implies that if $\delta_2$ is sufficiently small, then for all $\eta_0<1/2$,
there is a well-defined first return map
\[\cR\colon \cT_{in}(\delta_2, \eta_0) \setminus  Z^s \to \cT_{out}(\delta_2, 2\eta_0)\]
for the flow $\varphi_t$, with a local inverse
$\cR^{-1} \colon\cT_{out}(\delta_2, \eta_0)\setminus  Z^u \to \cT_{in}(\delta_2, 2\eta_0)$.  
These maps, where defined,  are $C^3$ and preserve the foliations $\widehat\cW^u$  and  $\widehat\cW^s$.

Fix $v_0\in  Z^u$ and recall that $\cW^u(v_0) = \widehat\cW^u(v_0)=  Z^u$.
 Fix $\eta$ small, and let $I_0=\widehat\cW^s(v_0,\eta)$.  The image of $I_0\setminus\{v_0\}$ under $\cR^{-1}$ is the union of two infinite rays spiraling into the unique closed stable manifold $ Z^s$ in $\cT(\delta_2)$.  Fix another point $v_0' \in  Z^s$ (for example, $v_0' = -v_0$), and fix two points
$v_L',  v_R' \in \widehat\cW^u(v_0',\eta)\cap  \cR^{-1}(I_0)$, to the left and right, respectively,  of $v_0'$  in
 $\widehat\cW^u(v_0',\eta)$ with respect to some fixed orientation.

Let $v_L = \cR(v_L')$, and let $v_R = \cR(v_R')$. The unstable manifold $\widehat\cW^u(v_L)$ contains an infinite ray  from $v_L$, spiraling into $ Z^u$ from the left and cutting $I_0$ infinitely many times.  Let $w_L$ be the first intersection point of this ray with $I_0$; it lies to the right of $v_L$, and to the left of $v_0$.  The points $v_L, w_L$ define a closed
curve $c_L$ in $\cT(\delta_2)$, consisting of the piece of $\widehat\cW^u(v_L)$ connecting 
$v_L$ to $w_L$ and the subinterval of $I_0 =\widehat\cW^s(v_0,\eta)$ from $v_L$ to $w_L$.

\begin{figure}[ht]
\begin{center}\includegraphics[scale=0.28]{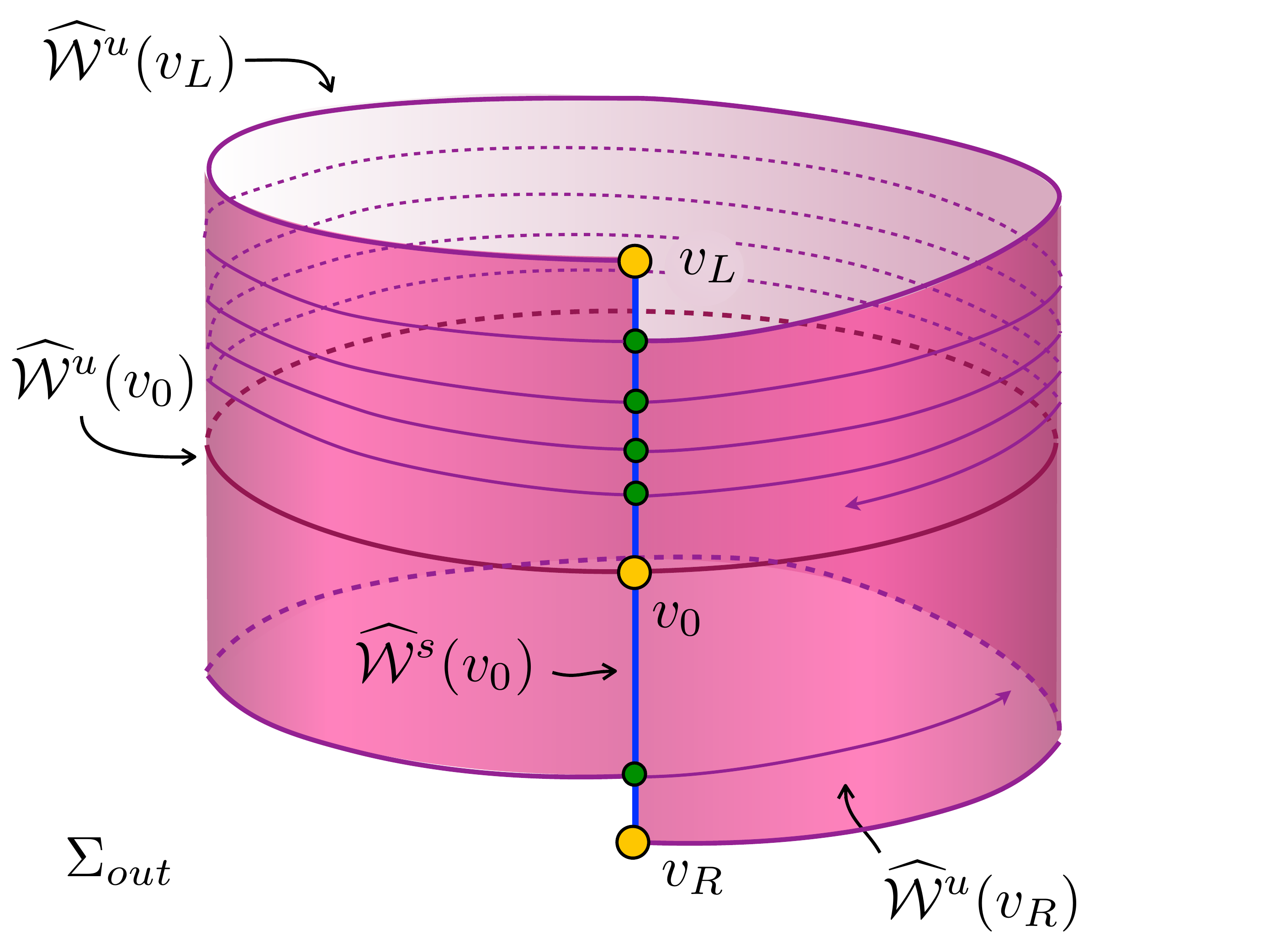}\end{center}
\caption{The section $\Sigma_{out}$ of vectors pointing out of the cusp.}
\end{figure}

  Similarly, let $w_R$ be the first intersection of the 
infinite ray $\widehat\cW^u(v_R)$ spiraling into $ Z^u$ from the right, and let 
$c_R$ be the curve constructed analogously.  The two curves $c_L$ and $c_R$ bound a cylindrical region $\Sigma_{out}$ in $\cT(\delta_2)$, which is depicted in Figure~2.

Let $\pi^s\colon \Sigma_{out}\to  Z^u$ be the projection along leaves of $\widehat\cW^s$, which is simply the
restriction of the projection $\pi^{cs}$ previously defined to the domain $ \Sigma_{out}$.
 Then $\pi^s$ is  $C^{1+\alpha}$ and maps the boundary curves $c_L$ and $c_R$ onto $ Z^u$.  The map $\pi^s$ is a diffeomorphism when restricted to the interior of any interval of $\widehat\cW^s$ that begins and ends in $I_0$ and makes one revolution around $\Sigma_{out}$.

\begin{figure}[ht]
\begin{center}\includegraphics[scale=0.28]{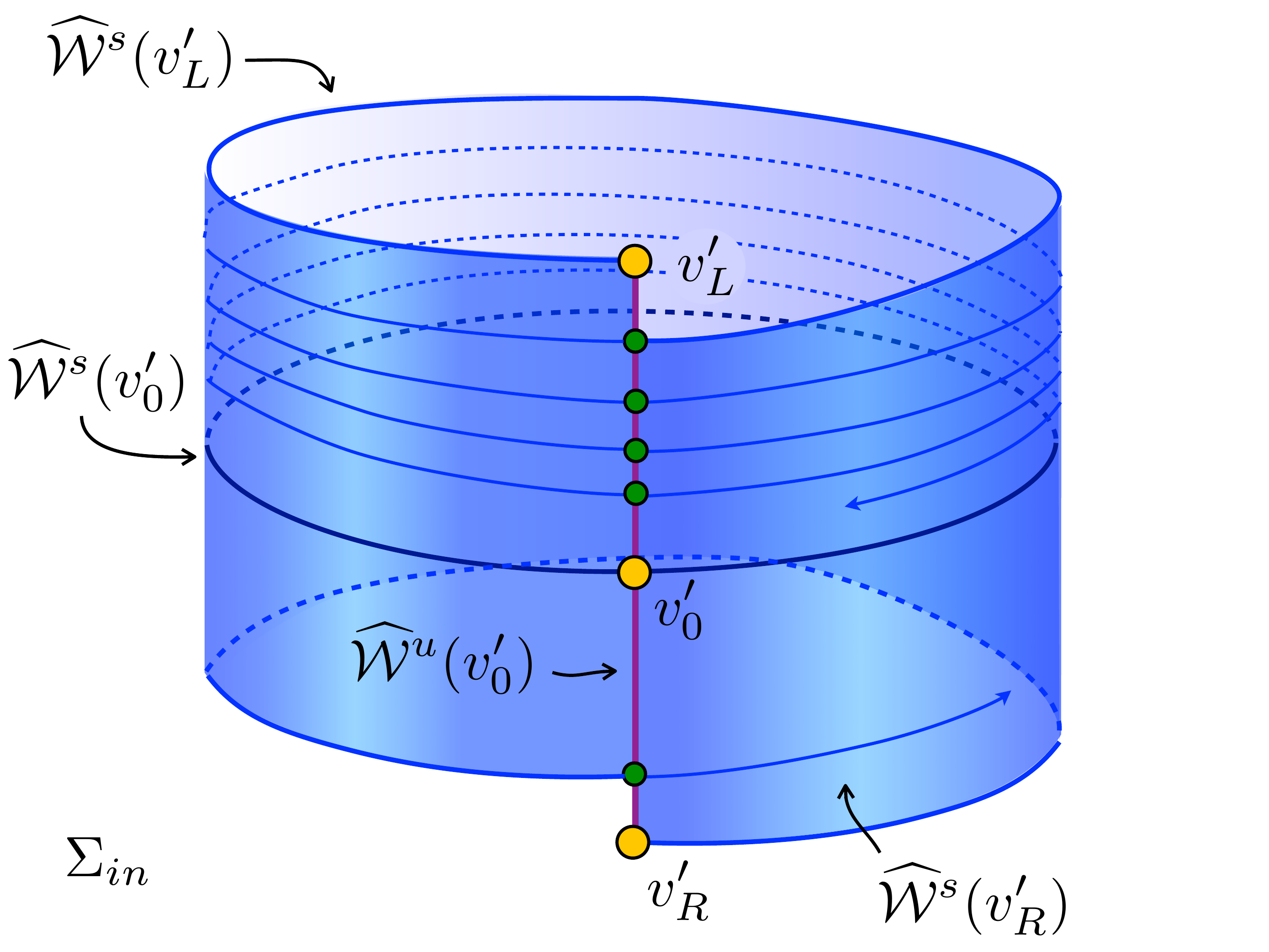}\end{center}
\caption{The section $\Sigma_{in}$ of vectors pointing toward the cusp.}
\end{figure}

We define the section $\Sigma_{in}\subset \cT_{in}(\delta_2,\eta_0)$ similarly: it is bounded by two curves $c_L'$ and $c_R'$, where $c_L'$ is the union of two segments of $\widehat\cW^u(v_L')$ and  $\widehat\cW^s(v_L')$, and $c_R'$ is the union of  segments of $\widehat\cW^u(v_R')$ and  $\widehat\cW^s(v_R')$.  See Figure 3. By construction, we have that $\cR(c_L') = c_L$, $\cR(c_R') = c_R$, and:
\[\cR\left(\Sigma_{in}\setminus  Z^s \right) = \Sigma_{out}\setminus  Z^u.
\]

If the radius $\eta$ of $I_0$ was initially chosen sufficiently small, then there exists $\eta_0\in (0,1/8)$ such that
\begin{equation}\label{e=b0def}
 \cT_{out}(\delta_2, \eta_0/2) \subset \Sigma_{out} \subset \cT_{out}(\delta_2, \eta_0),\;\hbox{ and }
 \cT_{in}(\delta_2, \eta_0/2) \subset \Sigma_{in} \subset \cT_{in}(\delta_2, \eta_0).
\end{equation}
Fix this $\eta_0$.

Let $\pi^u\colon \Sigma_{in}\to  Z^s$ be the projection along leaves of $\widehat\cW^u$, which is
the restriction of the center-unstable $\pi^{cu}$  to $ \Sigma_{in}$.  The fibers of 
$\pi^u$ are pieces of $\widehat\cW^u$-unstable manifold.

Let us examine the return time function for the flow on the fibers of $\pi^u$.  Let $\cN$ be a small neighborhood of $\Sigma_{in}\setminus Z^s$ defined by flowing  $\Sigma_{in}\setminus Z^s$ under
$\varphi_t$ in a small time interval.
 For $v\in \cN$, let
$t_{\cR}(v)$ be the smallest time $t>0$ satisfying $\varphi_t(v)\in \Sigma_{out}$:
\begin{equation}\label{e=defT_1}
t_{\cR}(v) = \inf\left\{t>0 : \varphi_{t}(v)\in \Sigma_{out}\right\};
\end{equation}
thus $\cR(v) = \varphi_{t_{\cR}(v)}(v)$, for all $v\in \Sigma_{in}\setminus Z^s$.

\begin{figure}[ht]
\begin{center}\includegraphics[scale=0.36]{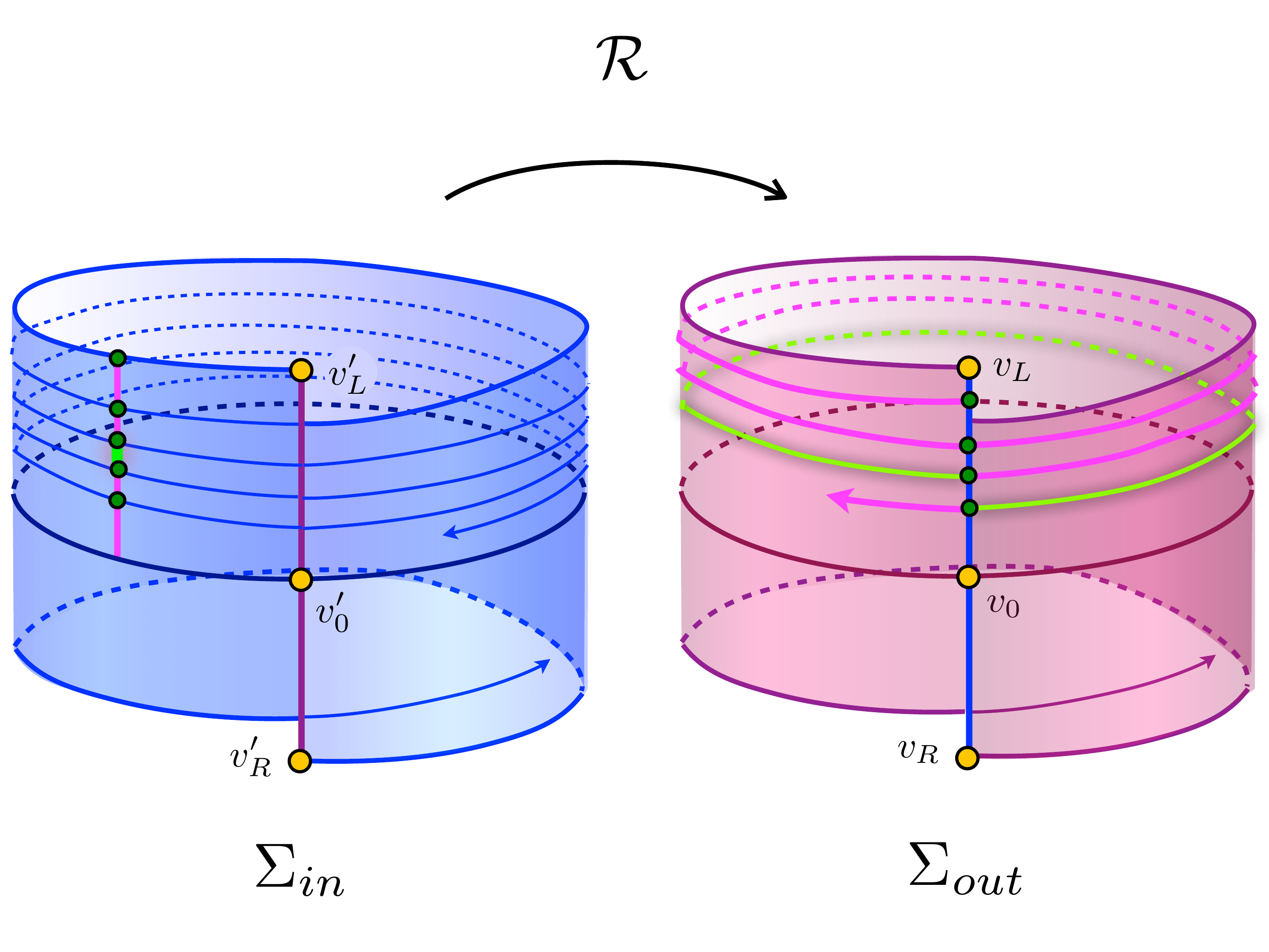}\end{center}
\caption{The action of $\cR$ on fundamental intervals.}
\end{figure}

 Let $v\in  Z^s$ and let $I\subset (\pi^u)^{-1}(v)$ be a closed interval.  We say that $I$ is a {\em fundamental interval} if the endpoints of $I$  lie on the same leaf of the $\widehat\cW^s$ foliation and the interior of $I$ contains no points on that leaf.

\begin{lemma}  There exists  $C_1 \geq 1$  such that for any fundamental interval $I$, the following holds:
\begin{enumerate}
\item $\pi^s\left(  \cR(I) \right) =  Z^u$, and the restriction of $\pi^s\circ\cR$ to the interior of $I$ is a $C^{1+\alpha}$ diffeomorphism, whose inverse has uniformly bounded distortion.
\item For any $w\in I$, we have $\|\cR'(w)\| \asymp |I|^{-1}$, where $\cR'(w)$ denotes the derivative of the restriction of $\cR$ to $I$.
\end{enumerate}
\end{lemma}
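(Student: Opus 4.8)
The plan is to prove the qualitative statement (1) by a soft foliation-theoretic argument and then to extract the quantitative bound (2) from it together with the distortion control of Corollary~\ref{c=distortion}. The first point to establish is the combinatorial fact that $\cR$ carries a fundamental interval to exactly one full revolution of an unstable leaf around $\Sigma_{out}$. Since the fibers of $\pi^u$ are arcs of $\widehat\cW^u$ and $\cR$ preserves $\widehat\cW^u$, the image $\cR(I)$ is an arc of an unstable leaf inside $\Sigma_{out}$; since $\cR$ also preserves $\widehat\cW^s$ and restricts to a diffeomorphism $\Sigma_{in}\setminus Z^s\to\Sigma_{out}\setminus Z^u$, it transports the defining property of a fundamental interval (endpoints on a common $\widehat\cW^s$-leaf $\ell$, interior missing $\ell$) to $\cR(I)$. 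Matching the spiraling of $\widehat\cW^u$ toward $Z^s$ in $\Sigma_{in}$ with the spiraling toward the unique closed unstable leaf $Z^u$ in $\Sigma_{out}$ then identifies $\cR(I)$ as one revolution around $\Sigma_{out}$, so that $\pi^s$ restricts on $\cR(I)$ to a degree-one map onto $Z^u$. Because $\pi^s=\pi^{cs}|_{\Sigma_{out}}$ is a $C^{1+\alpha}$ fibration and $\cR$ is $C^3$ where defined, it follows that $\pi^s\circ\cR$ restricted to $\mathrm{int}(I)$ is a $C^{1+\alpha}$ diffeomorphism onto $Z^u$ minus the single point $\pi^s(\ell)$, which is the qualitative half of (1).

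Next I would record that the image arc has length comparable to a fixed constant: $\cR(I)$ lies in the fixed compact set $\cT_{out}(\delta_2,\eta_0)$ and is mapped bijectively onto $Z^u$ by $\pi^s$, which, being a $C^{1+\alpha}$ fibration of a compact manifold, has derivative bounded above and below along unstable arcs. Hence $|\cR(I)|\asymp|Z^u|\asymp 1$. The choice of metric is irrelevant here, since the Sasaki and $\star$ metrics are comparable with fixed constants on the level set $\{\bar\delta=\delta_2\}$, where all of this takes place.

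Then I would control distortion. An orbit entering $\Sigma_{in}$ enters the cusp region $\{\bar\delta\leq\delta_2\}$ and, by Corollary~\ref{p=leave}, leaves it within time $2\delta_2$, so the first-return time $t_\cR$ is uniformly bounded; consequently, restricted to an unstable arc, $\cR$ agrees with the time-$t_\cR$ map of $\varphi$ up to correction factors (transversality of the sections to the flow, passage between the two metrics) that are bounded above and below. Subdividing $I$ into a bounded number of subarcs of $\rho_\star$-length less than the constant $\sigma$ of Corollary~\ref{c=distortion}, and applying Corollary~\ref{c=distortion}(2) on each, yields $\bigl|\log\|\cR'(w_1)\|-\log\|\cR'(w_2)\|\bigr|\leq C$ for all $w_1,w_2\in I$; together with the two-sided bounds on $D\pi^s$ along unstable arcs this bounds the distortion of $\pi^s\circ\cR|_I$ as well, finishing (1). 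Finally, since $\int_I\|\cR'(w)\|\,dw=|\cR(I)|\asymp 1$, bounded distortion forces $\|\cR'(w)\|\asymp|\cR(I)|/|I|\asymp|I|^{-1}$ for every $w\in I$, which is (2).

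The step I expect to be the main obstacle is the combinatorial one: verifying carefully, from the explicit construction of $\Sigma_{in}$ and $\Sigma_{out}$ out of the spiraling rays through $v_L,v_R,w_L,w_R$, that a fundamental interval is carried by $\cR$ to a single revolution around $\Sigma_{out}$, and hence projects under $\pi^s$ onto all of $Z^u$. Once that is in hand, the rest is routine: compactness for the length bound, Corollary~\ref{p=leave} for boundedness of the return times, and Corollary~\ref{c=distortion} for the distortion estimates.
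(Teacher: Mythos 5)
Your argument is correct and takes essentially the same route as the paper's (very terse) proof: property (1) is derived from the construction of fundamental intervals together with the uniform $C^{1+\alpha}$ regularity of $\cW^{cs}$ (hence of $\pi^s$ and of the one-revolution image $\cR(I)$), and property (2) from the distortion control of Corollary~\ref{c=distortion} combined with $|\cR(I)|\asymp 1$, exactly the ingredients you spell out. The only slight imprecision is in your chaining step: since Corollary~\ref{c=distortion} bounds unstable distortion for $\varphi_{-t}$ at nearby points of the same unstable leaf, you should subdivide so that the \emph{images} under $\cR$ have $\star$-length below $\sigma$ (boundedly many pieces, as $|\cR(I)|\asymp 1$) and apply the backward-time estimate at those image points, rather than subdividing $I$ itself -- a cosmetic fix.
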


\begin{proof} Property (1) follows from the construction of fundamental intervals and the fact that the foliation $\cW^{cs}_{\varphi}$ is uniformly $C^{1+\alpha}$.  Property (2) follows from Corollary~\ref{c=distortion}. \end{proof}

\begin{lemma} \label{l=intervalstretch}
There exist $C_2\geq 1$ and $\alpha>0$ such that if $I$ is a fundamental interval, and
$b(I) = \inf_{v\in I} |b(v)|$, then \[|I| \leq C_2 |b(I)|^{1+\alpha}.\]

\end{lemma}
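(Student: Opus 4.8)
The plan is to show that the first-return map $\cR$ expands a fundamental interval $I$ by a factor at least $b(I)^{-1-\alpha}$ for some $\alpha>0$, and then read off the bound on $|I|$ from the relation $\|\cR'(w)\|\asymp|I|^{-1}$ proved in the preceding lemma. We may assume $b(I)$ lies below any fixed threshold (for $b(I)$ bounded below, $|I|$ is trivially bounded by the diameter of $\Sigma_{in}$, and the claim holds by taking $C_2$ large). Choose $w_\ast\in I$ with $|b(w_\ast)|$ within a factor $2$ of $b(I)$. Since $I\subset\Sigma_{in}\subset\cT_{in}(\delta_2,\eta_0)$, the geodesic $\gamma_{w_\ast}$ starts on $Z(\delta_2)$ with $a(w_\ast)<0$, so by Lemma~\ref{l=convexd} it descends (strictly convexly in $\delta$) to a unique minimal value $\delta_{\min}$ — attained where $a=0$, hence where $|b|=1$ — and then returns to $\delta=\delta_2$ pointing outward, in total time $t_\cR(w_\ast)\le2\delta_2$ (Corollary~\ref{p=leave}), staying inside $\DD^\ast(\delta_2)\subset\cN(\delta_1)$.

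Two earlier estimates combine to give the expansion bound. First, the quasi-Clairaut relation (Proposition~\ref{p=Clairaut}): along $\gamma_{w_\ast}$ the quantity $\delta^r|b|$ varies by at most a factor $C=1+O(\delta_2)$, so equating its footpoint value with its turning-point value yields $\delta_{\min}^r\le C\,\delta_2^r|b(w_\ast)|$, i.e. $\delta_2/\delta_{\min}\ge C^{-1/r}|b(w_\ast)|^{-1/r}$. Second, the cone estimate: the unstable line $E^u_\varphi(w_\ast)$ is spanned by a perpendicular Jacobi field $j$ whose logarithmic derivative $u=j'/j$ lies in the invariant cone of Proposition~\ref{p=cones4}; since the whole orbit stays in $\cN(\delta_1)$, where $\chi\equiv r-1-\epsilon$ and $\bar\delta=\delta$, Lemma~\ref{l=cone2} gives $u(t)\ge(r-1-\epsilon)/\delta(t)$ for $t\in[0,t_\cR(w_\ast)]$. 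Because $\cR=\varphi_{t_\cR}$, because the $E^u$ direction is $\star$-orthogonal to the flow, and because the time reparametrization contributes only a flow component to $D\cR$, we obtain
\[
\|\cR'(w_\ast)\|\ \gtrsim\ \|D^u_{w_\ast}\varphi_{t_\cR(w_\ast)}\|_\star\ \asymp\ \frac{j(t_\cR(w_\ast))}{j(0)}\ =\ \exp\!\Big(\int_0^{t_\cR(w_\ast)}u(t)\,dt\Big),
\]
the middle $\asymp$ holding because at both endpoints $\bar\delta=\delta_2$ and $u\asymp1/\delta_2$ (and all fixed metrics on $\Sigma_{in},\Sigma_{out}$ are comparable).

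It remains to bound the integral from below. Writing $dt=d\delta/a$ and using only $|a|\le1$, together with the fact that $\gamma_{w_\ast}$ sweeps the interval $\delta\in[2\delta_{\min},\delta_2]$ once on the way in and once on the way out, we obtain
\[
\int_0^{t_\cR(w_\ast)}\frac{dt}{\delta(t)}=\int\frac{d\delta}{\delta|a|}\ \ge\ 2\int_{2\delta_{\min}}^{\delta_2}\frac{d\delta}{\delta}=2\log\frac{\delta_2}{2\delta_{\min}}\ \ge\ \frac2r\log\frac1{|b(w_\ast)|}-O(1),
\]
the last inequality coming from $\delta_2/\delta_{\min}\ge C^{-1/r}|b(w_\ast)|^{-1/r}$. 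Hence $\|\cR'(w_\ast)\|\gtrsim|b(w_\ast)|^{-2(r-1-\epsilon)/r}$, so $|I|\asymp\|\cR'(w_\ast)\|^{-1}\lesssim|b(w_\ast)|^{2(r-1-\epsilon)/r}\asymp b(I)^{2(r-1-\epsilon)/r}$. Since $\epsilon<(r-2)/2$ was fixed before Lemma~\ref{l=cone2}, we have $2(r-1-\epsilon)/r=1+\alpha$ with $\alpha:=(r-2-2\epsilon)/r>0$, which proves the lemma. This argument is short, being essentially the interplay of the quasi-Clairaut and cone estimates; the one point that needs genuine care is the first displayed line — checking that the return-map expansion along unstable leaves really is bounded below by the scalar Jacobi growth $\exp(\int u\,dt)$ in spite of the time reparametrization and the change of metric, and that the unstable direction does lie in the invariant cone so that the pointwise bound $u\ge(r-1-\epsilon)/\delta$ is legitimate. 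The positivity of $\alpha$ is exactly the standing hypothesis $\epsilon<(r-2)/2$.
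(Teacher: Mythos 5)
Your proof is correct and follows essentially the same route as the paper: bound the unstable expansion along the excursion below by $\exp\bigl(\int (r-1-\epsilon)/\delta\,dt\bigr)$ via the cone estimates, show this integral is at least $\tfrac{2}{r}\log(1/|b|)-O(1)$, and conclude from $\|\cR'(w)\|\asymp |I|^{-1}$, with the exponent $2(r-1-\epsilon)/r>1$ exactly because $\epsilon<(r-2)/2$. The only difference is cosmetic: you bound $\int dt/\delta$ by locating $\delta_{\min}$ through the quasi-Clairaut relation and substituting $dt\ge|d\delta|$, whereas the paper integrates the pointwise inequality $c\ge b'/b$ together with $c=r/\delta+O(1)$ — an equivalent computation resting on the same Lemma~\ref{l=derivs}.
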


\begin{proof}  
  For a point $v\in I$, let  $t_1$ be the return time of $v$ for $\varphi_t$  to $\Sigma_{out}$. 
Note that $\delta(\varphi_{t_1}(v))= \delta(v)$ and  $b(\varphi_{t_1}(v)) \approx b(v)$ (by Proposition~\ref{p=Clairaut}).  Let $t_0\in (0,t_1)$ be the point where $a(\varphi_{t_0}(v)) = 0$.

We write $a(t), b(t), c(t)$ for $a(\varphi_t(v)), b(\varphi_t(v)), c(\varphi_t(v))$, respectively.
Lemma~\ref{l=derivs} implies that $c = \frac{1}{|a|} \frac{b'}{b} \geq \frac{b'}{b}$, and
so 
\[\exp\left(\int_0^{t_0} c \right) \geq \exp\left(\int_0^{t_0} \frac{b'}{b} \right) = \exp(-\ln(b(0))) = |b(0)|^{-1}.\]
Thus, since $c = r/\delta + O(1)$ by Proposition~\ref{p=c}, we have
\[\exp\left(\int_0^{t_1} \frac{r}{\delta} \right) \geq C^{-1} |b(0)|^{-2},\]
for some $C\geq 1$,
and
\[ \|\cR'(w)\| \asymp \|D\varphi_{t_1} E^u\| \geq  \exp\left(\int_0^{t_1} \frac{r-1-\epsilon}{\delta} \right) \geq C^{-1} |b(0)|^{-2(r-1-\epsilon)/r}.\]
Then, since distortion is bounded on small intervals by Corollary~\ref{c=distortion}, we have \[|I| \asymp  \|\cR'(w)\|^{-1} \leq C_2 |b(0)|^{2(r-1-\epsilon)/r} \leq C_2 |b(I)|^{1+\alpha},\] for some $\alpha>0$, since $r>2$. \end{proof}

As a corollary, we obtain:
\begin{lemma} \label{l=IBcompare} There exist $C\geq 1$ and $\alpha\in (0,1)$ such that the following holds. If $B$ is a $\widehat W^u$-interval in $\Sigma_{in}$  with $Z^s\cap B\neq \emptyset$, and $I\subset B$ is a fundamental interval, then $|I| \leq C|B|^{1+\alpha}$.  In particular, if $|B|$ is sufficiently small then $|I| \leq |B|/4$.
\end{lemma}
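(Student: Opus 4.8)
The plan is to deduce this immediately from Lemma~\ref{l=intervalstretch}, once we observe that the function $b$ is Lipschitz along $\widehat\cW^u$-intervals with a \emph{uniform} constant. The key structural fact is that $\Sigma_{in}$ lies inside $\cT(\delta_2)=T^1_{Z(\delta_2)}S$, which is a \emph{compact} torus (a circle bundle over the circle $Z(\delta_2)$) on which $\bar\delta\equiv\delta_2$; thus $b\colon v\mapsto\langle v,J\nabla\bar\delta(\pi(v))\rangle$ restricts to a smooth function there (recall $\bar\delta$ is $C^4$). Consequently $b|_{\cT(\delta_2)}$ is Lipschitz: there is a constant $L\geq1$ with $|b(v)-b(w)|\leq L\,\ell(v,w)$ whenever $v,w$ lie on a common rectifiable curve in $\cT(\delta_2)$, where $\ell$ denotes arc length in a fixed smooth metric on $\cT(\delta_2)$ (the lengths $|\cdot|$ of $\widehat\cW^u$-intervals are comparable to such arc lengths, uniformly, because $\cT(\delta_2)$ is compact and the $\widehat\cW^u$-leaves are intersections of the $C^{1+\alpha}$ weak-unstable foliation with $\cT(\delta_2)$). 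We also record that $b\equiv0$ on $Z^s$, since the vectors $-\nabla\bar\delta(p)$ making up $Z^s$ are orthogonal to $J\nabla\bar\delta(p)$.

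Given this, fix a $\widehat\cW^u$-interval $B\subset\Sigma_{in}$ with $Z^s\cap B\neq\emptyset$ and a fundamental interval $I\subset B$, and pick a point $w_0\in Z^s\cap B$, so that $b(w_0)=0$. For every $v\in I\subset B$ the Lipschitz estimate gives
\[
|b(v)|=|b(v)-b(w_0)|\leq L\,\ell_B(v,w_0)\leq L|B|,
\]
where $\ell_B$ is arc length along $B$; taking the infimum over $v\in I$ yields $b(I)\leq L|B|$. Feeding this into Lemma~\ref{l=intervalstretch} gives $|I|\leq C_2\,b(I)^{1+\alpha}\leq C_2L^{1+\alpha}|B|^{1+\alpha}$, which is the first assertion with $C:=\max\{1,C_2L^{1+\alpha}\}$ and $\alpha$ the exponent furnished by Lemma~\ref{l=intervalstretch}. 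For the last sentence, $C|B|^{1+\alpha}=\bigl(C|B|^{\alpha}\bigr)|B|\leq|B|/4$ as soon as $|B|\leq(4C)^{-1/\alpha}$.

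I expect no real obstacle here: the content is entirely in the already-proved Lemma~\ref{l=intervalstretch}, and the only point requiring a word of care is the uniformity of the Lipschitz constant $L$ — i.e.\ that all the relevant $\widehat\cW^u$-intervals sit inside the one compact manifold $\cT(\delta_2)$ on which $b$ is smooth, so that "Lipschitz along $\widehat\cW^u$-leaves" holds with a single constant independent of the interval.
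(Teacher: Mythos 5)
Your argument is correct and is exactly the deduction the paper leaves implicit (the lemma is announced as a "corollary" of Lemma~\ref{l=intervalstretch} with no written proof). The two observations you supply — that $b\equiv 0$ on $Z^s$ so $b(I)\leq L|B|$ via the uniform Lipschitz bound on the compact torus $\cT(\delta_2)$, and then monotonicity in $b(I)$ of the bound $|I|\leq C_2|b(I)|^{1+\alpha}$ — are precisely what is needed, and the "in particular" clause follows as you say.
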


\subsection{Building a Young tower}\label{ss=thick}

\begin{lemma}\label{l=meets} Fix a compact set $K\subset T^1S$, $\eta>0$ and $\sigma>0$. There exists $U_0>0$  such that for any $v\in K$, there exists $w\in \cW^u(v,\sigma)$ and $t\in(0,U_0]$ such that
$\varphi_t(w)\in \cT_{in}(\delta_2,\eta)$.
\end{lemma}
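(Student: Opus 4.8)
The plan is a compactness argument over $K$ that reduces to a statement for a \emph{single} $v$, which is then extracted from topological transitivity together with the uniform expansion of unstable manifolds. First I would fix the target set: choose $\eta_0\in(0,1)$ with $\cT_{in}(\delta_2,\eta)\subset\cT(\delta_2)\setminus(C^{+}\cup C^{-})$, so that $\dot\varphi$ is transverse to $\cT(\delta_2)$ along $\cT_{in}(\delta_2,\eta)$, and set
\[ \mathcal V:=\bigcup_{0<s\le 1}\varphi_{-s}\bigl(\cT_{in}(\delta_2,\eta)\bigr), \]
an open subset of $T^1S$ every point of which reaches $\cT_{in}(\delta_2,\eta)$ within forward time $1$. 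Then define
\[ f(v):=\inf\bigl\{\,t\ge 0:\ \varphi_t(\cW^u(v,\sigma))\cap\mathcal V\neq\emptyset\,\bigr\}\in[0,\infty]. \]
It suffices to show that $f$ is finite everywhere and upper semicontinuous: then $f$ is bounded on the compact set $K$, and $U_0:=\sup_K f+2$ works, since for $v\in K$ one picks $t\le f(v)+1$ and $w\in\cW^u(v,\sigma)$ with $\varphi_t(w)\in\mathcal V$, whence $\varphi_{t+s}(w)\in\cT_{in}(\delta_2,\eta)$ for some $s\in(0,1]$ with $t+s\le U_0$. One subtlety: $\varphi_t$ need not be defined along all of $\cW^u(v,\sigma)$; but a geodesic that leaves the domain of $\varphi_t$ is one running into the cusp, and by the quasi-Clairaut relation of Proposition~\ref{p=Clairaut} such a geodesic is asymptotically radial, hence crosses $\cT(\delta_2)$ with $|b|$ as small as we wish, i.e. it passes through $\cT_{in}(\delta_2,\eta)$ within a universally bounded time. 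So I may assume the orbits in play are defined for all relevant times, or else the conclusion already holds with a universal $U_0$.

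Upper semicontinuity of $f$ I expect to be routine: $\mathcal V$ is open, the local unstable manifolds $\cW^u(v,\sigma)$ vary continuously with $v$ (by the uniform $C^{1+\alpha}$ regularity of $\cW^u$ measured in the $\star$-metric, from the corollaries following Theorem~\ref{t=accelanosov}), and the flow depends continuously on its arguments wherever it is defined; hence if $\varphi_{t_0}(w_0)\in\mathcal V$ for some $w_0\in\cW^u(v_0,\sigma)$, the same holds for a nearby $w\in\cW^u(v,\sigma)$ for all $v$ near $v_0$, giving $f(v)\le t_0$ near $v_0$.

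The heart of the matter is the finiteness of $f$, i.e. that the forward $\varphi$-flow of the local disk $\cW^u(v,\sigma)$ meets the open set $\mathcal V$. Here I would use two ingredients: (i) unstable disks expand uniformly — by Proposition~\ref{p=yzstuff} and the invariant cone field of Proposition~\ref{p=cones4}, $\varphi_t$ expands vectors tangent to $\cW^u_\varphi$ at a uniform exponential rate in the $\star$-metric, so $\varphi_T(\cW^u(v,\sigma))$ contains a local unstable disk of $\star$-radius $L(T)\to\infty$ as $T\to\infty$; and (ii) $\varphi_t$ is topologically transitive, being ergodic with respect to volume (Corollary~\ref{c=ergodic}). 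Combining these in the standard way — a sufficiently long local unstable disk is a large forward iterate of a small one, hence shadows a long forward orbit segment, and long forward orbit segments of a transitive flow become $\varepsilon$-dense in the compact region $\mathcal K_0:=\{\delta_2/2\le\bar\delta\le 2\delta_2\}$ — one concludes that a long local unstable disk is $\varepsilon$-dense in $\mathcal K_0$, so for $T$ large $\varphi_T(\cW^u(v,\sigma))$ meets the open set $\mathcal V\subset\mathcal K_0$, and thus $f(v)<\infty$.

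The step I expect to be the main obstacle is precisely the $\varepsilon$-density of long unstable disks in the present noncompact setting ($T^1S$ is only complete and of finite volume in the $\star$-metric), where the classical compact-manifold arguments require care. I would carry it out inside the fixed compact region $\mathcal K_0$, which contains $\cT_{in}(\delta_2,\eta)$ and on which the hyperbolicity constants, the sizes of local stable and unstable manifolds, and the local product structure of $\varphi_t$ (equivalently $\psi_t$) are all uniform. An alternative, which avoids this, is to invoke the mixing of the Anosov flow $\psi_t$ (Remark after Corollary~\ref{c=distortion}) together with its local product structure to produce directly a point of $\cW^u_\psi(v,\sigma)$ whose forward orbit enters $\cT_{in}(\delta_2,\eta)$, and then — since $\cW^u_\psi$ and $\cW^u_\varphi$ lie in the same weak-unstable leaves — to slide along flow lines to a point of $\cW^u_\varphi(v,\sigma)$ at the cost of a bounded extra time, after which the same compactness/upper-semicontinuity argument over $K$ applies.
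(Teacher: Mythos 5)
Your overall frame — reduce to a pointwise statement via an upper semicontinuous first-hitting function $f$ and then use compactness of $K$ — is sound, and your use of the quasi-Clairaut relation to dispose of orbits that die in the cusp is essentially right (a geodesic reaching the cusp must have $b\equiv 0$ up to a bounded factor, hence crosses $Z(\delta_2)$ inside $\cT_{in}(\delta_2,\eta)$; note though that this only gives finiteness of $f(v)$, not a ``universal $U_0$'' for that branch, since the orbit may wander arbitrarily long before heading in — harmless in your scheme, but the claim as written is overstated). The genuine gap is in your primary argument for finiteness of $f$: the assertion that ``long forward orbit segments of a transitive flow become $\varepsilon$-dense in the compact region'' is false (the forward orbit segments of a periodic point never become dense; transitivity only provides \emph{some} dense orbit), and a long unstable disk $\varphi_T(\cW^u(v,\sigma))$ does not shadow a single forward orbit segment, so the reduction collapses. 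What you actually need is that long pieces of strong unstable leaf meet the fixed open set $\mathcal V$, i.e.\ a minimality/equidistribution property of the strong unstable foliation; for Anosov flows this does not follow from transitivity by the argument you give (it requires Plante-type non-suspension input, or mixing). Your fallback — use mixing of $\psi_t$ plus local product structure, thickening $\cW^u_\psi(v,\sigma)$ along local stables and the flow direction, using forward stable contraction to pull the hit back onto the disk, and then sliding from $\cW^u_\psi$ to $\cW^u_\varphi$ along flow lines at bounded time cost (bounded $\psi$-time gives bounded $\varphi$-time since $\tau(v,t)\leq t\sup\bar\delta$) — is a correct and standard way to close this, but as written it is a sketch, and it leans on the mixing of $\psi_t$, which the paper only asserts in a remark.

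For comparison, the paper's proof avoids both the semicontinuity step and any appeal to mixing or to density of unstable leaves. It fixes a single point $u$ with dense \emph{backward} orbit whose orbit passes through $\cT_{in}(\delta_2,\eta/2)$, uses the Anosov property of $\psi_t$ to get a uniform local product structure on the compact set $K$ (if $d(v_1,v_2)<\sigma_1$ then $\cW^s(v_1,\sigma)\cap\cW^{cu}(v_2,\sigma)\neq\emptyset$), and uses backward expansion of $\cW^s(u)$; covering $K$ by finitely many $\sigma_1/2$-balls, the segment $\varphi_{[-s_1,-s_0]}(u)$ meets every ball, so every $\cW^{cu}(v,\sigma)$ with $v\in K$ contains a point whose forward orbit lands in $\cW^s_{loc}(u)$ and then in $\cT_{in}(\delta_2,\eta)$, with the uniform bound $U_0=s_1+\sigma_1$ read off directly. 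If you adopt that device (one backward-dense orbit serving all of $K$ at once), your argument needs only ergodicity, which the paper has already established in Corollary~\ref{c=ergodic}, rather than the stronger mixing statement; otherwise you must supply the thickening argument for the mixing route in full.
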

\begin{proof}  This is a consequence of ergodicity (indeed transitivity) of $\varphi_t$,  compactness of  $K$,  and
the Anosov condition on $\psi_t$.

Let $\eta,\sigma$ and $K$ be given. Let $\sigma_1<\eta/8$ be small enough such that for all $v_1,v_2\in K$, if $d(v_1,v_2) <\sigma_1$, then $\cW^s(v_1,\sigma)\cap \cW^{cu}(v_2,\sigma) \neq \emptyset$ (because $\psi_t$ is Anosov, this holds for the restriction of the $\star$ metric to $K$, which is then comparable to the original metric, since $K$ is compact).

 Since $\varphi_t$ is ergodic (by Corollary~\ref{c=ergodic})  there exists $u\in T^1S$ whose backward orbit is dense in $T^1S$ and 
such that \[\varphi_{[-\sigma_1/2,\sigma_1/2]}(u)\cap \cT_{in}(\delta_2,\eta/2) \neq \emptyset.\]
Cover the compact set $K$ with a finite collection of $\sigma_1/2$-balls.
Fix $s_0>0$ such that $\varphi_{-s_0}\left(\cW^s(u,\sigma_1/4))\right)$ has length $>\sigma$.
If $s_1$ is sufficiently large, then $\varphi_{[-s_1,-s_0]}(u)$ meets all of the $\sigma_1/2$-balls, and thus
meets every $\sigma$-center unstable manifold.  This implies the conclusion, with $U_0= s_1+\sigma_1$.
 \end{proof}

We now describe the procedure for partitioning $\Delta_0 =  Z^u\setminus \{v_0\}$ into a full measure set of subintervals $\{\Delta_j: j\geq 1\}$ mapping onto $\Delta_0$ under $\pi^{cs}\circ\varphi_R$, where $R\colon \bigcup_j \Delta_j\to \RR$.

\medskip

  We assume $\delta_2$ and $\eta_0$ are very small, so that by Corollary~\ref{c=distortion} distortion is at most $4/3$ on unstable intervals of length $\leq 2\eta_0$:
\[w\in \cW^u(v,2\eta_0) \implies  \frac{ \|D^u_w\varphi_{-t} \|}{ \|D^u_v\varphi_{-t}\|} \in \left[ \frac34, \frac43 \right], \forall t>0.
\]
Denote by $\ell>0$ the circumference of  $Z^u$, which is less than $1$ if $\delta_2$ is small enough, and
without loss of generality assume $\eta_0 < \ell/2$.
Let $t_{\cR}\colon \Sigma_{in}\setminus Z^s\to \mathbb{R}$ be the return function defined by (\ref{e=defT_1}).

Fix  $\Theta \subset T^1 S$ the thick part defined by
\[\Theta = \{ v\in T^1S : \bar\delta(v) \geq  \delta_2  \}.\]
Note that $\Sigma_{in}\cup\Sigma_{out} \subset \Theta$.
Lemma~\ref{l=meets} implies that there exists $U_0>0$ such that for any $v\in \Theta$,  there exists $w\in \cW^u(v, \eta_0 )$ and $t\in(0,U_0]$ such that $\varphi_{t}(w)\in \cT_{in}(\delta_2, \eta_0 )$.  
Let $s_0$ be the maximum time needed for a piece of unstable interval to double in length under $\varphi_t$.  Let 
$U =  U_0 + s_0 + 2\delta_2$.
Denote by $Z^{cs}$ the singular set consisting of all vectors $v\in T^1S$ with $\bar \delta(v)\leq \delta_2$ and
such that $\varphi_t(v)$ hits the cusp in time $t \leq \delta_2$; that is:
\[Z^{cs}  := \varphi_{[0,\delta_2]}(Z^s).
\]

We begin by chopping $Z^u$ into a collection $\cG_0$ of intervals of length in $[\eta_0 , 2\eta_0 )$.   We say that  an open  interval $G\subset Z^u$ is {\em active gap interval at time $t\geq 0$} if  $\varphi_{[0,t]}(G)\cap Z^{cs}= \emptyset$ and
$|\varphi_t(G)|\in[\eta_0 , 2\eta_0 )$.  Thus $\cG_0$ consists of active gap intervals at time $0$.

Recall from Corollary~\ref{p=leave} that for all $v\in \Sigma_{in}\setminus Z^s$, $\varphi_{2\delta_2}(v)\in \Theta$.
 This implies that if $G\subset Z^u$ is {\em any} piece of unstable manifold of
length less than $\eta_0$ such that:
\begin{itemize}
\item $\varphi_{t_0}(G)\cap \Sigma_{in} \neq\emptyset$, and
\item $\pi^{cs}(\varphi_{t_0}(G))$ contains a fundamental interval,
\end{itemize}
then there exists $t_1\in (0,2\delta_2)$ such that $G$ is an active gap interval at time $t_0+t_1$.

We now describe an algorithm for evolving an active gap interval to produce new active gap intervals and other intervals called border intervals. 

Let $G\subset Z^u$ be an active gap interval at some time $t_0\geq 0$.
 We then flow $G$ forward until the first $t > t_0$ when one of two things happens:
\begin{itemize}
\item[(a)] $|\varphi_t(G)| = 2\eta_0 $, or
\item[(b)] $\varphi_t(G)\cap Z^s\neq \emptyset$.
\end{itemize}
Either (a) or (b) will occur within time $s_0$.  

If (a) happens first, we chop $G$ into two new gap intervals, $G_1$ and $G_2$, so that $|\varphi_{t}(G_1)| = |\varphi_{t}(G_2)| = \eta_0 $.   We say that $G_1$ and $G_2$ 
are {\em born} and become {\em active} at time $t$ and write $t_b(G_1) = t_a(G_1)=t_a(G_2) = t_b(G_2) = t$.  Note that $G = G_1\cup \{v_0\} \cup G_2$, where $v_0$ is the point where the interval $G$ is cut.  Since distortion is bounded by $4/3$ on intervals of length $\leq 2\eta_0 $, we have:
\begin{equation}\label{e=ratio1}
\frac{|G_1|}{|G_2|} \in \left[  \frac34, \frac43 \right],\;\hbox{ and } \frac{|G_i|}{|G|}\in  \left[  \frac38, \frac23 \right], i=1,2.
\end{equation}
We say that $G$ is {\em inactive} in the time interval $[t,\infty)$, and that  $G_1, G_2$ are inactive in the time interval 
$[0,t)$.  

If (b) happens first, then $G$ gives birth to two  {\em border intervals} $B_1$ and $B_2$ and two gap intervals $G_1, G_2$ as follows.  Let $v\in G$ be the unique point satisfying $\varphi_t(v)\in Z^s$. 
Let $X_1$ and $X_2$ be the components of $G\setminus \{v\}$ that lie to the left and right of $v$, respectively.
For $i=1,2$, let $B_i\subset X_i$ be the largest interval satisfying:
\begin{itemize}
\item $\{v\}\cup B_i$ is a closed interval, 
\item $\pi^{cs} \varphi_t(B_i)$ is a countable union of fundamental intervals, and
\item $\pi^{cs} \varphi_t(G_i)$ contains exactly one fundamental interval, where  $G_i = X_i\setminus B_i$.
\end{itemize}

For $i=1,2$, we define the {\em birthday} of $B_i$ and $G_i$ to be $t_b(B_i) =  t_b(G_i) = t$.
Let
$t_a(G_i)$ to be the first time such that $\varphi_{t_a(G_i)}$ is an active interval.  Note that 
since $\varphi_t(G_i)$ meets $\Sigma_{in}$ and $\pi^{cs}\left(\varphi_t(G_i)\right)$ contains a fundamental interval, we have that
$t_a(G_i) \in (t, t + 2\delta_2]$.  Similarly, any fundamental interval in $\pi^{cs}\varphi_t(B_i)$ will return to $\Sigma_{out}$ in time at most $2\delta_2$.

To summarize, in case b) we produce a decomposition of the original active gap interval $G$ into disjoint subintervals 
\[
G = G_1 \cup B_1\cup B_2 \cup G_2,
\]
(up to a finite set of points)
with the following properties:
\begin{itemize}
\item $G_1$ and $G_2$ are gap intervals that are born at time $t_b(G_1) = t_b(G_2) \in [t_0, t_0+s_0]$, respectively.
For $i=1,2$, there exists $t_a(G_i)\in [t_b(G_i), t_b(G_i)+2\delta_2]$ such that $G_i$ is active at time $t_a(G_i)$. We say that
$G_i$ is {\em inactive} in the time period $(0, t_b(G_i))$ and {\em dormant} during the period
$[t_b(G_i), t_a(G_i))$.
\item  $B_1$ and $B_2$ are border intervals that are  born at time $t_b(B_1)=t_b(B_2)\in [t_0, t_0+s_0]$, respectively. For $i=1,2$, the set $\varphi_{t_b (B_i)}(B_i)$ is a countable union of fundamental intervals,
and for any $v\in B_i$, we have $t_{\cR}(\varphi_{t_b(B_i)} (v))\in 
(0, 2\delta_2]$.  
\item Since each $\pi^{cs}\varphi_{t_b(G_i)}(G_i)$ contains exactly one fundamental interval (and no more), it has bounded length when it first returns to $\Theta$:  when this forward image is projected onto $Z^{u}$ it covers at least once, but not more than twice.  Thus, assuming that $\delta_1, \eta_0$ etc. are small enough, we have that
if $t_1 > t_b(G_i)$
is the smallest time such that $\varphi_{t_1}(G_i)\cap \Sigma_{out}\neq \emptyset$,  
then
\begin{equation}\label{e=boundedreturn}
\left|\varphi_{ t_1 }(G_i) \right| \in [\ell/2, 3\ell).
\end{equation}
\item Since  each $\pi^{cs}\varphi_{t_b(G_i)}(G_i)$ is contained in two fundamental intervals, 
Lemma~\ref{l=IBcompare} implies that $|\varphi_{t_b(G_1)} (G_1\cup G_2)| \leq \frac12 |\varphi_{t_b(G_1)} (G)|$;  since distortion is bounded
by $4/3$ on intervals of length $\leq 2\eta_0 $, we have:
\begin{equation}\label{e=ratio2}
|G_1\cup G_2| \leq \frac23 |G|.
\end{equation}
\end{itemize}

Starting with the intervals in $\cG_0$ and applying the algorithm to all active gap intervals, we obtain for any time $t\geq 0$, three disjoint collections of disjoint intervals $\cA_t$, $\cB_t$ and $\cD_t$, the active, border and dormant intervals.   The set $\cA_t$ consists of the gap intervals that are active at time $t$, the set $\cB_t$
consists of border intervals $B$ with birth time $t_b(B) \leq t$, and $\cD_t$ are the gap intervals that are dormant at time $t$.    Let $\cG_t = \cA_t\cup \cD_t$ be the collection of all gap intervals active or dormant at time $t$.

Observe that for any $t>0$, we have
\[Z^u = \bigcup \cB_t \cup  \bigcup \cG_t \cup \bigcup \cV_t, 
\]
where $\cV_t$ is a finite collection of points.  (Note that at $t=0$, we have $\cG_0 = \cA_0$, $\cB_0 =  \cD_0 = \emptyset$,
and $\cV_0 = \{v_0\}$).

\begin{lemma}\label{l=lambda0} There exists $\lambda_0\in(0,1)$ such that for any $k\geq 0$:
\[\left| Z^u\setminus \bigcup \cB_{kU} \right|   = \left| \bigcup \cG_{kU} \right|  \leq \lambda_0^k.
\]
\end{lemma}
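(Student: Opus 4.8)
The plan is to show that the total length of gap intervals (active or dormant) contracts by a definite factor in each time window of length $U$, and that every gap interval present at time $t$ contributes to this contraction within the next window. The key quantitative inputs are already assembled: the ratio bound $|G_1\cup G_2|\le \tfrac23|G|$ from \eqref{e=ratio2} (which holds whenever an active gap interval ``splits'' via case (b)), the analogous $|G_i|/|G|\le \tfrac23$ bound from \eqref{e=ratio1} in case (a), the bounded-return statement \eqref{e=boundedreturn}, the distortion bound $4/3$ on intervals of length $\le 2\eta_0$, and Lemma~\ref{l=meets}, which via the choice $U = U_0 + s_0 + 2\delta_2$ guarantees that any piece of unstable manifold sitting in the thick part $\Theta$ at a given moment will, within time $U$, have a sub-piece that meets $\cT_{in}(\delta_2,\eta_0)$ and projects (under $\pi^{cs}$) onto something containing a fundamental interval, hence triggers the algorithm.

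First I would fix a single time window $[kU,(k+1)U]$ and track an arbitrary interval $G\in\cG_{kU}$. Because $G$ (or its flowed-forward image after the short dormant delay $\le 2\delta_2$) is active and lies in $\Theta$, Lemma~\ref{l=meets} together with the definition of $U$ forces the flow to carry $G$ to meet $\Sigma_{in}$ and to have $\pi^{cs}\varphi_t(G)$ contain a fundamental interval before time $kU+U$; by the remark following Corollary~\ref{p=leave} this means $G$ is cut by the algorithm (either case (a) or case (b)) at some time in $[kU,(k+1)U)$. In either case the newly produced gap intervals $G_1,G_2$ satisfy $|G_1\cup G_2|\le\tfrac23|G|$ — in case (b) this is exactly \eqref{e=ratio2}, and in case (a) it follows from \eqref{e=ratio1} since $|G_1|/|G|,|G_2|/|G|\le\tfrac23$ and $G_1,G_2$ are disjoint inside $G$. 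Any border intervals spun off only leave the ``gap'' collection for good (a border interval returns to $\Sigma_{out}$ within $2\delta_2$ and is never reactivated as a gap), so they only help. Summing over the disjoint intervals in $\cG_{kU}$, and noting that the gap intervals constituting $\cG_{(k+1)U}$ are exactly (up to finitely many points) the descendants $G_1,G_2$ of the intervals in $\cG_{kU}$ after one round of the algorithm, possibly further subdivided — and further subdivision only decreases total length — we get
\[
\left|\bigcup\cG_{(k+1)U}\right| \;\le\; \tfrac23\left|\bigcup\cG_{kU}\right|.
\]
Since $\bigcup\cG_0 = Z^u$ minus a point, iterating gives $\left|\bigcup\cG_{kU}\right|\le(\tfrac23)^k|Z^u|$, and choosing $\lambda_0\in(0,1)$ with $\tfrac23 |Z^u| \le \lambda_0 < 1$ (recall $|Z^u| = \ell < 1$ for $\delta_2$ small) yields the stated bound; the identity $Z^u\setminus\bigcup\cB_{kU} = \bigcup\cG_{kU}$ up to a null set is immediate from the decomposition $Z^u = \bigcup\cB_t\cup\bigcup\cG_t\cup\cV_t$ with $\cV_t$ finite.

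The main obstacle I anticipate is the bookkeeping needed to justify that \emph{every} gap interval alive at time $kU$ really does get cut within the next window — i.e. that the delays accumulated from the dormant periods, the ``wait to double'' time $s_0$, the excursion time into the cusp ($\le 2\delta_2$ by Corollary~\ref{p=leave}), and the time $U_0$ from Lemma~\ref{l=meets} to reach $\cT_{in}$ all fit inside a single block of length $U = U_0+s_0+2\delta_2$. One has to check that these contributions do not compound across generations within one window (they don't, because each interval is cut at most once per window and the bounded-return estimate \eqref{e=boundedreturn} keeps the image from being too long when it re-enters $\Theta$), and that the case distinction (a)/(b), the possibility that a freshly born $G_i$ is immediately dormant, and the fundamental-interval accounting in the definition of border intervals are all compatible with the $\tfrac23$ bound being applied exactly once per interval per window. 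Once this scheduling argument is laid out carefully, the geometric estimate is the routine part.
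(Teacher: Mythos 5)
The central step of your argument is false. You claim that in either case the newly produced gaps satisfy $|G_1\cup G_2|\le \frac23 |G|$, and for case (a) you deduce this from (\ref{e=ratio1}). But a case-(a) cut removes no measure at all: by construction $G=G_1\cup\{v_0\}\cup G_2$, so $|G_1\cup G_2|=|G_1|+|G_2|=|G|$. The bounds $|G_i|/|G|\le 2/3$ in (\ref{e=ratio1}) apply to each piece separately; disjointness gives $|G_1\cup G_2|=|G_1|+|G_2|$, which here equals $|G|$, not something $\le \frac23|G|$ (that inference is a non sequitur). Hence case-(a) events contribute no contraction whatsoever, and your claimed per-window inequality $\bigl|\bigcup\cG_{(k+1)U}\bigr|\le\frac23\bigl|\bigcup\cG_{kU}\bigr|$ is unproven. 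All contraction must come from case-(b) events, where the border intervals removed have measure at least $\frac13$ of the interval being split, by (\ref{e=ratio2}).

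Once this is recognized, the scheduling bookkeeping you set aside as routine becomes the heart of the proof, and your simplifying assertion that ``each interval is cut at most once per window'' is also incorrect: a gap alive at the start of a window typically divides several times before the window ends (its children are cut again), and the case-(b) event guaranteed by Lemma~\ref{l=meets} may occur only for a deep descendant of the original gap. To obtain a uniform $\lambda_0$ one must show (i) every gap present at the start of a window has at least one descendant undergoing a case-(b) split within that window, and (ii) the number of case-(a) subdivisions that can occur before (or between) case-(b) events is uniformly bounded; the paper derives (ii) from (\ref{e=boundedreturn}) together with the upper bound on $\|D\varphi_t\|$ on the thick part $\Theta$. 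Then (\ref{e=ratio1}) guarantees each case-(a) child retains at least $\frac38$ of its parent, so the interval that actually suffers the case-(b) split occupies a definite fraction of its window-ancestor, and (\ref{e=ratio2}) removes a definite fraction of that; this is what yields $\lambda_0<1$. Your outer structure (per-window contraction, iteration, and the identity $Z^u\setminus\bigcup\cB_{kU}=\bigcup\cG_{kU}$ up to a finite set) matches the paper, but the contraction estimate itself, as you argue it, fails.
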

\begin{proof}  Let $G\in \cG_{kU}$.  Then $G$ is either active or dormant at time $kU$.  Since an interval cannot be active for more than time $s_0\leq U$ and cannot be dormant for more than time $2\delta_2\leq U$, it follows that $G$ is inactive at time $(k-1)U$.  It follows that $G$ has a unique {\em ancestor} in $\cG_{(k-1)U}$; that is, there exists $G'\in \cG_{(k-1)U}$ such that $G\subset G'$ and $G'$ gives birth in the time interval $[(k-1)U, kU]$.

Now suppose $G'\in \cG_{(k-1)U}$.  Then $G'$ will become active within time $2\delta_2$ and some point $v\in G'$ will intersect $Z^s$ within time $2\delta_2 + U_0\leq U$.  
Thus during the time
period $[(k-1)U, kU]$, the interval $G'$ will divide finitely many times, and at least one active piece will intersect $Z^s$.  

The number of times this division can occur is uniformly bounded.  As the gap evolves in the time interval $[(k-1)U, kU]$, it gives birth to new gaps according to rule (a) or (b) above.  The number of times that case (a) can apply between two occurrences of case (b) is bounded: if a gap $G''$ is produced by rule (b), then by (\ref{e=boundedreturn}), we have $|\varphi_{t_1}(G'')|\in [\ell/2, 3\ell)$, where $t_1\geq t_b(G'')$ is the first time $G''$ returns to $\Sigma_{out}$.  In $\Theta$, the derivative $D\varphi_t$ is bounded above,  and so any active interval meeting $\Theta$ can divide a bounded number of times before some descendent meets $Z^s$ (which happens within time $U_0$).  Thus the number of times (a) can apply within two occurrences of (b) is uniformly bounded.

We conclude that $G' = G_1\cup \cdots \cup G_n \cup B_1\cdots \cup B_{2m}$,
with $n\geq 2m\geq 2$, where $G_1,\ldots,G_n\in \cG_{kU}$, and
$B_{1},\ldots, B_{2m}\in \cB_{kU}$.  Moreover, there exists $N>0$, independent of $k, G'$  such that $n\leq Nm$.
Combined with (\ref{e=ratio1}) and (\ref{e=ratio2}), this implies that there exists $\lambda_0\in (0,1)$ such that
\[\left| G_1\cup \cdots  \cup G_{n} \right| \leq \lambda_0 |G'|.
\]
Thus  $|\cG_{kU}|\leq \lambda_0 |\cG_{(k-1)U}|$; since $|\cG_{0}| < 1$, we obtain the conclusion.
\end{proof}

Let $\cB_\infty = \bigcup_{t>0} \cB_t$.  Then $\cB_\infty$ is a collection of disjoint intervals with
$\left|Z^u \setminus \bigcup \cB_\infty  \right| = 0$.

\begin{proof}[Proof of Theorem~\ref{t=tower}]

 We create a countable collection $\cI$ of intervals $\Delta_j$ as follows:
we decompose each $B\in \cB_\infty$ into a countable union $B = \bigsqcup_{j\geq 1} \Delta_{B,j}$ such that for each $j$,
$\pi^{cs}\varphi_{t_b(B)}(\Delta_{B,j})$ is a fundamental interval.  Then we set
\[\cI =  \{ \Delta_{B,j} :\, B\in \cB_\infty, \, j\geq 1\}.
\]
Note that 
$\left| Z^u\setminus \bigcup \cI \right| = \left| Z^u\setminus \bigcup \B_\infty \right| = 0$,
and so conclusion (1) of Theorem~\ref{t=tower} holds.

We extend the definition of $t_b$ to intervals in $\cI$ in the natural way: if $\Delta_j \subset B \in \cB_{\infty}$, we set $t_b(\Delta_j) = t_b(B)$.
For $v\in \Delta_j \subset \cI$, let
\[R_0(v) = t_b(\Delta_j) + t_{\cR}(\varphi_{t_b(\Delta_j)} (v)).\] Then
$R_0(v)$ is the the minimal time $> t_b(\Delta_j)$ such that
 $\varphi_{R_0(v)}(v)\in \Sigma_{out}$.
\begin{lemma}\label{l=R_0bound} There exist $\lambda\in(0,1)$ and $C\geq 1$ such that the function $R_0\colon \bigcup \cI\to \RR_{>0}$ satisfies
\[\left| \left\{v\in \bigcup\cI :  R_0(v)\geq k \right\} \right| \leq C \lambda^k,
\]
for each $k\geq 0$.
\end{lemma}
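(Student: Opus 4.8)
The plan is to show that the tail of $R_0$ is controlled by the tail of the birth-time function $t_b$, up to a universal additive constant, and then to quote Lemma~\ref{l=lambda0}. The reason this should work is that, once a border interval $B$ is born, the forward image $\varphi_{t_b(B)}(B)$ sits in $\Sigma_{in}$, and orbits starting there spend only a bounded amount of time in the cusp before returning to $\Sigma_{out}$; so the extra time $R_0 - t_b$ is uniformly bounded and has no effect on the exponential rate.

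Concretely, I would first establish the two-sided estimate $t_b(\Delta_j)\le R_0(v)\le t_b(\Delta_j)+2\delta_2$ for every $v\in\Delta_j\in\cI$. The lower bound is immediate from the definition $R_0(v)=t_b(\Delta_j)+t_{\cR}(\varphi_{t_b(\Delta_j)}(v))$ and positivity of $t_{\cR}$. For the upper bound I would argue that $t_{\cR}(\varphi_{t_b(\Delta_j)}(v))\le 2\delta_2$: by construction $\varphi_{t_b(\Delta_j)}(\Delta_j)$ lies in a fundamental interval inside $\Sigma_{in}\subset\cT(\delta_2)$, and since $\delta_2\le\delta_1/2$ we have $\bar\delta=\delta$ there, so $w:=\varphi_{t_b(\Delta_j)}(v)$ has footpoint on $\{\delta=\delta_2\}$ with $a(w)<0$; hence the geodesic through $w$ immediately enters $\DD^\ast(\delta_2)$. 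By Corollary~\ref{p=leave} it exits $\DD^\ast(\delta_2)$ within time $2\delta_2$, and at the exit moment it lies on $\cT(\delta_2)$ pointing outward with $|b|$ small (by the quasi-Clairaut relation, Proposition~\ref{p=Clairaut}), hence in $\Sigma_{out}$; this is the first return, so $t_{\cR}(w)\le 2\delta_2$.

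Given this, $\{v\in\bigcup\cI:R_0(v)\ge k\}\subset\{v\in\bigcup\cI:t_b(\Delta_j)\ge k-2\delta_2\}$, so it suffices to bound the latter. For $v\in\Delta_j\subset B\in\cB_\infty$ with $t_b(B)>t$, the point $v$ cannot lie in any element of $\cB_t$ (those are border intervals born by time $t$, and distinct border intervals are disjoint), so by the decomposition $Z^u=\bigcup\cB_t\cup\bigcup\cG_t\cup\cV_t$ with $\cV_t$ finite, we get $v\in\bigcup\cG_t$ up to a measure-zero set. Taking $t=mU$ with $m$ the largest integer satisfying $mU<k-2\delta_2$, Lemma~\ref{l=lambda0} gives $\bigl|\{v\in\bigcup\cI:R_0(v)\ge k\}\bigr|\le\bigl|\bigcup\cG_{mU}\bigr|\le\lambda_0^m$. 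Since $m\ge(k-2\delta_2)/U-1$, this is at most $C\lambda^k$ with $\lambda:=\lambda_0^{1/U}\in(0,1)$ and $C$ absorbing the bounded factor $\lambda_0^{-1-2\delta_2/U}$.

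I do not expect a genuine obstacle: the substantive content is already packaged in Lemma~\ref{l=lambda0} (and through it in Lemma~\ref{l=IBcompare} and the active/border/dormant formalism). The only points needing care are the uniform bound $t_{\cR}\le 2\delta_2$, which rests on Corollary~\ref{p=leave} together with the fact that geodesics spend time $O(\delta_2)$ in the cuspidal region, and the bookkeeping required to pass from the discrete-time estimate of Lemma~\ref{l=lambda0} (valid only at integer multiples of $U$) to a bound at an arbitrary threshold $k$, which amounts to rounding down and absorbing a constant.
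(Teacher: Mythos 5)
Your proposal is correct and follows the same route as the paper's own (much terser) proof: bound $t_{\cR}$ uniformly, reduce the tail of $R_0$ to the tail of $t_b$, and invoke Lemma~\ref{l=lambda0}. You simply fill in the details that the paper leaves implicit, including the containment of $\{t_b \ge t\}$ in $\bigcup\cG_t$ and the discrete-to-continuous rounding that yields $\lambda = \lambda_0^{1/U}$.
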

\begin{proof} Since $t_{\cR}$ is bounded it suffices to find $\lambda_1\in(0,1)$ such that
\[\left| \bigcup  \left\{B\in \cB_\infty :  t_b(B)\geq k \right\} \right| \leq C \lambda_1^k.\] But this follows immediately from the construction with $\lambda_1 = \lambda_0$ appearing in Lemma~\ref{l=lambda0}.
\end{proof}

We now define the return time function $R\colon \bigcup I\to \RR_{>0}$. Recall the projection
$\pi^s\colon \Sigma_{out} \to Z^u$ along the leaves of $\widehat\cW^s$.  The fibers
of $\pi^s$ are local $\widehat\cW^s$ manifolds.  Over each point $v\in \Sigma_{out}$ there
lies a unique point  $w(v)\in \cW^s_{loc}(\pi^s(v))$ such that $w(v) = \varphi_{r(v)}(v)$, for some small value of $r(v)$.  Let $\bar \Sigma_{out} = w(\Sigma_{out})$.  

\begin{lemma}\label{l=barSigma} The function
$r\colon \Sigma_{out}\to \RR$ is  uniformly $C^{1+\alpha}$, and  $\bar\Sigma_{out}$ is a $C^{1+\alpha}$ manifold.   The map $w\colon \Sigma_{out} \to \bar \Sigma_{out}$ is a $C^{1+\alpha}$ diffeomorphism.  
The manifold $\bar \Sigma_{out}$ is $C^{1+\alpha}$ foliated by local $\cW^s$-leaves:
\[\bar\Sigma_{out} \subset \bigcup_{v'\in \Delta_0} \cW^s_{loc}(v'),
\]
and the projection $\bar\pi^s \colon\bar\Sigma_0\to Z^u$ along these local leaves is a $C^{1+\alpha}$ submersion.
\end{lemma}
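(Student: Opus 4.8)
The plan is to obtain $\bar\Sigma_{out}$ as a small piece of the \emph{stable saturation} $\widetilde\Sigma:=\bigcup_{v'\in Z^u}\cW^s_{loc}(v')$ of the unstable circle $Z^u$, reached from $\Sigma_{out}$ by sliding along flow lines, and to read off all the regularity from facts already proved. Recall that $\cW^{cs}$ is a $C^{1+\alpha}$ foliation, so $\pi^{cs}$ (hence $\pi^s=\pi^{cs}\vert_{\Sigma_{out}}$) is a $C^{1+\alpha}$ fibration; that $E^s_\varphi$ is $C^{1+\alpha}$ in the $\star$-metric, which near the cuspidal torus $\cT(\delta_2)$ is comparable to the Sasaki metric (there $\bar\delta\equiv\delta_2$ is bounded away from $0$), so $\cW^s$ is a $C^{1+\alpha}$ foliation near $\Sigma_{out}$ in the ordinary sense; and that $\Sigma_{out}$ is an open piece of the smooth torus $\cT(\delta_2)$, transverse to $\dot\varphi$, of tiny diameter (order $\delta_2^{\,r}$) in the original metric, lying in a tiny $\cW^{cs}$-neighborhood of $Z^u$.

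First I would show that $\widetilde\Sigma$, restricted to a neighborhood of $Z^u$, is a $C^{1+\alpha}$ embedded surface, transverse to the flow and $C^{1+\alpha}$-foliated by its plaques $\cW^s_{loc}(v')$. It is the image of $Z^u\times(-\sigma,\sigma)$ under ``slide to stable arclength $s$ from $v'$ along $\cW^s(v')$'', a $C^{1+\alpha}$ map; this is an immersion and, for $\sigma$ small, injective near $Z^u$ (a strong stable and a strong unstable local leaf meet only at their common point, so nearby distinct points of $Z^u$ lie on distinct local stable leaves; compactness of $Z^u$ handles the rest), hence a standard $C^{1+\alpha}$ saturation of a smooth transversal by a $C^{1+\alpha}$ foliation. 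Along $Z^u$ one has $T_{v'}\widetilde\Sigma=E^u_\varphi(v')\oplus E^s_\varphi(v')$, a complement of $\RR\dot\varphi(v')$, so on a neighborhood of $Z^u$ the surface is flow-transverse and the projection $\bar\pi^s\colon\widetilde\Sigma\to Z^u$ along the plaques is a $C^{1+\alpha}$ submersion with $\bar\pi^s\vert_{Z^u}=\mathrm{id}$.

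Next I would define, for $v\in\Sigma_{out}$, $r(v)$ as the unique small $t$ with $\varphi_t(v)\in\widetilde\Sigma$: it exists, is unique and is small because $Z^u\subset\Sigma_{out}\cap\widetilde\Sigma$, both surfaces are flow-transverse near $Z^u$, and $\Sigma_{out}$ lies in a tiny $\cW^{cs}$-neighborhood of $Z^u$; and $r$ is $C^{1+\alpha}$, with uniform bounds, by the implicit function theorem applied to $g(\varphi_t(v))=0$, where $g\in C^{1+\alpha}$ locally defines $\widetilde\Sigma$ and $\partial_t\,g(\varphi_t(v))=dg(\dot\varphi)\neq0$ by transversality. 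Since $\pi^{cs}$ is constant along orbits near $Z^u$, the point $\varphi_{r(v)}(v)$ lies on the weak-stable leaf of $v$, hence of $v'=\pi^s(v)=\pi^{cs}(v)$; and since $\varphi_{r(v)}(v)\in\cW^s_{loc}(v'')$ for some $v''\in Z^u$ that is forced onto that same weak-stable leaf and (for $\sigma$ small) close to $v'$, transversality of $Z^u$ and $\cW^{cs}$ gives $v''=v'$. Thus $\varphi_{r(v)}(v)\in\cW^s_{loc}(\pi^s(v))$, so $w(v):=\varphi_{r(v)}(v)$ and $r$ are exactly the maps in the statement, and $\bar\Sigma_{out}=w(\Sigma_{out})\subset\widetilde\Sigma\subset\bigcup_{v'\in Z^u}\cW^s_{loc}(v')$.

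Finally I would check that $w\colon\Sigma_{out}\to\bar\Sigma_{out}$ is a $C^{1+\alpha}$ diffeomorphism: it is $C^{1+\alpha}$ as the composition $v\mapsto(v,r(v))\mapsto\varphi_{r(v)}(v)$ of a $C^{1+\alpha}$ map with the $C^3$ flow; its derivative $D_vw(\xi)=D\varphi_{r(v)}(\xi)+(dr\cdot\xi)\,\dot\varphi(w(v))$ maps $T_v\Sigma_{out}$ isomorphically onto a complement of $\RR\dot\varphi(w(v))$, as $D\varphi_{r(v)}$ does, $\Sigma_{out}$ being flow-transverse, so $w$ is an immersion; and $w$ is injective, since $w(v_1)=w(v_2)$ would force $v_2=\varphi_s(v_1)\in\Sigma_{out}$ with $s$ small and nonzero, impossible on a flow-transverse surface. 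Hence (by invariance of domain) $\bar\Sigma_{out}$ is an open $C^{1+\alpha}$ subsurface of $\widetilde\Sigma$, inheriting its $C^{1+\alpha}$ foliation by local $\cW^s$-leaves, with $\bar\pi^s\vert_{\bar\Sigma_{out}}=\pi^s\circ w^{-1}$ a $C^{1+\alpha}$ submersion onto $Z^u$ (onto, as $w$ fixes $Z^u$ pointwise). The step I expect to need the most care is the regularity bookkeeping in the first two paragraphs --- that ``$C^{1+\alpha}$ foliation'' truly suffices both for $\widetilde\Sigma$ to be $C^{1+\alpha}$ and for the first-hitting time $r$ to be $C^{1+\alpha}$, and that all constants are uniform --- which is precisely where one uses that the whole construction takes place at a fixed positive distance $\delta_2$ from the cusp, where the $\star$-metric (in which the invariant bundles were shown $C^{1+\alpha}$) and the original metric are comparable.
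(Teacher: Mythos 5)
Your proposal is correct and follows the same route as the paper, whose proof is simply the one-line observation that everything follows from the uniform $C^{1+\alpha}$ regularity of the stable foliation $\cW^s$ (valid near $\Sigma_{out}$, where the $\star$-metric and the original metric are comparable). Your argument is just a careful unwinding of that observation — the saturation of $Z^u$ by local stable leaves, the implicit-function-theorem definition of $r$, and the transversality checks are exactly the implicit content of the paper's terse proof.
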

\begin{proof} This follows from the fact that the foliation $\cW^s$ is uniformly $C^{1+\alpha}$.
\end{proof}

We define $R\colon \bigcup \cI \to \RR$ by $R(v) = R_0(v) + r(\varphi_{R_0(v)}(v))$;
it has the property that $\varphi_{R(v)}(v)\in \bar\Sigma_{out}$. 
Lemma~\ref{l=barSigma} implies that for each $v\in \bigcup \cI$, there exists a unique $v'\in \Delta_0$ -- namely, 
$v' = \bar\pi^s\varphi_{R(v)}(v)$ --  such that
$\varphi_{R(v)}(v)\in \cW^s_{loc}(v')$, giving conclusion (2).

For $\Delta_j\in \cI$, we  define $h_j\colon \Delta_0\to \Delta_j$ to be the inverse of the map $F_j = \pi^{cs}\circ \varphi_{R(\cdot)}  = \bar\pi^s \circ \varphi_{R(\cdot)} \colon \Delta_j\to \Delta_0$.  
This
is well-defined, because 
\[\bar\pi^s(\varphi_{R(\cdot)}(\Delta_j)) = \pi^{s}(\cR( \pi^{cs} \circ\varphi_{t_b(\Delta_j)}(\Delta_j))) = \Delta_0,\]
since $ \pi^{cs} \varphi_{t_b(\Delta_j)}(\Delta_j)$ is a fundamental interval.
Since $\pi^s$ is a  submersion, and  the map $v \mapsto \cR( \pi^{cs} \varphi_{t_b(B)})(v)$ is a  diffeomorphism  from $\Delta_j$ onto its image, the composition is a  diffeomorphism from $\Delta_j$ to $\Delta_0$.  Thus its inverse $h_j\colon \Delta_0\to \Delta_j$ is a diffeomorphism.  This establishes conclusion (3).

Note that $|h_j'(v)| \sim \|D^u_{h_j(v)}\varphi_{R_0(h_j( v))}\|^{-1}$, and so conclusion (4) holds.
Conclusion (5) follows from the facts that  $\varphi_t$ has bounded distortion and the map
$\pi^{cs}$ is uniformly $C^{1+\alpha}$.  Indeed note that the map $F_j = h_j^{-1}$ can also be expressed in the following way.  We fix some point $\hat v\in \Delta_j$ and
consider the image $\varphi_{R(\hat v)}(\Delta_j)$ under the constant time flow $\varphi_{R(\hat v)}$.  This is a piece of unstable manifold that meets $\bar \Sigma_{out}$.  The map $F_j = h_j^{-1}$ is
just the composition $F_j = \pi^{cs}\circ \varphi_{R(\hat v)}$ of this flow with the center-stable projection $\pi^{cs}\colon U \to Z^u$, defined in the beginning of the section. This latter projection is a uniformly $C^{1+\alpha}$ submersion and a local diffeomorphism when restricted to local unstable manifolds,  since the foliation
$\cW^{cs}$ is uniformly $C^{1+\alpha}$.   Thus $h_j$ is uniformly $C^{1+\alpha}$.

 Let's examine the map $R\circ h_j$.  Again fix a point $\hat v\in \Delta_j$,  and consider the image $\varphi_{R_0(\hat v)}\left(\Delta_j \right)$, which is a piece of unstable manifold meeting $\Sigma_{out}$ at the point $\varphi_{R_0(\hat v)}( \hat v)$.   It follows that there is a uniformly bounded $C^{2}$ function $\hat r\colon \varphi_{R_0(\hat v)}\left(\Delta_j \right) \to\RR$ such that 
$\varphi_{\hat r(\cdot)}$ sends the piece of unstable manifold $ \varphi_{R_0(\hat v)}\left(\Delta_j \right)$  to  $\widehat\cW^u_{loc}(\varphi_{R_0(\hat v)}( \hat v))\subset \Sigma_{out}$.   Then $R_0(v) = R_0(\hat v) + \hat r( \varphi_{R_0(\hat v)}( v))$, and so
$R(v) = R_0(\hat v) + \hat r( \varphi_{R_0(\hat v)}( v) )+ r(\varphi_{R_0(v)}(v))$.
Thus
\[|R'(v)| \leq |\hat r'( \varphi_{R_0(\hat v)}( v) )|\|D^u_v\varphi_{R_0(\hat v)}\|  \qquad \qquad \qquad \qquad \qquad \qquad \qquad \qquad \qquad \qquad \]\[ + |r'(\varphi_{R_0(v)}(v))|\left(\|D^u_v\varphi_{R_0(v)}\||+   \|\dot\varphi(\varphi_{R_0(v)}(v)\| |\hat r'( \varphi_{R_0(\hat v)}( v) )| \|D^u_v\varphi_{R_0(\hat v)}\|  \right)\qquad\]
\[=  |\hat r'( \varphi_{R_0(\hat v)}( v) )|\|D^u_v\varphi_{R_0(\hat v)}\|(1+  |r'(\varphi_{R_0(v)}(v))|)  +  |r'(\varphi_{R_0(v)}(v))|\|D^u_v\varphi_{R_0(v)}\||.
\]
The derivatives $r'$ and $\hat r'$ are uniformly bounded.
Since $|h_j'(v)| \asymp \|D^u_{h_j(v)}\varphi_{R_0(h_j( v))}\|^{-1}$, we obtain that
there exists a uniform constant $C\geq 1$ such that
$|(R\circ h_j)'|\leq C$, for all $j$.  This gives conclusion (6).

 Since the function $r$ is bounded, Lemma~\ref{l=R_0bound} implies that for each $k>0$, we have
\[\left| \left\{v\in \bigcup\cI :  R(v)\geq k \right\} \right| \leq C \lambda^k;
\]
since $|h_j'|\asymp |\Delta_j|$, this gives conclusion (7) of Theorem~\ref{t=tower}, where $\epsilon>0$ is chosen so that $\lambda \exp(C\epsilon) < 1$.

Finally we verify that the UNI Condition in conclusion (8) holds. 
This is a direct consequence of the fact that $\varphi_t$ preserves a contact $1$-form $\omega$, which implies that the foliations $\cW^s$ and $\cW^u$ are not jointly integrable.   The details are carried out in  Lemma 12 of \cite{ABV} (in the Axiom A context) and Lemma 4.2 and Corollary 4.3 of \cite{AM} (close to the current context). \end{proof}

\end{document}